\documentclass[12pt]{amsart}
\usepackage{ amsmath, amsthm, amsfonts, amssymb, color}
\usepackage{mathrsfs}
\usepackage{amsfonts, amsmath}
\usepackage{enumitem}
\usepackage{amsmath,amstext,amsthm,amssymb,amsxtra}
\usepackage{txfonts} 
\usepackage[colorlinks, citecolor=blue,pagebackref,hypertexnames=false]{hyperref}
\allowdisplaybreaks
\usepackage{pgf,tikz}
\usetikzlibrary{calc}
\usepackage{caption} 

\newtheorem{theorem}{Theorem}[section]
\newtheorem{proposition}[theorem]{Proposition}
\newtheorem{corollary}[theorem]{Corollary}
\newtheorem{lemma}[theorem]{Lemma}
\newtheorem{definition}[theorem]{Definition}

\newtheorem{remark}[theorem]{Remark}

\begin{document}

\title[Riesz transform on Grushin spaces]{Riesz transform and its related inequalities for degenerate elliptic operators of Grushin type}

\author{Dangyang He}

\begin{abstract}

We study the $L^p$ boundedness of the Riesz transform and the reverse Riesz inequality for degenerate elliptic operators of Grushin type. We prove full-range $L^p$ boundedness of the Riesz transform when the degenerate variable has dimension at least two, and obtain the sharp range in the one-dimensional weakly degenerate case, including the endpoint obstruction. In the strongly degenerate one-dimensional regime, we recover full-range boundedness, revealing a striking transition in the behavior of the singular set.

The proof develops a reverse Hölder theory for Grushin harmonic functions near the singular set. The main ingredients are explicit Poisson and Green kernel constructions adapted to the Friedrichs extension and a harmonic annihilation method which isolates the critical part of the Riesz kernel. These techniques illuminate the mechanism behind both the boundedness and unboundedness phenomena, and yield essentially sharp reverse Riesz inequalities for the same class of operators.

\end{abstract}

\subjclass[2020]{Primary 42B20; Secondary 35J70, 35H20.}

\keywords{Riesz transform, reverse Riesz inequality, reverse H\"older inequality, Grushin operators}

\maketitle

 \tableofcontents

\section{Introduction}\label{section_1}

\noindent
Riesz transforms associated with non-negative self-adjoint operators are central objects in harmonic analysis and geometric analysis. On Euclidean space, the classical Riesz transforms are Calder\'on--Zygmund operators and are bounded on $L^p$ for every $1<p<\infty$; see, for example, \cite{R,CZ}. In dimension one, the Riesz transform reduces, up to normalization, to the Hilbert transform. On general metric measure or geometric spaces, however, the corresponding boundedness may fail outside a restricted range of exponents. The sharp range of $p$ is often sensitive to the heat-kernel geometry, the validity of Poincar\'e inequalities, and the structure of harmonic functions. A large body of work relates Riesz transform estimates to heat-kernel gradient bounds, reverse H\"older inequalities for gradients of harmonic functions, and Hodge projection estimates; see, among many others, \cite{ACDH,Shen,CJKS,Jiang,CD2}.

In this paper, we study these questions for a class of degenerate elliptic operators of Grushin type. Let $n,m\geq1$, $0<\alpha<1$, and $\beta\geq0$. We consider the Friedrichs realization on $L^2(\mathbb R^{n+m})$ of
\begin{equation}\label{Grushin}
   L=-\nabla_x\cdot\bigl(|x|^{2\alpha}\nabla_x\bigr)
     +|x|^{2\beta}\Delta_y,
\end{equation}
where $\Delta_y$ denotes the non-negative Laplacian on $\mathbb R^m$. The corresponding intrinsic gradient is
\begin{equation}\label{Grushin2}
    \nabla_L
    =
    \left(|x|^\alpha\nabla_x,\ |x|^\beta\nabla_y\right).
\end{equation}
The degeneracy occurs on the singular set $\{x=0\}$. The corresponding
intrinsic geometry is anisotropic. Indeed, the exponent in the dilation
\begin{equation*}
    \delta_r(x,y)
    =
    \bigl(rx,r^{\beta+1-\alpha}y\bigr)
\end{equation*}
is dictated by the homogeneity of the operator: for every smooth function
$f$,
\begin{equation*}
    L(f\circ\delta_r)
    =
    r^{2(1-\alpha)}(Lf)\circ\delta_r,
    \qquad
    \nabla_L(f\circ\delta_r)
    =
    r^{1-\alpha}(\nabla_Lf)\circ\delta_r.
\end{equation*}
Consequently, the intrinsic distance satisfies
\begin{equation*}
    d(\delta_r z,\delta_r z')
    =
    r^{1-\alpha}d(z,z').
\end{equation*}
The associated distance, volume growth, subelliptic estimates, Poincar\'e inequalities, and heat-kernel upper bounds were studied in the foundational work of Robinson and Sikora \cite{RS,RS2,RS_1D}. Classical Grushin operators and their Riesz transforms, spectral multipliers, and Bochner--Riesz means have also been studied by Jotsaroop, Sanjay, and Thangavelu \cite{JST}, Martini and Sikora \cite{MS}, Chen and Sikora \cite{ChenSikora}, and others.

We say that the Riesz transform associated with $L$ is bounded on $L^p$ if
\begin{equation}\tag{$\mathrm{R}_p$}\label{R_p}
    \|\nabla_LL^{-1/2}f\|_p
    \leq
    C_p\|f\|_p,
    \qquad f\in C_c^\infty(\mathbb R^{n+m}).
\end{equation}
Equivalently, 
\begin{equation*}
    \|\nabla_L f\|_p
    \leq
    C_p\|L^{1/2}f\|_p.
\end{equation*}
Conversely, we say that the reverse Riesz inequality holds on $L^p$ if
\begin{equation}\tag{$\mathrm{RR}_p$}\label{RR_p}
    \|L^{1/2}f\|_p
    \leq
    C_p\|\nabla_L f\|_p,
    \qquad f\in C_c^\infty(\mathbb R^{n+m}).
\end{equation}

We shall use the following Poincar\'e inequalities. For $1\leq p<\infty$, we say that \eqref{P_p_intro} holds if there exist constants $C>0$ and $\lambda\geq1$ such that, for every ball $B$ and every locally Lipschitz function $f$,
\begin{equation}\tag{$\mathrm{P}_p$}\label{P_p_intro}
    \left(\fint_B|f-f_B|^p\right)^{1/p}
    \leq
    C r(B)
    \left(\fint_{\lambda B}|\nabla_Lf|^p\right)^{1/p}.
\end{equation}
Here $f_B:=\fint_Bf$, $r(B)$ denotes the radius of $B$, and $\lambda B$ denotes the concentric dilate of $B$. 

A key role in the proofs is played by reverse H\"older inequalities for gradients of harmonic functions. For $2<p<\infty$, we say that
\eqref{eq:RH:Jiang} holds if there exist constants $C>0$ and $\lambda\geq1$ such that, whenever $u$ is $L$-harmonic in $\lambda B$,
\begin{equation}\tag{$\mathrm{RH}_p$}\label{eq:RH:Jiang}
     \left(\fint_B|\nabla_Lu|^p\right)^{1/p}
     \leq
     \frac{C}{r(B)}
     \fint_{\lambda B}|u|.
\end{equation}
The endpoint condition $(\mathrm{RH}_\infty)$ is defined by replacing the left-hand side with $\operatorname*{sup}_B|\nabla_Lu|$. The use of an enlarged ball reflects the local nature of the Poincar\'e and Caccioppoli inequalities.

In the setting of doubling Dirichlet spaces supporting a scale-invariant $L^2$-Poincar\'e inequality, Coulhon, Jiang, Koskela, and Sikora \cite{CJKS} proved that, for $2<p<\infty$, the $L^p$ gradient estimate for the heat semigroup, \eqref{eq:RH:Jiang}, and \eqref{R_p} are equivalent. Their endpoint result also relates $(\mathrm{RH}_\infty)$ to pointwise gradient estimates for the heat kernel. Related versions under Poincar\'e inequalities outside a compact set were obtained by Jiang \cite{Jiang}. This abstract principle is one of the main engines of the present paper. In the Grushin setting, the principal difficulty is to establish the appropriate reverse H\"older estimates near the singular set $\{x=0\}$, where the Friedrichs realization selects different endpoint conditions in different parameter regimes.

Grushin-type operators have long served as model examples in the analysis of degenerate elliptic and subelliptic phenomena. The terminology originates in the two-dimensional degenerate operators
\begin{equation*}
    -\partial_x^2-x^{2k}\partial_y^2,
    \qquad k\in\mathbb N,
\end{equation*}
introduced by Grushin \cite{Grushin}. The heat-kernel bounds of Robinson and Sikora \cite{RS}, together with the abstract result of Sikora \cite{Sikora}, imply that, for the general operator \eqref{Grushin}, \eqref{R_p} holds for every $1<p\leq2$. Beyond the threshold $p=2$, however, the previously known full-range results concern special algebraic models.

In the case $\alpha=0$, $\beta=1$, and $m=1$, namely for the classical model
\begin{equation*}
    L=\Delta_x-|x|^2\partial_y^2,
\end{equation*}
Jotsaroop, Sanjay, and Thangavelu \cite{JST} proved $L^p$-boundedness for the full range $1<p<\infty$, using a closely related first-order system based on the creation and annihilation operators for the Hermite operator. Subsequently, Robinson and Sikora \cite{RS3} established full $L^p$ bounds for the integer-power models
\begin{equation*}
    L=\Delta_x+|x|^{2\beta}\Delta_y,
    \qquad
    n,m\geq1,\quad \alpha=0,\quad \beta\in\mathbb N,
\end{equation*}
by lifting the corresponding polynomial vector fields to a nilpotent Lie group. This method depends essentially on the integer-power structure and does not extend to the general power-degenerate Friedrichs model \eqref{Grushin}. In particular, once $\alpha>0$, the degeneracy in the $x$-derivatives introduces an endpoint mechanism at $\{x=0\}$ which is invisible to the nilpotent lifting argument. More generally, degenerate divergence-form operators may exhibit qualitative features absent from the uniformly elliptic theory; see \cite{ARSZ}.

The work of Robinson and Sikora naturally leaves open whether the full
$L^p$ range persists for arbitrary powers in the case $\alpha=0$. Our first main result establishes the corresponding full-range phenomenon for $n\geq2$ in the more singular regime $0<\alpha<1$.

\begin{theorem}\label{thm_main1}
Let $L$ be the Grushin operator defined by \eqref{Grushin}, with parameters
$\alpha\in(0,1)$, $\beta\geq0$, $m\geq1$, and $n\geq2$. Let $\nabla_L$ be the
corresponding gradient operator defined in \eqref{Grushin2}. Then
\eqref{R_p} holds for every $1<p<\infty$.
\end{theorem}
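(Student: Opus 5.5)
The range $1<p\le 2$ is already covered by the Robinson--Sikora heat kernel bounds \cite{RS} combined with Sikora's abstract result \cite{Sikora}. The plan for $2<p<\infty$ is to verify \eqref{eq:RH:Jiang} for every $p<\infty$ and then invoke the Coulhon--Jiang--Koskela--Sikora equivalence \cite{CJKS}. That result applies because the Grushin space is doubling and supports a scale-invariant $(\mathrm{P}_2)$, by \cite{RS,RS2}. By the dilation structure $\delta_r$ and translation invariance in $y$, it suffices to treat balls of unit radius, and these fall into two types.

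\textbf{Balls far from the singular set.} If $\lambda B$ stays at distance comparable to its radius from $\{x=0\}$, then $L$ is uniformly elliptic with smooth coefficients on $\lambda B$, after rescaling to unit size. Classical interior estimates then give $(\mathrm{RH}_\infty)$ there, with constants uniform by homogeneity. This reduces everything to balls centred on, or near, $\{x=0\}$.

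\textbf{Balls meeting the singular set.} This is the main obstacle. Let $u$ be $L$-harmonic near $\{x=0\}$. I would expand $u$ in spherical harmonics in the $x$-variable and take the Fourier transform in $y$. The $k$-th radial mode then solves
\[
-r^{1-n}\partial_r\bigl(r^{n-1+2\alpha}\partial_r v\bigr)+k(k+n-2)\,r^{2\alpha-2}v+|\eta|^2r^{2\beta}v=0 .
\]
Near $r=0$ this ODE has indicial exponents $\gamma$ solving $\gamma(\gamma+n-2+2\alpha)=k(k+n-2)$. The Friedrichs extension selects the regular root $\gamma_k^+\ge 0$. For $k=0$ this gives $\gamma=0$, the solutions constant in $x$ near the origin, and the other root $2-n-2\alpha<0$ is excluded because $n\ge2$. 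This is exactly where $n\ge2$ enters: for $n=1$ the second root $1-2\alpha$ may be admissible and would produce the obstruction.

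For the $k=0$ mode, $|x|^\alpha\partial_x v$ behaves like $|x|^{\alpha+1}$ and $|x|^\beta\nabla_y v$ is bounded, so the gradient stays bounded. For $k\ge1$ we have $\gamma_k^+\ge \gamma_1^+>1-\alpha$ when $n\ge2$; one checks $\gamma_1^+(\gamma_1^++n-2+2\alpha)=n-1$, and $(1-\alpha)(n-1+\alpha)<n-1$. Hence $|x|^\alpha\nabla_x(v\,Y_k)\sim|x|^{\gamma_k^++\alpha-1}$ is bounded near $0$.

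To make this quantitative and uniform, I would construct the Poisson kernel of $L$ on the model cylinder $\{|x|<1\}\times\{|y|<1\}$, adapted to the Friedrichs extension. Its derivative bounds, summed over spherical harmonics with Sobolev-type decay in $k$, should yield $\sup_{B}|\nabla_L u|\lesssim \fint_{\lambda B}|u|$. The sum in $k$ is controlled because higher modes vanish to higher order.

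Failing a fully explicit kernel, I would split $u$ into its $x$-radial mean and the remainder. The remainder is annihilated by averaging over $x$-spheres, so this step isolates the critical part. For the remainder I would use a Hardy inequality, with constant governed by $\gamma_1^+>1-\alpha$, to get a reverse Hölder gain. For the radial mean I would apply one-dimensional ODE estimates, a Caccioppoli inequality, and a Gehring/iteration bootstrap from $L^2$ to $L^p$ for all finite $p$. This yields \eqref{eq:RH:Jiang} for all $2<p<\infty$, and \cite{CJKS} then gives \eqref{R_p}.
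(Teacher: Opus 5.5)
Your outer reduction matches the paper's: Robinson--Sikora and Sikora for $p\le 2$, then \cite{CJKS} with $(\mathrm{P}_2)$ to reduce to \eqref{eq:RH:Jiang}, remote balls by rescaled elliptic estimates, and anchored balls via spherical harmonics, Fourier transform in $y$ and the regular Bessel branch. Your computation $\gamma_1^+>1-\alpha$ is exactly the paper's condition $\gamma_\ell+\alpha-1>0$ for $\ell\ge1$. The gap is the anchored estimate $\sup_{Q_1}|\nabla_L v|\lesssim\|v\|_{L^\infty(Q_3)}$, which is the main content of the theorem and which you only assert (``should yield''). The set-up you propose for it also does not work. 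A Poisson kernel on $\{|x|<1\}\times\{|y|<1\}$ cannot be obtained by Fourier transform in $y$, because the lateral boundary $\{|y|=1\}$ cuts through the singular set. The paper instead cuts off in $y$, setting $\omega=\chi(y)v$ on $(0,2)\times\mathbb S^{n-1}\times\mathbb R^m$, and writes $\omega=\mathcal Th+\mathcal Sf$ with a source $f=[L,\chi]v$. This source is supported in $\{2^{\mathfrak a}\le|y|\le 3^{\mathfrak a}\}$, but it lives on all of $0<\rho<2$, down to the singular set, and it contains $\nabla_y v$. Controlling $\nabla_L\mathcal Sf$ on $Q_1$ requires the bounds of Theorem~\ref{lem:Green_kernel} for $\nabla_L\mathcal G$ and for $\nabla_L\partial_{y_k'}\mathcal G$ integrated over the $y$-separated set; the second is needed to move $\nabla_y v$ onto the kernel. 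These bounds rest on the row estimates of Propositions~\ref{prop:Green:Shur} and~\ref{lem_basic_D_R_schur_explicit}, integration by parts in $Y$, and the resolvent identity $\partial_{Y_j}R_{\ell,Y}=-R_{\ell,Y}(2Y_j\rho^{2\beta})R_{\ell,Y}$. Nothing in your plan corresponds to this half of the argument.

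Your mode-wise analysis also does not give uniform bounds. The indicial exponents describe a mode only where $\tau_\rho=|Y|\rho^{\mathfrak a}/\mathfrak a\lesssim1$, and the constants depend on $\ell$ and $|Y|$. For $k=0$, for instance, $\rho^\alpha\partial_\rho v\sim|Y|^2\rho^{2\beta+1-\alpha}$ (not $|x|^{1+\alpha}$), with an unbounded factor $|Y|^2$. You need control uniform in $(\ell,Y,\rho)$, which the paper gets from the normalized ratio $H_{\ell,\rho}$ and from $S_{\nu_\ell}$. Once you localize as the paper does, your ``Sobolev decay'' idea can handle the Poisson term more cheaply than the paper's $L^1$-multiplier argument. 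Interior estimates near $|x|=2$ make $h=\omega(2,\cdot)$ smooth, with all derivatives controlled by $\sup|v|$ on a slightly larger box. For $\rho\le1$, the facts $0<H_{\ell,\rho}\le1$, $0<S_{\nu_\ell}<1$ and $\gamma_\ell+\alpha-1>0$ bound the gradient multipliers polynomially in $(\ell,|Y|)$. There is no comparable smoothing for $f$, which reaches the singular set, so the Green-kernel estimates cannot be bypassed there. Your fallback does not close the gap either. Gehring's lemma upgrades $L^2$ only to $L^{2+\varepsilon}$, not to every $p<\infty$. The spherical mean still depends on $(\rho,y)$ and solves a degenerate PDE, so one-dimensional ODE estimates do not apply without the same Fourier--Bessel analysis. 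A Hardy inequality gives integral control of $w/|x|^{1-\alpha}$, not a pointwise or high-$L^p$ bound on $\nabla_L w$.
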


The proof of Theorem~\ref{thm_main1} proceeds through
$(\mathrm{RH}_\infty)$. When $n\geq2$, the Grushin space satisfies volume doubling, Gaussian heat-kernel upper bounds, and the required Poincar\'e inequality. The theorem of Coulhon, Jiang, Koskela, and Sikora therefore reduces the $L^p$-boundedness for $p>2$ to the reverse H\"older estimate \eqref{eq:RH:Jiang}. Away from the singular set, the desired estimate follows from standard elliptic arguments after rescaling. The main analytic difficulty is local and lies in proving the estimate on balls intersecting $\{x=0\}$.

In higher dimensions, one must decompose in spherical harmonics in the
$x$-variable. The order of the resulting modified Bessel functions depends on the angular frequency and tends to infinity with it. Consequently, the proof cannot rely only on the usual fixed-order asymptotic formulae for Bessel functions. Instead, one needs estimates that are uniform jointly in the Bessel order, the Fourier frequency in the $y$-variable, and the distance to the singular set. This uniform-in-order analysis is the principal technical point in the proof of the higher-dimensional theorem; see Part~\ref{part1}.

The one-dimensional case is qualitatively different. Suppose first that $n=1$ and $0<\alpha<1/2$. Then
\begin{equation*}
    \int_{-1}^1|x|^{-2\alpha}\,dx<\infty,
\end{equation*}
and functions in the local Friedrichs form domain admit a common trace across $\{x=0\}$. Moreover, weak solutions satisfy a conormal flux-matching condition; see \cite{RS_1D}. Consequently, after decomposing into even and odd parts, the full-line problem reduces to two different half-line problems: the even component satisfies a zero-flux condition at the origin, whereas the odd component satisfies a zero-trace condition. The leading odd branch behaves like
\begin{equation*}
    \operatorname{sgn}(x)|x|^{1-2\alpha},
\end{equation*}
and hence its intrinsic gradient has the singular behavior
$|x|^{-\alpha}$. This produces the exact obstruction at
$p=\alpha^{-1}$; see Part~\ref{part2}.

\begin{theorem}\label{thm:n=1:alpha<1/2}
Let $L$ be the Grushin operator defined by \eqref{Grushin}, with parameters
$\alpha\in(0,\frac12)$, $\beta\geq0$, $m\geq1$, and $n=1$. Let $\nabla_L$ be
the corresponding gradient operator defined in \eqref{Grushin2}. Then
\eqref{R_p} holds if and only if
\begin{equation*}
    1<p<\alpha^{-1}.
\end{equation*}
\end{theorem}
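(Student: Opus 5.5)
The argument splits into a positive part ($2<p<\alpha^{-1}$) and a negative part ($p\ge\alpha^{-1}$). The range $1<p\le2$ is already covered by the heat-kernel bounds of Robinson and Sikora together with Sikora's theorem, as recalled above.

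\textbf{Sufficiency for $2<p<\alpha^{-1}$.} I would use the same route as for Theorem~\ref{thm_main1}. First I check, using \cite{RS_1D}, that for $n=1$ and $0<\alpha<1/2$ the space is doubling, the heat kernel has Gaussian upper bounds, and $(\mathrm P_2)$ holds. The theorem of Coulhon, Jiang, Koskela and Sikora then reduces \eqref{R_p} to \eqref{eq:RH:Jiang} for each $p<\alpha^{-1}$. Away from $\{x=0\}$ this follows from rescaling and standard elliptic estimates, so the work is on balls $B$ meeting $\{x=0\}$. By the dilation $\delta_r$ we may take $r(B)\sim1$.

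For $u$ harmonic in $\lambda B$, split $u=u_e+u_o$ into its even and odd parts in $x$. Take a Fourier transform in $y$, or, to stay local, represent $u$ through an explicit Poisson kernel on a box. Each frequency $\eta$ then solves an ODE $-(|x|^{2\alpha}v')'+|x|^{2\beta}|\eta|^2v=0$ on the half-line. Its solutions are expressed through modified Bessel functions of order $\nu=\frac{1-2\alpha}{2(1+\beta-\alpha)}$ and $-\nu$ in the variable $|x|^{1+\beta-\alpha}|\eta|/(1+\beta-\alpha)$. The even part selects the zero-flux branch, which is smooth in the sense that $|x|^\alpha\partial_x v=O(|x|^{1-\alpha})$ is bounded. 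The odd part selects the zero-trace branch $v\sim c\,\mathrm{sgn}(x)|x|^{1-2\alpha}(1+O(|x|^{2(1+\beta-\alpha)}))$, so $|x|^\alpha\partial_xv\sim c|x|^{-\alpha}$.

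The key estimate is a pointwise bound near the singular set,
\[
|\nabla_Lu(x,y)|\le C\bigl(1+|x|^{-\alpha}\bigr)\fint_{\lambda B}|u|,\qquad (x,y)\in B,
\]
where the coefficient $c$ is controlled by $\fint|u|$. I would get this from the Poisson/Green kernel representation, or from a harmonic annihilation argument: subtract $c(y)\,\mathrm{sgn}(x)|x|^{1-2\alpha}$, with $c(y)$ the flux at $x=0$, so that the remainder has bounded gradient. Since $|x|^{-\alpha}\in L^p_{\mathrm{loc}}$ exactly when $p<\alpha^{-1}$, this gives $(\mathrm{RH}_p)$ for every such $p$.

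\textbf{Necessity for $p\ge\alpha^{-1}$.} I would use the CJKS equivalence in the other direction. Once \eqref{R_p} holds for some $p>2$, it implies $(\mathrm{RH}_p)$, so it suffices to produce one harmonic function violating $(\mathrm{RH}_p)$. A candidate is $u(x,y)=\mathrm{sgn}(x)|x|^{1-2\alpha}$, which is $y$-independent and $L$-harmonic in the Friedrichs sense. It has zero trace and constant flux across $x=0$, so it satisfies the matching condition of \cite{RS_1D} and lies locally in the form domain, because $\int|x|^{2\alpha}|u'|^2=\int|x|^{-2\alpha}<\infty$. Then $|\nabla_Lu|\sim|x|^{-\alpha}\notin L^p_{\mathrm{loc}}$ for $p\ge\alpha^{-1}$, while $\fint|u|<\infty$, so $(\mathrm{RH}_p)$ fails. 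If the CJKS equivalence needs $p<\infty$ only, this covers all $p\ge\alpha^{-1}$.

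Alternatively, I could test \eqref{R_p} directly on $f=\phi u$ with $\phi$ a cutoff. Then $L^{1/2}f=L^{-1/2}(Lf)$, and $Lf$ only involves derivatives of $\phi$ away from the singularity up to a regular commutator. So $L^{1/2}f\in L^p$ while $\nabla_Lf\notin L^p$, which contradicts the equivalent form $\|\nabla_Lf\|_p\le C\|L^{1/2}f\|_p$. This needs an approximation argument, since $f\notin C_c^\infty$.

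\textbf{Main obstacle.} I expect the hardest step to be the uniform pointwise bound above near $\{x=0\}$. It has to be uniform in the $y$-frequency, which calls for precise Bessel asymptotics across the transition $|x|^{1+\beta-\alpha}|\eta|\sim1$, and it has to be converted from a Fourier representation into local control by $\fint_{\lambda B}|u|$. I would first try an explicit local Poisson kernel for a slab adapted to the Friedrichs conditions, with the odd branch's flux coefficient isolated.
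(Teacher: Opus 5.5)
Your proposal is correct and follows essentially the paper's route. You reduce via CJKS, with doubling and \eqref{UE} from \cite{RS} and $(\mathrm P_2)$ from \cite[Theorem~1.1]{RS2} rather than \cite{RS_1D}, to anchored reverse H\"older estimates. These are proved through a localized Poisson-plus-Green representation split into the zero-flux even branch ($I_{-\nu}$) and the zero-trace odd branch ($I_\nu$, carrying the factor $\rho^{1-2\alpha}$ and hence the loss $|x|^{-\alpha}$), and necessity comes from $\operatorname{sgn}(x)|x|^{1-2\alpha}$ together with the converse direction of CJKS.

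Of your two options for the pointwise bound, the kernel representation is the one the paper uses and it works as stated. Subtracting $c(y)\operatorname{sgn}(x)|x|^{1-2\alpha}$ with a $y$-dependent flux $c$ leaves a remainder with nonzero source $|x|^{2\beta+1-2\alpha}\operatorname{sgn}(x)\Delta_y c$, so that variant would need additional regularity and estimates for $c$.
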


The range in Theorem~\ref{thm:n=1:alpha<1/2} is sharp. The same singular
branch that appears in the Friedrichs half-line problem yields both the
positive range and the counterexample at and beyond the endpoint. Thus the
obstruction is neither an artifact of the method nor a global geometric
phenomenon; it is caused by the local endpoint behavior of the odd component
at the singular set.

The remaining one-dimensional regime, $n=1$ and $1/2\leq\alpha<1$, requires
a different strategy. In this range,
\begin{equation*}
    \int_{-1}^1|x|^{-2\alpha}\,dx=\infty,
\end{equation*}
so the trace-matching argument breaks down. The set $\{x=0\}$ has zero
capacity with respect to the Friedrichs form, and the two sides
$\{x>0\}$ and $\{x<0\}$ are separated by the Friedrichs realization. In
particular, the global Poincar\'e inequality is no longer available in the
form needed to invoke directly the equivalence between \eqref{R_p} and
\eqref{eq:RH:Jiang}; see \cite{RS2}. We therefore postpone the proof of the
Riesz transform estimate in this strongly degenerate regime and first turn
to the reverse Riesz inequality.

The reverse Riesz inequality is governed by a different mechanism. Although
\eqref{R_p} implies $(\mathrm{RR}_{p'})$ by duality; see \cite{CD},
reverse Riesz estimates may hold in ranges where the Riesz transform itself
fails. This phenomenon occurs, for example, for exterior-domain Laplacians
and manifolds with ends; see \cite{KVZ,JL,H2}. For exponents below $2$, a
classical route to \eqref{RR_p} uses volume doubling together with \eqref{P_p_intro}; see, for instance, \cite{AC}. In the strongly
degenerate one-dimensional regime, however, the relevant Poincar\'e
inequality across the singular set fails \cite{RS2}. We therefore develop a different argument, introduced by the author in \cite{H2}.

The guiding principle is that the obstruction to the Riesz transform is
often carried by a leading harmonic term in the kernel. When the square-root
operator is written in bilinear form and paired with the gradient, this
leading harmonic contribution can be eliminated by an integration-by-parts
argument. We refer to this cancellation mechanism as harmonic annihilation.

The model case to keep in mind is the Riesz transform on manifolds with Euclidean ends. The low-energy analysis of Carron, Coulhon, and Hassell \cite{CCH} identifies a leading contribution to the kernel of
$\Delta^{-1/2}$ whose coefficient is harmonic. When the manifold has more than one end, this coefficient is nonconstant, and its gradient is responsible for the failure of the Riesz transform in the endpoint range. For the reverse Riesz problem, however, the harmonicity of this leading coefficient becomes a source of cancellation: after passing to the bilinear form, the leading profile is annihilated. This idea was developed in \cite{H2} and further refined in \cite{H3}.

In the present Grushin setting, the same principle can be implemented
without relying on a global scattering calculus. Following an idea of
Carron \cite{Gilles}, we decompose the relevant kernel into geometric pieces, isolate the critical region in which the leading harmonic profile appears, and subtract this profile before estimating the remainder. The remaining term is then controlled by a Hardy inequality adapted to the Grushin weights. This yields the following reverse Riesz theorem.

\begin{theorem}\label{thm_main2}
Let $L$ be the Grushin operator defined by \eqref{Grushin}, with parameters $\alpha\in(0,1)$, $\beta\geq0$, and $n,m\geq1$. Let $\nabla_L$ be the corresponding gradient operator defined in \eqref{Grushin2}. Then \eqref{RR_p} holds for every
\begin{equation}\label{p2}
   p\in
   \begin{cases}
        \left(1,\frac{1}{1-\alpha}\right)
        \cup
        \left(\frac{1}{1-\alpha},\infty\right),
        & n=1,\quad 0<\alpha<\frac12,\\[3mm]
        (1,\infty),
        & n\geq2
        \quad\text{or}\quad
        n=1,\quad \frac12\leq\alpha<1.
   \end{cases}
\end{equation}
\end{theorem}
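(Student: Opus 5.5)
We split the proof of Theorem~\ref{thm_main2} according to the range of $p$. For $p\ge 2$ with $p'$ in the Riesz range, we use duality: \eqref{R_p} at $p'$ implies $(\mathrm{RR}_{p})$ (see \cite{CD}). When $n\ge2$, Theorem~\ref{thm_main1} gives all $p'\in(1,\infty)$, so this covers $p\in(1,\infty)$ entirely. For $n=1$, $0<\alpha<\frac12$, Theorem~\ref{thm:n=1:alpha<1/2} gives \eqref{R_p} for $1<p<\alpha^{-1}$, hence $(\mathrm{RR}_p)$ for $p>\frac{1}{1-\alpha}$. By the result of Sikora and the heat kernel bounds of Robinson--Sikora, \eqref{R_p} holds for $p\le 2$, which yields $(\mathrm{RR}_p)$ for $p\ge2$ in every case.

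The remaining range is $1<p<\frac{1}{1-\alpha}$ (for $n=1$, $\alpha<\frac12$) and $1<p<2$ (for $n=1$, $\alpha\ge\frac12$). In the first case, volume doubling and \eqref{P_p_intro} for $p$ near $1$ should hold, since the trace across $\{x=0\}$ exists. The classical Auscher--Coulhon argument \cite{AC} then gives $(\mathrm{RR}_p)$ for $1<p<2$, which covers this interval because $\frac{1}{1-\alpha}<2$. If the Poincar\'e inequality is only available up to some exponent, I would instead fall back on the harmonic annihilation scheme below.

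The genuinely new case is $n=1$, $\frac12\le\alpha<1$, with $1<p<2$. Here $\{x=0\}$ has zero capacity and Poincar\'e fails across it. I would write
\[
\|L^{1/2}f\|_p=\sup_{\|g\|_{p'}\le1}\langle \nabla_L f,\nabla_L L^{-1/2}g\rangle
\]
and decompose the kernel of $\nabla_L L^{-1/2}$, written as $\int_0^\infty \nabla_L e^{-tL}\,t^{-1/2}\,dt$, into geometric pieces following Carron: a local part (both points on the same side and close), which is Calder\'on--Zygmund by the local theory; a long-range same-side part; and a cross-side part. The critical piece is the long-range one, whose leading profile is a harmonic function $h$ of the half-space problem (essentially the odd/sign-type branch). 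Since $\nabla_L^*(\nabla_L h)=0$, pairing $\nabla_L f$ against $\nabla_L h$ times a function of the other variable integrates by parts to zero. I would subtract this profile and bound the remainder by $|\nabla_L f|$ through a weighted Hardy inequality
\[
\Bigl\| \frac{f-\text{const}}{\text{dist to }\{x=0\}\text{ scale}}\Bigr\|_p\lesssim\|\nabla_L f\|_p,
\]
adapted to the weights $|x|^\alpha$ and $|x|^\beta$. This inequality is valid precisely because $p<\frac1{1-\alpha}$ fails to obstruct when $\alpha\ge\frac12$.

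The main obstacle is the explicit asymptotics of the heat/Green kernel near the singular set. Obtaining these requires the Friedrichs Poisson/Green kernel constructions via Bessel functions in $x$ and the Fourier transform in $y$. One must verify that the remainder after subtracting the harmonic profile decays fast enough for the Hardy estimate to close uniformly in the $y$-frequency.
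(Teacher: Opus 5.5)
Your duality bookkeeping (Theorem~\ref{thm_main1}, Theorem~\ref{thm:n=1:alpha<1/2}, and $(\mathrm{R}_p)$ for $p\le 2$) is exactly the paper's. The genuine gap is the range $n=1$, $0<\alpha<\frac12$, $1<p<\frac1{1-\alpha}$, where you invoke Auscher--Coulhon with \eqref{P_p_intro} for $p$ near $1$. That Poincar\'e inequality is false.

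The common trace across $\{x=0\}$ is an $L^2$ statement, coming from $\int_{-1}^1|x|^{-2\alpha}\,dx<\infty$. The $L^p$ version of the same H\"older argument needs $\int_{-1}^1|x|^{-\alpha p'}\,dx<\infty$, i.e.\ $p>\frac1{1-\alpha}$. Below that exponent, take $f_\varepsilon(x,y)=\operatorname{sgn}(x)\eta_\varepsilon(|x|)$, with $\eta_\varepsilon=0$ on $[0,\varepsilon]$, $\eta_\varepsilon=1$ on $[2\varepsilon,\infty)$ and $|\eta_\varepsilon'|\lesssim\varepsilon^{-1}$. On an anchored ball $B$ one has $(f_\varepsilon)_B=0$ by reflection symmetry and $\fint_B|f_\varepsilon|^p\to1$. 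At the same time $\int_{\lambda B}|\nabla_Lf_\varepsilon|^p\lesssim\varepsilon^{1-p(1-\alpha)}\to0$. So $(\mathrm{P}_q)$ fails for every $q<\frac1{1-\alpha}$, and Auscher--Coulhon can return at most $(\mathrm{RR}_p)$ for $p>\frac1{1-\alpha}$, which duality already gave.

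A quick sanity check flags the error: your route would also yield the endpoint $p=\frac1{1-\alpha}$, which the theorem deliberately excludes. The missing range is exactly where the Hardy inequality $\|f/|x|^{1-\alpha}\|_p\lesssim\|\nabla_Lf\|_p$ (valid for $1<p<\frac n{1-\alpha}$) is available. So it has to be done by harmonic annihilation; the paper runs a single such argument for all $n$ and all $1<p<\min\{2,\frac n{1-\alpha}\}$.

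Your annihilation sketch has the right spirit, but it leaves the decisive step as the ``main obstacle'' and aims at the wrong tool: no explicit Bessel asymptotics are needed. The obstruction is the factor $|x|^{-(1-\alpha)}$ in the gradient bound of Lemma~\ref{Grushin_gradient} when $\xi$ is much closer to $\{x=0\}$ than $\eta$. The split should therefore be by relative distance to the singular set, via the cutoff $\vartheta_{\kappa^{-1}|x'|}(x)$, not same-side versus cross-side; for $\alpha\ge\frac12$ the cross-side kernel is identically zero anyway. The complement of the critical region is dominated by the good and diagonal pieces $\mathcal R_1,\mathcal R_2$. These must be shown bounded on $L^{p'}$ for $p'>\max\{2,\frac n{n-1+\alpha}\}$; the paper does the diagonal part with a remote covering and the ACDH good-$\lambda$ criterion, not just local Calder\'on--Zygmund theory.

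On the critical piece you integrate by parts in $\xi$, which produces two terms. The first is $L_\xi e^{-tL}=-\partial_te^{-tL}$, controlled by the analyticity bound $t^{-1}V(\eta,\sqrt t)^{-1}e^{-d^2/ct}$ with no singular factor in $|x|$. The second is the commutator $\nabla_L\vartheta\cdot\nabla_Le^{-tL}$, supported where $|x|\sim|x'|$. After multiplying by $|x|^{1-\alpha}$, both kernels are dominated by maximal-type operators bounded on $L^{p'}$. The Hardy inequality applied to $f$ itself (not to $f$ minus a constant, and not to $\nabla_Lf$) then closes the estimate. No harmonic profile is ever written down or subtracted; the integration by parts is what performs the annihilation.
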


The principal novelty in Theorem~\ref{thm_main2} is again one-dimensional.
When $n\geq2$, the full range follows from Theorem~\ref{thm_main1} and
duality. In the weakly degenerate one-dimensional case
$0<\alpha<1/2$, Theorem~\ref{thm:n=1:alpha<1/2} and duality give
\eqref{RR_p} for $p>(1-\alpha)^{-1}$, whereas harmonic annihilation yields the complementary range
$1<p<(1-\alpha)^{-1}$. The excluded exponent
$p=(1-\alpha)^{-1}$ is the natural Hardy threshold. Thus, after the leading
harmonic profile has been annihilated, the reverse Riesz inequality persists
well beyond the range in which the Riesz transform fails.

In the strongly degenerate case $1/2\leq\alpha<1$, the same method yields the
reverse inequality without a global Poincar\'e inequality across the
singular set. Moreover, the reverse Riesz estimate provides an essential
ingredient in the proof of the remaining Riesz transform theorem.

We now return to the strongly degenerate one-dimensional case. The threshold
$\alpha=1/2$ is consistent with the general one-dimensional extension theory
for degenerate divergence-form operators. For the model
\begin{equation*}
    H\varphi=-(c\varphi')',
    \qquad c(0)=0,
\end{equation*}
Robinson and Sikora \cite{RS_1D} characterized the self-adjoint and
submarkovian extensions in terms of the local behavior of $\int c^{-1}$ near
the degeneracy. In particular, when
$c(x)\simeq|x|^\delta$ with $\delta\geq1$, the Friedrichs semigroup separates
the two half-lines, and elements of the corresponding operator domain satisfy
the zero-flux condition
\begin{equation*}
    (c\varphi')(0\pm)=0.
\end{equation*}
For $c(x)=|x|^{2\alpha}$, this is precisely the regime
$\alpha\geq1/2$.

Thus the admissible half-line branch has the same zero-flux endpoint condition
as the even branch in the weakly degenerate case, although a global solution
need not be even. Using this zero-flux branch, we establish the endpoint
reverse H\"older estimate $(\mathrm{RH}_\infty)$ on each separated component.
The absence of a global Poincar\'e inequality is then bypassed by combining
the reverse Riesz inequality from Theorem~\ref{thm_main2} with a Hodge
projector argument. More precisely, the separated structure yields a
strengthened local reverse H\"older estimate of the type appearing in
Shen's criterion \cite{Shen}; together with the reverse Riesz inequality,
this allows us to apply a boundedness criterion for the Hodge projector and
recover the full range of the Riesz transform.

\begin{theorem}\label{thm:n=1:alpha>1/2}
Let $L$ be the Grushin operator defined by \eqref{Grushin}, with parameters
$\alpha\in[\frac12,1)$, $\beta\geq0$, $m\geq1$, and $n=1$. Let $\nabla_L$ be
the corresponding gradient operator defined in \eqref{Grushin2}. Then
\eqref{R_p} holds for every $1<p<\infty$.
\end{theorem}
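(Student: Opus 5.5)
The range $1<p\le 2$ is already covered by the heat-kernel upper bounds of Robinson and Sikora \cite{RS} together with Sikora's criterion \cite{Sikora}, so only $2<p<\infty$ needs work. Since the global Poincar\'e inequality fails across $\{x=0\}$ when $\alpha\ge 1/2$ \cite{RS2}, the Coulhon--Jiang--Koskela--Sikora equivalence \cite{CJKS} cannot be applied directly. Instead we run a Shen-type criterion for the Hodge projector. Write
\begin{equation*}
    \nabla_L L^{-1/2} = \bigl(\nabla_L L^{-1}\nabla_L^*\bigr)\circ\bigl(\nabla_L L^{-1/2}\bigr)^{*,-1}.
\end{equation*}
Concretely, for $p>2$, \eqref{R_p} follows once two facts are in hand: the Hodge projector $\Pi=\nabla_L L^{-1}\nabla_L^*$ is bounded on $L^p$ (vector fields), and the reverse Riesz inequality $(\mathrm{RR}_{p'})$ holds. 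The latter is provided by Theorem~\ref{thm_main2}, which gives $(\mathrm{RR}_q)$ for all $1<q<\infty$ when $n=1$ and $\alpha\ge1/2$. Given both, duality yields
\begin{equation*}
    \|\nabla_L L^{-1/2}f\|_p=\sup_{\|G\|_{p'}=1}\langle L^{-1/2}f,\nabla_L^*\Pi G\rangle .
\end{equation*}
The right-hand side is controlled by $\|f\|_p\,\|L^{-1/2}\nabla_L^*\Pi G\|_{p'}$. Finally, $\|L^{-1/2}\nabla_L^* \Pi G\|_{p'}=\|L^{1/2}u\|_{p'}$ with $\nabla_L u=\Pi G$, and by $(\mathrm{RR}_{p'})$ this is at most $C\|\Pi G\|_{p'}\lesssim\|G\|_{p'}$.

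For the boundedness of $\Pi$ on $L^p$, $p>2$, I would use Shen's criterion \cite{Shen}: $L^2$-boundedness (which is trivial) together with a weak reverse H\"older estimate for $\nabla_L u$ whenever $L u=0$ in $\lambda B$,
\begin{equation*}
    \Bigl(\fint_B|\nabla_Lu|^{q}\Bigr)^{1/q}\le C\Bigl(\fint_{\lambda B}|\nabla_L u|^2\Bigr)^{1/2},\qquad q>p.
\end{equation*}
This formulation has the advantage of requiring only doubling, which holds for $n=1$, and no Poincar\'e inequality across the singular set. To prove it, I split balls into two types. If $\lambda B$ stays away from $\{x=0\}$, the operator is uniformly elliptic after rescaling by $\delta_r$, and standard gradient bounds give $(\mathrm{RH}_\infty)$. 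If $\lambda B$ meets $\{x=0\}$, I use the fact that the Friedrichs realization separates the half-lines, so $u$ restricted to $\{x>0\}\cap\lambda B$ and to $\{x<0\}\cap\lambda B$ are independent zero-flux solutions. On each half, Poincar\'e holds (each half-space with the Grushin metric is doubling and supports $(\mathrm{P}_2)$). It therefore suffices to prove $\sup_{B^\pm}|\nabla_L u|\lesssim r^{-1}\fint_{\lambda B^\pm}|u-c_\pm|$ with separate constants $c_\pm$. Since the right-hand side of Shen's estimate involves only $\nabla_L u$, subtracting separate constants on the two sides is harmless, and Poincar\'e on each half converts the bound into the $L^2$ gradient average.

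The main obstacle is the half-line endpoint $(\mathrm{RH}_\infty)$ near $x=0$. My plan is to take the Fourier transform in $y$, reducing the problem to the ODE
\begin{equation*}
    -(x^{2\alpha}v')'+|\eta|^2x^{2\beta}v=0,\qquad x>0,
\end{equation*}
with zero flux $x^{2\alpha}v'(0+)=0$. Its admissible solution is explicit: $x^{(1-2\alpha)/2}I_{\nu}\bigl(c|\eta|x^{\gamma}\bigr)$, where $\gamma=\beta+1-\alpha$ and the order $\nu$ is chosen so that this branch is the regular, zero-flux one. I would then build the Poisson kernel on the half-cylinder with this branch and verify that $x^{\alpha}\partial_x$ and $x^\beta\partial_y$ applied to it remain bounded up to $x=0$. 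The key point is that the zero-flux branch behaves like $1+O(x^{2-2\alpha})$, so $x^\alpha v'\sim x^{1-\alpha}$, which is bounded. This is exactly the mechanism that failed for the odd branch when $\alpha<1/2$. Since $n=1$, only one Bessel order appears and fixed-order asymptotics suffice. Summing over frequencies with the scaling $\delta_r$ then gives the estimate uniformly in the ball.
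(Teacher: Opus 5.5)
Your overall architecture is the paper's: anchored $(\mathrm{RH}_\infty)$ from the regular zero-flux Fourier--Bessel branch; the upgrade to Shen's reverse H\"older inequality, by subtracting separate constants on $\{x>0\}$ and $\{x<0\}$ and using the separated Poincar\'e inequality of Robinson--Sikora; then Shen's criterion for the Hodge projector; then duality with the reverse Riesz inequality. The constant subtraction is legitimate because $\operatorname{sgn}(x)$ is locally Friedrichs-harmonic with vanishing form gradient when $\alpha\ge\frac12$.

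The genuine gap is the last step. Theorem~\ref{thm_main2} gives $(\mathrm{RR}_{p'})$ only for $f\in C_c^\infty(\mathbb R^{1+m})$. You apply it to $u=L^{-1}\nabla_L^*G$, which is not compactly supported. Because the two half-spaces are decoupled, $u$ need not even have matching traces $u(0^\pm,y)$. This is not a formality: the implication $(\Pi_{p'})+(\mathrm{RR}_{p'})\Rightarrow(\mathrm R_p)$ your argument rests on is false in general. On $\mathbb R^2\#\mathbb R^2$ the Hodge projector and the reverse inequality are bounded for every $1<p<\infty$, yet the Riesz transform is unbounded for all $p>2$ \cite{HeSi}. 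The reason is that the reverse inequality for test functions does not transfer to functions of the form $L^{-1}\operatorname{div}\omega$. Since this step of your argument uses no property specific to the Grushin operator, it cannot be correct as stated.

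What is missing is a check that the harmonic-annihilation proof of the reverse inequality runs for $u=L^{-1}\operatorname{div}_L\omega$ itself, with $\omega\in C_c^\infty$. Two points need verification.

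(a) The Hardy inequality $\bigl\|u/|x|^{1-\alpha}\bigr\|_{p'}\lesssim\|\nabla_Lu\|_{p'}$ must hold for this $u$. One proves it on each half-space via the one-dimensional Hardy inequality in the variable $x^{1-\alpha}$. This requires $p'<(1-\alpha)^{-1}$, which holds since $p'<2\le(1-\alpha)^{-1}$. It also requires $|\nabla_Lu|\in L^{p'}$, which is where the Hodge bound enters a second time. Finally it requires $u\to0$ at infinity, which follows from $L^{-1}(\xi,\eta)\lesssim d(\xi,\eta)^{2-\mathcal Q}$ with $\mathcal Q\ge3$.

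(b) The integration by parts \eqref{eq:HA} must be justified. Because $u$ may jump across $\{x=0\}$, you must show that the boundary terms at $x=\pm\varepsilon$ vanish as $\varepsilon\to0$. These terms pair $u(\pm\varepsilon,y)$ against $\varepsilon^{2\alpha}\partial_xe^{-tL}((\pm\varepsilon,y),\eta)$. Their vanishing uses that the separated heat kernel satisfies the zero-flux condition on each half-line.

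Two smaller points. In the anchored step, $u$ is harmonic only on a bounded set, so you cannot Fourier transform in $y$ directly. After cutting off with $\chi(y)$, you must control $\nabla_L$ of the Green potential of $[L,\chi]u$ as well as the Poisson term. The $\nabla_yu$ part of that source is handled by integrating by parts in $y'$ and bounding $\nabla_L\partial_{y'}\mathcal G$ on $y$-separated sets. Also, the zero-flux branch is $1+O(x^{2(\beta+1-\alpha)})$, not $1+O(x^{2-2\alpha})$, so $x^\alpha v'=O(x^{2\beta+1-\alpha})$. Your conclusion that it stays bounded is unaffected.
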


Combining Theorems~\ref{thm_main1}, \ref{thm:n=1:alpha<1/2}, and
\ref{thm:n=1:alpha>1/2}, we obtain a complete description of the
$L^p$-boundedness of the Riesz transform associated with the Grushin operator
\eqref{Grushin}.

\medskip

\noindent\textbf{Outline.}
The paper is organized as follows. Section~\ref{sec2} introduces the
Friedrichs realization, recalls the Grushin geometry, and proves the
Poincar\'e inequalities needed later. Part~\ref{part1} proves
Theorem~\ref{thm_main1} through anchored reverse H\"older estimates and uniform-in-order Bessel analysis. Part~\ref{part2} proves
Theorem~\ref{thm:n=1:alpha<1/2} using the even--odd half-line decomposition and the endpoint counterexample. Part~\ref{part3} proves the reverse Riesz inequality, Theorem~\ref{thm_main2}, using a Grushin Hardy inequality and harmonic annihilation. Finally, Part~\ref{part4} treats the strongly degenerate one-dimensional regime and proves Theorem~\ref{thm:n=1:alpha>1/2} using separated half-line kernels and the Hodge projector.

\section{Preliminaries}\label{sec2}

\subsection{Friedrichs realization}\label{ssec:Friedrichs}

The singular set $\{x=0\}$ is not merely a technical nuisance; it determines which endpoint branches are admissible in the kernel construction (see Part~\ref{part1}~\ref{part2},~\ref{part4} below). We therefore begin by fixing the Friedrichs realization of $L$ and by recording the interface conditions it imposes. For more detailed discussion in this direction, we refer reader to \cite{RS_1D,RS_flux} and references therein.

Let $n,m\ge 1$, $\alpha\in (0,1)$ and $\beta \ge 0$. On
$C_c^\infty(\mathbb R^{n+m})$, define
\begin{equation*}
    \mathfrak q_0(u,v)
    =
    \int_{\mathbb R^{n+m}}
    \left(
        |x|^{2\alpha}\nabla_xu\cdot \overline{\nabla_xv}
        +
        |x|^{2\beta}\nabla_yu\cdot \overline{\nabla_yv}
    \right)dxdy.
\end{equation*}
This non-negative symmetric form is closable in $L^2(\mathbb R^{n+m})$ (cf. \cite[Lemma~2.1]{RS}). We denote its
closure by $(\mathfrak q,\mathcal F)$, where
\begin{equation*}
    \mathcal F
    =
    \overline{C_c^\infty(\mathbb R^{n+m})}^{\|\cdot\|_{\mathcal F}},
    \qquad
    \|u\|_{\mathcal F}^2
    =
    \|u\|_2^2+\mathfrak q(u,u).
\end{equation*}
The Friedrichs realization of $L$ is the non-negative self-adjoint operator associated
with $(\mathfrak q,\mathcal F)$. Thus $u\in\mathcal D(L)$ and $Lu=f$ if and only if
$u\in\mathcal F$, $f\in L^2$, and
\begin{equation*}
    \mathfrak q (u,\varphi)=\langle f,\varphi\rangle_{L^2}
    \qquad
    \text{for all }\varphi\in\mathcal F.
\end{equation*}
We also use the corresponding local notion. If $\Omega\subset\mathbb R^{n+m}$ is open, then $u\in\mathcal F_{\mathrm{loc}}(\Omega)$ means that for every $\widetilde{\Omega} \subset\Omega$ there exists $v\in\mathcal F$ such that $u=v$ a.e. on $\widetilde{\Omega}$. We say that $u$ has locally finite energy if $u\in\mathcal F_{\mathrm{loc}}$.
Equivalently, locally one has
\begin{equation*}
    \int
    \left(
        |x|^{2\alpha}|\nabla_xu|^2
        +
        |x|^{2\beta}|\nabla_yu|^2
    \right)dxdy
    <\infty.
\end{equation*}
A function $u\in\mathcal F_{\mathrm{loc}}(\Omega)$ is called $L$-harmonic in $\Omega$ if
\begin{equation}
    \mathfrak q(u,\varphi)=0
    \qquad
    \text{for all }\varphi\in\mathcal F
    \text{ compactly supported in }\Omega.
\end{equation}

We now record the only subtle point concerning the singular set when $n=1$. Near $x=0$, the relevant part of the energy is $\int |x|^{2\alpha}|\partial_xu|^2dx$. The difference between $0<\alpha<1/2$ and $1/2 \le \alpha<1$ is essential. In the former regime, the form domain has a common trace across $x=0$, while in the latter regime the singular set has zero Friedrichs capacity and separates the two half-lines.

Assume first that $0<\alpha<1/2$. Then $\int_{-1}^{1}|x|^{-2\alpha}dx<\infty$. Hence, for smooth $u$ and $x_1 < 0 < x_2$, Cauchy-Schwarz inequality gives
\begin{align*}
|u(x_2,y)-u(x_1,y)|^2\le \left(\int_{x_1}^{x_2}|t|^{-2\alpha}dt\right) \left(\int_{x_1}^{x_2}|t|^{2\alpha}|\partial_tu(t,y)|^2 dt\right).
\end{align*}
By density, the same estimate holds for functions in the local Friedrichs form
domain. Hence, for almost every $y$, the one-sided traces $u(0^+,y)$ and
$u(0^-,y)$ exist. Moreover, since both factors on the right-hand side above tend to zero as $x_1\to0^-$ and
$x_2\to0^+$, one obtains
\begin{equation}\label{eq_subcritical_trace_matching}
    u(0^+,y)=u(0^-,y)
    \qquad
    \text{for a.e. }y.
\end{equation}
Thus the common trace condition comes from the Friedrichs form domain.

The flux condition comes from the weak equation. Suppose that $u$ is piecewise smooth away from $x=0$, belongs locally to the Friedrichs form domain, and solves
\begin{equation}\label{equation}
    Lu=f
\end{equation}
weakly across $x=0$, with no singular source supported on $\{x=0\}$. Suppose that the one-sided conormal fluxes
\begin{equation*}
    F_+(y):=\lim_{x\to0^+}x^{2\alpha}\partial_xu(x,y),
    \qquad
    F_-(y):=\lim_{x\to0^-}|x|^{2\alpha}\partial_xu(x,y)
\end{equation*}
exist. Integrating by parts separately on $(-\varepsilon,0)$ and $(0,\varepsilon)$
gives the interface contribution
\begin{equation*}
\int_{\mathbb R^m}
    \left[F_-(y)\varphi(0^-,y)-F_+(y)\varphi(0^+,y)\right] dy.    
\end{equation*}
By \eqref{eq_subcritical_trace_matching}, test functions in the Friedrichs space
have a common trace at $x=0$. Therefore the interface contribution becomes
\begin{equation*}
    \int_{\mathbb R^m}
    \left[F_-(y)-F_+(y)\right]\varphi(0,y) dy.
\end{equation*}
Since the weak equation contains only the volume term $\int f\varphi$ and has no
delta contribution on $\{x=0\}$, this interface term must vanish for all test
functions $\varphi$. Hence
\begin{equation}\label{eq_subcritical_flux_matching}
    F_-(y)=F_+(y)
    \qquad\text{for a.e. }y.
\end{equation}

For later use, now consider even and odd weak solutions of \eqref{equation}. If $u_e(x,y)=v_e(|x|,y)$, then
\begin{equation*}
F_+(y)=\lim_{\rho\to0^+}\rho^{2\alpha}\partial_\rho v_e(\rho,y),
    \qquad
    F_-(y)=-\lim_{\rho\to0^+}\rho^{2\alpha}\partial_\rho v_e(\rho,y). 
\end{equation*}
The matching condition \eqref{eq_subcritical_flux_matching} therefore gives $\lim_{\rho\to0+}\rho^{2\alpha}\partial_\rho v_e(\rho,y)=0$. Thus the even solution satisfies the zero-flux condition. If $u_o(x,y)=\mathrm{sgn}(x)v_o(|x|,y)$, then the trace matching condition \eqref{eq_subcritical_trace_matching} gives $v_o(0+,y)=-v_o(0+,y)$ and hence $v_o(0+,y)=0$. Thus the odd solution satisfies the zero-trace condition.

When $1/2\leq\alpha<1$, the situation is different. The integral $\int_{-1}^{1}|x|^{-2\alpha}dx$ diverges, so the above trace argument no longer applies. In fact, the singular point has zero
Friedrichs capacity. This can be seen from the approximation of the jump function $\mathrm{sgn}(x)$.
For $\alpha>1/2$, take $\eta_\varepsilon\in C^\infty([0,\infty))$ such that
\begin{equation*}
    \eta_\varepsilon(r) = \begin{cases}
        0, & r\le \varepsilon,\\
        1, & r\ge 2\varepsilon,
    \end{cases}
\end{equation*}
and $|\eta_\varepsilon'(r)|\leq C\varepsilon^{-1}$. Then
\begin{equation*}
    \int_{-1}^{1}
    |x|^{2\alpha}
    \left|
        \partial_x\bigl(\mathrm{sgn}(x)\eta_\varepsilon(|x|)\bigr)
    \right|^2 dx
    \le
    C\varepsilon^{2\alpha-1}
    \to 0.
\end{equation*}
At $\alpha=1/2$, using the logarithmic cut-off
\begin{equation*}
\eta_\varepsilon(r)= \begin{cases}
        0, & 0\le r\leq\varepsilon^2,\\
        \frac{\log(r/\varepsilon^2)}{\log(1/\varepsilon)},
        & \varepsilon^2<r<\varepsilon,\\
        1, & r\ge \varepsilon,
    \end{cases}
\end{equation*}
one obtains
\begin{equation*}
\int_{-1}^{1}|x|\left|\partial_x\bigl(\mathrm{sgn}(x)\eta_\varepsilon(|x|)\bigr)\right|^2 dx \le \frac{C}{\log(1/\varepsilon)} \to 0.
\end{equation*}
Thus $\mathrm{sgn}(x)$ belongs to the local Friedrichs form domain when
$1/2\leq\alpha<1$, and
\begin{equation}\label{eq:gradient:sgn}
    \nabla_L \mathrm{sgn}(x)=0
    \qquad
    \text{in the form sense}.
\end{equation}
Consequently, $\mathfrak q(\mathrm{sgn}(x),\varphi)=0$ for every compactly supported test function $\varphi$, so $\mathrm{sgn}(x)$ is
locally $L$-harmonic. Therefore, locally near $x=0$, the two half-lines are separated and there is no trace matching condition across $x=0$. 

We now explain why the separated half-line Friedrichs problem carries the zero-flux endpoint condition. Consider a sufficiently regular solution $u$ on the half-line $(0,2)$ whose conormal flux $w(\rho):=\rho^{2\alpha}\partial_\rho u(\rho)$ has a finite limit as $\rho\to0^+$. If $\lim_{\rho\to0^+}w(\rho)=C\ne 0$, then, for all sufficiently small $\rho$, $|\partial_\rho u(\rho)|= \rho^{-2\alpha}|w(\rho)| \gtrsim \rho^{-2\alpha}$. Consequently,
\begin{equation*}
\int_0^1 \rho^{2\alpha}|\partial_\rho u(\rho)|^2 d\rho
\gtrsim
\int_0^1 \rho^{-2\alpha} d\rho
=\infty,    
\end{equation*}
because $\alpha\ge 1/2$. This contradicts the Friedrichs finite-energy condition. Therefore every such Friedrichs solution must satisfy
\begin{equation}\label{eq_supercritical_zero_flux}
    \lim_{\rho\to0^+}\rho^{2\alpha}\partial_\rho u(\rho)=0.
\end{equation}
Equivalently, the local homogeneous normal equation $-\partial_\rho\left(\rho^{2\alpha}\partial_\rho u\right)=0$ has two endpoint branches: $u(\rho)\sim 1$ and
\begin{equation*}
u(\rho)\sim
\begin{cases}
\rho^{1-2\alpha}, & \alpha>1/2,\\
\log\rho, & \alpha=1/2.
\end{cases}    
\end{equation*}
The second branch has nonzero conormal flux and infinite Friedrichs energy, whereas the first branch has zero conormal flux. Thus the Friedrichs extension selects the first branch on each separated half-line. This separation phenomenon is a manifestation of the general capacity-flux criterion for degenerate divergence-form operators: a hypersurface may become an impenetrable barrier for the Friedrichs semigroup when its normal energy flux, vanishes (see \cite{RS_flux}).

For the explicit Fourier-Bessel modes used in Part~\ref{part2} and ~\ref{part4} below, the conormal flux exists and finite energy excludes the nonzero-flux branch.

\subsection{Grushin geometry}

We next recall the intrinsic geometry associated with $L$. The relevant distance is obtained by declaring the vector fields $|x|^{\alpha} \nabla_x$ and $|x|^\beta \nabla_y$ to have unit size (cf.~\cite{JS}). The resulting balls have two different forms: remote balls, which stay away from the singular set and are comparable to anisotropic Euclidean boxes with essentially constant coefficients, and anchored balls, which meet the singular set and reflect the true Grushin scaling. This dichotomy is used throughout the paper.

Recall the homogeneous dimension of Grushin space:
\begin{equation*}
    \mathcal Q:= \frac{n+m(\beta+1-\alpha)}{1-\alpha}.
\end{equation*}
We record the following result from \cite[Proposition~5.1, Theorem~6.4, Corollary~6.6]{RS}.

\begin{theorem}\cite{RS}\label{RS_Grushin}
Let $n,m\ge 1$, $\alpha \in (0,1)$ and $\beta \ge 0$. The following estimates hold:
\begin{enumerate}[label=(\roman*)]
\item the distance estimate
\begin{align*}
d(\xi,\eta) = d((x,y);(x',y')) \sim \frac{|x-x'|}{\left( |x| + |x'| \right)^\alpha} + \frac{|y-y'|}{(|x|+|x'|)^{\beta} + |y-y'|^{\frac{\beta}{\beta+1-\alpha}}} =: D(\xi,\eta),
\end{align*}
\item the volume estimate
\begin{align*}
V(\xi,r) = V((x,y),r) \sim \begin{cases}
r^{\mathcal{Q}}, & r\ge |x|^{1-\alpha},\\
r^{n+m}|x|^{n \alpha + m\beta}, & r\le |x|^{1-\alpha},
\end{cases}
\end{align*}
\item the volume doubling property
\begin{align}\label{D}\tag{$\mathrm{D}$}
V(\xi,sr) \le C s^{\mathcal{Q}} V(\xi,r)
\end{align}
for all $\xi \in \mathbb{R}^{n+m}$ and all $s\ge 1$, $r>0$,
\item the heat kernel estimate
\begin{equation}\label{UE}\tag{$\mathrm{UE}$}
e^{-tL}(\xi,\eta) \le \frac{C}{V(\xi, \sqrt{t})} \textrm{exp} \left( - \frac{d(\xi,\eta)^2}{ct} \right).
\end{equation}
\end{enumerate}
\end{theorem}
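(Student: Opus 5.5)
This theorem is quoted from \cite{RS}, so we could simply cite it. To make the paper self-contained, we would reprove it along the lines of Robinson and Sikora, in the order (i), (ii), (iii), (iv).

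\emph{Distance estimate (i).} First, by the dilation identity $d(\delta_r z,\delta_r z')=r^{1-\alpha}d(z,z')$ and the obvious homogeneity of $D$ under $\delta_r$, both sides scale the same way. So it suffices to compare $d$ and $D$ on a normalized configuration, say $|x|+|x'|=1$ or $|y-y'|=1$.

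For the upper bound we build explicit horizontal paths. If $|y-y'|\lesssim(|x|+|x'|)^{\beta+1-\alpha}$, we move in $x$ at speed $(|x|+|x'|)^{\alpha}$, staying at height $\sim|x|+|x'|$, and then move in $y$ at speed $(|x|+|x'|)^{\beta}$. Otherwise we first move $x$ outward to radius $R=|y-y'|^{1/(\beta+1-\alpha)}$, which costs about $R^{1-\alpha}$, then travel in $y$, and then return.

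For the lower bound we take an arbitrary admissible curve with $|\dot x|\le|x|^\alpha$ and $|\dot y|\le|x|^\beta$. Integrating $\frac{d}{dt}|x|^{1-\alpha}\le C$ bounds the growth of $|x|$ along the curve. The $y$-displacement in time $T$ is then at most $\int(|x_0|^{1-\alpha}+Ct)^{\beta/(1-\alpha)}dt$, and inverting this gives $T\gtrsim D$.

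\emph{Volume estimate (ii) and doubling (iii).} By (i), the ball $B((x,y),r)$ is comparable to a box. If $r\le|x|^{1-\alpha}$, the box is $\{|x'-x|\lesssim r|x|^\alpha,\ |y'-y|\lesssim r|x|^\beta\}$. If $r\ge|x|^{1-\alpha}$, the box is $\{|x'|\lesssim r^{1/(1-\alpha)},\ |y'-y|\lesssim r^{(\beta+1-\alpha)/(1-\alpha)}\}$. Lebesgue measure of these boxes gives the two formulas in (ii). Doubling (iii) then follows by checking the three cases for $r<sr$ relative to the threshold $|x|^{1-\alpha}$, using $n+m\le\mathcal Q$.

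\emph{Heat kernel bound (iv).} This is the main obstacle. The plan is as follows.

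1. Establish a scale-invariant $L^2$ Poincaré inequality on balls. On remote balls this reduces after rescaling to Euclidean Poincaré. On anchored balls, for $n\ge2$, we integrate along paths avoiding $\{x=0\}$, which is possible since that set has codimension $\ge2$ in $x$. For $n=1$ with $\alpha<1/2$, we use the trace continuity \eqref{eq_subcritical_trace_matching}. For $n=1$ with $\alpha\ge1/2$, Poincaré across $\{x=0\}$ fails \cite{RS2}, and there we instead aim only for the Gaussian upper bound.

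2. Combine doubling with a local Sobolev/Nash inequality on balls to get on-diagonal bounds.

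3. Use Davies' perturbation method with weights $e^{\psi}$, where $|\nabla_L\psi|\le1$ and we take $\psi=\lambda d(\cdot,\eta)$, to obtain the Gaussian factor.

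For $n=1$ with $\alpha\ge1/2$, the local Sobolev inequality still holds on each half-space, which suffices for the Nash-type argument because only the upper bound \eqref{UE} is claimed. The delicate point is verifying the local Sobolev inequality uniformly on anchored balls, where the weight $|x|^{2\alpha}$ degenerates. I would try to handle this via a weighted Hardy–Sobolev inequality with $A_2$-type weight $|x|^{2\alpha}$ in $x$, combined with the box description above.
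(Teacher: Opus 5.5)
The paper gives no proof of this statement. It is quoted from \cite{RS} (Proposition~5.1, Theorem~6.4 and Corollary~6.6 there). The only related argument in the paper is Lemma~\ref{le_volume}, which extracts from (i) the same box description of balls that you use for (ii). Your text is therefore a reconstruction rather than a variant of the paper's proof, and its first half is sound:
\begin{itemize}
\item The explicit paths give $d\lesssim D$. For $n=1$, a path through $x=0$ costs only $\int|s|^{-\alpha}\,ds<\infty$, so opposite signs of $x,x'$ are harmless.
\item Integrating $\bigl|\tfrac{d}{dt}|x(t)|^{1-\alpha}\bigr|\le 1-\alpha$ from the endpoint with the larger $|x|$, and then using $|\dot x|\le|x|^\alpha$ and $|\dot y|\le|x|^\beta$, gives $d\gtrsim D$.
\item In the mixed case $r\le|x|^{1-\alpha}\le sr$ of \eqref{D}, the identity $|x|^{n\alpha+m\beta}=(|x|^{1-\alpha})^{\mathcal Q-n-m}$ bounds the volume ratio by $s^{\mathcal Q}(r/|x|^{1-\alpha})^{\mathcal Q-n-m}\le s^{\mathcal Q}$.
\end{itemize}

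Part (iv), by contrast, is only a roadmap. The step you flag as delicate is where all of its content lies, and the tool you propose for it does not fit, for three reasons.
\begin{itemize}
\item The weight $|x|^{2\alpha}$ is not $A_2$ on $\mathbb R$, nor on a half-line, once $\alpha\ge\frac12$.
\item The standard $A_2$ Sobolev inequalities carry the same weight on both sides, whereas here $u$ is measured in Lebesgue measure.
\item An inequality in $x$ alone does not control the $y$-direction, whose energy $|x|^{2\beta}|\nabla_yu|^2$ degenerates separately.
\end{itemize}

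The way to close this step is different. Uniformity over anchored balls is automatic. Since $\nabla_L(f\circ\delta_r)=r^{1-\alpha}(\nabla_Lf)\circ\delta_r$ and $\delta_r$ has Jacobian $r^{(1-\alpha)\mathcal Q}$, the local Sobolev inequality $\|f\|_{2q}^2\le C s^2V(B)^{-1+1/q}\bigl(\|\nabla_Lf\|_2^2+s^{-2}\|f\|_2^2\bigr)$, for $f$ supported in $B=B((0,y),s)$, is invariant under $\delta_r$ and $y$-translations. It therefore reduces to a single qualitative inequality on $B((0,0),1)$, or on each of its halves when $n=1$ and $\alpha\ge\frac12$. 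That one inequality follows from the Poincar\'e inequality of \cite{RS2} (which your Step~1 aims at), or in the separated case from the separated Poincar\'e inequality of \cite{RS2}, together with doubling \cite{HK}. Remote balls rescale to the Euclidean case as in Lemma~\ref{PE}, and intermediate balls are absorbed into anchored ones by \eqref{D}. Poincar\'e enters only as a source of local Sobolev; \eqref{UE} itself does not require it.

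You should also run Step~3 in localized form: first the on-diagonal bound $C/V(\xi,\sqrt t)$ from local Sobolev and doubling, then Davies--Gaffney estimates in the doubling space. A single global Nash inequality with Davies' weights can only give a bound uniform in $\xi$. At best this is $t^{-\mathcal Q/2}e^{-d^2/ct}$, because $\inf_\xi V(\xi,\sqrt t)\sim t^{\mathcal Q/2}$ is attained on $\{x=0\}$. That is strictly weaker than \eqref{UE} wherever $|x|^{1-\alpha}\gg\sqrt t$.

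In the separated case, also check two facts: $|B(\xi,r)\cap\{\pm x>0\}|\sim V(\xi,r)$ for $\xi$ in the corresponding half-space, and the half-space intrinsic distance dominates $d$. Since the kernel vanishes across $\{x=0\}$, these yield \eqref{UE} there.
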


With Theorem~\ref{RS_Grushin}, we establish the following estimates.

\begin{lemma}\label{le_volume}
Let $n,m\ge 1$, $\alpha\in (0,1)$ and $\beta\ge 0$. Let $\xi=(x,y)\in \mathbb{R}^{n+m}$. There exist $0<c_1<c_2$ and $\varepsilon_0 \in (0,1)$ such that for all $r\le \varepsilon_0 |x|^{1-\alpha}$,
\begin{equation}\label{le_volume2}
B_n(x, c_1 r |x|^\alpha) \times B_m(y, c_1 r |x|^\beta) \subset B(\xi, r) \subset B_n(x, c_2 r |x|^\alpha) \times B_m(y, c_2 r |x|^\beta).
\end{equation}
Moreover, there exist $0<c_3<c_4$ such that for all $r>0$ and all $y\in \mathbb{R}^m$,
\begin{equation}\label{le_volume3}
B_n(0, c_3 r) \times B_m(y, c_3 r^{\beta+1-\alpha}) \subset B((0,y), r^{1-\alpha}) \subset B_n(0, c_4 r) \times B_m(y, c_4 r^{\beta+1-\alpha}).
\end{equation}
\end{lemma}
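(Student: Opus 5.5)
Both inclusions reduce to the two-sided comparison $d\sim D$ of Theorem~\ref{RS_Grushin}(i). Fix constants $A\ge1$ with $A^{-1}D\le d\le AD$. For each inclusion we evaluate $D(\xi,\eta)$ with $\xi=(x,y)$ and $\eta=(x',y')$, comparing the term $|x-x'|/(|x|+|x'|)^\alpha$ and the $y$-term separately.

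\emph{Remote case, \eqref{le_volume2}.} Let $r\le\varepsilon_0|x|^{1-\alpha}$, with $\varepsilon_0$ to be fixed later. The key observation is that every point in the relevant sets has $|x'|\sim|x|$.

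Right inclusion. Take $\eta\in B(\xi,r)$. Then
\begin{equation*}
\frac{|x-x'|}{(|x|+|x'|)^\alpha}\le Ar.
\end{equation*}
Suppose $|x'|\le 2|x|$. This gives $|x-x'|\le A3^\alpha r|x|^\alpha\le 3A\varepsilon_0|x|$. Suppose instead $|x'|>2|x|$. Then $|x-x'|\ge|x'|/2$, so $|x'|^{1-\alpha}\lesssim Ar\le A\varepsilon_0|x|^{1-\alpha}$, which contradicts $|x'|>2|x|$ once $\varepsilon_0$ is small. Taking $\varepsilon_0$ small therefore gives $|x'|\in[\tfrac12|x|,2|x|]$ and $|x-x'|\le c_2r|x|^\alpha$.

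For the $y$-term we use $(|x|+|x'|)^\beta+|y-y'|^{\beta/(\beta+1-\alpha)}\ge|x|^\beta$, which only gives $|y-y'|\le Ar\big(3^\beta|x|^\beta+|y-y'|^{\beta/(\beta+1-\alpha)}\big)$. The self-referential term must be absorbed. If $|y-y'|^{\beta/(\beta+1-\alpha)}\le|x|^\beta$ we are done. Otherwise $|y-y'|>|x|^{\beta+1-\alpha}$, and then $|y-y'|\le 2Ar|y-y'|^{\beta/(\beta+1-\alpha)}$, i.e. $|y-y'|^{(1-\alpha)/(\beta+1-\alpha)}\le 2Ar$. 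This yields
\begin{equation*}
|y-y'|\le(2Ar)^{(\beta+1-\alpha)/(1-\alpha)}\le(2A\varepsilon_0)^{\cdots}|x|^{\beta+1-\alpha},
\end{equation*}
which contradicts the assumption for small $\varepsilon_0$. Hence $|y-y'|\le c_2r|x|^\beta$.

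Left inclusion. Take $\eta$ in the product set with $c_1$ small. Then $|x-x'|\le c_1\varepsilon_0|x|$, so $|x'|\sim|x|$, and
\begin{equation*}
D\le c_1r+c_1r|x|^\beta/|x|^\beta\cdot C\lesssim c_1 r.
\end{equation*}
Consequently $d\le AD<r$ for small $c_1$.

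\emph{Anchored case, \eqref{le_volume3}.} Here $\xi=(0,y)$ and the radius is $r^{1-\alpha}$, and no smallness is needed.

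Right inclusion. Take $\eta\in B(\xi,r^{1-\alpha})$. The $x$-term is $|x'|^{1-\alpha}\le Ar^{1-\alpha}$, so $|x'|\le c_4r$. For the $y$-term, write $s=|y-y'|$ and $\gamma=\beta/(\beta+1-\alpha)$, so that $s\le Ar^{1-\alpha}(|x'|^\beta+s^\gamma)$. We split according to which summand dominates. If $s^\gamma\ge|x'|^\beta$, then $s^{1-\gamma}\le 2Ar^{1-\alpha}$, and since $1-\gamma=(1-\alpha)/(\beta+1-\alpha)$ this gives $s\lesssim r^{\beta+1-\alpha}$. Otherwise $s\le 2Ar^{1-\alpha}|x'|^\beta\lesssim r^{\beta+1-\alpha}$, using $|x'|\le c_4 r$.

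Left inclusion. Take $|x'|\le c_3r$ and $s\le c_3r^{\beta+1-\alpha}$. Then $D\le c_3^{1-\alpha}r^{1-\alpha}+s^{1-\gamma}\le(c_3^{1-\alpha}+c_3^{1-\gamma})r^{1-\alpha}$. Choosing $c_3$ small gives $d<r^{1-\alpha}$.

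\emph{Main obstacle.} The only delicate point is the $y$-term, whose denominator contains $|y-y'|$ itself. In both cases this is handled by the dichotomy on which summand of the denominator dominates. In the remote case this dichotomy combines with the choice of $\varepsilon_0$ to rule out the branch where $|y-y'|$ dominates. If the comparison constants in $d\sim D$ were only valid after passing to a bounded range, homogeneity under $\delta_r$ would let us reduce to $|x|=1$ or $r=1$. We expect no need for this.
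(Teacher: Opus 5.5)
Your proposal is correct and follows essentially the paper's argument: you reduce to $d\sim D$ from Theorem~\ref{RS_Grushin}(i), force $|x'|\sim|x|$ by contradiction for small $\varepsilon_0$, split the $y$-term according to which summand of its denominator dominates, and get the left inclusion by direct evaluation with small $c_1$. The differences are cosmetic. You rule out the branch where $|y-y'|$ dominates by contradiction, whereas the paper bounds it directly using $r^{\beta/(1-\alpha)}\le\varepsilon_0^{\beta/(1-\alpha)}|x|^\beta$. You also write out the anchored case \eqref{le_volume3}, which the paper omits as routine.
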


\begin{proof}
Fix a constant $C_0>1$ such that $C_0^{-1} D(\xi,\eta) \le d(\xi,\eta) \le C_0 D(\xi,\eta)$. First, we establish that
\begin{equation}\label{le_volume1}
B(\xi, r) \subset B_n(x, c_2 r |x|^\alpha) \times B_m(y, c_2 r |x|^\beta).
\end{equation}
Let $\eta = (x',y')\in B(\xi, r)$. Then $D(\xi,\eta)\le C_0 r$. The following claim holds:
\begin{equation}\label{claim1}
|x-x'| \le \frac{|x|}{2},
\end{equation}
provided $\varepsilon_0$ is sufficiently small. Indeed, if this were false, then $|x|+|x'|\le |x-x'| + 2|x| \le 3|x-x'|$, and hence
\begin{equation*}
\frac{|x-x'|}{\left(|x|+|x'|\right)^\alpha} \ge 3^{-\alpha} |x-x'|^{1-\alpha} \ge 3^{-\alpha} 2^{\alpha-1} |x|^{1-\alpha}.
\end{equation*}
Consequently, $D(\xi,\eta)\ge 3^{-\alpha} 2^{\alpha-1} |x|^{1-\alpha}$. Choose $\varepsilon_0 < C_0^{-1} 3^{-\alpha} 2^{\alpha-1}$. Since $r\le \varepsilon_0 |x|^{1-\alpha}$, the previous lower bound contradicts $D(\xi,\eta)\le C_0 r$. This proves \eqref{claim1}.

Therefore,
\begin{equation}\label{eq_volume2}
\frac{|x|}{2} \le |x'| \le \frac{3}{2}|x|\quad \textrm{and}\quad |x|+|x'| \le \frac{5}{2} |x|.
\end{equation}
Since $D(\xi,\eta)\le C_0 r$, we obtain
\begin{align*}
\frac{|x-x'|}{\left(|x|+|x'|\right)^\alpha} \le C_0 r \implies |x-x'|\le C_0 r \left(|x|+|x'|\right)^\alpha \le C_0 \left(\frac{5}{2}\right)^\alpha r |x|^\alpha.
\end{align*}
For the $y$-part of $D(\xi,\eta)$, if $\left(|x|+|x'|\right)^{\beta+1-\alpha}\ge |y-y'|$, then
\begin{align*}
\frac{|y-y'|}{2\left(|x|+|x'|\right)^{\beta}} \le \frac{|y-y'|}{\left(|x|+|x'|\right)^{\beta} + |y-y'|^{\frac{\beta}{\beta+1-\alpha}}} \le C_0 r.
\end{align*}
Combining this with \eqref{eq_volume2} yields
\begin{equation*}
|y-y'|\le 2C_0 r \left(|x|+|x'|\right)^{\beta} \le 2C_0 \left(\frac{5}{2}\right)^\beta r |x|^\beta.
\end{equation*}
If instead $\left(|x|+|x'|\right)^{\beta+1-\alpha}\le |y-y'|$, then
\begin{align*}
|y-y'|^{\frac{1-\alpha}{\beta+1-\alpha}} \le 2 C_0 r \implies |y-y'|\le \left(2C_0\right)^{\frac{\beta+1-\alpha}{1-\alpha}} r^{\frac{\beta+1-\alpha}{1-\alpha}}\le \left(2C_0\right)^{\frac{\beta+1-\alpha}{1-\alpha}} r |x|^{\beta}.
\end{align*}
Thus, setting
\begin{equation*}
c_2 = \max \left( C_0 \left(\frac{5}{2}\right)^\alpha,  2C_0 \left(\frac{5}{2}\right)^\beta, \left(2C_0\right)^{\frac{\beta+1-\alpha}{1-\alpha}} \right),
\end{equation*}
gives \eqref{le_volume1}.

It remains to prove the converse inclusion
\begin{equation*}
B_n(x, c_1 r |x|^\alpha) \times B_m(y, c_1 r |x|^\beta) \subset B(\xi, r).
\end{equation*}
This is immediate. Let $\eta = (x',y')$ satisfy $|x-x'|\le c_1 r |x|^\alpha$ and $|y-y'|\le c_1 r |x|^\beta$. It is plain that
\begin{equation*}
d(\xi,\eta)\le C_0 D(\xi,\eta)\le C_0 \frac{c_1 r |x|^\alpha}{\left(|x|+|x'|\right)^\alpha} + C_0 \frac{c_1 r |x|^\beta}{\left(|x|+|x'|\right)^\beta+|y-y'|^{\frac{\beta}{\beta+1-\alpha}}} \le 2 C_0 c_1 r.
\end{equation*}
Choosing $c_1 = (4C_0)^{-1}$ implies $\eta \in B(\xi,r)$, and hence \eqref{le_volume2} follows.

Finally, \eqref{le_volume3} follows directly from a similar analysis as above and involves no additional difficulty, so the details are omitted.
\end{proof}

\begin{remark}
Note that the constants $c_1$ and $c_2$ depend only on $C_0,\alpha,\beta$. Therefore, by requiring
\begin{equation*}
\varepsilon_0< \min \left( C_0^{-1} 3^{-\alpha} 2^{\alpha-1}, \frac{c_2}{2} \right),
\end{equation*}
it follows that for all $r\le \varepsilon_0 |x|^{1-\alpha}$ and all $\eta = (x',y') \in B_n(x, c_2 r |x|^\alpha) \times B_m(y, c_2 r |x|^\beta)$,
\begin{equation*}
|x|\le |x-x'|+|x'| \le c_2 r |x|^{\alpha} + |x'| \le \frac{|x|}{2} +|x'|,
\end{equation*}
and hence $|x|\le 2|x'|\le 3|x|$.
\end{remark}

\begin{definition}
A Grushin ball $B=B(\xi,r) = B((x,y), r)$ is called
\begin{itemize}
\item remote if
\begin{equation*}
r \le \varepsilon_0 |x|^{1-\alpha},
\end{equation*}
where $\varepsilon_0$ comes from Lemma~\ref{le_volume}, so that for all $\eta = (x',y')\in B$, one has $|x-x'|\le \frac{|x|}{2}$ and hence $|x|\sim |x'|$;
\item anchored if $B=B(\xi,r) = B((0,y),r)$.
\end{itemize}
\end{definition}

We briefly recall the curvature-dimension notation used below. Let $\mathcal L$ be a non-positive diffusion operator with carré du champ
\begin{equation*}
    \Gamma(f,g)
    :=
    2^{-1} \bigl(\mathcal L(fg)-f\mathcal Lg-g \mathcal Lf\bigr),
    \qquad
    \Gamma(f):=\Gamma(f,f).
\end{equation*}
The iterated carré du champ is defined by
\begin{equation*}
    \Gamma_2(f,g)
    :=
    2^{-1} \bigl(\mathcal L\Gamma(f,g)-\Gamma(f, \mathcal Lg)-\Gamma(g, \mathcal Lf)\bigr),
    \qquad
    \Gamma_2(f):=\Gamma_2(f,f).
\end{equation*}
Given $N_1 \in\mathbb R$ and $N_2 \in(0,\infty]$, one says that $\mathcal L$ satisfies the curvature-dimension inequality $\operatorname{CD}(N_1,N_2)$ if
\begin{equation*}
    \Gamma_2(f)
    \ge
    N_1\Gamma(f)+ N_2^{-1} (\mathcal Lf)^2
\end{equation*}
for all sufficiently smooth functions $f$, with the last term omitted when $N_2=\infty$.

By setting $\mathcal L=-L$, we have the following gradient estimates for the heat kernel.

\begin{lemma}\cite{H4}\label{Grushin_gradient}
Let $n,m\ge 1$, $\alpha\in (0,1)$, and $\beta\ge 0$. Then 
\begin{enumerate}[label=(\roman*)]
    \item for any $f\in C^\infty(\mathbb{R}^n \setminus \{0\} \times \mathbb{R}^m)$, the following curvature-dimension inequality holds:
    \begin{equation*}
        \Gamma_2(f) \ge - \frac{C_{n,m,\alpha,\beta}}{|x|^{2-2\alpha}} \Gamma(f) + C_{n,m,\alpha,\beta}' (Lf)^2;
    \end{equation*}
    \item the following gradient estimate holds:
    \begin{align*}
        \left|\nabla_{L}e^{-tL}(\xi,\eta)\right|
        \le \left(\frac{1}{\sqrt{t}} + \frac{1}{|x|^{1-\alpha}}\right)
        \frac{C}{V(\xi,\sqrt{t})}e^{-\frac{d(\xi,\eta)^2}{ct}},
    \end{align*}
    where $\xi = (x,y) \in \mathbb{R}^n \setminus \{0\} \times \mathbb{R}^m$.
\end{enumerate}
\end{lemma}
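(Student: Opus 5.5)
The plan is to prove (i) by a direct Bochner-type computation for the vector fields defining $\nabla_L$, and (ii) by a localized parabolic regularity argument on remote balls combined with \eqref{UE}. Part (i) is not strictly needed for (ii), but it gives an alternative route to it.

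\textbf{Part (i): algebra.} Write $X_i=|x|^\alpha\partial_{x_i}$ for $1\le i\le n$ and $Y_j=|x|^\beta\partial_{y_j}$ for $1\le j\le m$, so that $\Gamma(f)=\sum_i|X_if|^2+\sum_j|Y_jf|^2$.
\begin{enumerate}[label=(\alpha*)]
\item On $\{x\neq0\}$ expand $L$ in these fields:
\begin{equation*}
 -L=\sum_iX_i^2+\sum_jY_j^2+\sum_i a_i(x)X_i ,
\end{equation*}
where the drift coefficients satisfy $|a_i(x)|\lesssim|x|^{\alpha-1}$.
\item Record the commutators:
\begin{equation*}
 [X_i,Y_j]=\beta x_i|x|^{\alpha-2}Y_j,\qquad [X_i,X_k]=\alpha|x|^{\alpha-2}(x_iX_k-x_kX_i).
\end{equation*}
Both have coefficients of size $|x|^{\alpha-1}$ times a first-order field.
\item Expand $\Gamma_2(f)$ via the standard formula $\Gamma_2(f)=\sum|ZWf|^2+(\text{commutator terms})$. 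This gives the nonnegative ``Hessian'' term
\begin{equation*}
 \mathcal H(f):=\sum_{Z,W\in\{X_i,Y_j\}}|ZWf|^2 .
\end{equation*}
It also gives error terms of two shapes. The first is (coefficient $\lesssim|x|^{\alpha-1}$) $\times$ (second-order derivative) $\times$ (first-order derivative). The second is (coefficient $\lesssim|x|^{2\alpha-2}$) $\times\,\Gamma(f)$; this includes derivatives of the drift terms $a_i$.
\item Absorb the mixed terms using $2ab\le\varepsilon a^2+\varepsilon^{-1}b^2$. This gives
\begin{equation*}
 \Gamma_2(f)\ge\tfrac12\mathcal H(f)-C|x|^{2\alpha-2}\Gamma(f).
\end{equation*}
\item From the expansion of $L$ in step ($\alpha$1) and Cauchy--Schwarz,
\begin{equation*}
 (Lf)^2\le2(n+m)\mathcal H(f)+C|x|^{2\alpha-2}\Gamma(f).
\end{equation*}
Substituting this into step ($\alpha$4) yields (i), with $C'=(4(n+m))^{-1}$.
\end{enumerate}
The only care needed is to track that every lower-order coefficient has exactly the homogeneity $|x|^{\alpha-1}$. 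This follows from the scaling identity $\nabla_L(f\circ\delta_r)=r^{1-\alpha}(\nabla_Lf)\circ\delta_r$.

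\textbf{Part (ii): gradient bound.} Fix $\xi=(x,y)$ with $x\ne0$, fix $t>0$, and set $r=\min(\sqrt t,\varepsilon_0|x|^{1-\alpha})$. Then $B(\xi,r)$ is a remote ball. By Lemma~\ref{le_volume} it is comparable to the box $B_n(x,c\,r|x|^\alpha)\times B_m(y,c\,r|x|^\beta)$, and on this box $|x'|\sim|x|$.
\begin{enumerate}[label=(\beta*)]
\item Rescale anisotropically by $x'=x+r|x|^\alpha\tilde x$ and $y'=y+r|x|^\beta\tilde y$, with time rescaled by $r^2$. The operator $L$ becomes a uniformly elliptic divergence-form operator on a unit box. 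Its coefficients are $(|x'|/|x|)^{2\alpha}$ and $(|x'|/|x|)^{2\beta}$, which lie in $[c,C]$. All their derivatives are bounded independently of $\xi,t$, because each derivative gains a factor $r|x|^{\alpha}/|x|\le\varepsilon_0$.
\item Classical interior parabolic (Schauder) regularity then gives, for any solution $u$ of $\partial_su+Lu=0$ on $Q=B(\xi,r)\times(t-r^2,t]$,
\begin{equation*}
 |\nabla_Lu(\xi,t)|\le\frac{C}{r}\sup_Q|u| ,
\end{equation*}
with $C$ uniform in $\xi$ and $t$.
\item Apply this to $u(\cdot,s)=e^{-sL}(\cdot,\eta)$. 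For $(\zeta,s)\in Q$ we have $s\in[t-r^2,t]$ with $r^2\le t$, so one may take $t-r^2\ge t/2$ after shrinking $r$ by a constant. Also $d(\zeta,\eta)\ge d(\xi,\eta)-\sqrt t$. Doubling \eqref{D} gives $V(\zeta,\sqrt s)\sim V(\xi,\sqrt t)$. Hence \eqref{UE} gives
\begin{equation*}
 \sup_Q|u|\lesssim V(\xi,\sqrt t)^{-1}e^{-d(\xi,\eta)^2/(ct)} ,
\end{equation*}
where the shift by $\sqrt t$ is absorbed by enlarging $c$.
\item Finally, $r^{-1}\lesssim t^{-1/2}+|x|^{\alpha-1}$, which is the bound in (ii).
\end{enumerate}

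The main obstacle is the uniformity in step ($\beta$1): the rescaled coefficients must stay in a fixed compact family of smooth uniformly elliptic coefficients as $\xi$ approaches the singular set. The remote-ball definition and Lemma~\ref{le_volume} are exactly what guarantee this. If one prefers to avoid Schauder theory, an alternative is a localized Bakry--\'Emery argument on $B(\xi,r)$. This uses (i), where on the ball the curvature is bounded below by $-K$ with $K\sim|x|^{2\alpha-2}\lesssim r^{-2}$. It produces the same factor $r^{-1}$.
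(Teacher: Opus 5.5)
Your proposal is correct. The paper does not prove this lemma itself: it quotes both parts from \cite{H4}, and (i) is reused separately in Lemma~\ref{RH_remote}, where it supplies $\mathrm{CD}(-C_1/r^2,C_2)$ on remote balls for the local Li--Yau inequality of \cite{ZZ}.

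\textbf{Part (ii).} Your argument is exactly the alternative indicated in the remark after the lemma: rescale a remote ball to a uniformly parabolic divergence-form problem, apply interior gradient estimates, and use \eqref{UE} on $[t/2,t]$. Your details are right. These are the choice $r=\min(\sqrt{t/2},\varepsilon_0|x|^{1-\alpha})$, the derivative gain $r|x|^{\alpha-1}\le\varepsilon_0$ that keeps the rescaled coefficients in a compact smooth family, and absorbing the $\sqrt t$ shift into the Gaussian.

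\textbf{Part (i).} Your Bochner computation is also correct. One has $-L=\sum_iX_i^2+\sum_jY_j^2+\sum_i\alpha x_i|x|^{\alpha-2}X_i$, all commutator and drift coefficients are homogeneous of degree $\alpha-1$, and the identity $\Gamma_2(f)=\mathcal H(f)+\sum_kZ_kf\,[-L,Z_k]f$ gives $\Gamma_2\ge\frac12\mathcal H-C|x|^{2\alpha-2}\Gamma$. This yields $C'=(4(n+m))^{-1}$.

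\textbf{Comparison.} Your Schauder route is elementary and self-contained, and it makes (ii) independent of (i). The curvature-dimension formulation packages a single input that serves both the heat-kernel gradient bound and the Li--Yau step in Lemma~\ref{RH_remote}. That lemma, too, could be obtained from the same rescaled interior (elliptic) estimates you use, so (i) is convenient for the paper rather than indispensable.
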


\begin{remark}
On remote balls the coefficients are uniformly elliptic after the intrinsic rescaling. Hence the required gradient bound may also follow from standard interior gradient estimates for uniformly parabolic equations; see \cite{Lieberman}, for instance.
\end{remark}

\subsection{Poincar\'e inequality}

We establish the following variants of the Poincar\'e inequality.

\begin{lemma}\label{PE}
Let $n,m\ge 1$, $\alpha\in (0,1)$ and $\beta\ge 0$. Let $B(\xi_0, r) = B((x_0,y_0), r)$ be a remote ball. Then there exist $\gamma \ge 1$ (depending only on the structural constants) and $C_P>0$ such that for all $1\le p < \infty$, the following Poincar\'e inequality holds:
\begin{equation}
    \int_B |f-f_B|^p d\xi \le C_P r^p \int_{\gamma B} |\nabla_L f|^p d\xi,
\end{equation}
where the constant $C_P$ depends only on $n,m,\alpha,\beta,p$.
\end{lemma}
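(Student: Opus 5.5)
The plan is to reduce the inequality on a remote ball to the classical Euclidean Poincaré inequality on a box, after an anisotropic rescaling that freezes the coefficients. Let $B=B(\xi_0,r)$ be remote, so $r\le\varepsilon_0|x_0|^{1-\alpha}$. By Lemma~\ref{le_volume} and the remark following it, we have the inclusions
\begin{equation*}
B\subset Q:=B_n(x_0,c_2r|x_0|^\alpha)\times B_m(y_0,c_2r|x_0|^\beta).
\end{equation*}
Moreover, every $(x',y')\in Q$ satisfies $|x_0|\le 2|x'|\le 3|x_0|$. We would also like $Q\subset\gamma B$ for some $\gamma$ depending only on structural constants. To get this, apply the inner inclusion of \eqref{le_volume2} at radius $\gamma r$, namely $B_n(x_0,c_1\gamma r|x_0|^\alpha)\times B_m(y_0,c_1\gamma r|x_0|^\beta)\subset B(\xi_0,\gamma r)$, and choose $\gamma=c_2/c_1$. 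This requires $\gamma r\le\varepsilon_0|x_0|^{1-\alpha}$, which need not hold.

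I expect this scale restriction to be the main obstacle. I would handle it in one of two ways. The first option is to prove the inner inclusion directly from the distance estimate $D$ of Theorem~\ref{RS_Grushin}(i). For $\eta\in Q$ one has $|x_0|+|x'|\sim|x_0|$, so $D(\xi_0,\eta)\lesssim c_2 r$ without any smallness of $\gamma r$; the argument is the same as the converse inclusion in the proof of Lemma~\ref{le_volume}, using only $|x'|\gtrsim|x_0|$ on $Q$. This gives $Q\subset B(\xi_0,C c_2 C_0 r)$, and we may take $\gamma=2C_0^2c_2/c_1$, say. The second option is to shrink $\varepsilon_0$ in the definition of remote so that $\gamma r$ is still admissible. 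I would take the first route, since it keeps the definition unchanged.

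On $Q$ the weights are frozen: $|x'|^\alpha\sim|x_0|^\alpha$ and $|x'|^\beta\sim|x_0|^\beta$. It follows that
\begin{equation*}
|x_0|^{\alpha}|\nabla_xf|+|x_0|^{\beta}|\nabla_yf|\lesssim|\nabla_Lf|\quad\text{on }Q.
\end{equation*}
Now set $\Phi(s,t)=(x_0+r|x_0|^\alpha s,\ y_0+r|x_0|^\beta t)$. Then $\Phi$ maps the unit box $Q_1=B_n(0,c_2)\times B_m(0,c_2)$ onto $Q$, and $g=f\circ\Phi$ satisfies
\begin{equation*}
|\nabla_{s,t}g|\le r\bigl(|x_0|^\alpha|\nabla_xf|+|x_0|^\beta|\nabla_yf|\bigr)\circ\Phi\lesssim r\,|\nabla_Lf|\circ\Phi .
\end{equation*}
The standard $L^p$ Poincaré inequality on the fixed convex set $Q_1$, valid for $1\le p<\infty$, gives $\int_{Q_1}|g-g_{Q_1}|^p\le C\int_{Q_1}|\nabla g|^p$. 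Undoing the change of variables, whose Jacobian is constant and cancels on both sides, yields
\begin{equation*}
\int_Q|f-f_Q|^p\le C r^p\int_Q|\nabla_Lf|^p .
\end{equation*}

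Finally we pass from $Q$ to $B$. Since $B\subset Q$ and $|B|\sim|Q|$ (both are comparable to $r^{n+m}|x_0|^{n\alpha+m\beta}$ by Theorem~\ref{RS_Grushin}(ii)), the standard estimate
\begin{equation*}
\int_B|f-f_B|^p\le 2^p\int_B|f-c|^p
\end{equation*}
applied with $c=f_Q$ gives $\int_B|f-f_B|^p\le 2^p\int_Q|f-f_Q|^p$. Combining this with $Q\subset\gamma B$ concludes the proof. The constants depend only on $n,m,\alpha,\beta,p$ through $C_0,c_1,c_2$ and the Euclidean Poincaré constant of $Q_1$. Locally Lipschitz $f$ are handled directly, since $Q$ stays away from $\{x=0\}$, where $\nabla_L$ is a bounded elliptic frame.
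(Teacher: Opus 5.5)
Your proposal is correct and follows essentially the same route as the paper. Like the paper, you enclose $B$ in the frozen-coefficient box $Q$, rescale to a fixed box, apply the Euclidean $L^p$ Poincar\'e inequality, freeze the weights using $|x'|\sim|x_0|$ on $Q$, and return to $B$ via $\int_B|f-f_B|^p\le 2^p\int_B|f-f_Q|^p$ together with $Q\subset\gamma B$.

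Your extra care over that last inclusion is well placed. The paper simply cites Lemma~\ref{le_volume} at radius $\gamma r$, which can fall outside the lemma's stated range $r\le\varepsilon_0|x|^{1-\alpha}$. Your direct bound $D(\xi_0,\eta)\le 2c_2 r$ for $\eta\in Q$ closes this cleanly; it is the same computation as the converse inclusion in that lemma's proof, which never uses smallness of $r$.
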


\begin{proof}
Let $B=B(\xi_0,r) = B((x_0,y_0),r)$ be a remote ball, and let $f\in C^\infty(\gamma B)$. By Lemma~\ref{le_volume}, there exists $c_2>0$ such that
\begin{equation*}
    B(\xi_0, r) \subset B_n(x_0,c_2 r |x_0|^\alpha) \times B_m(y_0,c_2 r |x_0|^\beta)=: Q.
\end{equation*}
Define the rescaling map
\begin{equation*}
    \Lambda: (x', y') \mapsto \left( x_0 + r|x_0|^\alpha x', y_0 + r|x_0|^\beta y' \right),\quad \forall x'\in \mathbb{R}^n,\quad y'\in \mathbb{R}^m,
\end{equation*}
and the function $F(x',y') = f\left( \Lambda(x',y') \right)$. Clearly, $\Lambda$ maps $Q_0:=B_n(0,c_2) \times B_m(0,c_2)$ onto $Q$. By the standard Poincar\'e inequality on $\mathbb{R}^{n+m}$ for the convex set $Q_0$,
\begin{align*}
    \int_{Q_0} |F(x',y') - F_{Q_0}|^p dx'dy' \le C(n,m,c_2,p) \int_{Q_0} |\nabla F(x',y')|^p dx'dy'.
\end{align*}
After a change of variables,
\begin{equation*}
    \int_{Q} |f(\xi) - f_{Q}|^p d\xi \le C(n,m,c_2,p) r^p \int_Q \left[|x_0|^{2\alpha} |\nabla_xf(\xi)|^2 + |x_0|^{2\beta} |\nabla_y f(\xi)|^2 \right]^{\frac{p}{2}} d\xi.
\end{equation*}
Since $B$ is remote, for all $(x,y)\in Q$, $|x_0|\le (1+c_2\varepsilon_0) |x|$, and therefore
\begin{align*}
    \int_{Q} |f(\xi) - f_{Q}|^p d\xi &\le C(n,m,c_2,p) r^p \int_Q \left[|x|^{2\alpha} |\nabla_xf(\xi)|^2 + |x|^{2\beta} |\nabla_y f(\xi)|^2 \right]^{\frac{p}{2}} d\xi \\
    &= C(n,m,c_2,p) r^p \int_Q |\nabla_L f(\xi)|^p d\xi.
\end{align*}
Moreover, Lemma~\ref{le_volume} gives $Q \subset \frac{c_2}{c_1} B=: \gamma B$, where $\gamma \ge 1$. It follows that
\begin{align*}
    \int_B |f-f_B|^p d\xi &\le 2^p \int_B |f-f_Q|^p d\xi \\
    &\le C(n,m,c_2,p) r^p \int_Q |\nabla_L f(\xi)|^p d\xi \\
    &\le C(n,m,c_2,p) r^p \int_{\gamma B} |\nabla_L f(\xi)|^p d\xi,
\end{align*}
which completes the proof.
\end{proof}

Next, we introduce a remote-ball version telescoping lemma. The proof is standard and is based on the classical telescoping argument; see \cite{HK} for example.

\begin{lemma}\label{le_tele}
Let $n,m\ge 1$, $\alpha\in (0,1)$ and $\beta \ge 0$. Let $B=B(\xi_0,r) = B((x_0,y_0),r)$ be a ball such that $3B$ is remote. Then there exists a constant $C>0$, depending only on the doubling constant and on $C_P$, such that for all $f\in C^{1}(2\gamma B)$ and for all $\xi,\eta\in B$,
\begin{equation}\label{eq:remote-lemma-312}
|f(\xi)-f(\eta)|
\le
C\, d(\xi,\eta)\Big(M_2(|\nabla_L f|\mathbf{1}_{2\gamma B})(\xi)+M_2(|\nabla_L f|\mathbf{1}_{2\gamma B})(\eta)\Big),
\end{equation}
where
\[
M_2 h(\xi):=\sup_{B\ni \xi}\left(\frac{1}{|B|}\int_B |h|^2 \right)^{1/2}.
\]
\end{lemma}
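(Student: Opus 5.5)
The plan is to run the classical telescoping argument (as in \cite{HK}) along a chain of concentric balls shrinking to each of the two points. Every ball in the chain is kept inside the remote region, so that Lemma~\ref{PE} applies with a uniform constant.

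\emph{Step 1: the chain.} Fix $\xi,\eta\in B$ and set $\rho:=d(\xi,\eta)\le 2r$. Let $B_0:=B(\xi,2\rho)$, which contains $\eta$, and let $B_k:=B(\xi,2^{-k}\rho)$ for $k\ge1$. Define $B_k'$ analogously around $\eta$, with $B_0':=B(\eta,2\rho)$. Since $\xi\in B$ and $3B$ is remote, the centre $x_\xi$ of each $B_k$ satisfies $|x_\xi|\sim|x_0|$, and $\gamma B_k\subset 2\gamma B$ once $\rho\le 2r$, possibly after adjusting the harmless factors. Here I use that $r\le\varepsilon_0|x_0|^{1-\alpha}/3$ gives $2^{-k}\rho\lesssim\varepsilon_0|x_\xi|^{1-\alpha}$. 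If the comparison constant spoils exact remoteness of $B_0$, I replace it by a ball of radius a fixed fraction of $\rho$ and add finitely many intermediate comparison balls, each costing only a doubling constant.

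\emph{Step 2: telescoping.} By continuity of $f\in C^1$, $f_{B_k}\to f(\xi)$, so
\[
|f(\xi)-f_{B_0}|\le\sum_{k\ge0}|f_{B_{k+1}}-f_{B_k}|\le C\sum_{k\ge0}\fint_{B_k}|f-f_{B_k}|,
\]
using doubling \eqref{D} and $B_{k+1}\subset B_k$. Lemma~\ref{PE} with $p=2$, together with H\"older's inequality, gives
\[
\fint_{B_k}|f-f_{B_k}|\le C2^{-k}\rho\Bigl(\fint_{\gamma B_k}|\nabla_Lf|^2\Bigr)^{1/2}\le C2^{-k}\rho\,M_2(|\nabla_Lf|\mathbf 1_{2\gamma B})(\xi).
\]
Here $\gamma B_k\ni\xi$, $\gamma B_k\subset2\gamma B$, and $|\gamma B_k|\sim|B_k|$ by doubling. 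Summing the geometric series yields $|f(\xi)-f_{B_0}|\le C\rho\,M_2(\cdots)(\xi)$, and the same bound holds for $\eta$ with $B_0'$.

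\emph{Step 3: joining the two chains.} Since $B_0\subset B(\eta,3\rho)=:\widetilde B$ and $B_0'\subset\widetilde B$, with all these balls having comparable measure, we get
\[
|f_{B_0}-f_{B_0'}|\le C\fint_{\widetilde B}|f-f_{\widetilde B}|.
\]
This is again at most $C\rho\,M_2(\cdots)(\eta)$ by Lemma~\ref{PE} applied to $\widetilde B$.

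\emph{Main obstacle.} The only delicate point is bookkeeping: checking that every ball used, namely $B_k$, $B_k'$, $\widetilde B$ and their $\gamma$-dilates, is remote and lies inside $2\gamma B$. This is what the hypothesis that $3B$ is remote is for. Failing that, I would shrink the top-level radius by a fixed structural factor, which changes only $C$.
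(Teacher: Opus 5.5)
Your chain-and-join construction is the paper's own argument for the regime $d(\xi,\eta)<r/8$. As written, however, it fails when $\rho=d(\xi,\eta)$ is comparable to $r$, and this is more than bookkeeping.

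\emph{The failing step.} Your assertion that $\gamma B_k\subset 2\gamma B$ whenever $\rho\le 2r$ is false for $k=0$. If $\rho>\tfrac32 r$, then $\gamma B_0=B(\xi,2\gamma\rho)$ contains $B(\xi_0,2\gamma\rho-r)$, which is strictly larger than $2\gamma B$. The joining ball $\gamma\widetilde B=B(\eta,3\gamma\rho)$ is larger still. These balls need not be remote either: from $3r\le\varepsilon_0|x_0|^{1-\alpha}$ and $|x_\xi|\ge|x_0|/2$ you only get $\varepsilon_0|x_\xi|^{1-\alpha}\ge 3\cdot 2^{\alpha-1}r$, which is less than $3r$, while $\widetilde B$ can have radius close to $6r$. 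Moreover, $f$ is only assumed $C^1$ on $2\gamma B$, and the right-hand side only sees $|\nabla_L f|\mathbf 1_{2\gamma B}$. So Lemma~\ref{PE} applied to $B_0$, $B_0'$ or $\widetilde B$ gives nothing usable.

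\emph{Why your fallback does not repair it.} Shrinking the top radius by a structural factor means the ball no longer contains both points. You would then need a chain of overlapping small balls along a curve from $\xi$ to $\eta$, staying in the region where the balls are remote and their $\gamma$-dilates lie in $2\gamma B$. That is an additional length-space input which you neither state nor verify.

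\emph{The missing idea.} When $\rho$ is large, use the given ball $B$, which already contains both points, as the connecting ball. Split at $\rho=r/8$.

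If $\rho\ge r/8$, estimate $|f(\xi)-f_B|$ and $|f(\eta)-f_B|$ separately, in three steps.
\begin{enumerate}
\item Telescope along the balls $B(\xi,2^{-j}r)$, $j\ge 0$. They are remote, because $r\le\tfrac{2^{1-\alpha}}{3}\varepsilon_0|x|^{1-\alpha}$ for $\xi=(x,y)\in B$. Their dilates stay inside, because $\gamma B(\xi,r)\subset B(\xi_0,(1+\gamma)r)\subset 2\gamma B$ since $\gamma\ge 1$.
\item Pass from $f_{B(\xi,r)}$ to $f_{2B}$, and from $f_{2B}$ to $f_B$, using doubling and Lemma~\ref{PE} on $2B$. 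The ball $2B$ is remote because $3B$ is, and its dilate $\gamma(2B)=2\gamma B$ is exactly the permitted region.
\item This gives $|f(\xi)-f_B|\le C r\, M_2(|\nabla_L f|\mathbf 1_{2\gamma B})(\xi)$, and the same for $\eta$. Then $r\le 8\rho$ finishes this case.
\end{enumerate}

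If $\rho<r/8$, your Steps 1--3 go through verbatim. Every ball you use then has radius less than $3r/8$, so it is remote, and its $\gamma$-dilate lies in $B(\xi_0,(1+3\gamma/8)r)\subset 2\gamma B$.
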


\begin{proof}
See Appendix~\ref{app:tele}.
\end{proof}

\subsection{Modified Bessel functions}

Throughout, $I_\nu$ and $K_\nu$ denote the standard modified Bessel functions. We shall use several classical properties of these functions, including their power-series expansions, recurrence relations, Wronskian identity, integral representations, and small- and large-argument asymptotics; see Watson~\cite[Chapter~III]{Watson} and Abramowitz--Stegun~\cite[Chapter~9]{AS}. For $\nu>-1$ and $t>0$, we also introduce the normalized functions
\begin{equation*}
    \mathcal I_\nu(t):=t^{-\nu}I_\nu(t),
    \qquad
    \mathcal K_\nu(t):=t^\nu K_\nu(t).
\end{equation*}
We further set
\begin{equation*}
    S_\nu(t):=\frac{I_{\nu+1}(t)}{I_\nu(t)}.
\end{equation*}
The functions $I_\nu$ and $K_\nu$ form a fundamental system of solutions to the modified Bessel equation
\begin{equation*}
    t^2F''(t)+tF'(t)-\bigl(t^2+\nu^2\bigr)F(t)=0.
\end{equation*}
For each fixed $\nu>-1$, one has
\begin{equation*}
    I_\nu(t)\sim_\nu
    \begin{cases}
        t^\nu, & 0<t\leq 1,\\
        t^{-1/2}e^t, & t\geq 1.
    \end{cases}
\end{equation*}
Since $K_{-\nu}=K_\nu$, for each fixed $\nu\in\mathbb R$ one has
\begin{equation*}
    K_\nu(t)\sim_\nu
    \begin{cases}
        t^{-|\nu|}, & 0<t\leq 1,\quad \nu\neq 0,\\
        1+|\log t|, & 0<t\leq 1,\quad \nu=0,\\
        t^{-1/2}e^{-t}, & t\geq 1.
    \end{cases}
\end{equation*}
For later use, we record the following lemmas, whose proofs are postponed to Appendix~\ref{app:B}.

\begin{lemma}\label{lem:Bessel_regular_branch}
Let $\nu_*>-1$. For every integer $N\geq 0$, there exists a constant
$C_{N,\nu_*}>0$ such that, for every $\nu\geq\nu_*$,
\begin{equation*}
    \sup_{0\leq t\leq 1}
    \left|
        \frac{d^k}{dt^k}\mathcal I_\nu(t)
    \right|
    \leq C_{N,\nu_*},
    \qquad 0\leq k\leq N.
\end{equation*}
Here $\mathcal I_\nu$ is understood at $t=0$ through its smooth extension. Moreover,
\begin{equation*}
    \frac{d}{dt}\mathcal I_\nu(t)
    =t\mathcal I_{\nu+1}(t).
\end{equation*}
\end{lemma}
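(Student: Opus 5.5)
We start from the power series
\begin{equation*}
    \mathcal I_\nu(t)=t^{-\nu}I_\nu(t)=\sum_{j\ge0}\frac{t^{2j}}{2^{2j+\nu}\,j!\,\Gamma(j+\nu+1)}.
\end{equation*}
This shows that $\mathcal I_\nu$ extends to an even entire function of $t$, so the value at $t=0$ makes sense through this extension. The plan has two parts.

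\emph{Derivative identity.} Differentiate the series term by term:
\begin{equation*}
    \frac{d}{dt}\mathcal I_\nu(t)=\sum_{j\ge1}\frac{2j\,t^{2j-1}}{2^{2j+\nu}\,j!\,\Gamma(j+\nu+1)}.
\end{equation*}
Reindex with $j=k+1$. The coefficient becomes
\begin{equation*}
    \frac{t^{2k+1}}{2^{2k+\nu+1}\,k!\,\Gamma(k+\nu+2)},
\end{equation*}
which is exactly $t$ times the $k$-th coefficient of $\mathcal I_{\nu+1}$. This gives $\mathcal I_\nu'(t)=t\,\mathcal I_{\nu+1}(t)$. Equivalently, it is the classical recurrence $(t^{-\nu}I_\nu)'=t^{-\nu}I_{\nu+1}$. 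Term-by-term differentiation is justified because the series converges locally uniformly together with all its derivatives.

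\emph{Uniform bounds.} Differentiate the series $k$ times on $[0,1]$. Since $t\le1$,
\begin{equation*}
    \Bigl|\frac{d^k}{dt^k}t^{2j}\Bigr|\le (2j)^k .
\end{equation*}
Hence
\begin{equation*}
    \sup_{0\le t\le1}\bigl|\mathcal I_\nu^{(k)}(t)\bigr|\le \sum_{j\ge0}\frac{(2j)^k}{2^{2j+\nu}\,j!\,\Gamma(j+\nu+1)}.
\end{equation*}
It remains to bound $2^{-\nu}/\Gamma(j+\nu+1)$ uniformly in $\nu\ge\nu_*$ and $j\ge0$.

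\emph{Main obstacle.} The only delicate point is uniformity in $\nu$ near $\nu_*$ when $\nu_*\in(-1,0)$, where $\Gamma$ is not monotone. Set $s=j+\nu+1$, so $s\ge \nu_*+1>0$.
\begin{itemize}
\item For $s\in[\nu_*+1,2]$, $\Gamma(s)$ is bounded below by a positive constant depending only on $\nu_*$, since $\Gamma$ has no zeros on $(0,\infty)$ and attains its minimum about $0.8856$ near $1.46$.
\item For $s\ge2$, $\Gamma(s)\ge1$.
\end{itemize}
Moreover, $2^{-\nu}\le 2^{-\nu_*}$. Therefore
\begin{equation*}
    \frac{2^{-\nu}}{\Gamma(j+\nu+1)}\le c(\nu_*)\quad\text{for all } j\ge0,\ \nu\ge\nu_*.
\end{equation*}
The bound becomes
\begin{equation*}
    \sup_{0\le t\le1}\bigl|\mathcal I_\nu^{(k)}(t)\bigr|\le c(\nu_*)\sum_{j\ge0}\frac{(2j)^k}{4^j\,j!}.
\end{equation*}
The right-hand side is a convergent series, finite for each $k$ and independent of $\nu$. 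Taking the maximum over $0\le k\le N$ yields $C_{N,\nu_*}$.

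Alternatively, the identity $\mathcal I_\nu'=t\,\mathcal I_{\nu+1}$ could be iterated to express higher derivatives through $\mathcal I_{\nu+\ell}$, reducing everything to a uniform bound on $\sup_{[0,1]}|\mathcal I_\mu|$ for $\mu\ge\nu_*$. The direct series estimate is simpler and is the route I would take.
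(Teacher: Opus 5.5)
Your proof is correct and takes the same route as the paper: expand $\mathcal I_\nu$ in its power series and bound the coefficients uniformly in $\nu\ge\nu_*$. The identity is the recurrence $(t^{-\nu}I_\nu)'=t^{-\nu}I_{\nu+1}$, which the paper cites and you verify by reindexing the series. Your bound $2^{-\nu}/\Gamma(j+\nu+1)\le c(\nu_*)$ and the convergence of $\sum_j (2j)^k/(4^j j!)$ spell out the uniformity that the paper asserts in one line.
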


\begin{lemma}\label{lem_S_derivative_bound}
Let $\nu_*>-1$. For every integer $N\geq 0$, there exist constants
$C_{N,\nu_*},A_N>0$ such that, for every $\nu\geq\nu_*$,
\begin{equation*}
    \left|S_\nu^{(k)}(t)\right|
    \leq
    C_{N,\nu_*}(1+\nu+t)^{A_N},
    \qquad
    0\leq k\leq N,\quad t>0.
\end{equation*}
Consequently, for every multi-index $\gamma$ satisfying $|\gamma|\leq N$,
\begin{equation*}
    \left|
        \partial_Y^\gamma\bigl(|Y|S_\nu(|Y|)\bigr)
    \right|
    \leq
    C_{N,\nu_*}(1+\nu+|Y|)^{A_N},
    \qquad
    Y\in\mathbb R^m.
\end{equation*}
\end{lemma}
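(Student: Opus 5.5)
The plan is to prove the bound on $S_\nu=I_{\nu+1}/I_\nu$ through its Riccati equation and then pass to $|Y|S_\nu(|Y|)$ by the chain rule, using evenness in $t$ to control behaviour at $Y=0$.

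\textbf{Step 1: the Riccati equation.} The recurrences
\begin{equation*}
I_\nu'=I_{\nu+1}+\tfrac{\nu}{t}I_\nu,
\qquad
I_{\nu+1}'=I_\nu-\tfrac{\nu+1}{t}I_{\nu+1}
\end{equation*}
give
\begin{equation*}
S_\nu'=1-\frac{2\nu+1}{t}S_\nu-S_\nu^2 .
\end{equation*}

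\textbf{Step 2: the case $k=0$.} For $\nu>-1$ and $t>0$ we have $I_\nu>0$ and $I_{\nu+1}>0$. For $\nu\ge 0$ the integral representation shows $0<I_{\nu+1}<I_\nu$. Uniformly for $\nu\ge\nu_*$, I would use the Amos-type bound
\begin{equation*}
0<S_\nu(t)\le \frac{t}{\nu+\frac12+\sqrt{(\nu+\frac12)^2+t^2}}\cdot C ,
\end{equation*}
or, more crudely, the series bound $S_\nu(t)\le t/(2(\nu+1))\cdot e^{t}$ for small $t$ together with $S_\nu\le 1+C/t$ for large $t$. The large-$t$ bound comes from the Riccati equation: whenever $S>1+|2\nu+1|/t$, the derivative $S'$ is negative enough to force $S$ back down. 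Altogether $|S_\nu(t)|\le C(1+\nu+t)$.

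\textbf{Step 3: higher derivatives for $t\ge1$.} Differentiate the Riccati equation inductively. Each $S^{(k+1)}$ is a polynomial in $S,\dots,S^{(k)}$ and in $t^{-j}$ with coefficients polynomial in $\nu$. For $t\ge 1$ this gives $|S^{(k)}|\le C(1+\nu+t)^{A_k}$ with $A_{k+1}=2A_k+1$.

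\textbf{Step 4: $t\le1$, the main obstacle.} Here the factor $1/t$ is dangerous. I would avoid the Riccati equation in this range and write instead
\begin{equation*}
S_\nu(t)=\frac{t}{2}\,\frac{\mathcal I_{\nu+1}(t)}{\mathcal I_\nu(t)}\,\frac{1}{\nu+1}\cdot(2(\nu+1)) ,
\end{equation*}
that is, $S_\nu(t)=t\,\mathcal I_{\nu+1}(t)/\mathcal I_\nu(t)$. Lemma~\ref{lem:Bessel_regular_branch} bounds all derivatives of $\mathcal I_\nu$ and $\mathcal I_{\nu+1}$ on $[0,1]$ uniformly in $\nu$.

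It remains to bound $\mathcal I_\nu$ below on $[0,1]$. Since $\mathcal I_\nu$ is increasing in $t$ (its derivative is $t\mathcal I_{\nu+1}\ge0$), we have
\begin{equation*}
\mathcal I_\nu(t)\ge \mathcal I_\nu(0)=\frac{1}{2^\nu\Gamma(\nu+1)} .
\end{equation*}
This decays superexponentially in $\nu$, so a direct lower bound fails. The fix is to compare numerator and denominator at matching order: the series
\begin{equation*}
\mathcal I_{\nu+1}(t)/\mathcal I_\nu(0)=\sum_j \frac{(t/2)^{2j}\,\Gamma(\nu+1)}{2\,j!\,\Gamma(\nu+j+2)}
\end{equation*}
has all $t$-derivatives on $[0,1]$ bounded uniformly, and the same holds for $\mathcal I_\nu(t)/\mathcal I_\nu(0)\ge1$. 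The quotient rule then yields uniform bounds. The factor $\Gamma(\nu+1)/\Gamma(\nu+j+1)$ is bounded for $\nu\ge\nu_*$ since $\nu_*>-1$.

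\textbf{Step 5: the function of $Y$.} The $t\le1$ representation shows $tS_\nu(t)=t^2G_\nu(t^2)$ with $G_\nu$ smooth and uniformly bounded, since the series is in $t^2$. Hence $|Y|S_\nu(|Y|)=|Y|^2G_\nu(|Y|^2)$ is smooth near $Y=0$ with uniformly bounded derivatives. For $|Y|\ge1$, apply the chain rule: the derivatives of $|Y|$ are bounded there, and Step 3 supplies the polynomial bounds. Combining the two regions gives the stated estimate with a possibly larger $A_N$.
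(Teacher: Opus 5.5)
Your proposal is correct and follows essentially the same route as the paper: the Riccati equation with inductive differentiation for $t\ge1$, and for $t\le1$ the power series normalized by its leading coefficient (your $\mathcal I_\nu(t)/\mathcal I_\nu(0)=E_\nu(t^2)\ge1$ is exactly the paper's $E_\nu$). The $t^2$-structure then handles $|Y|S_\nu(|Y|)$ near $Y=0$. Your Step~2 bound $|S_\nu|\le C(1+\nu+t)$ is weaker than the paper's $S_\nu\le C_{\nu_*}$ but suffices, and it has a one-line proof without any barrier argument: the termwise comparison $E_{\nu+1}\le E_\nu$ gives $S_\nu(t)=\frac{t}{2(\nu+1)}\frac{E_{\nu+1}(t^2)}{E_\nu(t^2)}\le \frac{t}{2(\nu_*+1)}$ for all $t>0$. (The Amos-type expression you quote blows up like $1/t$ as $t\to0$ when $\nu<-\tfrac12$, so it cannot control the small-$t$ range.)
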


\begin{lemma}\label{lem_bessel_inputs_high_frequency}
Let $\nu_*>-1$. There exist constants $c,C>0$, depending only on $\nu_*$, such that, for every $\nu\geq\nu_*$,
\begin{equation}\label{eq_S_lower_large}
    \frac{I_{\nu+1}(t)}{I_\nu(t)}
    \geq c,
    \qquad
    t\geq C(1+\nu).
\end{equation}
Moreover, the function $\mathcal K_\nu(t)=t^\nu K_\nu(t)$ is decreasing on $(0,\infty)$, and
\begin{equation}\label{eq_K_normalized_exp_decay}
    \frac{\mathcal K_\nu(t_2)}{\mathcal K_\nu(t_1)}
    \leq
    C\exp\bigl\{-c(t_2-t_1)\bigr\},
    \qquad
    t_2\geq t_1\geq C(1+\nu).
\end{equation}
In addition, for every $\nu\geq 0$ and $t>0$,
\begin{equation}\label{eq:product:Bessel}
    I_\nu(t)K_\nu(t)
    \leq
    \frac{C}{t+\nu}.
\end{equation}
\end{lemma}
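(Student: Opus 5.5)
The plan is to reduce all three statements to first-order Riccati equations for ratios of Bessel functions. For those equations, sign information and a comparison argument give bounds that are uniform in $\nu$. The key identities, from the recurrence relations, are $I_\nu'=I_{\nu+1}+\frac{\nu}{t}I_\nu$ and $(t^\nu K_\nu)'=-t^\nu K_{\nu-1}$. Together with the Wronskian identity $I_\nu K_\nu'-I_\nu'K_\nu=-1/t$, these are all we need. Write $S=S_\nu=I_{\nu+1}/I_\nu$ and $T=T_\nu:=K_{\nu-1}/K_\nu$; both are positive on $(0,\infty)$.

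\emph{Step 1: lower bound \eqref{eq_S_lower_large}.} From the Bessel equation one derives the Riccati equation
\begin{equation*}
S'=1-S^2-\frac{2\nu+1}{t}S .
\end{equation*}
Fix a small $c\in(0,\frac12)$ and put $t_0=C(1+\nu)$ with $C$ large, depending on $\nu_*$.
\begin{itemize}
\item Whenever $S\le c$ and $t\ge t_0/2$, we have $S'\ge 1-c^2-\frac{(2\nu+1)_+c}{t}\ge\frac12$. For $\nu<-\frac12$ the last term has a favourable sign anyway.
\item If $S<c$ on all of $[t_0/2,t_0]$, then $S$ would grow by at least $t_0/4\ge1>c$, which is a contradiction. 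So $S$ reaches $c$ somewhere in $[t_0/2,t_0]$.
\item Afterwards $S$ cannot cross $c$ downward, since $S'>0$ on $\{S=c\}$.
\end{itemize}
Hence $S\ge c$ for all $t\ge t_0$.

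\emph{Step 2: monotonicity and \eqref{eq_K_normalized_exp_decay}.} Monotonicity is immediate from $(t^\nu K_\nu)'=-t^\nu K_{\nu-1}<0$. For the decay, set $h=\mathcal K_\nu$, which satisfies $h''+\frac{1-2\nu}{t}h'-h=0$. Since $T=-h'/h$, it satisfies
\begin{equation*}
T'=T^2-1-\frac{1-2\nu}{t}T .
\end{equation*}
\begin{itemize}
\item Suppose $T(t_1)<c$ for some $t_1\ge C(1+\nu)$. Then on $\{T\le c\}$ we get $T'\le c^2-1+\frac{(2\nu-1)_+c}{t}\le-\frac12$, so $T$ stays below $c$ and keeps decreasing at rate at least $\frac12$.
\item $T$ would therefore become negative in finite time, contradicting $K_{\nu-1}>0$. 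So $T\ge c$ on $[C(1+\nu),\infty)$.
\item Integrating $(\log h)'=-T\le -c$ from $t_1$ to $t_2$ gives the exponential decay, even with constant $1$.
\end{itemize}

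\emph{Step 3: the product bound \eqref{eq:product:Bessel}.} Dividing the Wronskian identity by $I_\nu K_\nu$ and using $I_\nu'/I_\nu=S+\nu/t$ and $-K_\nu'/K_\nu=T+\nu/t$ gives the exact formula
\begin{equation*}
I_\nu(t)K_\nu(t)=\frac{1}{t\,(S_\nu+T_\nu)+2\nu}.
\end{equation*}
Since $S,T>0$, this immediately yields $I_\nu K_\nu\le 1/(2\nu)$. We then split into regions.
\begin{itemize}
\item For $t\ge C(1+\nu)$, Steps 1 and 2 give $S+T\ge 2c$, so $I_\nu K_\nu\le (2ct)^{-1}\lesssim (t+\nu)^{-1}$.
\item For $t\le C(1+\nu)$ and $\nu\ge\frac12$, we have $t+\nu\lesssim\nu$, and the bound $1/(2\nu)$ suffices.
\item For $t\le 2C$ and $\nu\in[\frac12,1]$, the same bound works because $t+\nu$ is bounded above.
\item For $0\le\nu\le\frac12$, use that $\mu\mapsto K_\mu(t)$ is increasing in $|\mu|$ (visible from $K_\mu(t)=\int_0^\infty e^{-t\cosh s}\cosh(\mu s)\,ds$). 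Since $|\nu-1|\ge\nu$, this gives $T\ge1$, hence $I_\nu K_\nu\le 1/t$. Combined with $1/(2\nu)$, this yields $I_\nu K_\nu\le 2\min(1/t,1/(2\nu))\lesssim (t+\nu)^{-1}$.
\end{itemize}

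I expect the main obstacle to be Step 2, the comparison argument for $T$. One has to check the sign bookkeeping for the term $\frac{1-2\nu}{t}T$ across all $\nu\ge\nu_*>-1$. One also has to make sure the contradiction uses only positivity of $K_{\nu-1}$ and not any fixed-order asymptotics, since those would destroy uniformity in $\nu$. Step 1 is the analogous argument with reversed time direction, and Step 3 is then bookkeeping on top of the exact Wronskian formula.
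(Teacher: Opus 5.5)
Your proof is correct, but it reaches the first two estimates by a different mechanism than the paper.

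\textbf{The ratio bound.} For \eqref{eq_S_lower_large} the paper invokes an Amos-type lower bound $I_{\nu+1}(t)/I_\nu(t)\ge t/\bigl(\nu+\tfrac12+\sqrt{(\nu+\tfrac32)^2+t^2}\bigr)$ and then enlarges $C$. Your Riccati comparison for $S_\nu$ uses only the recurrences and the positivity of $I_\nu$ and $I_{\nu+1}$. It is therefore self-contained. Its uniformity over the whole range $\nu\ge\nu_*>-1$ is visible from the proof rather than inherited from an external inequality. This includes $-1<\nu<0$, which is needed for the order $-\nu$ in Part~\ref{part2}.

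\textbf{The $K$-branch decay.} For \eqref{eq_K_normalized_exp_decay} the paper differentiates $K_\nu(t)=\int_0^\infty e^{-t\cosh s}\cosh(\nu s)\,ds$ in $t$ to get $-K_\nu'/K_\nu\ge1$. This gives the explicit bound $K_{\nu-1}/K_\nu\ge 1-\nu/t$ for all $t>0$. That is shorter than your forward-in-time comparison for $T_\nu$. Your comparison only gives $T_\nu\ge c$ beyond $C(1+\nu)$, though that is all that is needed.

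\textbf{The product bound.} For \eqref{eq:product:Bessel} your identity $I_\nu K_\nu=\bigl(t(S_\nu+T_\nu)+2\nu\bigr)^{-1}$ is exactly the paper's Wronskian relation. However, the same fact $-K_\nu'/K_\nu\ge 1$, i.e.\ $tT_\nu+\nu\ge t$ for every $\nu$, gives $I_\nu K_\nu\le 1/t$ in one line. Indeed, using $tS_\nu\ge 0$, it even gives $I_\nu K_\nu\le (t+\nu)^{-1}$ directly. So your four-region case split is unnecessary. Your use of the monotonicity of $K_\mu$ in $|\mu|$ recovers this only for $\nu\le\frac12$, and your third bullet is already contained in the second.
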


\noindent\textbf{Notation.}
Throughout the article, for $n,m\geq 1$, $\alpha\in(0,1)$, and $\beta\geq 0$, we set
\begin{equation*}
    \mathfrak a:=\beta+1-\alpha,
    \qquad
    \mathfrak b:=\frac{n+2\alpha-2}{2},
    \qquad
    \mathcal Q
    :=\frac{n+m(\beta+1-\alpha)}{1-\alpha}
    =\frac{n+m\mathfrak a}{1-\alpha}.
\end{equation*}
The symbols $c$ and $C$ denote positive constants that may vary from line to line.

\part{Riesz transform for \texorpdfstring{$n\ge 2$}{n>=2}}\label{part1}

In this part we prove the full-range Riesz transform estimate when the degenerate variable has dimension $n\geq2$. By the abstract criterion recalled in Section~\ref{sec3}, it is enough to prove reverse H\"older inequalities for gradients of Grushin harmonic functions. Remote balls are controlled by local Li--Yau type estimates, since the coefficients are essentially non-degenerate there. The only new difficulty is the anchored case, where the ball meets the singular set.

After rescaling an anchored ball to a fixed cylinder, we represent a localized harmonic function as the sum of a Poisson term and a Green term. The singularity at $\rho=0$ is encoded in modified Bessel functions after Fourier transform in $y$ and spherical harmonic decomposition in the angular variable. Unlike the one-dimensional analysis of Part~\ref{part2}, the Bessel order now depends on the angular frequency $\ell$ and tends to infinity with it. Thus the proof requires Bessel estimates which are uniform simultaneously in the order, the Fourier frequency, and the distance to the singular set.

\section{Reduction}\label{sec3}

From Robinson–Sikora \cite{RS}, it is well known that the Grushin space satisfies the volume doubling condition \eqref{D} and the Gaussian upper bound for the heat kernel \eqref{UE}; see also Theorem~\ref{RS_Grushin}. Moreover, it is known from \cite{RS2} that the $L^2$-Poincar\'e inequality holds when $n\ge 2$, or when $n=1$ and $\alpha \in (0,1/2)$. That is,
\begin{equation}\label{P2}\tag{$\mathrm{P}$}
\int_B |f-f_B|^2 d\xi \le C r(B)^2 \int_{\lambda B} |\nabla_L f|^2 d\xi,\quad f\in C^\infty(\lambda B)
\end{equation}
for some $\lambda \ge 1$ and any Grushin ball $B$ with radius $r(B)$.

By \cite{CJKS}, it is known that, under suitable conditions, \eqref{R_p} and \eqref{eq:RH:Jiang} are equivalent. More precisely,

\begin{theorem}\cite[Theorem~1.9]{CJKS}\label{thm:CJKS}
Let $X$ be a non-compact Dirichlet metric measure space endowed with a ``carr\'e du champ''. Assume that $X$ satisfies \eqref{D} and \eqref{UE}. Let $2<p<\infty$. If \eqref{P2} holds, then \eqref{eq:RH:Jiang} and \eqref{R_p} are equivalent.
\end{theorem}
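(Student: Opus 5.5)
Since Theorem~\ref{thm:CJKS} is quoted from \cite{CJKS}, we only indicate how we would reconstruct the argument in the abstract setting. The plan is to pass through the intermediate heat-semigroup gradient estimate
\begin{equation*}
    (\mathrm{G}_p)\qquad \|\nabla_L e^{-tL}\|_{p\to p}\le \frac{C}{\sqrt t},\qquad t>0,
\end{equation*}
and to prove the chain of equivalences $(\mathrm{R}_p)\Leftrightarrow(\mathrm{G}_p)\Leftrightarrow(\mathrm{RH}_p)$.

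\textbf{Step 1: $(\mathrm{R}_p)\Leftrightarrow(\mathrm{G}_p)$.} The implication $(\mathrm{R}_p)\Rightarrow(\mathrm{G}_p)$ is immediate. Write
\begin{equation*}
\nabla_Le^{-tL}=(\nabla_LL^{-1/2})\,(L^{1/2}e^{-tL}),
\end{equation*}
and use the analyticity of the semigroup on $L^p$, which follows from \eqref{UE} and \eqref{D}, to get $\|L^{1/2}e^{-tL}\|_{p\to p}\le Ct^{-1/2}$. For the converse we would use the Auscher--Coulhon--Duong--Hofmann good-$\lambda$ scheme \cite{ACDH}. Split $L^{-1/2}=\int_0^\infty e^{-tL}\,t^{-1/2}\,dt$ at scale $r(B)^2$ and approximate on each ball by $A_B=I-(I-e^{-r(B)^2L})^N$. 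The off-diagonal part $\nabla_L L^{-1/2}(I-A_B)$ is controlled in $L^2$ by Gaussian estimates and the $L^2$ boundedness of the Riesz transform. The local part $\nabla_L L^{-1/2}A_B$ is controlled in $L^{p_0}$ for every $p_0<p$, by interpolating $(\mathrm{G}_p)$ with the $L^2$ Gaffney bounds. Here \eqref{P2} is used to convert $L^2$ control of $\nabla_L f$ into control of oscillations of $f$, as in Auscher's $L^p$ extrapolation criterion. This yields $(\mathrm{R}_q)$ for every $2<q<p$, and a self-improvement of $(\mathrm{G}_p)$ (Step~3) closes the gap up to $p$.

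\textbf{Step 2: $(\mathrm{G}_p)\Rightarrow(\mathrm{RH}_p)$.} Let $u$ be $L$-harmonic in $\lambda B$ and take a cutoff $\chi$ adapted to $B$. Decompose
\begin{equation*}
\chi u=e^{-r^2L}(\chi u)+\int_0^{r^2}e^{-sL}L(\chi u)\,ds ,
\end{equation*}
where $L(\chi u)$ involves only $\nabla\chi$ terms, because $u$ is harmonic. Localize $(\mathrm{G}_p)$ using Davies--Gaffney off-diagonal bounds, which come from \eqref{UE}. Then bound $\|\nabla_L u\|_{L^2}$ and $\|u\|_{L^2}$ on $2B$ by $\fint_{\lambda B}|u|$, using Caccioppoli together with the $L^\infty$ estimate that follows from (D), \eqref{UE} and \eqref{P2} through De Giorgi--Moser. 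This gives \eqref{eq:RH:Jiang}.

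\textbf{Step 3: $(\mathrm{RH}_p)\Rightarrow(\mathrm{G}_p)$.} This is the step we expect to be the main obstacle, because a heat solution is not harmonic. Fix $t$ and a ball $B$ of radius $\sqrt t$, and set $v=e^{-tL}f$. Write $v=h+w$ on $\lambda B$, where $h$ is the $L$-harmonic replacement of $v$ on $\lambda B$. The error $w$ solves $Lw=Lv$ on $\lambda B$ with zero boundary data, and $\|Lv\|\lesssim t^{-1}\|f\|$ locally by analyticity. Apply \eqref{eq:RH:Jiang} to $h$, and estimate $\nabla_L w$ in $L^2$ by the energy estimate. This gives a weak reverse H\"older inequality for $|\nabla_L v|$ with an $L^2$ tail term. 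We would then run Shen's $L^p$ criterion \cite{Shen}, or equivalently a Calder\'on--Zygmund decomposition of $|\nabla_Le^{-tL}f|^2$ at scale $\sqrt t$ combined with \eqref{D}, to upgrade the $L^2$ gradient bound to $L^p$. Verifying that the tail terms sum correctly is exactly where \eqref{P2} enters.
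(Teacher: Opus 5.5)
The paper gives no proof of this statement; it only quotes \cite{CJKS}. Your sketch therefore has to stand on its own. Its skeleton $(\mathrm R_p)\Leftrightarrow(\mathrm G_p)\Leftrightarrow(\mathrm{RH}_p)$ is the right one, but as written it never reaches the exponent $p$ itself.

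\textbf{Missing endpoint step.} You correctly note that the scheme of \cite{ACDH} turns $(\mathrm G_p)$ only into $(\mathrm R_q)$ for $q<p$. You then defer the endpoint to ``a self-improvement of $(\mathrm G_p)$ (Step~3)''. Step~3 is not a self-improvement: it is the implication $(\mathrm{RH}_p)\Rightarrow(\mathrm G_p)$. Run through Shen's criterion, it yields $(\mathrm G_q)$ only for $q<p$, since Theorem~\ref{thm:Shen} gives boundedness only strictly below the reverse H\"older exponent. So Steps~2 and~3 together give $(\mathrm G_p)\Rightarrow(\mathrm G_q)$ for $q<p$, which carries no information. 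As written you only obtain $(\mathrm{RH}_p)\Rightarrow(\mathrm R_q)$ for $q<p$.

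The missing ingredient is Gehring's lemma, and the repair goes as follows.
\begin{enumerate}[label=(\roman*)]
\item Apply \eqref{eq:RH:Jiang} to $u-u_{\lambda B'}$ and use \eqref{P2}. This gives the gradient form \eqref{eq:RH:Shen} on every ball $B'$ with $\lambda B'$ inside the harmonicity domain. This is where \eqref{P2} genuinely enters, and it is also what makes Shen's criterion applicable to $\nabla_L h$ in Step~3.
\item Gehring's lemma on the doubling space raises the exponent to $p+\varepsilon$.
\item Caccioppoli plus the mean value inequality give back $(\mathrm{RH}_{p+\varepsilon})$.
\item Step~3 at $p+\varepsilon$ gives $(\mathrm G_q)$ for some $q>p$, and only then does \cite{ACDH} yield $(\mathrm R_p)$.
\end{enumerate}

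\textbf{Loss in Step~2.} The converse chain $(\mathrm R_p)\Rightarrow(\mathrm G_p)\Rightarrow(\mathrm{RH}_p)$ needs Step~2 without loss, and your localization also loses. For harmonic $u$, $L(\chi u)$ is the distribution $\phi\mapsto\int\bigl(u\,\Gamma(\chi,\phi)-\phi\,\Gamma(\chi,u)\bigr)\,d\mu$; in a Dirichlet space $L\chi$ need not be a function. This distribution has a divergence-form part. Global bounds from $(\mathrm G_p)$ and its dual therefore only give $\|\nabla_L e^{-sL}L(\chi u)\|_p\lesssim s^{-1}$ for that part, which is not integrable at $s=0$. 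The integral $\int_0^{r^2}$ is controlled only by using that $L(\chi u)$ lives at distance $\sim r$ from $B$.

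That requires $L^p$ off-diagonal bounds for $\sqrt s\,\nabla_L e^{-sL}$ at the exponent $p$ itself. Interpolating $(\mathrm G_p)$ with the $L^2$ Davies--Gaffney bounds gives them only below $p$, with decay constants degenerating at $p$. Unlike the loss in Step~3, this one cannot be recovered by Gehring. You need an exact localization. One option is finite propagation speed, which is equivalent to Davies--Gaffney:
\begin{enumerate}[label=(\roman*)]
\item Take $\psi$ even with $\psi(0)=1$ and $\operatorname{supp}\hat\psi\subset[-\frac12,\frac12]$.
\item Then $k_r(\xi):=(1-\psi(r\xi))\xi^{-2}$ has $\operatorname{supp}\hat k_r\subset[-\frac r2,\frac r2]$, so $k_r(\sqrt L)$ has kernel supported in $\{d\le r/2\}$.
\item Hence $\chi u=\psi(r\sqrt L)(\chi u)$ on $2B$ when $\chi=1$ on $3B$.
\item Finally $\nabla_L\psi(r\sqrt L)=\nabla_L(I+r^2L)^{-N}\,(I+r^2L)^{N}\psi(r\sqrt L)$ maps $L^2(4B)$ into $L^p$ with norm $\lesssim r^{-1}\mu(B)^{1/p-1/2}$, by $(\mathrm G_p)$ and \eqref{UE}.
\end{enumerate}
This gives \eqref{eq:RH:Jiang} exactly at $p$.
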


\begin{remark}
We emphasize the following points.
\begin{enumerate}[label=(\roman*)]
\item In fact, \cite[Theorem~1.9]{CJKS} requires only a \eqref{P_p_intro} rather than \eqref{P2}. However, under \eqref{D}, condition \eqref{P2} implies \eqref{P_p_intro} for all $p>2$; see \cite{HK}. For convenience, we therefore state their result in the above form.

\item By \cite[Theorem~1.1]{RS2}, it is known that \eqref{P2} holds on Grushin spaces if either $n\ge 2$, or $n=1$ and $0<\alpha<1/2$. Thus, in these two cases, the Riesz transform problem is reduced to proving \eqref{eq:RH:Jiang}.
\item In the special case $n=1$ and $1/2\le \alpha<1$, the global condition \eqref{P2} fails, and hence Theorem~\ref{thm:CJKS} cannot be applied. In \cite{Jiang}, Jiang refines the above theorem by relaxing \eqref{P2} to a so-called Poincar\'e inequality on the ends, which requires \eqref{P2} only for balls outside a compact set. This refinement makes the equivalence stable under gluing operations, for example on manifolds or Dirichlet spaces with ends. However, in our strongly degenerate case, \eqref{P2} holds only for balls outside the singular hyperplane $\{x=0\}$, which is not compact. Therefore, Jiang's theorem does not apply either. We will address this issue later in Part~\ref{part4}.

\end{enumerate}
\end{remark}

The following two lemmas further reduce the problem to establishing reverse H\"older estimates on anchored balls.

\begin{lemma}\label{reduce_RH}
Let $n\ge 1$, $m\ge 1$, $\alpha\in (0,1)$ and $\beta \ge 0$. Let $1<p\le \infty$. The $L^p$-reverse H\"older inequality \eqref{eq:RH:Jiang} holds provided that it holds for all anchored balls of the form $B((0,y_0), r)$ and for all remote balls $B((x_0,y_0), r)$ with $r\le \varepsilon_0 |x_0|^{1-\alpha}$.
\end{lemma}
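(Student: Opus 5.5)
Given an arbitrary ball $B=B(\xi_0,r)$ with $\xi_0=(x_0,y_0)$, I will split into two cases by comparing $r$ with $|x_0|^{1-\alpha}$. Throughout I use the distance and volume estimates of Theorem~\ref{RS_Grushin}, Lemma~\ref{le_volume}, and the doubling property \eqref{D}. The constant $\lambda$ in \eqref{eq:RH:Jiang} may be enlarged in the conclusion. This costs nothing, since harmonicity on a larger ball is then assumed.

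\emph{Case 1: $r\le \varepsilon_0|x_0|^{1-\alpha}$.} Here $B$ is remote, and the hypothesis applies directly.

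\emph{Case 2: $r> \varepsilon_0|x_0|^{1-\alpha}$.} Then $d(\xi_0,(0,y_0))\lesssim |x_0|^{1-\alpha}\lesssim r$, by (i) of Theorem~\ref{RS_Grushin} with $y=y'$. Hence there is a constant $A$, depending only on the structure, such that the anchored ball $B^*:=B((0,y_0),Ar)$ satisfies $B\subset B^*$. Suppose $u$ is $L$-harmonic in $\lambda' B$, with $\lambda'$ chosen so that $\lambda B^*\subset\lambda' B$; this is possible because $\lambda B^*\subset B(\xi_0,(\lambda A+C)r)$. Applying the anchored hypothesis to $B^*$ gives
\[
\Bigl(\fint_B|\nabla_Lu|^p\Bigr)^{1/p}\le \Bigl(\tfrac{|B^*|}{|B|}\Bigr)^{1/p}\Bigl(\fint_{B^*}|\nabla_Lu|^p\Bigr)^{1/p}\le \frac{C}{Ar}\Bigl(\tfrac{|B^*|}{|B|}\Bigr)^{1/p}\fint_{\lambda B^*}|u|.
\]
Doubling gives $|B^*|\sim|B|\sim|\lambda' B|$. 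Therefore $\fint_{\lambda B^*}|u|\lesssim \fint_{\lambda' B}|u|$, and the estimate follows. For $p=\infty$ the same argument works, replacing averages by suprema on the left.

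The only point needing care is the inclusion $B\subset B^*$ together with the volume comparability, both uniform in $\xi_0$. This follows from the quasi-distance comparison $D\sim d$ and the triangle inequality for $d$, since $d$ is a genuine metric. I do not anticipate a genuine obstacle; the content of the lemma is purely geometric bookkeeping. The constants $\lambda$ and $C$ obtained depend only on $n,m,\alpha,\beta$, the constants of the hypothesis, and $\varepsilon_0$.
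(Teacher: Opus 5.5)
Your proposal is correct and follows the paper's argument. For $r>\varepsilon_0|x_0|^{1-\alpha}$ you enclose $B$ in an anchored ball $B((0,y_0),Ar)$ of comparable radius, using $d(\xi_0,(0,y_0))\sim|x_0|^{1-\alpha}$. You then apply the anchored hypothesis and return to an enlarged concentric ball $\lambda' B$ by doubling, which is exactly the paper's chain $B\subset B((0,y_0),C_1r)\subset B((0,y_0),\lambda C_1r)\subset C_2B$; you simply make the inclusion and volume-comparison constants explicit.
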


\begin{proof}
Suppose that $B=B(\xi_0,r) = B((x_0,y_0), r)$ is neither anchored nor remote. That is, $x_0 \ne 0$ and $r>\varepsilon_0 |x_0|^{1-\alpha}$. Clearly, $B((x_0,y_0), r) \subset B((0,y_0), C_1 r)\subset B((0,y_0), \lambda C_1 r) \subset B((x_0,y_0), C_2 r)$ for some $C_2 \ge C_1 \ge 1$. Therefore, by the $L^p$-reverse H\"older inequality for anchored balls and \eqref{D},
\begin{align*}
    \left( \fint_B |\nabla_L u|^p d\xi \right)^{\frac{1}{p}}
    &\le C \left( \fint_{B((0,y_0), C_1 r)} |\nabla_L u|^p d\xi \right)^{\frac{1}{p}} \\
    &\le \frac{C}{r} \fint_{B((0,y_0), \lambda C_1 r)} |u| d\xi \\
    &\le \frac{C}{r} \fint_{C_2 B} |u| d\xi
\end{align*}
as required.
\end{proof}

\begin{lemma}\label{RH_remote}
Let $n\ge 1$, $m\ge 1$, $\alpha\in (0,1)$ and $\beta \ge 0$. The $L^p$-reverse H\"older inequality holds for remote balls $B((x_0,y_0), r)$ with $r\le \varepsilon_0 |x_0|^{1-\alpha}$ for all $1<p\le \infty$.
\end{lemma}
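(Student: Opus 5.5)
Our plan is to reduce to the uniformly elliptic case by the intrinsic rescaling already used in the proof of Lemma~\ref{PE}. Since $\nabla_L u$ is controlled in $L^\infty$ by its $L^p$ averages, it suffices to prove $(\mathrm{RH}_\infty)$ on remote balls. Let $B=B((x_0,y_0),r)$ with $r\le\varepsilon_0|x_0|^{1-\alpha}$, and suppose $u$ is $L$-harmonic on $\lambda B$. We choose $\lambda$ and shrink $\varepsilon_0$ if necessary so that $\lambda B$ is still remote, say $\lambda r\le \varepsilon_0'|x_0|^{1-\alpha}$ with $\varepsilon_0'$ as in Lemma~\ref{le_volume}. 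Then Lemma~\ref{le_volume} gives
\[
Q_1:=B_n(x_0,c_1\lambda r|x_0|^\alpha)\times B_m(y_0,c_1\lambda r|x_0|^\beta)\subset\lambda B .
\]
Also $|x|\sim|x_0|$ on $\lambda B$, so $\lambda B$ stays away from $\{x=0\}$. There the Friedrichs form is that of a smooth uniformly elliptic operator, and $u$ is a classical smooth solution.

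Next we apply the map $\Lambda(x',y')=(x_0+r|x_0|^\alpha x',\,y_0+r|x_0|^\beta y')$ and set $U=u\circ\Lambda$. This $U$ solves
\[
-\nabla_{x'}\cdot\bigl(a(x')\nabla_{x'}U\bigr)+b(x')\Delta_{y'}U=0
\]
on the fixed cylinder $Q_0'=B_n(0,c_1\lambda)\times B_m(0,c_1\lambda)$. The coefficients are
\[
a(x')=|x_0+r|x_0|^\alpha x'|^{2\alpha}/|x_0|^{2\alpha},\qquad b(x')=|x_0+r|x_0|^\alpha x'|^{2\beta}/|x_0|^{2\beta}.
\]
Since $r|x_0|^{\alpha}\le\varepsilon_0|x_0|$, the point $x_0+r|x_0|^\alpha x'$ stays in the annulus $\{|x|\in[\tfrac12|x_0|,\tfrac32|x_0|]\}$. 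Hence $a$ and $b$ are bounded above and below by constants depending only on $\alpha,\beta$.

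The key point is that all derivatives of $a$ and $b$ are also bounded uniformly in $x_0$ and $r$. Indeed, $\partial^k_{x'}a$ is bounded by a multiple of $(r|x_0|^{\alpha}/|x_0|)^k\le\varepsilon_0^k$, and the same holds for $b$. Standard interior estimates for elliptic equations with smooth coefficients on a fixed domain then apply: De Giorgi--Nash--Moser or Schauder theory, with constants depending only on the ellipticity bounds and finitely many coefficient derivatives. On $B_n(0,c_2)\times B_m(0,c_2)$, which contains $\Lambda^{-1}(B)$ by Lemma~\ref{le_volume}, they give
\[
\sup|\nabla_{x',y'}U|\le C\fint_{Q_0'}|U| .
\]
The radii must be arranged so that $c_2<c_1\lambda$, which holds once $\lambda>c_2/c_1$. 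This matches the choice $\gamma=c_2/c_1$ in Lemma~\ref{PE}.

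Finally we undo the scaling. We have $\nabla_{x'}U=r|x_0|^\alpha(\nabla_xu)\circ\Lambda$ and $\nabla_{y'}U=r|x_0|^\beta(\nabla_yu)\circ\Lambda$. Since $|x|\sim|x_0|$ on $B$, it follows that $|\nabla_Lu|\sim r^{-1}|\nabla_{x',y'}U|\circ\Lambda^{-1}$ there. Averages are preserved by the affine map, and $Q_1\subset\lambda B$ with $|Q_1|\sim|\lambda B|$ by Lemma~\ref{le_volume} and \eqref{D}. Together these give
\[
\sup_B|\nabla_Lu|\le \frac{C}{r}\fint_{\lambda B}|u| .
\]
The only delicate step is the bookkeeping of constants: $\varepsilon_0$ and $\lambda$ must be chosen so that the enlarged ball is still remote and the box inclusions of Lemma~\ref{le_volume} hold at both scales. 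Alternatively, one could integrate the gradient bound of Lemma~\ref{Grushin_gradient}, since there $1/\sqrt t\gtrsim|x|^{-(1-\alpha)}$ on remote scales, but the direct elliptic route above seems cleaner.
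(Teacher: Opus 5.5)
Your proof is correct, but it takes a different route from the paper's.

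\textbf{The paper's route.} The paper never rescales. After shrinking $\varepsilon_0$ so that $4B$ is remote, it notes that on $4B$ one has $|x|^{-(2-2\alpha)}\lesssim \varepsilon_0^2r^{-2}$. Hence the curvature-dimension inequality of Lemma~\ref{Grushin_gradient}(i) becomes a local $\mathrm{CD}(-C_1/r^2,C_2)$ condition there. The paper then applies the local Li--Yau gradient estimate of \cite[Theorem~1.4]{ZZ} to the positive harmonic function $u+\|u\|_{L^\infty(2B)}+\varepsilon$, which gives $\sup_B|\nabla_Lu|\le Cr^{-1}(\|u\|_{L^\infty(2B)}+\varepsilon)$. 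It closes with the $L^\infty$--$L^1$ mean value inequality \cite[Proposition~2.1]{CJKS}, which rests on the global (D) and (UE).

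\textbf{Your route.} You replace both ingredients by classical elliptic theory on a fixed box. The affine map $\Lambda$ turns the equation into a uniformly elliptic divergence-form equation whose coefficient derivatives are $O(\varepsilon_0^k)$. Moser's local boundedness then gives the $L^1$ mean value bound, and Schauder (or $H^k$) estimates give the gradient bound. All constants are independent of $x_0$ and $r$. This is exactly the alternative the paper points to in the remark after Lemma~\ref{Grushin_gradient}.

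\textbf{Comparison.} The paper's version is intrinsic and short given the cited machinery. However, it depends on the $\Gamma_2$ computation of \cite{H4} and on the hypotheses of the Zhang--Zhu theorem being met locally. Yours is self-contained: it needs only Lemma~\ref{le_volume} and textbook regularity, and it makes the constant dependence transparent. The price is the radius bookkeeping you carry out: $\lambda>c_2/c_1$, $\lambda B$ remote, and $|Q_1|\sim|\lambda B|$.

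\textbf{Two small slips.} First, the reduction to $(\mathrm{RH}_\infty)$ uses $\bigl(\fint_B|\nabla_Lu|^p\bigr)^{1/p}\le\sup_B|\nabla_Lu|$; your opening sentence states the inequality the wrong way round. Second, for the coefficient bounds on $Q_0'$ you need $c_1\lambda r|x_0|^\alpha\le|x_0|/2$. This follows from your requirement $\lambda r\le\varepsilon_0'|x_0|^{1-\alpha}$, not from $r|x_0|^\alpha\le\varepsilon_0|x_0|$ as written.
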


\begin{proof}
Let $B=B(\xi_0,r)=B((x_0,y_0),r)$ be a remote ball. Choosing $\varepsilon_0$ sufficiently small if necessary, it may be assumed that $4B$ is also remote. It follows that $|x| \sim |x_0|$ for all $(x,y) \in 4B$, and that $r \le \varepsilon_0 |x_0|^{1-\alpha}$. By Lemma~\ref{Grushin_gradient}, on $4B$ the local curvature-dimension inequality is bounded below by $\mathrm{CD}(-C_1/r^{2}, C_2)$ for some $C_1,C_2>0$. Let $u$ be $L$-harmonic in $4B$. Then $\widetilde{u}:=u+\|u\|_{L^\infty(2B)}+\varepsilon$ with $\varepsilon>0$ is positive and $L$-harmonic in $2B$. Hence the local Li--Yau type inequality (see \cite[Theorem~1.4]{ZZ}) gives
\begin{equation}\label{Li-Yau}
   \sup_B |\nabla_L u| = \sup_B |\nabla_L \widetilde{u}| \le \frac{C}{r} \|\widetilde{u}\|_{L^\infty(2B)}\le \frac{C}{r} \left(\|u\|_{L^\infty(2B)}+\varepsilon\right).
\end{equation}
Next, since the Grushin space satisfies \eqref{D} and \eqref{UE}, \cite[Proposition~2.1]{CJKS} yields
\begin{equation}\label{meanvalue}
    \|u\|_{L^\infty(2B)} \le C \fint_{4B} |u| d\xi.
\end{equation}
Combining \eqref{Li-Yau} and \eqref{meanvalue}, and then letting $\varepsilon \to 0$, yields the $L^\infty$-reverse H\"older inequality for remote balls, and hence the $L^p$-reverse H\"older inequality for all $1<p\le \infty$.
\end{proof}

Combining Lemma~\ref{reduce_RH} and Lemma~\ref{RH_remote}, the $L^p$-boundedness of the Riesz transform is now reduced to proving the following result.

\begin{theorem}\label{thm:RH}
Let $n\ge 2$, $m\ge 1$, $\alpha\in (0,1)$ and $\beta \ge 0$. Let $B=B\big((0,y_0), r\big)$ be an anchored Grushin ball. Suppose that $u$ is $L$-harmonic in $4B$. Then for every $2<p\le \infty$, the reverse H\"older inequality \eqref{eq:RH:Jiang} holds. In particular, there exists a constant $C>0$ such that
\begin{equation*}
    \left( \fint_B |\nabla_L u|^p d\xi \right)^{\frac{1}{p}} \le  \frac{C}{r} \fint_{4B} |u| d\xi.
\end{equation*}
\end{theorem}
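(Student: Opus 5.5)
The plan is to normalize the ball, write $u$ as a Poisson term plus a Green term in a mode expansion, and prove the endpoint $p=\infty$, which gives every $2<p\le\infty$ by H\"older. The first step is to normalize. Since $L$ commutes with translations in $y$ and $L(f\circ\delta_s)=s^{2(1-\alpha)}(Lf)\circ\delta_s$, the function $u\circ\delta_s(\cdot+(0,y_0))$ is $L$-harmonic in the dilated ball. By \eqref{le_volume3}, anchored balls are comparable to cylinders $B_n(0,s)\times B_m(y_0,s^{\mathfrak a})$ with $r=s^{1-\alpha}$, and both sides of the inequality scale like $r^{-1}$. So it suffices to take $y_0=0$, $r=1$, and work on the cylinders $\mathcal C_\kappa:=B_n(0,\kappa)\times B_m(0,\kappa)$. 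For $u$ $L$-harmonic in $\mathcal C_4$, the target is
\[
\sup_{\mathcal C_1}|\nabla_Lu|\le C\fint_{\mathcal C_4}|u|.
\]
By the mean value inequality \cite[Proposition~2.1]{CJKS}, already used in Lemma~\ref{RH_remote}, we may replace the right-hand side by $\|u\|_{L^\infty(\mathcal C_3)}$, after shrinking cylinders harmlessly.

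Next, localize and build a representation. Take $\chi\in C_c^\infty(\mathcal C_3)$ with $\chi=1$ on $\mathcal C_2$, chosen radial in $x$. Then $w=\chi u$ solves $Lw=g$ with $g=-2\Gamma(\chi,u)-u\,L\chi$, which is supported in $\mathcal C_3\setminus\mathcal C_2$, away from $\mathcal C_1$. By Caccioppoli, $g$ is controlled by $\|u\|_{L^\infty(\mathcal C_3)}$ in the energy norm. Near $\mathcal C_1$ we then have $u=w=L^{-1}g$, or more conveniently $u=P+G$: here $P$ is a Poisson-type extension solving the homogeneous equation in the slab $\{|x|<2\}$ with given data on $\{|x|=2\}$, and $G$ is a Green term with a source localized in $y$ away from $B_m(0,1)$. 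Take the Fourier transform in $y$ (frequency $\eta$) and expand in spherical harmonics $Y_{\ell}$ in $x/|x|$. The mode equation in $\rho=|x|$ is
\[
-\rho^{1-n}\partial_\rho(\rho^{n-1+2\alpha}\partial_\rho v)+\ell(\ell+n-2)\rho^{2\alpha-2}v+|\eta|^2\rho^{2\beta}v=0 .
\]
With $t=|\eta|\rho^{\mathfrak a}/\mathfrak a$, this has solutions $\rho^{-\mathfrak b}I_{\nu_\ell}(t)$ and $\rho^{-\mathfrak b}K_{\nu_\ell}(t)$, where $\nu_\ell=\sqrt{\mathfrak b^2+\ell(\ell+n-2)}/\mathfrak a$. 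Finite Friedrichs energy near $\rho=0$ excludes the $K$ branch, so the interior kernel uses only the regular branch. That branch is $\mathcal I_{\nu}$ times $\rho^{\mathfrak a\nu_\ell-\mathfrak b}$ up to normalization. The Poisson coefficients are $\rho^{-\mathfrak b}I_\nu(t)/(2^{-\mathfrak b}I_\nu(t(2)))$. The Green kernel in $\rho$ is $I_\nu(t_<)K_\nu(t_>)$ divided by the Wronskian.

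The key pointwise fact is the small-$\rho$ regularity of each mode. Write $\gamma_\ell=\mathfrak a\nu_\ell-\mathfrak b$, which solves $\gamma(\gamma+n-2+2\alpha)=\ell(\ell+n-2)$. Then
\[
|\rho^\alpha\partial_\rho\rho^{\gamma_\ell}|+\ell\rho^{\alpha-1}\rho^{\gamma_\ell}\lesssim \ell\,\rho^{\gamma_\ell-1+\alpha}.
\]
This is bounded near $\rho=0$ iff $\gamma_\ell\ge 1-\alpha$. For $\ell=0$ the mode is $\rho$-even analytic in $\rho^{2\mathfrak a}$, giving $\gamma=0$ but with derivative $\rho^{2\mathfrak a-1+\alpha}$, which is fine. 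For $\ell\ge1$, substituting $\gamma=1-\alpha$ gives $(1-\alpha)(n-1+\alpha)=n-1-\alpha(n-2)-\alpha^2\le n-1\le\ell(\ell+n-2)$. This is exactly where $n\ge2$ enters: for $n=1$ the odd branch gives the singular exponent. The $|x|^\beta\nabla_y$ component gains $|\eta|$, which is absorbed by the $\mathcal I_\nu$ bounds. So on $\mathcal C_1$, $|\nabla_L u|$ is bounded by a sum over $\ell$ and an integral over $\eta$ of mode coefficients times uniform factors. These factors are $\mathcal I_\nu$ and its derivatives via Lemma~\ref{lem:Bessel_regular_branch}, for $t\le1$. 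For $t\ge1$ we use the ratio $\rho^{-\mathfrak b}I_\nu(t)/(2^{-\mathfrak b}I_\nu(t(2)))$, which decays like $(\rho/2)^{\gamma_\ell}e^{-c|\eta|}$; this follows from Lemma~\ref{lem_bessel_inputs_high_frequency}, monotonicity of $\mathcal I_\nu$, and $S_\nu$ from Lemma~\ref{lem_S_derivative_bound}. The Green part is handled similarly with \eqref{eq_K_normalized_exp_decay} and \eqref{eq:product:Bessel}, using that the source is separated from $\mathcal C_1$ by a fixed distance.

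The main obstacle is summing these modal estimates uniformly in $(\ell,\eta)$. The data coefficients grow at most polynomially in $\ell$ and $|\eta|$, since boundary data are smooth after a smoothing cutoff argument: Sobolev norms of $u$ on $\{1.5<|x|<2.5\}$ are controlled by $\|u\|_\infty$ through remote-ball interior regularity (Lemma~\ref{RH_remote} iterated). This growth must be beaten by geometric decay $(1/2)^{\gamma_\ell}\sim 2^{-\ell/\mathfrak a}$ and exponential decay in $|\eta|$, with constants independent of $\nu$. I would first try the crude route: bound spherical harmonics by $\ell^{(n-2)/2}$ in sup norm, use polynomial bounds from Lemma~\ref{lem_S_derivative_bound}, and show the order-uniform ratio bound
\[
\frac{I_\nu(s t)}{I_\nu(t)}\le C\,s^{\nu}e^{-c(1-s)t},\qquad 0<s\le 1/2 .
\]
This follows from log-convexity and the Riccati equation for $S_\nu$ together with \eqref{eq_S_lower_large}. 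Summation then gives $\sup_{\mathcal C_1}|\nabla_Lu|\le C\|u\|_{L^\infty(\mathcal C_3)}$, completing the proof after undoing the scaling.
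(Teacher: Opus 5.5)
\textbf{The gap is the Green term, which you dispose of in one sentence.}

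The part of the source that feeds the Green term is supported in $\{2\le |y|\le 3\}$ for \emph{all} $|x|<2$, down to the singular set. It contains $|x|^{2\beta}\nabla_y\chi\cdot\nabla_y u$. Near $x=0$ the only a priori control of this term is the weighted energy bound $\int |x|^{2\beta}|\nabla_y u|^2\lesssim \|u\|_\infty^2$ from Caccioppoli. Your smoothness argument covers only the Poisson data on $\{1.5<|x|<2.5\}$.

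So you have no decay, not even boundedness, of the coefficients $\hat g_{\ell,\mu}(\sigma,\eta)$, and summing absolute values of modes cannot close. Two further facts block your scheme:
\begin{itemize}
\item Unlike the Poisson multipliers, the modal Green gradients have no exponential decay in $|\eta|$. Proposition~\ref{lem_basic_D_R_schur_explicit} only gives row norms $O(1+|\eta|)$. The bound \eqref{eq_K_normalized_exp_decay} helps only when $\tau_\sigma-\tau_\rho$ is large, but the source reaches $\sigma\approx\rho$.
\item The separation you invoke is in $y$. It is lost as soon as you put absolute values on the $\eta$-integrand, because $|\hat g(\sigma,\eta)|$ carries no information about where $g$ sits in $y$.
\end{itemize}

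\textbf{What is needed instead.} To use the $y$-separation you must integrate by parts in $\eta$, using $|y-y'|\ge c$. This requires high-order bounds on $\partial_\eta^\gamma D_*G_{\ell,\eta}$ that are integrable in $\eta$ and summable in $\ell$; this is the technical heart of the paper. It gets them from the resolvent identity $\partial_{Y_j}R_{\ell,Y}=-R_{\ell,Y}(2Y_j\rho^{2\beta})R_{\ell,Y}$ together with the row-norm bounds of Propositions~\ref{prop:Green:Shur} and~\ref{lem_basic_D_R_schur_explicit}; see Proposition~\ref{lem:second_to_last}. The paper also has to:
\begin{itemize}
\item move the derivative off $\nabla_{y'}u$ by integrating by parts in $y'$, which needs the extra bound on $\nabla_L\partial_{y_k'}\mathcal G$ (Corollary~\ref{cor_weighted_Y_moment_Green}, Theorem~\ref{lem:Green_kernel});
\item sum over $\ell$ through the $L^1$ norms of $\Pi_\ell$ and $\lambda_\ell^{-1/2}\nabla_\theta\Pi_\ell$, so that only $\|v\|_\infty$ is ever used.
\end{itemize}

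An alternative repair is to first prove tangential regularity of $u$ in $(\theta,y)$ up to $\{x=0\}$, with bounds by $\|u\|_\infty$, using the invariance of $L$ under rotations in $x$ and translations in $y$. After that, your modal summation with Proposition~\ref{lem_basic_D_R_schur_explicit} would close. This is a substantial missing step, though: for $\beta\ge n/2$ the Caccioppoli bound does not by itself place $\nabla_y u$ in $L^1_{\mathrm{loc}}$ near $x=0$.

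\textbf{The Poisson part.} Your Poisson argument is a valid and simpler alternative to the paper's $L^1$ kernel bounds, since the data on $\{|x|=2\}$ are tangentially smooth. One correction: your order-uniform ratio bound $I_\nu(st)/I_\nu(t)\le Cs^\nu e^{-c(1-s)t}$ is false uniformly in $\nu$. For example, at $t=\sqrt\nu$ one has $\mathcal I_\nu(st)/\mathcal I_\nu(t)\approx e^{-(1-s^2)/4}$. The bound holds only for $t\ge C(1+\nu)$; below that you have only $s^\nu$, by monotonicity of $\mathcal I_\nu$. This is exactly the split in Proposition~\ref{lem_transition_ratio_derivatives}, and it suffices, because in that range $|\eta|\lesssim 1+\ell$ and $s^\nu$ already supplies the decay in $\ell$.
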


\begin{proof}[Reduction of Theorem~\ref{thm:RH}]
For $R>0$, consider the Grushin box
\begin{equation*}
Q_R := B_n(0,R) \times B_m \left(0, R^{\mathfrak a} \right).
\end{equation*}
By Lemma~\ref{le_volume},
\begin{equation*}
B\big((0,y_0), r\big) \sim B_n\left(0, r^{\frac{1}{1-\alpha}}\right) \times B_m\left(y_0, r^{\frac{\mathfrak a}{1-\alpha}}\right).
\end{equation*}
Define
\begin{equation*}
v(\xi) = v(x,y) := u\left( r^{\frac{1}{1-\alpha}} x, y_0 + r^{\frac{\mathfrak a}{1-\alpha}} y \right).
\end{equation*}
Then $v$ is $L$-harmonic in $Q_4$. After the change of variables
\begin{equation*}
x \mapsto r^{\frac{1}{1-\alpha}}x, \qquad y \mapsto y_0+ r^{\frac{\mathfrak a}{1-\alpha}}y,
\end{equation*}
it is enough to prove
\begin{equation}\label{eq_RHv}
\left( \fint_{Q_1}|\nabla_L v|^p d\xi \right)^{\frac{1}{p}} \le C \fint_{Q_4} |v| d\xi.
\end{equation}
Moreover, by \eqref{D} and \eqref{UE}, \cite[Proposition~2.1]{CJKS} gives
\begin{equation*}
\| v \|_{L^\infty(Q_3)} \le C \fint_{Q_4} |v| d\xi.
\end{equation*}
Therefore, \eqref{eq_RHv} follows for all $2<p\le \infty$ once
\begin{equation}\label{eq_RH_show}
\sup_{Q_1} |\nabla_L v| \le C \| v \|_{L^\infty(Q_3)}
\end{equation}
is established.

Passing to polar coordinates,
\begin{equation*}
x = \rho \theta, \quad \rho = |x|,\quad \theta \in \mathbb{S}^{n-1},
\end{equation*}
a straightforward computation shows that
\begin{equation*}
L = - \rho^{2\alpha} \partial_\rho^2 - (n-1+2\alpha) \rho^{2\alpha-1} \partial_\rho + \rho^{2\alpha -2} \Delta_{\mathbb{S}^{n-1}} + \rho^{2\beta} \Delta_y.
\end{equation*}
Choose $\chi \in C_c^\infty\left( B_m\big(0, 3^{\mathfrak a}\big) \right)$ such that $\chi = 1$ on $B_m\big(0, 2^{\mathfrak a}\big)$, and define
\begin{equation*}
\omega(\rho,\theta,y) = \chi(y) v(\rho,\theta,y).
\end{equation*}
Then $\omega$ is a smooth function compactly supported in the $y$-variable. Moreover, $\omega = v$ on $Q_2$, and
\begin{equation*}
\sup_{Q_1}|\nabla_L v| = \sup_{Q_1}|\nabla_L \omega|.
\end{equation*}
Consider the cylinder
\begin{equation*}
\mathcal{C}:= (0,2)_\rho \times \mathbb{S}^{n-1}_\theta \times \mathbb{R}^m_y.
\end{equation*}
The function $\omega$ satisfies
\begin{equation*}
L \omega = f:= [L,\chi]v = |x|^{2\beta} v \Delta_y \chi - 2|x|^{2\beta} \nabla_y \chi \cdot \nabla_y v.
\end{equation*}
Clearly, $f$ is supported in $\{2^{\mathfrak a}\le |y|\le 3^{\mathfrak a}\}$. Let
\begin{equation*}
h(\theta,y) := \omega(2,\theta,y).
\end{equation*}
By the Friedrichs representation,
\begin{equation*}
\omega = \mathcal{T}h + \mathcal{S}f,
\end{equation*}
where $\mathcal{T}h$ is the solution to the boundary value problem
\begin{align}\label{BVP}
\begin{cases}
Lg = 0, & \textrm{on}\quad \mathcal{C},\\
g = h, & \textrm{when}\quad \rho=2,\\
g\quad \textrm{has finite energy at}\quad \rho=0,
\end{cases}
\end{align}
and $\mathcal{S}f$ is the solution to the Dirichlet problem
\begin{align}\label{DP}
\begin{cases}
Lg = f, & \textrm{on}\quad \mathcal{C},\\
g = 0, & \textrm{when}\quad \rho=2,\\
g\quad \textrm{has finite energy at}\quad \rho=0.
\end{cases}
\end{align}
Accordingly, \eqref{eq_RH_show}, and hence Theorem~\ref{thm:RH}, will follow once the two estimates
\begin{align*}
\sup_{Q_1} |\nabla_L \mathcal{T}h| \le C \| v \|_{L^\infty(Q_3)},
\end{align*}
and
\begin{align*}
\sup_{Q_1} |\nabla_L \mathcal{S}f| \le C \| v \|_{L^\infty(Q_3)}
\end{align*}
are proved.
\end{proof}

\section{Kernel construction}

We now construct the Poisson and Green kernels needed for the anchored estimate. The singularity at $\rho=0$ is encoded in a family of modified Bessel equations after Fourier transform in $y$ and spherical harmonic decomposition in the angular variable. The
finite-energy condition selects the regular $I$-branch and excludes the singular $K$-branch. The remainder of the section records the resulting kernels and the estimates required for the proof of the reverse H\"older inequality.

Throughout Part~\ref{part1}, for each $\ell \ge 0$, we use the notation
\begin{equation*}
\lambda_\ell:= \ell(\ell+n-2),\qquad \nu_\ell:= \frac{\sqrt{\mathfrak b^2 + \lambda_\ell}}{\mathfrak a}, \qquad \gamma_\ell:= - \mathfrak b + \sqrt{\mathfrak b^2 + \lambda_\ell}, \qquad \delta_\ell := \mathfrak b+\sqrt{\mathfrak b^2+\lambda_\ell}.
\end{equation*}


\subsection{The Poisson multiplier}

Let $\{\Theta_{\ell,\mu}\}_{\substack{\ell\ge 0\\ 1\le \mu\le d_\ell}}$ be the spherical harmonics on $\mathbb{S}^{n-1}$, that is,
\begin{equation*}
    \Delta_{\mathbb{S}^{n-1}} \Theta_{\ell,\mu} = \lambda_{\ell} \Theta_{\ell,\mu},
\end{equation*}
where $d_\ell \sim (1+\ell)^{n-2}$ denotes the dimension of the space of spherical harmonics of degree $\ell$; see \cite[Chapter~IV]{SW2}. Consider the equation
\begin{equation*}
    - \rho^{2\alpha} \partial_\rho^2 g - (n-1+2\alpha) \rho^{2\alpha-1} \partial_\rho g + \rho^{2\alpha -2} \Delta_{\mathbb{S}^{n-1}}g + \rho^{2\beta} \Delta_y g = 0.
\end{equation*}
Taking the Fourier transform in the $y$-variable and then expanding in spherical harmonics gives
\begin{align*}
    0 &= - \rho^{2\alpha} \partial_\rho^2 \hat{g} - (n-1+2\alpha) \rho^{2\alpha-1} \partial_\rho \hat{g} + \rho^{2\alpha -2} \Delta_{\mathbb{S}^{n-1}} \hat{g} + \rho^{2\beta} |Y|^2 \hat{g}\\
    &= \sum_{\ell=0}^\infty \sum_{\mu=1}^{d_\ell} \left(  - \rho^{2\alpha} \partial_\rho^2 a_{\ell,\mu} - (n-1+2\alpha) \rho^{2\alpha-1} \partial_\rho a_{\ell,\mu} + \rho^{2\alpha -2} \lambda_{\ell} a_{\ell,\mu} + \rho^{2\beta} |Y|^2 a_{\ell,\mu}  \right) \Theta_{\ell,\mu}(\theta),
\end{align*}
where
\begin{equation*}
    \hat{g}(\rho,\theta,Y) = \int_{\mathbb{R}^m} e^{-iy \cdot Y} g(\rho,\theta,y)\, dy,
\end{equation*}
\begin{equation*}
    a_{\ell,\mu}(\rho,Y) = \int_{\mathbb{S}^{n-1}} \hat{g}(\rho,\theta,Y)\, \overline{\Theta_{\ell,\mu}(\theta)}\, d\theta,
\end{equation*}
and
\begin{equation}\label{eq_spherical_decom}
    \hat{g}(\rho,\theta,Y) = \sum_{\ell=0}^\infty \sum_{\mu=1}^{d_\ell} a_{\ell,\mu}(\rho,Y) \Theta_{\ell,\mu}(\theta).
\end{equation}
By orthogonality,
\begin{equation}\label{eq_equation1}
    \rho^{2} \partial_\rho^2 a_{\ell,\mu} + (n-1+2\alpha) \rho \partial_\rho a_{\ell,\mu} -  \lambda_{\ell} a_{\ell,\mu} - \rho^{2\beta-2\alpha+2} |Y|^2 a_{\ell,\mu} = 0
\end{equation}
for each $\ell,\mu$. Introduce
\begin{equation*}
    t:= \frac{|Y|}{\mathfrak a} \rho^{\mathfrak a}
    \quad \textrm{and}\quad
    U_{\ell,\mu,Y}(t):= \rho^{\mathfrak b} a_{\ell,\mu}(\rho,Y).
\end{equation*}
A direct computation shows that \eqref{eq_equation1} is equivalent to
\begin{equation}\label{eq_equation2}
    t^2 U_{\ell,\mu,Y}'' + t U_{\ell,\mu,Y}' - \left( t^2 + \nu_\ell^2 \right)U_{\ell,\mu,Y} = 0.
\end{equation}
The solutions of the modified Bessel equation \eqref{eq_equation2} are linear combinations of $I_{\nu_\ell}(t)$ and $K_{\nu_\ell}(t)$, where $I_{\nu_\ell}$ and $K_{\nu_\ell}$ denote the modified Bessel functions. Therefore,
\begin{equation}\label{a_l,mu}
    a_{\ell,\mu}(\rho,Y) = C_1(\ell,\mu,Y) \rho^{-\mathfrak b} I_{\nu_\ell}\left( \frac{|Y| \rho^{\mathfrak a}}{\mathfrak a} \right) + C_2(\ell,\mu,Y) \rho^{-\mathfrak b} K_{\nu_\ell}\left( \frac{|Y| \rho^{\mathfrak a}}{\mathfrak a} \right).
\end{equation}
However, since the Friedrichs representation imposes a finite energy condition at $\rho=0$, the following claim holds.

\textit{Claim.} The finite-energy solution of the singular Sturm--Liouville equation \eqref{eq_equation1} excludes the $K_{\nu_\ell}$-branch, whose energy near $\rho=0$ is infinite.

\begin{proof}[Proof of Claim]
Let $g$ be the corresponding finite-energy solution. Then its weighted radial derivative satisfies, for sufficiently small $\varepsilon>0$,
\begin{equation*}
\int_0^\varepsilon \int_{\mathbb S^{n-1}} \int_{\mathbb R^m} \rho^{2\alpha}|\partial_\rho g(\rho,\theta,y)|^2 \rho^{n-1}\, dy\, d\theta\, d\rho < \infty.
\end{equation*}
Since $\partial_\rho$ commutes with both the Fourier transform in the $y$-variable and the spherical harmonic projection in $\theta$, it follows from \eqref{eq_spherical_decom} that
\begin{equation}\label{eq_finite_energy}
    \partial_\rho \hat{g}(\rho,\theta,Y) =
\sum_{\ell=0}^{\infty}\sum_{\mu=1}^{d_\ell}\partial_\rho a_{\ell,\mu}(\rho,Y) \Theta_{\ell,\mu}(\theta).
\end{equation}
By Plancherel's theorem and the orthogonality of the spherical harmonics,
\begin{align*}
&\int_0^\varepsilon \int_{\mathbb S^{n-1}} \int_{\mathbb R^m} \rho^{2\alpha}|\partial_\rho \hat{g}(\rho,\theta,Y)|^2 \rho^{n-1}\, dY\, d\theta\, d\rho\\
&\qquad = \sum_{\ell=0}^{\infty} \sum_{\mu=1}^{d_\ell} \int_{\mathbb R^m} \int_0^\varepsilon \rho^{2\alpha} |\partial_\rho a_{\ell,\mu}(\rho,Y)|^2 \rho^{n-1}\, d\rho\, dY.
\end{align*}
Since the left-hand side is finite, every term on the right-hand side is finite as well. Hence, for each $(\ell,\mu)$ and for a.e. $Y\in \mathbb{R}^m$,
\begin{equation}\label{eq_radial_gradient}
\int_0^\varepsilon \rho^{2\alpha} |\partial_\rho a_{\ell,\mu} (\rho,Y)|^2 \rho^{n-1}\, d\rho < \infty.
\end{equation}
Next, since $K_{\nu_\ell}(t) \sim t^{-\nu_\ell}$ for $t\le 1$, the $K_{\nu_\ell}$-branch in \eqref{a_l,mu} behaves like $\rho^{\widetilde{\gamma}_\ell}$ as $\rho \to 0$, where
\begin{equation*}
    \widetilde{\gamma}_\ell = -\mathfrak b - \sqrt{\mathfrak b^2 + \lambda_\ell}.
\end{equation*}
Therefore, its contribution to \eqref{eq_radial_gradient} is
\begin{equation*}
    \int_0^\varepsilon \rho^{2(\alpha + \widetilde{\gamma}_\ell -1 ) } \rho^{n-1}\, d\rho = \infty,\quad \forall \ell \ge 0,
\end{equation*}
since $n\ge 2$. Consequently, the entire $K_{\nu_\ell}$-branch must be excluded from the solution \eqref{a_l,mu}, that is,
\begin{equation*}
    C_2(\ell,\mu,Y) = 0,\quad \forall \ell,\mu,Y.
\end{equation*}
This proves the \textit{Claim}.
\end{proof}

Define
\begin{equation*}
    \phi_{\ell, Y}(\rho) = \phi_{\ell}(\rho,Y):= \rho^{-\mathfrak b} I_{\nu_\ell}\left( \frac{|Y| \rho^{\mathfrak a}}{\mathfrak a} \right)
    \qquad \textrm{and}\qquad
    M_{\ell, Y}(\rho) = M_{\ell}(\rho,Y):= \frac{\phi_{\ell}(\rho,Y)}{\phi_{\ell}(2,Y)}.
\end{equation*}
It follows from \eqref{BVP} and the \textit{Claim} that
\begin{align*}
    \widehat{\mathcal{T}h}(\rho,\theta,Y) = \sum_{\ell=0}^\infty \sum_{\mu=1}^{d_\ell} C_1(\ell,\mu,Y) \phi_{\ell}(\rho,Y)\, \Theta_{\ell,\mu}(\theta).
\end{align*}
Moreover, the boundary condition gives
\begin{align*}
    \widehat{\mathcal{T}h}(2,\theta,Y) = \hat{h}(\theta,Y) = \sum_{\ell=0}^\infty \sum_{\mu=1}^{d_\ell} b_{\ell,\mu}(Y) \Theta_{\ell,\mu}(\theta),
\end{align*}
where
\begin{equation*}
    b_{\ell,\mu}(Y) = \int_{\mathbb{S}^{n-1}} \hat{h}(\theta,Y)\, \overline{\Theta_{\ell,\mu}(\theta)}\, d\theta.
\end{equation*}
Orthogonality therefore implies
\begin{equation*}
    C_1(\ell,\mu,Y) = \frac{b_{\ell,\mu}(Y)}{\phi_{\ell}(2,Y)}.
\end{equation*}
Taking the inverse Fourier transform yields
\begin{align*}
    \mathcal{T}h(\rho,\theta,y) = \iint_{\mathbb{S}^{n-1}\times \mathbb{R}^m} \mathcal{K}_{\rho}(\theta,y; \theta', y') h(\theta', y')\, d\theta'\, dy',
\end{align*}
where
\begin{equation}\label{eq_kernel_of_T}
    \mathcal{K}_{\rho}(\theta,y; \theta', y') = C_m \sum_{\ell=0}^\infty \Pi_{\ell}(\theta,\theta') \int_{\mathbb{R}^m} e^{i(y-y')\cdot Y} M_{\ell}(\rho,Y)\, dY,
\end{equation}
and
\begin{equation*}
    \Pi_{\ell}(\theta,\theta') = \sum_{\mu=1}^{d_\ell} \Theta_{\ell,\mu}(\theta) \overline{\Theta_{\ell,\mu}(\theta')}.
\end{equation*}

\subsection{The Green kernel}\label{ssec:Green:n>2}

Next, we construct the Green kernel. Let $\mathcal{G}(Z,Z')$ (where $Z = (\rho,\theta,y)$ and $Z' = (\sigma,\theta',y')$) denote the Green kernel associated with $L$ on $\mathcal{C}$ under the Dirichlet boundary condition at $\rho=2$ and the local finite-energy condition at $\rho=0$. In the same spirit, by taking the Fourier transform in the $y$-$y'$ variables and the spherical harmonic decomposition in the $\theta$-$\theta'$ variables, the Green kernel takes the form
\begin{equation}\label{eq_Green's_kernel}
    \mathcal{G}(Z,Z') = \sum_{\ell=0}^\infty \Pi_{\ell}(\theta, \theta') \int_{\mathbb{R}^m} e^{i(y-y') \cdot Y} G_{\ell, Y}(\rho, \sigma) \, dY,
\end{equation}
where $G_{\ell,Y}$ is the one-dimensional Green kernel associated with the operator
\begin{equation*}
    \mathcal{L}_{\ell,Y} := - \rho^{2\alpha} \partial_{\rho}^2 - (n-1+2\alpha) \rho^{2\alpha-1} \partial_\rho + \lambda_\ell \rho^{2\alpha-2} + |Y|^2 \rho^{2\beta}.
\end{equation*}
Equivalently,
\begin{equation}
    \mathcal{S}f(Z) = \iiint_{\mathcal{C}} \mathcal{G}(Z,Z') f(Z') \, \sigma^{n-1} \, d\sigma \, d\theta' \, dy'.
\end{equation}
Recall the function
\begin{equation*}
    \phi_{\ell}(\rho,Y):= \rho^{-\mathfrak b} I_{\nu_\ell}\!\left( \frac{|Y| \rho^{\mathfrak a}}{\mathfrak a} \right).
\end{equation*}
This is the solution to \eqref{eq_equation1} with finite energy at $\rho=0$. To construct the Green kernel, another independent solution satisfying the Dirichlet boundary condition at $\rho=2$ is required. Define
\begin{equation*}
    \psi_{\ell}(\rho, Y):= \rho^{-\mathfrak b} \left[ K_{\nu_\ell} \!\left( \frac{|Y| \rho^{\mathfrak a}}{\mathfrak a} \right) - \frac{K_{\nu_\ell} \!\left( \frac{|Y| 2^{\mathfrak a}}{\mathfrak a} \right)}{I_{\nu_\ell} \!\left( \frac{|Y| 2^{\mathfrak a}}{\mathfrak a} \right)} \, I_{\nu_\ell} \!\left( \frac{|Y| \rho^{\mathfrak a}}{\mathfrak a} \right) \right].
\end{equation*}
Note that $\psi_{\ell}$ solves \eqref{eq_equation1} and satisfies $\psi_{\ell}(2)=0$. Hence,
\begin{equation*}
    G_{\ell,Y}(\rho,\sigma) = \begin{cases}
        A_\ell(\sigma, Y) \, \phi_{\ell}(\rho, Y), & 0 < \rho < \sigma,\\[4pt]
        B_\ell(\sigma, Y) \, \psi_{\ell}(\rho, Y), & \sigma < \rho < 2.
    \end{cases}
\end{equation*}
Continuity of the Green kernel at $\rho=\sigma$ yields the first equation:
\begin{equation*}
    A_\ell(\rho,Y) \, \phi_{\ell}(\rho, Y) = B_\ell(\rho,Y) \, \psi_{\ell}(\rho, Y).
\end{equation*}
The derivative jump condition, namely $\mathcal{L}_{\ell,Y}G_{\ell,Y}(\rho,\sigma) = \frac{\delta_{\sigma}(\rho)}{\sigma^{n-1}}$, gives the second equation:
\begin{equation*}
    \sigma^{n+2\alpha-1} \bigl[ A_\ell(\sigma,Y) \, \phi_{\ell}'(\sigma,Y) - B_\ell(\sigma,Y) \, \psi_{\ell}'(\sigma,Y) \bigr] = 1.
\end{equation*}
Solving these equations gives
\begin{equation*}
    A_\ell(\sigma,Y) = \frac{\psi_{\ell}(\sigma,Y)}{W_{\ell,Y}}, \qquad B_\ell(\sigma,Y) = \frac{\phi_{\ell}(\sigma,Y)}{W_{\ell,Y}},
\end{equation*}
where
\begin{equation*}
    W_{\ell,Y} = t^{n+2\alpha-1} \bigl[ \phi_{\ell}'(t,Y) \, \psi_\ell(t,Y) - \phi_{\ell}(t,Y) \, \psi'_\ell(t,Y) \bigr]
\end{equation*}
is the weighted Wronskian. Using the identity
\begin{equation*}
    I_{\nu_\ell}'(t) \, K_{\nu_\ell}(t) - I_{\nu_\ell}(t) \, K_{\nu_\ell}'(t) = \frac{1}{t},
\end{equation*}
it is readily verified that $W_{\ell,Y} = \mathfrak a = \beta+1-\alpha$.

Therefore,
\begin{align}\label{eq_kernel_Green}
    G_{\ell,Y}(\rho,\sigma) = \mathfrak a^{-1}
    \begin{cases}
        \phi_{\ell}(\rho,Y) \, \psi_{\ell}(\sigma,Y), & 0 < \rho < \sigma,\\[4pt]
        \phi_{\ell}(\sigma,Y) \, \psi_{\ell}(\rho,Y), & \sigma < \rho < 2.
    \end{cases}
\end{align}

\section{Proof of Theorem~\ref{thm:RH}}

This section is devoted to proving Theorem~\ref{thm:RH}.

\begin{theorem}\label{lem:Poisson_kernel}
Let $h\in L^\infty(\mathbb{S}^{n-1} \times \mathbb{R}^m)$, and let $\mathcal{T}h$ be the solution of the homogeneous problem \eqref{BVP} with finite energy at $\rho=0$. Then
\begin{equation*}
    \sup_{0<\rho\le 1} \bigl\| \nabla_L \mathcal{T}h \bigr\|_{L^\infty(\mathbb{S}^{n-1} \times \mathbb{R}^m)} \le C \|h\|_{L^\infty(\mathbb{S}^{n-1} \times \mathbb{R}^m)}.
\end{equation*}
\end{theorem}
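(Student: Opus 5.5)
Since $\mathcal{T}h(\rho,\cdot)=\mathcal{K}_\rho * h$ in the sense of \eqref{eq_kernel_of_T}, I will prove that the kernels of $\nabla_L\mathcal{T}$, namely
\[
\rho^{\alpha}\partial_\rho\mathcal{K}_\rho,\qquad \rho^{\alpha-1}\nabla_{\mathbb S^{n-1}}\mathcal{K}_\rho,\qquad \rho^{\beta}\nabla_y\mathcal{K}_\rho ,
\]
are integrable in $(\theta',y')$ uniformly for $0<\rho\le 1$ and $(\theta,y)$. Young's inequality then gives the statement. By the $y$-translation invariance, the task reduces to bounding $\sum_\ell \|D\,\Pi_\ell(\theta,\cdot)\|\,\|\check{m}_\ell(\rho,\cdot)\|_{L^1(\mathbb R^m)}$. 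Here $D$ is the relevant derivative and $m_\ell(\rho,Y)$ is one of $\rho^\alpha\partial_\rho M_\ell$, $\rho^{\alpha-1}M_\ell$, $\rho^\beta Y M_\ell$. Recall that $\|\Pi_\ell(\theta,\cdot)\|_{L^1}\lesssim \|\Pi_\ell\|_{L^2}\lesssim d_\ell^{1/2}\lesssim(1+\ell)^{(n-2)/2}$, and angular derivatives cost at most a factor $(1+\ell)$. It therefore suffices to show that $\|\check m_\ell(\rho,\cdot)\|_{L^1}\le C_N 2^{-c\ell}$, or at least $\le C_N(1+\ell)^{-N}$, uniformly in $\rho\le 1$.

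\textbf{Step 1: symbol structure.} Write $M_\ell(\rho,Y)=(\rho/2)^{\gamma_\ell}\,\mathcal I_{\nu_\ell}(|Y|\rho^{\mathfrak a}/\mathfrak a)/\mathcal I_{\nu_\ell}(|Y|2^{\mathfrak a}/\mathfrak a)$, using $I_\nu(t)=t^\nu\mathcal I_\nu(t)$ and $\mathfrak a\nu_\ell-\mathfrak b=\gamma_\ell$. The factor $(\rho/2)^{\gamma_\ell}\le 2^{-\gamma_\ell}$ decays geometrically in $\ell$, since $\gamma_\ell\sim\ell$. For $\ell\ge1$ we have $\gamma_\ell\ge\gamma_1>1-\alpha$ (check: $\gamma_1(\gamma_1+2\mathfrak b)=n-1\ge 1$ while $(1-\alpha)(1-\alpha+n+2\alpha-2)=(1-\alpha)(n-1+\alpha)<n-1$). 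This makes $\rho^{\alpha-1}\rho^{\gamma_\ell}$ and $\rho^{\alpha}\partial_\rho\rho^{\gamma_\ell}$ bounded. For $\ell=0$, $\gamma_0=0$, so no angular term appears. The radial derivative hits only $\mathcal I_{\nu}$, and by Lemma~\ref{lem:Bessel_regular_branch} this produces $\rho^{\alpha}\cdot\rho^{\mathfrak a-1}|Y|^2\rho^{\mathfrak a}\mathcal I_{\nu+1}$, which carries positive powers of $\rho$.

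\textbf{Step 2: $L^1$ bounds on the inverse Fourier transforms.} I will split in $|Y|$ into the regions $|Y|\lesssim 1+\nu_\ell$ and $|Y|\gtrsim 1+\nu_\ell$. On the low-frequency part, the ratio $\mathcal I_\nu(s)/\mathcal I_\nu(S)$ with $s\le S$ is smooth, bounded by $1$ (monotonicity of $\mathcal I_\nu$), and has $Y$-derivatives bounded polynomially in $1+\nu$ by Lemmas~\ref{lem:Bessel_regular_branch} and \ref{lem_S_derivative_bound}, using $\partial\log\mathcal I_\nu(t)=t\,\mathcal I_{\nu+1}/\mathcal I_\nu=S_\nu(t)$. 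The standard estimate $\|\check m\|_{L^1}\lesssim \sum_{|\gamma|\le m+1}\|\partial^\gamma m\|_{L^1\cup L^\infty}$ on a smooth cutoff of the region then gives a polynomial bound in $\ell$. This is absorbed by $2^{-\gamma_\ell}$.

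On the high-frequency part, write the ratio as $\exp\!\bigl(-\int_{s}^{S}S_\nu(\tau)\,d\tau/\tau\cdots\bigr)$. Equivalently, $\frac{I_\nu(s)}{I_\nu(S)}\le C e^{-c(S-s)}$ by \eqref{eq_S_lower_large}, and $S-s\gtrsim |Y|$ since $\rho\le1<2$. This yields exponential decay in $|Y|$, which beats the polynomial growth of derivatives and of the multiplier factors $\rho^\beta|Y|$ and $|Y|^2\rho^{2\mathfrak a}$. Hence the $L^1$ norm is again $\lesssim C(1+\nu_\ell)^{A}$, times the geometric factor.

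\textbf{Main obstacle.} The hardest part will be uniformity in $\rho\to0$ combined with the large-$\ell$ regime in the high-frequency piece. The derivative bounds must not lose powers of $\rho^{-1}$. The rescaled variable $\rho^{\mathfrak a}Y$ makes derivatives in $Y$ produce factors $\rho^{\mathfrak a}$, which are harmless, but the split point $|Y|\sim 1+\nu_\ell$ must be chosen in terms of $S=|Y|2^{\mathfrak a}/\mathfrak a$, not $s$. I would handle this by treating the numerator $\mathcal I_\nu(s)$ with Lemma~\ref{lem:Bessel_regular_branch} when $s\le1$, and with the monotone ratio bound otherwise. I would also verify carefully that, summed over $\ell$, the polynomial losses $(1+\ell)^{A+n}$ are dominated by $2^{-\gamma_\ell}$.
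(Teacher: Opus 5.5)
Your proposal is correct and follows essentially the same route as the paper. The shared steps are: the factorization $M_\ell=(\rho/2)^{\gamma_\ell}H_{\ell,\rho}(|Y|)$; polynomial-in-$\ell$ derivative bounds for $H_{\ell,\rho}$ via the log-derivative $S_{\nu_\ell}$ when $\mathfrak A_2|Y|\lesssim 1+\ell$, and exponential decay from the Amos-type lower bound beyond that; conversion of $L^1_Y$ bounds on $Y$-derivatives into $L^1_{y'}$ kernel bounds; and summation against the spherical projection bounds $(1+\ell)^{(n-2)/2}$ using $\gamma_\ell\sim 1+\ell$ and $\gamma_1>1-\alpha$. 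The only slips are cosmetic: the relevant identity is $\log\bigl(\mathcal I_\nu(S)/\mathcal I_\nu(s)\bigr)=\int_s^S S_\nu(\tau)\,d\tau$ with no factor $1/\tau$, and the lower bound $S_\nu\ge c$ is only needed on $[2^{-\mathfrak a}S,S]$, since positivity of $S_\nu$ handles the rest of the interval.
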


\begin{proof}
The proof is based on a multiplier-to-kernel argument. First, we estimate the three multiplier families corresponding to the radial, angular, and base components. Note that
\begin{equation*}
    \nabla_L = \bigl( \rho^\alpha \partial_\rho, \, \rho^{\alpha-1} \nabla_{\mathbb{S}^{n-1}}, \, \rho^{\beta} \nabla_y \bigr).
\end{equation*}
By the explicit formula \eqref{eq_kernel_of_T}, it is enough to estimate the kernels associated with the following three multipliers:
\begin{align*}
m_{\ell,\rho}^{\mathrm{rad}}(Y)&:= \rho^{\alpha} \partial_\rho M_{\ell}(\rho, Y),\\
m_{\ell,\rho}^{\mathrm{ang}}(Y)&:= \rho^{\alpha-1} \sqrt{\lambda_\ell} \, M_{\ell}(\rho, Y),\\
m_{\ell,\rho}^{\mathrm{bas},j}(Y)&:= i Y_j \rho^{\beta} M_{\ell}(\rho, Y), \quad 1 \le j \le m,
\end{align*}
where, in particular, $m_{0,\rho}^{\mathrm{ang}}$ is understood to be zero.

The following estimate will be used, whose proof is postponed to Proposition~\ref{lem_detailed_poisson_multiplier} below: for every integer $N \ge 0$,
\begin{equation}\label{eq_detailed_poisson_multiplier_goal_copy}
    \sum_{|\gamma|\le N}
    \bigl\|
        \partial_Y^\gamma m_{\ell,\rho}^{(*)}
    \bigr\|_{L^1(\mathbb R^m_Y)}
    \le
    C_N e^{-c_N \ell},
    \quad
    *\in \{ \mathrm{rad}, \mathrm{ang}, \mathrm{bas}, j\},
\end{equation}
uniformly for $0<\rho\le 1$ and $\ell\ge 0$.

Next, we use the identity
\begin{equation*}
    (I+\Delta_Y)^N e^{i Y \cdot (y-y')} = (1 + |y-y'|^2)^N e^{iY\cdot(y-y')}.
\end{equation*}
Together with \eqref{eq_detailed_poisson_multiplier_goal_copy}, integration by parts yields
\begin{align*}
    \biggl| \int_{\mathbb{R}^m} e^{iY\cdot(y-y')} m_{\ell,\rho}^{(*)}(Y) \, dY \biggr|
    &= (1+|y-y'|^2)^{-N} \biggl| \int_{\mathbb{R}^m} e^{iY\cdot(y-y')} (I+\Delta_Y)^N m_{\ell,\rho}^{(*)}(Y) \, dY \biggr|\\[4pt]
    &\le C_N (1+|y-y'|^2)^{-N} \sum_{|\gamma|\le 2N} \bigl\|
        \partial_Y^\gamma m_{\ell,\rho}^{(*)}
    \bigr\|_{L^1(\mathbb R^m_Y)}\\[4pt]
    &\le C_N (1+|y-y'|^2)^{-N} e^{-c_N\ell}.
\end{align*}
Choosing $N>\frac{m}{2}$, we obtain
\begin{equation}\label{eq_multiplier_final}
    \sup_{ \substack{0<\rho\le 1\\ y\in \mathbb{R}^{m}} } \biggl\| \int_{\mathbb{R}^m} e^{iY\cdot(y-y')} m_{\ell,\rho}^{(*)}(Y) \, dY \biggr\|_{L^1_{y'}(\mathbb{R}^m)} \le C_N e^{-c_N \ell}, \quad
    *\in \{ \mathrm{rad}, \mathrm{ang}, \mathrm{bas}, j\}.
\end{equation}
On the other hand, the standard spherical projection estimates \cite[Chapter~IV]{SW2} (see also \cite[Theorem~2.9, Section~2.3]{AHan}) yield
\begin{align}\label{eq_sph_pro1}
    \sup_{\theta \in \mathbb{S}^{n-1}} \int_{\mathbb{S}^{n-1}} |\Pi_{\ell}(\theta, \theta')| \, d\theta' \le C (1+\ell)^{\frac{n-2}{2}},
\end{align}
and
\begin{align}\label{eq_sph_pro2}
    \sup_{\theta \in \mathbb{S}^{n-1}} \int_{\mathbb{S}^{n-1}} \bigl| \lambda_\ell^{-1/2} \nabla_{\theta} \Pi_\ell(\theta, \theta') \bigr| \, d\theta' \le  C (1+\ell)^{\frac{n-2}{2}}.
\end{align}
Indeed, \eqref{eq_sph_pro1} follows from Cauchy--Schwarz and the rotational invariance of the spherical projection:
\begin{align*}
    \int_{\mathbb{S}^{n-1}} |\Pi_{\ell}(\theta, \theta')| \, d\theta'
    &\le C \biggl( \int_{\mathbb{S}^{n-1}} |\Pi_\ell(\theta,\theta')|^2 \, d\theta' \biggr)^{\frac{1}{2}}
    = C \, \Pi_\ell(\theta, \theta)^{\frac{1}{2}}
    = C \, d_\ell^{\frac{1}{2}}
    \le C (1+\ell)^{\frac{n-2}{2}}.
\end{align*}
Similarly, for \eqref{eq_sph_pro2},
\begin{align*}
    \int_{\mathbb{S}^{n-1}} \bigl| \lambda_\ell^{-1/2} \nabla_{\theta} \Pi_\ell(\theta, \theta') \bigr| \, d\theta'
    &\le C \lambda_\ell^{-1/2} \biggl( \int_{\mathbb{S}^{n-1}} |\nabla_\theta \Pi_\ell(\theta,\theta')|^2 \, d\theta' \biggr)^{\frac{1}{2}}\\
    &= C \lambda_\ell^{-1/2} \biggl( \sum_{\mu=1}^{d_\ell} |\nabla_\theta \Theta_{\ell,\mu}(\theta)|^2 \biggr)^{\frac{1}{2}}\\
    &\le C \, d_\ell^{\frac{1}{2}}
    \le C (1+\ell)^{\frac{n-2}{2}},
\end{align*}
where the penultimate bound follows from
\begin{align*}
    \sum_{\mu=1}^{d_\ell} |\nabla_\theta \Theta_{\ell,\mu}(\theta)|^2
    &= \int_{\mathbb{S}^{n-1}} \sum_{\mu=1}^{d_\ell} |\nabla_\theta \Theta_{\ell,\mu}(\theta)|^2 \, d\theta\\
    &= \sum_{\mu=1}^{d_\ell} \int_{\mathbb{S}^{n-1}} \Delta_{\mathbb{S}^{n-1}} \Theta_{\ell,\mu}(\theta) \, \Theta_{\ell,\mu}(\theta) \, d\theta\\
    &= \lambda_\ell \, d_\ell.
\end{align*}
Combining \eqref{eq_sph_pro1}, \eqref{eq_sph_pro2}, and \eqref{eq_multiplier_final}, it follows that
\begin{align*}
    \sup_{\substack{0<\rho\le 1\\ y\in \mathbb{R}^{m}\\ \theta \in \mathbb{S}^{n-1}}} \iint_{\mathbb{S}^{n-1} \times \mathbb{R}^m} \bigl| \nabla_L \mathcal{K}_\rho (\theta,y; \theta', y') \bigr| \, d\theta' \, dy' \le C \sum_{\ell=0}^\infty (1+\ell)^{\frac{n-2}{2}} e^{-c\ell} \le C,
\end{align*}
which completes the proof of Theorem~\ref{lem:Poisson_kernel}.
\end{proof}

Before proving the key estimate \eqref{eq_detailed_poisson_multiplier_goal_copy}, further introduce the notation
\begin{equation*}
\mathfrak A_\rho:=\frac{\rho^{\mathfrak a}}{\mathfrak a},
\qquad
\mathfrak A_2:=\frac{2^{\mathfrak a}}{\mathfrak a},
\qquad
\tau_\rho(Y):= |Y| \mathfrak A_\rho,
\qquad
\tau_2(Y):=|Y|\mathfrak A_2,
\end{equation*}
and define
\begin{equation*}
H_{\ell,\rho}(r)
:=
\frac{\mathcal I_{\nu_\ell}(\mathfrak A_\rho r)}
{\mathcal I_{\nu_\ell}(\mathfrak A_2 r)},
\qquad
\mathcal I_\nu(t):=t^{-\nu}I_\nu(t).
\end{equation*}
For simplicity, the dependence of $\tau_\rho$ and $\tau_2$ on $Y$ will often be suppressed.

All Poisson multiplier estimates reduce to the behavior of a single normalized Bessel ratio. The main difficulty here is that the order $\nu_\ell$ is not fixed, but grows linearly with the spherical harmonic frequency $\ell$. The next proposition records the uniform estimates needed later. Its proof separates the transition region $r\lesssim 1+\ell$ from the large-frequency region $r\gg1+\ell$.

\begin{proposition}\label{lem_transition_ratio_derivatives}
For every integer $N \ge 0$ and every $0\le k\le N$, there exist constants $C_N,A_N,c_N>0$ such that
\begin{align}\label{eq_transition_H_derivatives}
    |H_{\ell,\rho}^{(k)}(r)|
    \le \begin{cases}
        C_N(1+\ell)^{A_N}, & r \le C^*(1+\ell),\\[4pt]
        C_N(1+r)^{A_N}e^{-c_N r}, & r > C^*(1+\ell),
    \end{cases}
\end{align}
where $C^*>1$ is a sufficiently large constant.

Consequently, for every multi-index $|\gamma|\le N$,
\begin{align}\label{eq_transition_Y_derivatives}
\bigl|\partial_Y^\gamma H_{\ell,\rho}(|Y|)\bigr| \le \begin{cases}
    C_N (1+\ell)^{A_N}, & |Y|\le C^* (1+\ell),\\[4pt]
    C_N (1+|Y|)^{A_N}e^{-c_N|Y|}, & |Y|>C^*(1+\ell).
\end{cases}
\end{align}
\end{proposition}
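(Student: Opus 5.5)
The plan is to work with the logarithmic derivative of $\mathcal I_\nu$, so that $H_{\ell,\rho}$ becomes an exponential of an integral of $S_\nu$. By Lemma~\ref{lem:Bessel_regular_branch}, $\mathcal I_\nu'(t)=t\mathcal I_{\nu+1}(t)=t^{-\nu}I_{\nu+1}(t)$, hence $\mathcal I_\nu'/\mathcal I_\nu=S_\nu$. Throughout write $\nu=\nu_\ell$, and note that $\nu_\ell\sim 1+\ell$ uniformly in $\ell$. This gives
\begin{equation*}
H_{\ell,\rho}(r)=\exp\Bigl(-\int_{\mathfrak A_\rho r}^{\mathfrak A_2 r}S_{\nu}(t)\,dt\Bigr),
\qquad
\frac{H_{\ell,\rho}'(r)}{H_{\ell,\rho}(r)}=\mathfrak A_\rho S_\nu(\mathfrak A_\rho r)-\mathfrak A_2 S_\nu(\mathfrak A_2 r).
\end{equation*}
Since $0<\rho\le1$, we have $\mathfrak A_\rho\le 2^{-\mathfrak a}\mathfrak A_2$. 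Since $S_\nu\ge0$, it follows that $0<H_{\ell,\rho}\le1$.

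Next we control the derivatives by Fa\`a di Bruno. Iterating the identity above expresses $H^{(k)}_{\ell,\rho}$ as $H_{\ell,\rho}$ times a universal polynomial in the quantities $\mathfrak A_\rho^{j+1}S_\nu^{(j)}(\mathfrak A_\rho r)$ and $\mathfrak A_2^{j+1}S_\nu^{(j)}(\mathfrak A_2 r)$, with $j\le k-1$. Lemma~\ref{lem_S_derivative_bound} bounds each such factor by $C(1+\nu+r)^{A}$, uniformly in $\rho\le1$. Therefore
\begin{equation*}
|H^{(k)}_{\ell,\rho}(r)|\le C_N(1+\ell+r)^{A_N}\,H_{\ell,\rho}(r).
\end{equation*}
The two regimes then follow separately.

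\textbf{Regime $r\le C^*(1+\ell)$.} Using $H_{\ell,\rho}\le1$ gives the bound $C_N(1+\ell)^{A_N}$.

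\textbf{Regime $r>C^*(1+\ell)$.} The integration interval contains $[2^{-\mathfrak a}\mathfrak A_2 r,\mathfrak A_2 r]$. Choose $C^*$ so large that $2^{-\mathfrak a}\mathfrak A_2C^*(1+\ell)\ge C(1+\nu_\ell)$, with $C$ the constant of \eqref{eq_S_lower_large}. Then $S_\nu\ge c$ on this interval, so
\begin{equation*}
H_{\ell,\rho}(r)\le e^{-c(1-2^{-\mathfrak a})\mathfrak A_2 r}.
\end{equation*}
Moreover $1+\ell+r\lesssim 1+r$ in this regime, which yields $C_N(1+r)^{A_N}e^{-c_Nr}$. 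This proves \eqref{eq_transition_H_derivatives}.

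For \eqref{eq_transition_Y_derivatives}, away from the origin, say $|Y|\ge1$, the chain rule applies. Derivatives of $|Y|$ of order $\le N$ are bounded there, so $\partial_Y^\gamma H_{\ell,\rho}(|Y|)$ is a finite sum of terms $H^{(k)}_{\ell,\rho}(|Y|)$ times bounded factors. The previous bounds then transfer directly, possibly after enlarging $C^*$ and $A_N$.

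Near $Y=0$ the chain rule degenerates, and this is the step I expect to be the main obstacle. The reason is that we need a bound uniform in $\nu$, while $\mathcal I_\nu(0)=1/(2^\nu\Gamma(\nu+1))$ is tiny. The plan here is to use the normalized series
\begin{equation*}
\frac{\mathcal I_\nu(t)}{\mathcal I_\nu(0)}=\sum_{k\ge0}\frac{\Gamma(\nu+1)}{k!\,\Gamma(\nu+k+1)}\Bigl(\frac{t^2}{4}\Bigr)^k=:F_\nu(t^2).
\end{equation*}
Its coefficients are bounded by $4^{-k}/k!$ uniformly in $\nu\ge\nu_*$, and $F_\nu\ge1$. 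Then
\begin{equation*}
H_{\ell,\rho}(|Y|)=\frac{F_\nu(\mathfrak A_\rho^2|Y|^2)}{F_\nu(\mathfrak A_2^2|Y|^2)}
\end{equation*}
is a quotient of smooth functions of $|Y|^2$. For $|Y|\le1$, all $Y$-derivatives of numerator and denominator are bounded uniformly in $\nu$ and $\rho$, and the denominator is at least $1$. Hence $\partial_Y^\gamma H_{\ell,\rho}(|Y|)$ is bounded by $C_N$ on the unit ball, which is compatible with both cases of \eqref{eq_transition_Y_derivatives}. Combining this with the $|Y|\ge1$ bound from the chain rule completes the proof.
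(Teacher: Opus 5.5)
Your proposal is correct and follows essentially the same route as the paper's proof. Both arguments use the representation $\log H_{\ell,\rho}=-\int_{\mathfrak A_\rho r}^{\mathfrak A_2 r}S_{\nu_\ell}$, Fa\`a di Bruno together with Lemma~\ref{lem_S_derivative_bound} and $0<H_{\ell,\rho}\le 1$, the Amos-type lower bound \eqref{eq_S_lower_large} for $r>C^*(1+\ell)$, the chain rule away from $Y=0$, and the normalized power series near the origin (the paper's $E_\nu$, your $F_\nu$). The differences are cosmetic: you derive the unified bound $|H^{(k)}_{\ell,\rho}|\le C_N(1+\ell+r)^{A_N}H_{\ell,\rho}$ before splitting, and you cut at $|Y|\le 1$ instead of $|Y|\le\mathfrak A_2^{-1}$. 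One small caveat: your coefficient bound $4^{-k}/k!$ relies on $(\nu+1)_k\ge 1$. This holds here because $\nu_\ell\ge 0$, but for $\nu_*<0$ it would need a constant depending on $\nu_*$.
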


\begin{proof}
Using the identity
\begin{equation*}
    tI_\nu'(t)-\nu I_\nu(t)=tI_{\nu+1}(t),
\end{equation*}
it follows that $\mathcal I_\nu'(t)=t^{-\nu}I_{\nu+1}(t)$. Hence,
\begin{equation*}
    \frac{d}{dt}\log\mathcal I_\nu(t) = S_\nu(t)
    \qquad\text{and}\qquad
    \log H_{\ell,\rho}(r)
    =
    -\int_{\mathfrak A_\rho r}^{\mathfrak A_2 r}S_{\nu_\ell}(t) \, dt.
\end{equation*}
On the other hand, note that $\log H_{\ell,\rho}(r) = \log\mathcal I_{\nu_\ell}(\mathfrak A_\rho r) - \log\mathcal I_{\nu_\ell}(\mathfrak A_2 r)$.
This implies that for $j\ge 1$,
\begin{equation}\label{eq_log_H_derivative_formula}
    \frac{d^j}{dr^j}\log H_{\ell,\rho}(r)
    =
    \mathfrak A_\rho^j S_{\nu_\ell}^{(j-1)}(\mathfrak A_\rho r)
    -
    \mathfrak A_2^j S_{\nu_\ell}^{(j-1)}(\mathfrak A_2 r).
\end{equation}

\textit{Case 1: $r\le C^*(1+\ell)$.} Since $0<\mathfrak A_\rho\le \mathfrak A_2$ and $\nu_\ell\sim 1+\ell$, Lemma~\ref{lem_S_derivative_bound} yields
\begin{equation}\label{eq_log_H}
    \Bigl|
        \frac{d^j}{dr^j}\log H_{\ell,\rho}(r)
    \Bigr|
    \le
    C_j(1+\ell)^{A_j}.
\end{equation}
Moreover, as $t^{-\nu}I_\nu(t)$ is increasing on $(0,\infty)$, $H_{\ell,\rho}(r)$ is positive and uniformly bounded by $1$, i.e., $0<H_{\ell,\rho}(r)\le 1$.
Applying Faà di Bruno's formula to $H_{\ell,\rho}=\exp\bigl(\log H_{\ell,\rho}\bigr)$, gives, for each integer $k\ge 1$,
\begin{equation}\label{eq_faa_di_bruno_H}
    H_{\ell,\rho}^{(k)}(r)
    =
    H_{\ell,\rho}(r)
    \, P_k\Bigl(
        \frac{d}{dr}\log H_{\ell,\rho}(r),
        \dots,
        \frac{d^k}{dr^k}\log H_{\ell,\rho}(r)
    \Bigr),
\end{equation}
where $P_k$ is a polynomial depending only on $k$. Combine \eqref{eq_faa_di_bruno_H} with \eqref{eq_log_H} to get
\begin{equation}\label{eq_H_k}
    |H_{\ell,\rho}^{(k)}(r)|
    \le
    C_N (1+\ell)^{A_N} H_{\ell,\rho}(r)
    \le
    C_N (1+\ell)^{A_N}.
\end{equation}
This proves \eqref{eq_transition_H_derivatives} in the range $r\le C^*(1+\ell)$. 

It remains to pass from one-dimensional derivatives in $r$ to derivatives in $Y$. If $\tau_2 \ge 1$, that is, $|Y|\ge \mathfrak A_2^{-1}$, then the chain rule yields
\begin{align*}
\bigl|\partial_Y^\gamma H_{\ell,\rho}(|Y|)\bigr|
&\le \sum_{k=0}^{|\gamma|} |Y|^{k-|\gamma|} \bigl|H_{\ell,\rho}^{(k)}(|Y|)\bigr|
\le C_N (1+\ell)^{A_N}, \qquad |Y|\le C^* (1+\ell),
\end{align*}
for all $|\gamma|\le N$. Next, if $\tau_2 \le 1$, that is, $|Y|\le \mathfrak A_2^{-1}$, first observe that $H_{\ell,\rho}(|Y|)$ is in fact a smooth function of $|Y|^2$. Indeed, recalling the power series expansion
\begin{equation*}
    I_\nu(t)
    =
    \frac{1}{\Gamma(\nu+1)}
    \Bigl(\frac{t}{2}\Bigr)^{\!\nu}
    \sum_{q=0}^{\infty}
    \frac{1}{q!(\nu+1)_q}
    \Bigl(\frac{t^2}{4}\Bigr)^{\!q},
\end{equation*}
we find
\begin{equation*}
    \mathcal I_\nu(t)
    :=
    t^{-\nu}I_\nu(t)
    =
    \frac{1}{2^\nu\Gamma(\nu+1)}
     E_\nu(t^2), \qquad  E_\nu(s)
    =
    \sum_{q=0}^{\infty}
    \frac{1}{q!(\nu+1)_q}
    \Bigl(\frac{s}{4}\Bigr)^{\!q}.
\end{equation*}
Note that $E_\nu(s) \ge 1$ for all $s>0$. Also,
\begin{equation*}
    H_{\ell,\rho}(r)
    =
    \frac{
        E_{\nu_\ell}(\mathfrak A_\rho^2 r^2)
    }{
        E_{\nu_\ell}(\mathfrak A_2^2 r^2)
    }.
\end{equation*}
Equivalently,
\begin{equation*}
    H_{\ell,\rho}(|Y|)
    =
    h_{\ell,\rho}(|Y|^2), \qquad h_{\ell,\rho}(s) :=  \frac{
        E_{\nu_\ell}(\mathfrak A_\rho^2 s)
    }{
        E_{\nu_\ell}(\mathfrak A_2^2 s)
    }.
\end{equation*}
By \eqref{eq_A_derivatives_small}, for every fixed $N$, $\sup_{0\le s\le 1}\bigl| E_\nu^{(k)}(s) \bigr| \le C_N$ ($0\le k\le N$),
uniformly in $\nu> -1$. The quotient rule therefore gives
\begin{equation}\label{eq_h_deri}
    |h_{\ell,\rho}^{(k)}(s)|\le C_k, \quad \forall\, s\le \mathfrak A_2^{-2}.
\end{equation}
Hence, for every multi-index $\gamma$,
\begin{equation*}
    \bigl| \partial_Y^{\gamma} H_{\ell,\rho}(|Y|)\bigr|
    = \bigl|\partial_Y^\gamma
    \bigl[
        h_{\ell,\rho}(|Y|^2)
    \bigr]\bigr|
    \le \sum_{k=0}^{|\gamma|} C_{\gamma, k} \bigl|P_{\gamma,k}(|Y|)\bigr| \, \bigl|h_{\ell,\rho}^{(k)}\bigl(|Y|^2\bigr)\bigr|,
\end{equation*}
where $P_{\gamma,k}$ is a polynomial depending only on $\gamma$ and $k$. Since $|Y|\le C^*(1+\ell)$, and $|Y|\le \mathfrak A_2^{-1}$, \eqref{eq_h_deri} implies
\begin{equation*}
    \bigl|
        \partial_Y^\gamma H_{\ell,\rho}(|Y|)
    \bigr|
    \le
    C_N (1+\ell)^{A_N},\quad \forall\, |\gamma|\le N,
\end{equation*}
as required.

\textit{Case 2: $r>C^*(1+\ell)$.} By a standard Amos-type estimate \cite[Corollary~1]{KB} (see Lemma~\ref{lem_bessel_inputs_high_frequency}), there exist $c_0, C_1>0$ such that
\begin{equation}\label{eq_amos_lower_bound}
    S_\nu(t)=\frac{I_{\nu+1}(t)}{I_\nu(t)}\geq c_0,\quad \text{if}\quad t\ge C_1(1+\nu).
\end{equation}
Also, since $0<\rho\le 1$, $\frac{\mathfrak A_\rho}{\mathfrak A_2} = \frac{\rho^{\mathfrak a}}{2^{\mathfrak a}} \le 2^{-\mathfrak a}$.
Now, if $C^*$ is chosen sufficiently large, then for every
$t\in(2^{- \mathfrak a} \mathfrak A_2 r,\mathfrak A_2 r]$,
\begin{equation*}
    t\ge 2^{- \mathfrak a} \mathfrak A_2 r\ge \mathfrak a^{-1} C^*(1+\ell) \ge C_1(1+\nu_\ell).
\end{equation*}
Using \eqref{eq_amos_lower_bound}, we deduce, for some $c>0$,
\begin{align*}
\log H_{\ell,\rho}(r) &= -\int_{\mathfrak A_\rho r}^{\mathfrak A_2 r}S_{\nu_\ell}(t) \, dt
\le -\int_{2^{- \mathfrak a} \mathfrak A_2 r}^{\mathfrak A_2 r}S_{\nu_\ell}(t) \, dt \\
&\le -c_0(1-2^{- \mathfrak a})\mathfrak A_2 r \le -cr.
\end{align*}
It follows that
\begin{equation}\label{eq_H_large_decay}
    0<H_{\ell,\rho}(r)\le e^{-cr},
    \qquad
    r>C^* (1+\ell).
\end{equation}
Next, we estimate the derivatives of $H_{\ell,\rho}$. Combining \eqref{eq_log_H_derivative_formula} with Lemma~\ref{lem_S_derivative_bound}, 
yields, for $j\ge 1$,
\begin{equation}\label{eq_log_H_derivative_polynomial}
    \Bigl|
        \frac{d^j}{dr^j}\log H_{\ell,\rho}(r)
    \Bigr|
    \le
    C_j(1+r)^{A_j},
    \qquad
    r>C^*(1+\ell).
\end{equation}
Therefore, by \eqref{eq_H_large_decay}, \eqref{eq_log_H_derivative_polynomial}, and \eqref{eq_faa_di_bruno_H}, Faà di Bruno's formula gives
\begin{equation*}
    |H_{\ell,\rho}^{(k)}(r)| \le C_N (1+r)^{A_N}e^{-c_N r},\qquad 0\le k \le N,
\end{equation*}
which proves \eqref{eq_transition_H_derivatives} in the range $r>C^*(1+\ell)$. Estimate \eqref{eq_transition_Y_derivatives} then follows immediately from the chain rule for derivatives of radial functions, since $|Y|>C^*(1+\ell)\ge C^*$. This completes the proof of Proposition~\ref{lem_transition_ratio_derivatives}.
\end{proof}

\begin{remark}\label{re:H}
The proof of Proposition~\ref{lem_transition_ratio_derivatives} uses the growth of the order only to keep the constants uniform in $\ell$. If the order is fixed, the same argument applies to every $\mu>-1$. More precisely, for
\begin{equation*}
    H_{\mu,\rho}(r):=
\frac{\mathcal I_\mu(\mathfrak A_\rho r)}{\mathcal I_\mu(\mathfrak A_2 r)},
\end{equation*}
one has, for every integer $N\ge 0$,
\begin{equation*}
    |H_{\mu,\rho}^{(k)}(r)|
\le
\begin{cases}
C_{N,\mu}, & r\le C^*,\\[4pt]
C_{N,\mu}(1+r)^{A_N}e^{-c_N r}, & r>C^*,
\end{cases}
\end{equation*}
for $0\le k\le N$, uniformly in $0<\rho\leq1$. The corresponding
$Y$-derivative estimates for $H_{\mu,\rho}(|Y|)$ follow in the same way. In
Part~\ref{part2} we apply this fixed-order version with $\mu=\nu$ and with
$\mu=-\nu\in(-1,0)$. 
\end{remark}

We now translate the normalized Bessel-ratio estimates into estimates for the
actual Poisson multipliers. The three multiplier families correspond respectively
to the radial derivative, the angular derivative, and the $y$-derivatives in
$\nabla_L$. The factor $(\rho/2)^{\gamma_\ell}$ is crucial: for $\rho\leq1$ it
converts polynomial growth in $\ell$ into exponential summability in $\ell$.

\begin{proposition}\label{lem_detailed_poisson_multiplier}
For every integer $N \ge 0$, there exist constants $C_N, c_N > 0$ such that
\begin{equation}\label{eq_detailed_poisson_multiplier_goal}
    \sum_{|\gamma|\le N}
    \bigl\|
        \partial_Y^\gamma m_{\ell,\rho}^{(*)}
    \bigr\|_{L^1(\mathbb R^m_Y)}
    \le
    C_N e^{-c_N\ell},
    \quad
    *\in \{\mathrm{rad}, \mathrm{ang}, \mathrm{bas}, j\},
\end{equation}
uniformly for $0<\rho\le 1$ and $\ell\ge 0$.
\end{proposition}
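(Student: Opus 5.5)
The plan is to factor each multiplier as an explicit power of $\rho$ times the normalized Bessel ratio $H_{\ell,\rho}$ of Proposition~\ref{lem_transition_ratio_derivatives}, and then convert the polynomial bounds of that proposition into $L^1_Y$ bounds. Since $\phi_\ell(\rho,Y)=\rho^{-\mathfrak b}I_{\nu_\ell}(\tau_\rho)=\rho^{-\mathfrak b}\tau_\rho^{\nu_\ell}\mathcal I_{\nu_\ell}(\tau_\rho)$ and $\mathfrak a\nu_\ell-\mathfrak b=\gamma_\ell$, we have the exact identity
\begin{equation*}
M_\ell(\rho,Y)=\Bigl(\frac{\rho}{2}\Bigr)^{\gamma_\ell}H_{\ell,\rho}(|Y|).
\end{equation*}
The basic multiplier is then $m^{\mathrm{bas},j}_{\ell,\rho}=iY_j\rho^\beta(\rho/2)^{\gamma_\ell}H_{\ell,\rho}(|Y|)$, and the angular one is $\rho^{\alpha-1}\sqrt{\lambda_\ell}(\rho/2)^{\gamma_\ell}H_{\ell,\rho}(|Y|)$. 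For the radial one we differentiate the identity in $\rho$. Using $\mathcal I_\nu'(t)=t\,\mathcal I_{\nu+1}(t)$ (Lemma~\ref{lem:Bessel_regular_branch}), we get
\begin{equation*}
\rho^\alpha\partial_\rho M_\ell=\Bigl(\frac{\rho}{2}\Bigr)^{\gamma_\ell}\Bigl[\gamma_\ell\rho^{\alpha-1}H_{\ell,\rho}(|Y|)+\rho^{\alpha-1+\mathfrak a}|Y|\,S_{\nu_\ell}(\tau_\rho)\,H_{\ell,\rho}(|Y|)\Bigr].
\end{equation*}
Here $\alpha-1+\mathfrak a=\beta\ge0$.

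\textbf{Handling the $\rho$-prefactors.} Note $\gamma_\ell\ge0$, with $\gamma_0=-\mathfrak b+|\mathfrak b|$, which vanishes since $\mathfrak b=\frac{n+2\alpha-2}{2}>0$ when $n\ge2$. For $\ell\ge1$ we have $\gamma_\ell\gtrsim\ell$; to be checked: $\gamma_\ell=\lambda_\ell/(\mathfrak b+\sqrt{\mathfrak b^2+\lambda_\ell})\ge c\,\ell$. The factors $\rho^{\alpha-1}$ in the angular and first radial terms are singular, but they appear only for $\ell\ge1$, where they are absorbed as
\begin{equation*}
\rho^{\alpha-1}(\rho/2)^{\gamma_\ell}\lesssim(\rho/2)^{\gamma_\ell-1+\alpha}.
\end{equation*}
Since $\gamma_\ell\ge\gamma_1\ge 1-\alpha$ should hold, this is bounded; it is worth checking $\gamma_1=n-1>1-\alpha$ for $n\ge2$ (indeed $\lambda_1=n-1$ and $\gamma_1$ solves $\gamma^2+2\mathfrak b\gamma=n-1$, giving $\gamma_1=1-\alpha$). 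The borderline gives exactly $\rho^{0}$, which is acceptable. For $\ell=0$ the angular term vanishes and the first radial term carries the coefficient $\gamma_0=0$. For $\ell\ge2$ we use $(\rho/2)^{\gamma_\ell-1+\alpha}\le 2^{-(\gamma_\ell-1+\alpha)}\le Ce^{-c\ell}$ since $\rho\le1$, and the polynomial factors $\gamma_\ell,\sqrt{\lambda_\ell}\lesssim1+\ell$ are harmless.

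\textbf{$L^1_Y$ bounds.} It remains to show that $\partial_Y^\gamma$ of $H_{\ell,\rho}(|Y|)$, $Y_jH_{\ell,\rho}(|Y|)$ and $|Y|S_{\nu_\ell}(\tau_\rho)H_{\ell,\rho}(|Y|)$ has $L^1_Y$ norm at most $C_N(1+\ell)^{A_N}$. Then $(1+\ell)^{A_N}e^{-c\ell}\le C e^{-c'\ell}$. We split $\mathbb R^m$ as in \eqref{eq_transition_Y_derivatives}.
\begin{enumerate}
\item On $|Y|\le C^*(1+\ell)$, the pointwise bound $(1+\ell)^{A_N}$ times the volume $(1+\ell)^m$ suffices. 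The factors $Y_j$ contribute only polynomially in $\ell$.
\item For $|Y|S_{\nu_\ell}(\tau_\rho)=\mathfrak A_\rho^{-1}\tau_\rho S_{\nu_\ell}(\tau_\rho)$, we use the second statement of Lemma~\ref{lem_S_derivative_bound} applied to the vector $\mathfrak A_\rho Y$. Each $Y$-derivative brings a factor $\mathfrak A_\rho\le\mathfrak A_2$, and the prefactor $\mathfrak A_\rho^{-1}$ combines with $\rho^{\beta}$. The product $\rho^\beta\mathfrak A_\rho^{-1}\sim\rho^{\alpha-1}$ is again absorbed by $(\rho/2)^{\gamma_\ell}$ when $\ell\ge1$.
\item On $|Y|>C^*(1+\ell)$, the bound $(1+|Y|)^{A_N}e^{-c_N|Y|}$ is integrable, uniformly in $\ell$, after multiplication by the polynomial bounds on the other factors. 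Leibniz's rule handles the products.
\end{enumerate}

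\textbf{Main obstacle.} I expect the main difficulty to be the $\ell=0$ radial term $\rho^{\beta}|Y|S_{\nu_0}(\tau_\rho)H_{0,\rho}$, where no $\gamma_\ell$ decay is available. There I would instead use $\tau_\rho S_{\nu_0}(\tau_\rho)=\tau_\rho I_{\nu_0+1}/I_{\nu_0}\lesssim\tau_\rho^2$ for small $\tau_\rho$, and $\lesssim\tau_\rho$ for large $\tau_\rho$. Combined with $\mathfrak A_\rho^{-1}$, this gives $\rho^\beta|Y|\min(1,\tau_\rho)$, which is bounded by $C|Y|$ on the small-$|Y|$ region and is absorbed by the exponential decay elsewhere. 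Derivatives are handled through the smooth-in-$|Y|^2$ expansion used in the proof of Proposition~\ref{lem_transition_ratio_derivatives}. The other delicate point is the $\ell=1$ borderline $\gamma_1=1-\alpha$, where $\rho^{\alpha-1}(\rho/2)^{\gamma_1}$ is bounded but gains nothing. This is fine because only finitely many $\ell$ lack exponential decay, and a constant bound $C_N$ is consistent with $C_Ne^{-c_N\ell}$.
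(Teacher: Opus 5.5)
Your proposal is correct and follows essentially the same route as the paper: the factorization $M_\ell=(\rho/2)^{\gamma_\ell}H_{\ell,\rho}(|Y|)$, the radial formula in which $\rho^\beta|Y|=\mathfrak a\rho^{\alpha-1}\tau_\rho$ is absorbed into $(\rho/2)^{\gamma_\ell+\alpha-1}$, the split at $|Y|=C^*(1+\ell)$ via Proposition~\ref{lem_transition_ratio_derivatives} and Lemma~\ref{lem_S_derivative_bound}, and a separate treatment of $\ell=0$. One arithmetic slip: $\gamma_1$ is neither $n-1$ nor exactly $1-\alpha$ (substituting $\gamma=1-\alpha$ into $\gamma^2+2\mathfrak b\gamma-(n-1)$ gives $-\alpha(n-2+\alpha)<0$), so in fact $\gamma_1>1-\alpha$ strictly, which is all you need.
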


\begin{proof}
It suffices to prove that for every multi-index $|\gamma|\le N$, there exist constants $A_N, C_N, c_N>0$ such that
\begin{equation}\label{eq_pointwise_split_bound_Y}
    \bigl|
        \partial_Y^\gamma m_{\ell,\rho}^{(*)}(Y)
    \bigr|
    \le
    C_N
    (1+\ell+|Y|)^{A_N}
    \bigl[
        e^{-c_N \ell}\mathbf 1_{|Y|\le C^* (1+\ell)}
        +
        e^{-c_N |Y|}\mathbf 1_{|Y|>C^* (1+\ell)}
    \bigr],
\end{equation}
uniformly for $0<\rho\le 1$. Once \eqref{eq_pointwise_split_bound_Y} is established, \eqref{eq_detailed_poisson_multiplier_goal} follows immediately. Indeed, \eqref{eq_pointwise_split_bound_Y} suggests
\begin{align*}
    \int_{|Y|\le C^* (1+\ell)}
    (1+\ell+|Y|)^{A_N}e^{-c_N \ell} \, dY
    &\le
    C_N (1+\ell)^{A_N + m}e^{-c_N \ell}
    \le
    C_N e^{-c_N'\ell},
\end{align*}
and
\begin{align*}
    \int_{|Y|> C^* (1+\ell)}
    (1+\ell+|Y|)^{A_N}e^{-c_N |Y|} \, dY
    &\le
    C_N e^{-c_N''\ell}.
\end{align*}
Therefore, $\bigl\| \partial_Y^\gamma m_{\ell,\rho}^{(*)} \bigr\|_{L^1_Y} \le C_N e^{-c_N \ell}$.
Summing over $|\gamma|\le N$ proves \eqref{eq_detailed_poisson_multiplier_goal}.

It remains to prove \eqref{eq_pointwise_split_bound_Y}. Note that for $\ell\ge 0$,
\begin{equation}\label{eq_M_H}
    M_{\ell,\rho}(Y) = \Bigl( \frac{\rho}{2} \Bigr)^{\!-\mathfrak b} \frac{I_{\nu_\ell}(\tau_\rho)}{I_{\nu_\ell}(\tau_2)} = \Bigl( \frac{\rho}{2} \Bigr)^{\!\gamma_\ell} H_{\ell,\rho}(|Y|).
\end{equation}
Moreover, for $\ell\ge 1$,
\begin{align*}
    \rho^\alpha \partial_\rho M_{\ell, Y}(\rho)
    = \gamma_\ell \rho^{\alpha-1} \Bigl(\frac{\rho}{2}\Bigr)^{\!\gamma_\ell} H_{\ell,\rho}(|Y|) + \rho^{\alpha} \Bigl(\frac{\rho}{2}\Bigr)^{\!\gamma_\ell} \partial_\rho H_{\ell,|Y|}(\rho).
\end{align*}
The identity
\begin{align*}
    \frac{\partial_\rho H_{\ell, |Y|}(\rho)}{H_{\ell, |Y|}(\rho)}
    = \partial_\rho \log H_{\ell, |Y|}(\rho)
    = |Y| \rho^{\mathfrak a-1} \frac{\mathcal{I}'_{\nu_\ell}(\mathfrak A_\rho |Y|)}{\mathcal{I}_{\nu_\ell}(\mathfrak A_\rho |Y|)}
    = |Y| \rho^{\mathfrak a-1} S_{\nu_\ell}(\mathfrak A_\rho |Y|),
\end{align*}
gives
\begin{align}\label{eq_rad_M_H}
    \rho^\alpha \partial_\rho M_{\ell, Y}(\rho)
    = \gamma_\ell 2^{\alpha-1} \Bigl(\frac{\rho}{2}\Bigr)^{\!\gamma_\ell+\alpha-1} H_{\ell, \rho}(|Y|)
    + \mathfrak a 2^{\alpha -1} \Bigl(\frac{\rho}{2}\Bigr)^{\!\gamma_\ell+\alpha - 1} |\mathfrak A_\rho Y| \, S_{\nu_\ell}(\mathfrak A_\rho |Y|) H_{\ell, \rho}(|Y|).
\end{align}
We split the argument into two cases.

\textit{Case 1: $|Y|\le C^* (1+\ell)$.} 
By Proposition~\ref{lem_transition_ratio_derivatives} and \eqref{eq_M_H},
\begin{equation}\label{eq_M_deri}
    \bigl| \partial_Y^\gamma M_{\ell,\rho}(Y) \bigr| \le C_N \Bigl( \frac{\rho}{2} \Bigr)^{\!\gamma_\ell} (1+\ell)^{A_N} \le C_N (1+\ell)^{A_N} e^{-c \ell},\quad \forall\, |\gamma|\le N,
\end{equation}
where the last inequality follows from
\begin{equation*}
     \Bigl(\frac{\rho}{2}\Bigr)^{\!\gamma_\ell} \le e^{- \gamma_\ell \log 2} \le C e^{-c \ell}
\end{equation*}
since $\rho \le 1$ and $\gamma_\ell \sim 1+\ell$. Therefore, since $\alpha-1+\gamma_\ell>0$ for $\ell \ge 1$, $m_{0,\rho}^{(\mathrm{ang})}=0$, and $\sqrt{\lambda_\ell}$ grows at most polynomially in $\ell$, we obtain
\begin{equation*}
    \bigl|\partial_Y^\gamma m_{\ell,\rho}^{(\mathrm{ang})}(Y)\bigr|
    \le
    C_N
    (1+\ell+|Y|)^{A_N}
        e^{-c_N \ell}\mathbf 1_{|Y|\le C^* (1+\ell)},\quad |\gamma|\le N.
\end{equation*}
For the base multiplier $m_{\ell,\rho}^{(\mathrm{bas}, j)}(Y) = i Y_j \rho^{\beta} M_{\ell,\rho}(Y)$, with $|Y|\le C^*(1+\ell)$, $\gamma_\ell + \beta >0$, and \eqref{eq_M_deri}, Leibniz's rule gives
\begin{equation*}
    \bigl|\partial_Y^\gamma m_{\ell,\rho}^{(\mathrm{bas},j)}(Y)\bigr|
    \le
    C_N
    (1+\ell+|Y|)^{A_N}
        e^{-c_N \ell}\mathbf 1_{|Y|\le C^* (1+\ell)},\quad |\gamma|\le N,\quad 1\le j\le m.
\end{equation*}
Next, for the radial multiplier, that is, for $*=\mathrm{rad}$, by \eqref{eq_rad_M_H},
and $\gamma_\ell +\alpha-1>0$, Proposition~\ref{lem_transition_ratio_derivatives} together with Lemma~\ref{lem_S_derivative_bound} yields
\begin{equation*}
    \bigl| \partial_Y^{\gamma} m_{\ell,\rho}^{(\mathrm{rad})}(Y) \bigr|
    \le C_{\alpha,N} (1+\ell + |Y|)^{A_N} e^{-c_N \ell} \mathbf 1_{|Y|\le C^* (1+\ell)},\quad |\gamma|\le N,\qquad \ell \ge 1.
\end{equation*}
The case $\ell = 0$ is handled similarly. Indeed, since $\gamma_0=0$, $M_{0,\rho}(Y) = H_{0,\rho}(|Y|)$ and hence
\begin{equation*}
    m_{0,\rho}^{(\mathrm{rad})}(Y) = \rho^\alpha \partial_\rho M_{0, Y}(\rho)
    = 2^\beta \Bigl(\frac{\rho}{2}\Bigr)^{\!\beta}|Y| S_{\nu_0}(\mathfrak A_\rho |Y|) H_{0, \rho}(|Y|).
\end{equation*}
Clearly,
\begin{equation*}
    |Y|S_{\nu_0}(\mathfrak A_\rho |Y|)\le C(1+\nu_0+\mathfrak A_\rho |Y|)\le C(1+|Y|).
\end{equation*}
Moreover, by Lemma~\ref{lem_S_derivative_bound} and the chain rule, for $|\gamma|\ge 1$,
\begin{equation*}
    |\partial_Y^{\gamma}(|Y|S_{\nu_0}(\mathfrak A_\rho |Y|))|
    \le C_N \mathfrak A_\rho^{|\gamma|-1} (1+\nu_0+\mathfrak A_\rho |Y|)^{A_N}
    \le C_N (1+|Y|)^{A_N}.
\end{equation*}
The required $Y$-derivative estimate for $m_{0,\rho}^{(\mathrm{rad})}$ follows at once. 

\textit{Case 2: $|Y|>C^* (1+\ell)$.} With \eqref{eq_M_H}, \eqref{eq_rad_M_H} in mind, using Proposition~\ref{lem_transition_ratio_derivatives}, Lemma~\ref{lem_S_derivative_bound}, the same argument as in \textit{Case 1} implies 
\begin{equation*}
    \bigl|
        \partial_Y^\gamma m_{\ell,\rho}^{(*)}(Y)
    \bigr|
    \le
    C_N
    (1+\ell+|Y|)^{A_N}
        e^{-c_N |Y|}\mathbf 1_{|Y|> C^* (1+\ell)},\quad |\gamma|\le N,\qquad * \in \{\mathrm{rad}, \mathrm{ang}, \mathrm{bas}, j\}.
\end{equation*}
for all $\ell \ge 0$. This proves \eqref{eq_pointwise_split_bound_Y}, and hence Proposition~\ref{lem_detailed_poisson_multiplier}.
\end{proof}

\begin{remark}\label{re:multiplier}
The proof of Proposition~\ref{lem_detailed_poisson_multiplier} has two independent
components: the fixed-order Bessel estimates for the multiplier in the Fourier
variable $Y$, and the summation over spherical harmonics. In the one-dimensional
case of Part~\ref{part2} the second component disappears. Therefore, once Proposition~\ref{lem_transition_ratio_derivatives} is replaced by its fixed-order version from the preceding Remark~\ref{re:H}, the same multiplier-to-kernel argument applies
verbatim. The only new feature in the odd half-line branch is the explicit factor
$\rho^{1-2\alpha}$, whose intrinsic radial derivative contributes the loss $\rho^{-\alpha}$.
\end{remark}

We next turn from the Poisson operator to the Green operator. The Poisson term is
controlled by differentiating normalized boundary multipliers. For the Green term,
the main issue is row integrability of the one-dimensional Green kernel and of its
intrinsic derivatives.

We first record the following Bessel estimates.

\begin{lemma}\label{lem:Bessel:integral}
Let $\nu>-1$ and $s>|\nu|$. Let $\varepsilon \in \{0,1\}$. The following two estimates hold:
\begin{enumerate}[label=(\roman*)]
    \item if $q\ge -\nu - \varepsilon$ and $q+s\le 2$, then
    \begin{equation*}
        \sup_{t>0}I(q,s,\nu,\varepsilon,t):=\sup_{t>0} t^q I_{\nu+\varepsilon}(t) \int_t^\infty u^{s-1} K_{|\nu|}(u) du \le C,
    \end{equation*}
    \item if $q+s+\nu-|\nu|\ge 0$ and $q+s\le 2$, then
    \begin{equation*}
        \sup_{t>0}II(q,s,\nu,t):= \sup_{t>0} t^q K_{|\nu|}(t) \int_0^t u^{s-1} I_{\nu}(u) du \le C.
    \end{equation*}
\end{enumerate}
\end{lemma}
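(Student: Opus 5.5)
The plan is to split each supremum at $t=1$ and use only the two-sided asymptotics for $I_\nu$ and $K_\nu$ recorded in the subsection on modified Bessel functions. The order $\nu>-1$ is fixed here, so every constant may depend on $\nu$, $s$, $q$. Away from the origin the pairing $I\cdot K$ cancels the exponentials, and $q+s\le 2$ controls what polynomial growth remains. Near the origin only power counting is needed, and this is where the lower constraints on $q$ enter.

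\emph{Estimate (i).} First I check that the tail integral converges at $u=0$. Near $0$ one has $u^{s-1}K_{|\nu|}(u)\sim u^{s-1-|\nu|}$, with a logarithm if $\nu=0$, and this is integrable because $s>|\nu|$. At infinity there is exponential decay. So $J(t):=\int_t^\infty u^{s-1}K_{|\nu|}(u)\,du$ is finite and decreasing.

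For $0<t\le 1$, this gives $J(t)\le J(0)<\infty$. Also $I_{\nu+\varepsilon}(t)\lesssim t^{\nu+\varepsilon}$, since $\nu+\varepsilon>-1$. Hence $t^qI_{\nu+\varepsilon}(t)J(t)\lesssim t^{q+\nu+\varepsilon}\le 1$, using $q\ge-\nu-\varepsilon$.

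For $t\ge1$, the asymptotic $K_{|\nu|}(u)\sim u^{-1/2}e^{-u}$ and one integration by parts (or comparing $\int_t^\infty u^{a}e^{-u}du\lesssim t^ae^{-t}$ for $t\ge1$) give $J(t)\lesssim t^{s-3/2}e^{-t}$. Combined with $I_{\nu+\varepsilon}(t)\lesssim t^{-1/2}e^t$, this yields $t^qI_{\nu+\varepsilon}(t)J(t)\lesssim t^{q+s-2}\le 1$.

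\emph{Estimate (ii).} Here convergence of $\int_0^t u^{s-1}I_\nu(u)\,du$ near $0$ needs $s+\nu>0$, which holds because $s>|\nu|\ge-\nu$.

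For $t\le1$, the integral is $\lesssim t^{s+\nu}$. Combined with $K_{|\nu|}(t)\lesssim t^{-|\nu|}$, the product is $\lesssim t^{q+s+\nu-|\nu|}\le1$, by the hypothesis $q+s+\nu-|\nu|\ge0$.

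When $\nu=0$ the bound $K_0(t)\lesssim 1+|\log t|$ introduces a logarithm, and this is the one delicate point. If $q+s>0$, the power $t^{q+s}$ absorbs $|\log t|$. In the borderline case $q+s=0$ the bound is only logarithmic, so I would either note that this case is excluded by how the lemma is applied, or argue directly from $s>0$ and the condition as stated. I expect to handle it by observing that $q+s\ge0$ together with a small margin suffices, and I would check at the application sites that the strict case $q+s>0$ is what is used.

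For $t\ge1$, split the integral at $1$. The piece over $[0,1]$ is $O(1)$. On $[1,t]$ use $u^{s-3/2}e^u$; integrating by parts, or using monotonicity of $u^ae^{-(t-u)}$ weights, gives $\int_1^t u^{s-3/2}e^u\,du\lesssim t^{s-3/2}e^t+C$. Multiplying by $t^qK_{|\nu|}(t)\lesssim t^{q-1/2}e^{-t}$ gives $\lesssim t^{q+s-2}+t^{q-1/2}e^{-t}$, which is bounded since $q+s\le2$.

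The main obstacle I anticipate is getting the large-$t$ integral asymptotics uniformly for every exponent $a=s-3/2$, including negative $a$. I would handle this by the elementary bound $\int_t^\infty u^ae^{-u}du\le C_a t^ae^{-t}$ for $t\ge1$, obtained by substituting $u=t+v$ and using $(1+v/t)^a\le(1+v)^{|a|}$, and similarly for $\int_1^t$. A secondary obstacle is the logarithmic endpoint at $\nu=0$ discussed above.
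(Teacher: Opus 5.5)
Your proposal is correct and is essentially the paper's proof. You split at $t=1$, use the small- and large-argument asymptotics of $I_\nu$ and $K_{|\nu|}$, and use the elementary bounds $\int_t^\infty u^a e^{-u}\,du\lesssim t^a e^{-t}$ and $\int_1^t u^a e^{u}\,du\lesssim t^a e^{t}$ for $t\ge1$. Your splitting of $\int_0^t$ at $u=1$ is in fact more careful than the paper's use of $\int_0^t u^{s-3/2}e^u\,du$, which diverges when $s\le 1/2$.

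Your worry about $\nu=0$ is justified and cannot be argued away. Take $\nu=0$, $s=1$, $q=-1$: all hypotheses of (ii) hold, yet $II(t)=t^{-1}K_0(t)\int_0^t I_0(u)\,du\sim|\log t|\to\infty$ as $t\to0^+$. The paper's proof misses this because it uses $K_{|\nu|}(t)\lesssim t^{-|\nu|}$ on $(0,1]$, which fails at $\nu=0$. So drop the idea of arguing from the condition as stated, take your first option, and add the hypothesis $q+s>0$ when $\nu=0$. This is harmless: wherever the paper invokes (ii), either $\nu\neq0$ or $q+s>0$.
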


\begin{proof}
We first show $(i)$. Assume first $t\le 1$. Then by small argument asymptotics of Bessel functions, $I$ is bounded by 
\begin{align*}
    t^{q+\nu+\varepsilon} \left( \int_t^1 u^{s-1-|\nu|} du + \int_1^\infty u^{s-\frac{3}{2}} e^{-u} du   \right) \le C t^{q+\nu+\varepsilon} \le C.
\end{align*}
While if $t>1$, use large argument asymptotic formula instead. $I$ attains upper bound
\begin{align*}
    t^{q-\frac{1}{2}} e^t \int_t^\infty u^{s-\frac{3}{2}} e^{-u} du \le C t^{q+s-2}\le C, 
\end{align*}
where the second to last inequality follows from the elementary estimate $\int_t^\infty u^{\delta} e^{-u}du\le C_\delta t^\delta e^{-t}$ for $\delta\in \mathbb{R}$.

Next, we treat $(ii)$. If $t\le 1$, a parallel analysis gives
\begin{align*}
    II(q,s,\nu,t) \le C t^{q-|\nu|} \int_0^t u^{s-1+\nu} du \le C t^{q+s+\nu-|\nu|}\le C. 
\end{align*}
On the other hand, if $t>1$, one deduces 
\begin{align*}
   II(q,s,\nu,t)\le C t^{q-\frac{1}{2}} e^{-t} \int_0^t u^{s-\frac{3}{2}} e^u du \le C t^{q+s-2} \le C,
\end{align*}
where the second to last inequality is a consequence of estimate $\int_0^t u^{\delta} e^{u}du\le C_\delta t^\delta e^{t}$ for $\delta\in \mathbb{R}$.

\end{proof}

The next estimate is purely mode-wise and will later be
combined with integration by parts in the Fourier variable and summation in $\ell$.

\begin{proposition}\label{prop:Green:Shur}
For $\ell\ge 1$ and $0<\rho\le 2$, we have
\begin{equation}\label{eq_basic_R_schur_improved}
    \int_0^2 G_{\ell,Y}(\rho,\sigma) \, \sigma^{n-1} \, d\sigma \le \begin{cases}
        C \rho^{1-\alpha} (1+\ell)^{-2}, & |Y|\le C^*(1+\ell),\\[4pt]
        C \rho^{1-\alpha} (1+\ell+\tau_\rho)^{-2}, & |Y|> C^*(1+\ell).
    \end{cases}
\end{equation}
In particular,
\begin{equation}\label{eq_basic_R_schur_ell=0}
    \int_0^2 G_{0,Y}(\rho,\sigma) \, \sigma^{n-1} \, d\sigma \le \begin{cases}
        C, & |Y|\le C^*,\\[4pt]
        C (1+\tau_\rho)^{-2}, & |Y|> C^*.
    \end{cases}
\end{equation}
Moreover, for $\ell \ge 0$,
\begin{align}\label{eq_basic_R_schur}
\sup_{0<\rho<2} \rho^{2\beta} \int_0^2 G_{\ell,Y}(\rho,\sigma) \, \sigma^{n-1} \, d\sigma \le \begin{cases}
        C (1+\ell)^{-2}, & |Y|\le C^*(1+\ell),\\[4pt]
        C |Y|^{-2}, & |Y|> C^*(1+\ell).
    \end{cases}
\end{align}
\end{proposition}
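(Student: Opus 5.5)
Write $G_{\ell,Y}$ via \eqref{eq_kernel_Green} and change variables to the Bessel variable. With $t=\tau_\rho=|Y|\mathfrak A_\rho$, $u=\tau_\sigma=|Y|\mathfrak A_\sigma$, $\nu=\nu_\ell$, we have $\sigma^{n-1}d\sigma\sim \sigma^{n-\mathfrak a}\,\frac{du}{u}$, and $\sigma\sim (u/|Y|)^{1/\mathfrak a}$. Since $G_{\ell,Y}\ge 0$, we may discard the subtracted $I$-term in $\psi_\ell$, giving $0\le\psi_\ell(\sigma)\le \sigma^{-\mathfrak b}K_\nu(\tau_\sigma)$. The row integral then splits into two pieces:
\begin{equation*}
\mathfrak a^{-1}\rho^{-\mathfrak b}K_\nu(\tau_\rho)\int_0^\rho \sigma^{n-1-\mathfrak b} I_\nu(\tau_\sigma)\,d\sigma
+\mathfrak a^{-1}\rho^{-\mathfrak b}I_\nu(\tau_\rho)\int_\rho^2 \sigma^{n-1-\mathfrak b}K_\nu(\tau_\sigma)\,d\sigma .
\end{equation*}
Converting $\sigma^{n-1-\mathfrak b}d\sigma$ into $u^{s-1}du$ with $s=(n-\mathfrak b)/\mathfrak a$ produces a prefactor $|Y|^{-s}$. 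Pulling out $\rho^{-\mathfrak b}=(\mathfrak a t/|Y|)^{-\mathfrak b/\mathfrak a}$, the two pieces become $|Y|^{-(n-2\mathfrak b)/\mathfrak a}$ times $t^{-\mathfrak b/\mathfrak a}K_\nu(t)\int_0^t u^{s-1}I_\nu$ and $t^{-\mathfrak b/\mathfrak a}I_\nu(t)\int_t^{\tau_2}u^{s-1}K_\nu$. Note that $n-2\mathfrak b=2-2\alpha$. To extract $\rho^{1-\alpha}=(\mathfrak a t/|Y|)^{(1-\alpha)/\mathfrak a}$, write $|Y|^{-(2-2\alpha)/\mathfrak a}=\rho^{1-\alpha}\cdot|Y|^{-(1-\alpha)/\mathfrak a}t^{(1-\alpha)/\mathfrak a}$ up to constants. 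The natural exponents are then $q=(1-\alpha-\mathfrak b)/\mathfrak a$ and $s$, and the leftover factor $|Y|^{-(1-\alpha)/\mathfrak a}$ is harmless in the stated range.

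In the fixed-order model, Lemma~\ref{lem:Bessel:integral} would bound such products. Here, however, $q+s=(n+1-\alpha-2\mathfrak b)/\mathfrak a=(3-3\alpha)/\mathfrak a$, which need not be $\le2$, and we need the extra decay $(1+\ell)^{-2}$ uniformly in $\nu$. So I would instead redo the two integrals with order-uniform inputs. For $u\le t$, monotonicity of $u^{-\nu}I_\nu(u)$ gives $I_\nu(u)\le (u/t)^\nu I_\nu(t)$. Symmetrically, Lemma~\ref{lem_bessel_inputs_high_frequency} says $\mathcal K_\nu$ is decreasing, so $K_\nu(u)\le (t/u)^{\nu}K_\nu(t)$ for $u\ge t$. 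Inserting these, each piece is bounded by
\begin{equation*}
I_\nu(t)K_\nu(t)\, t^{q}\int (u/t)^{\pm\nu}u^{s-1}du\;\lesssim\; \frac{t^{q+s}}{(\nu\pm s)}\,I_\nu(t)K_\nu(t).
\end{equation*}
The factor $1/(\nu\pm s)\sim(1+\ell)^{-1}$ is legitimate because $\nu_\ell\sim 1+\ell$ exceeds $s$ for large $\ell$; the finitely many small $\ell$ go to Lemma~\ref{lem:Bessel:integral}. Combining with \eqref{eq:product:Bessel}, $I_\nu K_\nu\le C/(t+\nu)$, gives a total of $(1+\ell)^{-1}(t+\nu)^{-1}$ times powers of $t$. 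Undoing the powers of $t$ and $|Y|$ should return the ordinary variable and give $\rho^{1-\alpha}$ times $(1+\ell)^{-2}$ when $\tau_\rho\lesssim 1+\ell$.

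In the regime $|Y|>C^*(1+\ell)$ with $\tau_\rho\gg\nu$, the crude power comparison is wasteful. There I would use the exponential behavior instead: \eqref{eq_K_normalized_exp_decay} on the $K$-side, and the Amos bound \eqref{eq_S_lower_large}, i.e.\ $(\log\mathcal I_\nu)'\ge c$, on the $I$-side. With these, the integrals localize to $|u-t|\lesssim1$ and contribute $t^{s-1}$ instead of $t^{s}/\nu$. This yields the $(1+\ell+\tau_\rho)^{-2}$ decay, since $I_\nu(t)K_\nu(t)\lesssim t^{-1}$ there. The case $\ell=0$, \eqref{eq_basic_R_schur_ell=0}, is the same computation without the $\nu$-gain; the $\rho^{1-\alpha}$ factor is dropped because $\rho\le2$.

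For \eqref{eq_basic_R_schur}, multiply the previous bound by $\rho^{2\beta}$. Since $\rho^{1-\alpha+2\beta}$ is bounded, the bound $(1+\ell)^{-2}$ in the first regime is immediate. In the second regime, $\rho^{2\beta+2-2\alpha}\tau_\rho^{-2}\sim |Y|^{-2}$ because $\tau_\rho^2=|Y|^2\rho^{2\mathfrak a}/\mathfrak a^2$ and $2\mathfrak a=2\beta+2-2\alpha$; here one keeps $\rho^{1-\alpha}$ to the full power, which the exponential localization provides. The main obstacle I expect is precisely this uniform-in-order bookkeeping of the exponents $q,s$ against $\nu_\ell$, and verifying that the endpoint $\sigma\to2$ (where $\tau_2$ is finite) causes no loss. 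If the power comparison in the transition zone $t\sim\nu$ is too lossy, I would fall back on the Debye-type bounds implicit in Lemma~\ref{lem_S_derivative_bound}.
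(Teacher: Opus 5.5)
Your treatment of the modes with $\nu_\ell>s:=(n-\mathfrak b)/\mathfrak a$ (equivalently $\delta_\ell>n$) is sound, and it is the paper's argument written in the Bessel variable. Monotonicity of $\mathcal I_\nu$ and $\mathcal K_\nu$ gives the gains $1/(s\pm\nu_\ell)$, \eqref{eq:product:Bessel} supplies the second factor, and the Amos bound together with \eqref{eq_K_normalized_exp_decay} handles $\tau_\rho\gg1+\ell$. In each of these cases the change of variables returns exactly $\rho^{n-2\mathfrak b}=\rho^{2-2\alpha}$.

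The gap is in the finitely many modes with $\nu_\ell\le s$, i.e.\ $\delta_\ell\le n$. This always includes $\ell=0$, since $\delta_0=2\mathfrak b<n$, so your ``same computation'' for $\ell=0$ is unavailable on the $K$-side. It can also include $\ell=1$, e.g.\ when $n=2$ and $\alpha\le 3/4$.

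For these modes the $K$-side integral is governed by its upper endpoint $\sigma=2$. Keeping the truncation, the comparison $\psi_\ell(\sigma)\lesssim(\rho/\sigma)^{\delta_\ell}\psi_\ell(\rho)$ gives $\phi_\ell(\rho)\int_\rho^2\psi_\ell(\sigma)\sigma^{n-1}\,d\sigma\lesssim\rho^{\delta_\ell-2\mathfrak b}=\rho^{\gamma_\ell}$, and this is sharp as $|Y|\to0$. So for $\ell\ge1$ the factor $\rho^{1-\alpha}$ in \eqref{eq_basic_R_schur_improved} holds precisely because of \eqref{eq_delta_lower}, $\delta_1\ge n+\alpha-1$, i.e.\ $\gamma_\ell\ge 1-\alpha$. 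This is exactly where $n\ge2$ is used. In the $n=1$ odd branch the corresponding exponent is $1-2\alpha<1-\alpha$, which is the source of the $\rho^{1-2\alpha}$ in Proposition~\ref{prop:Green:n=1}.

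Your proposal never isolates this inequality, and Lemma~\ref{lem:Bessel:integral} cannot replace it, for two reasons:
\begin{itemize}
\item The lemma integrates the $K$-side to $\infty$. For $\nu_\ell<s$ and fixed $\rho$, $t^{-s}I_\nu(t)\int_t^\infty u^{s-1}K_\nu(u)\,du\sim t^{\nu-s}\to\infty$ as $|Y|\to 0$.
\item Every admissible choice of $q$ leaves a negative power of $|Y|$, which is not harmless in the range $|Y|\le C^*(1+\ell)$.
\end{itemize}
Your exponent bookkeeping there also has a sign slip. In fact $|Y|^{-(2-2\alpha)/\mathfrak a}\simeq\rho^{1-\alpha}|Y|^{-(1-\alpha)/\mathfrak a}t^{-(1-\alpha)/\mathfrak a}$, so $q=-(1-\alpha+\mathfrak b)/\mathfrak a$ and $q+s=(1-\alpha)/\mathfrak a$. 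The lemma's hypothesis $q\ge-\nu_\ell$ is then exactly $\gamma_\ell\ge1-\alpha$, but the factor $|Y|^{-(1-\alpha)/\mathfrak a}$ survives.

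Second, \eqref{eq_basic_R_schur} in the regime $|Y|>C^*(1+\ell)$ does not follow by multiplying the unweighted bounds by $\rho^{2\beta}$. Since $\rho^{2\beta}|Y|^2=\mathfrak a^2\tau_\rho^2\rho^{-(2-2\alpha)}$, that step needs a bound $\rho^{2-2\alpha}(1+\ell+\tau_\rho)^{-2}$. You have this for the large modes and in the exponential zone. For the small modes with $\tau_\rho\lesssim1$, however, you only have $\rho^{\gamma_\ell}$, and \eqref{eq_basic_R_schur_ell=0} carries no power of $\rho$ at all. For instance, at $\tau_\rho=1$ one gets $\rho^{2\beta}\simeq|Y|^{-2\beta/\mathfrak a}\gg|Y|^{-2}$.

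The crude comparison $K_\nu(u)\le(t/u)^{\nu}K_\nu(t)$ is also too lossy here, since it produces $\tau_2^{s-\nu}$ with $\tau_2\gg1$. What is needed is the exponential decay of $K_\nu$ for $u\gtrsim1$, and this is where Lemma~\ref{lem:Bessel:integral} belongs. The paper applies it directly to the $\rho^{2\beta}$-weighted pieces with $q=(2\beta-\mathfrak b)/\mathfrak a$ and $s=(n-\mathfrak b)/\mathfrak a$. Then $q+s=2$ and $q+\nu_\ell\ge0$, which yields $C|Y|^{-2}$ at once.
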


\begin{proof}
\textit{Case 1: $|Y|\le C^*(1+\ell)$.} 
The one-dimensional Green kernel takes the form (cf.~\eqref{eq_kernel_Green}):
\begin{align}\label{eq_kernel_Green_copy}
G_{\ell,Y}(\rho,\sigma) = \mathfrak a^{-1}
\begin{cases}
\phi_{\ell}(\rho,Y)\,\psi_{\ell}(\sigma,Y), &0<\rho<\sigma,\\[4pt]
\phi_{\ell}(\sigma,Y)\,\psi_{\ell}(\rho,Y), &\sigma < \rho<2,
\end{cases}
\end{align}
where $\phi_{\ell,Y}(\rho) = \rho^{-\mathfrak b}I_{\nu_\ell}(\tau_\rho)$ and
\begin{equation*}
\psi_{\ell,Y}(\rho) = \rho^{-\mathfrak b} \bigl[ K_{\nu_\ell}(\tau_\rho) - \eta_{\ell,Y} I_{\nu_\ell}(\tau_\rho)\bigr],\qquad \eta_{\ell,Y} := \frac{K_{\nu_\ell}(\tau_2)}{I_{\nu_\ell}(\tau_2)}.
\end{equation*}

Suppose first $\ell\ge 1$. With the notation
\begin{equation*}
\gamma_\ell:= -\mathfrak b+\mathfrak a\nu_\ell = -\mathfrak b+\sqrt{\mathfrak b^2+\lambda_\ell},\quad \delta_\ell := \mathfrak b+\mathfrak a\nu_\ell = \mathfrak b+\sqrt{\mathfrak b^2+\lambda_\ell},
\end{equation*}
both $\gamma_\ell$ and $\delta_\ell$ are of size $1+\ell$. Since $t^{-\nu}I_\nu(t)$ is increasing on $(0,\infty)$, for
$0<\sigma\leq\rho$,
\begin{equation}\label{eq_phi_ratio}
    \frac{\phi_{\ell,Y}(\sigma)}{\phi_{\ell,Y}(\rho)}
    =
    \Bigl(\frac{\sigma}{\rho}\Bigr)^{\!-\mathfrak b}
    \frac{I_{\nu_\ell}(\tau_\sigma)}{I_{\nu_\ell}(\tau_\rho)}
    \le
    \Bigl(\frac{\sigma}{\rho}\Bigr)^{\!\gamma_\ell}.
\end{equation}
On the other hand, for $0<\rho\le 2$, there exists $0<c<1$ such that
\begin{equation}\label{eq_psi_comparable_K}
    c\rho^{-\mathfrak b}K_{\nu_\ell}(\tau_\rho)
    \le
    \psi_{\ell,Y}(\rho)
    \le
    \rho^{-\mathfrak b}K_{\nu_\ell}(\tau_\rho),\qquad \ell \ge 0.
\end{equation}
Indeed, as $t^\nu K_\nu(t)$ is decreasing on $(0,\infty)$, the definition of $\eta_{\ell,Y}$ gives
\begin{equation*}
\frac{\eta_{\ell,Y}I_{\nu_\ell}(\tau_\rho)}{K_{\nu_\ell}(\tau_\rho)} =
\frac{I_{\nu_\ell}(\tau_\rho)}{I_{\nu_\ell}(\tau_2)}
\frac{K_{\nu_\ell}(\tau_2)}{K_{\nu_\ell}(\tau_\rho)}\le
\Bigl(\frac{\rho}{2}\Bigr)^{\!2\mathfrak a\nu_\ell}\le
4^{-\mathfrak a\nu_0} < 1.
\end{equation*}
This establishes \eqref{eq_psi_comparable_K}. Consequently, if $\rho\le \sigma \le 2$, then
\begin{equation}\label{eq_psi_ratio}
    \frac{\psi_{\ell,Y}(\sigma)}{\psi_{\ell,Y}(\rho)}
    \leq
    C
    \Bigl(\frac{\rho}{\sigma}\Bigr)^{\!\delta_\ell}.
\end{equation}
By Lemma~\ref{lem_bessel_inputs_high_frequency} and $\nu_\ell \ge \nu_0 >0$,
\begin{equation}\label{eq_IK_product_bound}
I_{\nu_\ell} (t) K_{\nu_\ell} (t) \le \frac{C}{1+\nu_\ell}, \qquad \ell \ge 0, \qquad t>0.
\end{equation}
Combining \eqref{eq_psi_comparable_K} and \eqref{eq_IK_product_bound},
\begin{equation}\label{eq_phi_psi_product_bound}
    \phi_{\ell,Y}(\rho)\psi_{\ell,Y}(\rho)
    \le
    C\rho^{-2\mathfrak b}(1+\ell)^{-1},
    \qquad
    0<\rho\leq1,
    \qquad
    \ell\ge 1.
\end{equation}
Fix $0<\rho\le 1$. Split
\begin{equation*}
\begin{aligned}
    \int_0^2 G_{\ell,Y}(\rho,\sigma) \, \sigma^{n-1} \, d\sigma
    &=
    \mathfrak a^{-1} \psi_{\ell,Y}(\rho)
    \int_0^\rho
        \phi_{\ell,Y}(\sigma)\,\sigma^{n-1}\,d\sigma          \\
    &\quad+
    \mathfrak a^{-1} \phi_{\ell,Y}(\rho)
    \int_\rho^2
        \psi_{\ell,Y}(\sigma)\,\sigma^{n-1}\,d\sigma          \\
    &=: I_1+I_2.
\end{aligned}
\end{equation*}
For $I_1$, \eqref{eq_phi_ratio} together with \eqref{eq_phi_psi_product_bound} implies
\begin{align}\label{eq_I1_bound_schur}
    I_1 &\le \mathfrak a^{-1} \phi_{\ell,Y}(\rho)\psi_{\ell,Y}(\rho) \int_0^\rho \Bigl(\frac{\sigma}{\rho}\Bigr)^{\!\gamma_\ell} \sigma^{n-1} \, d\sigma\\ \nonumber
    &= \frac{\mathfrak a^{-1}}{n+\gamma_\ell} \, \rho^n \phi_{\ell,Y}(\rho)\psi_{\ell,Y}(\rho)\\ \nonumber
    &\le C (1+\ell)^{-2}\rho^{n-2\mathfrak b}\\ \nonumber
    &\le C (1+\ell)^{-2}\rho^{1-\alpha}.
\end{align}
For $I_2$, using \eqref{eq_psi_ratio} and \eqref{eq_phi_psi_product_bound},
\begin{align*}
    I_2 &\le C\phi_{\ell,Y}(\rho)\psi_{\ell,Y}(\rho)
    \int_\rho^2 \Bigl(\frac{\rho}{\sigma}\Bigr)^{\!\delta_\ell}\sigma^{n-1} \, d\sigma \\
    &\le C (1+\ell)^{-1} \rho^{\delta_\ell - 2 \mathfrak b} \int_\rho^2 \sigma^{n-\delta_\ell - 1} \, d\sigma \\
    &\le C \begin{cases}
        (1+\ell)^{-2} \rho^{n-2\mathfrak b}, & \delta_\ell >n,\\
        (1+\ell)^{-1} \rho^{n-2\mathfrak b} |\log{\rho}|, & \delta_\ell = n,\\
        (1+\ell)^{-1} |n-\delta_\ell|^{-1} \rho^{\delta_\ell-n}, &\delta_\ell<n.
    \end{cases}
\end{align*}
For $\delta_\ell \ge n$, it is clear that $I_2 \le C (1+\ell)^{-2} \rho^{1-\alpha}$ since $n-2\mathfrak b=2-2\alpha$ and $\rho\le 1$ (the logarithmic blow-up $|\log{\rho}|$ can be bounded by $\rho^{-\varepsilon}$, with $0<\varepsilon<1-\alpha$). While if $\delta_\ell <n$, first note that for $n\ge 2$, 
\begin{equation}\label{eq_delta_lower}
    \delta_\ell \ge \delta_1\geq n+\alpha-1.
\end{equation}
It follows that $\rho^{\delta_\ell-n} \le \rho^{\delta_1-n} \le \rho^{n+\alpha-1-2\mathfrak b} = \rho^{1-\alpha}$. The coefficients in $\ell$ are harmless as there are only finite many $\ell\ge 1$ such that $\delta_\ell < n$. In summary, 
\begin{equation*}
    I_2 \le C (1+\ell)^{-2} \rho^{1-\alpha},\qquad 0<\rho \le 1.
\end{equation*}
Combining this with \eqref{eq_I1_bound_schur} proves \eqref{eq_basic_R_schur_improved} and \eqref{eq_basic_R_schur} with $\ell \ge 1$.


It remains to treat the case $\ell=0$. Observe that \eqref{eq_psi_comparable_K} and \eqref{eq_IK_product_bound} remain valid. From the explicit formula \eqref{eq_kernel_Green_copy} and the monotonicity of $t^{-\nu}I_{\nu}(t)$ and $t^\nu K_\nu(t)$,
\begin{equation*}
    G_{0,Y}(\rho,\sigma) \le C \begin{cases}
        \rho^{\gamma_0} \sigma^{-\delta_0}, & 0<\rho \le \sigma <2,\\[4pt]
        \rho^{-\delta_0} \sigma^{\gamma_0}, & 0<\sigma <\rho<2.
    \end{cases}
\end{equation*}
Therefore, since $\gamma_0 = 0$ and $\delta_0 = n-2(1-\alpha) <n$, for all $0<\rho \le 1$,
\begin{align*}
    \int_0^2 G_{0,Y}(\rho,\sigma)\,\sigma^{n-1}\,d\sigma
    &\le C \rho^{-\delta_0} \int_0^\rho \sigma^{n-1}\,d\sigma\\
    &\quad+ C \int_\rho^2 \sigma^{n-\delta_0 -1}\,d\sigma \\
    &\le C.
\end{align*}
This proves \eqref{eq_basic_R_schur_ell=0} and \eqref{eq_basic_R_schur} with $\ell=0$.

\textit{Case 2: $|Y|> C^*(1+\ell)$.} Set for simplicity $k_{\ell,Y}(\rho):=\rho^{-\mathfrak b}K_{\nu_\ell}(\tau_\rho)$. 
By \eqref{eq_psi_comparable_K}, it is enough to estimate $G_{\ell,Y}(\rho,\sigma)$ with $\psi_{\ell,Y}$ replaced by $k_{\ell,Y}$. Fix $0<\rho<2$. From \eqref{eq_kernel_Green_copy},
\begin{align}\label{eq_weighted_split}
    \rho^{2\beta}
    \int_0^2
        G_{\ell,Y}(\rho,\sigma)
    \, \sigma^{n-1} \, d\sigma                                      \nonumber
    \le
    C\rho^{2\beta}
    \phi_{\ell,Y}(\rho)k_{\ell,Y}(\rho)
    \biggl[\int_0^\rho \frac{\phi_{\ell,Y}(\sigma)}{\phi_{\ell,Y}(\rho)} \, \sigma^{n-1} \, d\sigma + \int_\rho^2 \frac{k_{\ell,Y}(\sigma)}{k_{\ell,Y}(\rho)} \, \sigma^{n-1} \, d\sigma \biggr].
\end{align}
By Lemma~\ref{lem_bessel_inputs_high_frequency}, for $\nu_\ell\ge \nu_0>0$,
\begin{equation}\label{eq_phi_k_product}
    \phi_{\ell,Y}(\rho)k_{\ell,Y}(\rho)
    =
    \rho^{-2\mathfrak b}
    I_{\nu_\ell}(\tau_\rho)K_{\nu_\ell}(\tau_\rho)
    \le
    C\rho^{-2\mathfrak b}(1 + \ell + \tau_\rho)^{-1}.
\end{equation}
Put $r=\sigma/\rho\in(0,1]$. Then
\begin{equation*}
\frac{\phi_{\ell,Y}(\sigma)}{\phi_{\ell,Y}(\rho)} = r^{\gamma_\ell} \frac{\mathcal I_{\nu_\ell}(\tau_\rho r^{\mathfrak a})}{\mathcal I_{\nu_\ell}(\tau_\rho)}.
\end{equation*}
Assume first that $\tau_\rho \le  C_0(1+\ell)$ with $C_0>0$ to be determined later. Since $\mathcal I_\nu$ is increasing,
\begin{align*}
    \int_0^\rho
        \frac{\phi_{\ell,Y}(\sigma)}
             {\phi_{\ell,Y}(\rho)}
    \, \sigma^{n-1} \, d\sigma
    &\le C\rho^n \int_0^1 r^{n-1+\gamma_\ell}\,dr
    \le C\frac{\rho^n}{1+\ell}
    \le C\frac{\rho^n}{1+ \ell + \tau_\rho}.
\end{align*}
Next assume that $\tau_\rho>C_0(1+\ell)$. Choose $C_0>1$ sufficiently large so that $2^{- \mathfrak a} C_0 (1+\ell) \ge C(1+\ell)$, where $C>0$ is the constant from \eqref{eq_S_lower_large}. Lemma~\ref{lem_bessel_inputs_high_frequency} \eqref{eq_S_lower_large} then gives for $0<r<1/2$ (thus $r^{\mathfrak a}\le 2^{- \mathfrak a}$),
\begin{align*}
\frac{\mathcal I_{\nu_\ell}(\tau_\rho r^{\mathfrak a})}{\mathcal I_{\nu_\ell}(\tau_\rho)}
&=
\exp\Bigl\{-\int_{\tau_\rho r^{\mathfrak a}}^{\tau_\rho}\frac{I_{\nu_\ell+1}(t)}{I_{\nu_\ell}(t)}\,dt\Bigr\} \\
&\le
\exp\Bigl\{-\int_{\tau_\rho 2^{-\mathfrak a}}^{\tau_\rho}\frac{I_{\nu_\ell+1}(t)}{I_{\nu_\ell}(t)}\,dt\Bigr\}
\le C e^{-c \tau_\rho},
\end{align*}
and for $1/2\le r \le 1$ (thus $\tau_\rho r^{\mathfrak a} \ge 2^{- \mathfrak a} C_0 (1+\ell) \ge C(1+\ell)$),
\begin{equation*}
\frac{\mathcal I_{\nu_\ell}(\tau_\rho r^{\mathfrak a})}{\mathcal I_{\nu_\ell}(\tau_\rho)}
=\exp\Bigl\{-\int_{\tau_\rho r^{\mathfrak a}}^{\tau_\rho}\frac{I_{\nu_\ell+1}(t)}{I_{\nu_\ell}(t)}\,dt\Bigr\}
\le C e^{-c \tau_\rho(1-r^{\mathfrak a})}
\le C e^{-c \tau_\rho(1-r)}.
\end{equation*}
Hence
\begin{align*}
   \int_0^\rho \frac{\phi_{\ell,Y}(\sigma)}{\phi_{\ell,Y}(\rho)}\,\sigma^{n-1}\,d\sigma
   &\le
    C\rho^n \int_0^{1/2} r^{n-1+\gamma_\ell} e^{-c\tau_\rho}\,dr\\
    &\quad+ C\rho^n \int_{1/2}^1 r^{n-1+\gamma_\ell}e^{-c\tau_\rho(1-r)}\,dr\\
    &\le C\frac{\rho^n}{1+\ell+\tau_\rho}.
\end{align*}
Thus, in all cases,
\begin{equation}\label{eq_I_ratio_integral_final}
    \int_0^\rho
        \frac{\phi_{\ell,Y}(\sigma)}
             {\phi_{\ell,Y}(\rho)}
    \, \sigma^{n-1} \, d\sigma
    \le
    C\frac{\rho^n}{1+\ell+\tau_\rho}.
\end{equation}

For the $K$-branch, note that now $r=\sigma/\rho \ge 1$. Then
\begin{equation}\label{eq_K_ratio}
    \frac{k_{\ell,Y}(\sigma)}
         {k_{\ell,Y}(\rho)}
    =
    r^{-\delta_\ell}
    \frac{\mathcal K_{\nu_\ell}(\tau_\rho r^{\mathfrak a})}
         {\mathcal K_{\nu_\ell}(\tau_\rho)},
    \qquad
    \mathcal K_\nu(t):=t^\nu K_\nu(t),
\end{equation}
where
\begin{equation*}
    \delta_\ell=\mathfrak b+\mathfrak a\nu_\ell =
    \mathfrak b+\sqrt{\mathfrak b^2+\lambda_\ell}
    \sim 1+\ell.
\end{equation*}
Moreover, for $\ell\ge 1$, \eqref{eq_delta_lower} implies $\delta_\ell\ge n+\alpha-1$.

\textit{Case 2a: $\delta_\ell > n$.} If $\tau_\rho\le C_0(1+\ell)$, then $\mathcal K_\nu$ is decreasing and hence
\begin{align}\label{eq_formula3}
\int_\rho^2 \frac{k_{\ell,Y}(\sigma)}{k_{\ell,Y}(\rho)}\,\sigma^{n-1}\,d\sigma
&\le \rho^n
    \int_1^{2/\rho} r^{n-1-\delta_\ell}\,dr
    \le C\frac{\rho^n}{1+\ell}
    \le C\frac{\rho^n}{1+ \ell + \tau_\rho}.
\end{align}
If instead $\tau_\rho>C_0 (1+\ell)$, where $C_0$ is again chosen large enough so that Lemma~\ref{lem_bessel_inputs_high_frequency} applies, then \eqref{eq_K_normalized_exp_decay} yields
\begin{equation*}
    \frac{\mathcal K_{\nu_\ell}(\tau_\rho r^{\mathfrak a})}
         {\mathcal K_{\nu_\ell}(\tau_\rho)}
    \le
    C e^{-c\tau_\rho(r^{\mathfrak a}-1)}
    \le C e^{-c\tau_\rho(r-1)}
\end{equation*}
since $r\ge 1$. Therefore,
\begin{equation}\label{eq_K_ratio_large_ell}
    \int_\rho^2\frac{k_{\ell,Y}(\sigma)}{k_{\ell,Y}(\rho)}\,\sigma^{n-1}\,d\sigma
    \le C\rho^n  \int_1^\infty e^{-c\tau_\rho(r-1)}\,dr
    \le C\frac{\rho^n}{1+\ell+\tau_\rho}.
\end{equation}
Combining this with \eqref{eq_phi_k_product},
\begin{align*}
    \rho^{2\beta}
    \int_0^2
        G_{\ell,Y}(\rho,\sigma)
    \, \sigma^{n-1} \, d\sigma
    \le \frac{C \rho^{n-2 \mathfrak b+2\beta} }{(1+\ell + \tau_\rho)^2}
    = \frac{C \rho^{2 \mathfrak a} }{(1+\ell + \tau_\rho)^2}
    \le C |Y|^{-2},
\end{align*}
as $\rho^{2 \mathfrak a} = \mathfrak a^2 |Y|^{-2} \tau_\rho^2$. This proves \eqref{eq_basic_R_schur} with $\delta_\ell >n$.

\textit{Case 2b: $\delta_\ell \le n$.} On the region $\sigma<\rho$, the change of variables $u=\tau_\sigma$ gives
\begin{align}\label{eq_finite_left_scaling}
\rho^{2\beta}
    k_{\ell,Y}(\rho)
    \int_0^\rho
        \phi_{\ell,Y}(\sigma) \, \sigma^{n-1} \, d\sigma
    =
    C |Y|^{-2}
    \tau_\rho^{\frac{2\beta-\mathfrak b}{\mathfrak a}}
    K_{\nu_\ell}(\tau_\rho)
    \int_0^{\tau_\rho}
        u^{\frac{n-\mathfrak b}{\mathfrak a}-1}
        I_{\nu_\ell}(u) \, du.
\end{align}
Since only finitely many values of $\ell$ are involved here, the standard small and large argument asymptotic formulas for the modified Bessel functions suffice, and there is no need to keep track of the $\ell$-dependence. We show that the function of $\tau_\rho$ on the right-hand side of \eqref{eq_finite_left_scaling} is uniformly bounded on $(0,\infty)$. By Lemma~\ref{lem:Bessel:integral} $(ii)$, this function is nothing but $II(q,s,\nu_\ell,\tau_\rho)$ with $q=\frac{2\beta-\mathfrak b}{\mathfrak a}$, $s=\frac{n-\mathfrak b}{\mathfrak a}$ and $\nu_\ell \ge \nu_0>0$. It is easy to check 
\begin{align}\label{check}
    q+s+\nu-|\nu| = q+s = \frac{2\beta - 2\mathfrak b+n}{\mathfrak a} = 2 \le 2,
\end{align}
inferring the uniform boundedness of $II(q,s,\nu_\ell,\tau_\rho)$ for all $\tau_\rho >0$, and hence \eqref{eq_finite_left_scaling} is $O(|Y|^{-2})$ as required.

On the other hand,
consider the region $\sigma \ge \rho$. After the same change of variables,
\begin{align*}
\rho^{2\beta}
    \phi_{\ell,Y}(\rho)
    \int_\rho^2
        k_{\ell,Y}(\sigma)\,\sigma^{n-1}\,d\sigma
    =
    C |Y|^{-2}
    \tau_\rho^{\frac{2\beta-\mathfrak b}{\mathfrak a}}
    I_{\nu_\ell}(\tau_\rho)
    \int_{\tau_\rho}^\infty
        u^{\frac{n-\mathfrak b}{\mathfrak a}-1}
        K_{\nu_\ell}(u) \, du = I(q,s,\nu_\ell
        ,0,\tau_\rho),
\end{align*}
with the same $q,s$ as in the case $\sigma>\rho$. With \eqref{check} and 
\begin{align*}
q+\nu_\ell = \frac{2\beta-\mathfrak b + \sqrt{\mathfrak b^2 + \lambda_\ell}}{\mathfrak a} \ge \frac{2\beta}{\mathfrak a}\ge 0,
\end{align*}
Lemma~\ref{lem:Bessel:integral} gives $I(\tau_\rho) \le C$ for all $\tau_\rho >0$. This closes the proof of \eqref{eq_basic_R_schur} for $\delta_\ell\le n$.

To complete the argument, it remains to derive a pointwise estimate for the $K$-branch when $\delta_\ell \le n$. Clearly, \eqref{eq_K_ratio_large_ell} remains valid when $\tau_\rho > C_0(1+\ell)$. For the case $\tau_\rho \le C_0(1+\ell)$, however, \eqref{eq_delta_lower} only gives
\begin{align*}
    \int_\rho^2 \frac{k_{\ell,Y}(\sigma)}{k_{\ell,Y}(\rho)} \, \sigma^{n-1} \, d\sigma
    &\le C \rho^n \int_1^{2/\rho} r^{n-\delta_\ell-1}\,dr\\
    &\le C \begin{cases}
        \rho^n |\log{\rho}|, & \delta_\ell = n,\\
        |n-\delta_\ell|^{-1} \rho^{\delta_\ell}, & \delta_\ell < n.
    \end{cases}
\end{align*}
For $\delta_\ell=n$, it is plain that the integral is bounded by $C(1+\ell+\tau_\rho)^{-1} \rho^{n+\alpha-1}$, where one can bound the logarithmic term by $C_\alpha \rho^{-(1-\alpha)}$. As for $\delta_\ell<n$, it satisfies upper bound
\begin{align}\label{equation2}
    \begin{cases}
        (1+\ell+\tau_\rho)^{-1} \rho^{n+\alpha-1}, & \ell \ge 1,\\
        (1+\tau_\rho)^{-1} \rho^{2\mathfrak b}, & \ell=0.
    \end{cases}
\end{align}
Concluded from \eqref{eq_phi_k_product}, \eqref{eq_I_ratio_integral_final}, \eqref{eq_K_ratio_large_ell} and \eqref{equation2}, for all $0<\rho \le 1$,
\begin{align*}
    \int_0^2
        G_{\ell,Y}(\rho,\sigma)
    \, \sigma^{n-1} \, d\sigma
    &\le C \rho^{-2 \mathfrak b} (1+\ell+\tau_\rho)^{-1} \begin{cases}
        (1+\ell+\tau_\rho)^{-1} \rho^{n+\alpha-1}, & \ell \ge 1,\\
        (1+\tau_\rho)^{-1} \rho^{2\mathfrak b}, & \ell=0.
    \end{cases} \\
    &\le C \begin{cases}
        (1+\ell+\tau_\rho)^{-2} \rho^{1-\alpha}, & \ell \ge 1,\\
        (1+\tau_\rho)^{-2}, & \ell=0.
    \end{cases} 
\end{align*}
This completes the proof of \eqref{eq_basic_R_schur_improved}, \eqref{eq_basic_R_schur_ell=0}, and hence Proposition~\ref{prop:Green:Shur}.

\end{proof}

Define operators
\begin{equation*}
    D_r:=\rho^\alpha\partial_\rho,
    \qquad
    D_\theta:=\rho^{\alpha-1}\sqrt{\lambda_\ell},
    \qquad
    D_{y,j}:=\rho^\beta Y_j.
\end{equation*}
We also have the following first order derivative estimates for the Green kernel.

\begin{proposition}\label{lem_basic_D_R_schur_explicit}
For $\ell \ge 0$,
\begin{align}\label{eq_basic_D_R_schur}
    \sup_{0<\rho\leq1} \int_0^2 |D_* G_{\ell,Y}(\rho,\sigma)|\,\sigma^{n-1}\,d\sigma \le \begin{cases}
        C, & |Y|\le C^*(1+\ell),\\[4pt]
        C|Y|, & |Y|>C^*(1+\ell),
    \end{cases}
\end{align}
where $* \in \{r, \theta, y,j\}$.
\end{proposition}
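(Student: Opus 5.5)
The plan is to reuse the splitting from the proof of Proposition~\ref{prop:Green:Shur}, now applied to $D_*G_{\ell,Y}$. Since $D_\theta$ and $D_{y,j}$ are multiplication operators, these two cases reduce at once to the undifferentiated estimates.

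For $D_{y,j}$, use that $G_{\ell,Y}\ge 0$ and apply Cauchy--Schwarz in the form $\rho^\beta|Y|\le (\rho^{2\beta}|Y|^2)^{1/2}$ together with \eqref{eq_basic_R_schur}.
\begin{itemize}
\item In the range $|Y|\le C^*(1+\ell)$, write $\rho^\beta|Y|\int G\le |Y|\,\rho^\beta\int G$. Boundedness of $\rho^\beta\int G$ needs care when $\beta<0$ is excluded: since $\beta\ge0$ and $\rho\le1$, we have $\rho^\beta\le1$. Then \eqref{eq_basic_R_schur_improved} and \eqref{eq_basic_R_schur_ell=0} give $\int G\lesssim (1+\ell)^{-2}$, respectively $\lesssim 1$. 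Hence the product is $\lesssim |Y|(1+\ell)^{-2}\lesssim 1$ for $\ell\ge1$, and $\lesssim C^*$ for $\ell=0$.
\item In the range $|Y|>C^*(1+\ell)$, interpolate: $\rho^\beta\int G\le (\rho^{2\beta}\int G)^{1/2}(\int G)^{1/2}$. The first factor is $\lesssim |Y|^{-1}$ by \eqref{eq_basic_R_schur} and the second is $\lesssim 1$ by \eqref{eq_basic_R_schur_improved}/\eqref{eq_basic_R_schur_ell=0}. Multiplying by $|Y|$ gives $\lesssim 1\le |Y|$.
\end{itemize}

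For $D_\theta$ with $\ell\ge1$, \eqref{eq_basic_R_schur_improved} gives $\rho^{\alpha-1}\sqrt{\lambda_\ell}\int G\lesssim \rho^{\alpha-1}(1+\ell)\rho^{1-\alpha}(1+\ell+\tau_\rho)^{-2}\lesssim 1$. The factor $\rho^{1-\alpha}$ there was obtained precisely to absorb this singular weight.

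The radial derivative $D_r$ is the genuine step, since here we cannot take absolute values of a positive kernel. I would differentiate the two-piece formula \eqref{eq_kernel_Green_copy}:
\[
D_rG_{\ell,Y}(\rho,\sigma)=\mathfrak a^{-1}\rho^\alpha\bigl[\psi_{\ell,Y}'(\rho)\phi_{\ell,Y}(\sigma)\mathbf 1_{\sigma<\rho}+\phi_{\ell,Y}'(\rho)\psi_{\ell,Y}(\sigma)\mathbf 1_{\sigma>\rho}\bigr].
\]
From the logarithmic derivatives computed earlier,
\[
\rho\,\phi_{\ell,Y}'/\phi_{\ell,Y}=\gamma_\ell+\mathfrak a\tau_\rho S_{\nu_\ell}(\tau_\rho)\ge0,
\]
which by Lemma~\ref{lem_S_derivative_bound}, or more precisely the bound $S_\nu\le 1$ together with $\tau S_\nu(\tau)\lesssim \tau$, is $\lesssim 1+\ell+\tau_\rho$. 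For $\psi$, I would use the Bessel relation $tK_\nu'=-\nu K_\nu-tK_{\nu+1}$ to get $|\rho\,k'_{\ell,Y}/k_{\ell,Y}|\lesssim \delta_\ell+\tau_\rho K_{\nu+1}(\tau_\rho)/K_\nu(\tau_\rho)\lesssim 1+\ell+\tau_\rho$. The correction term $\eta I$ is dominated the same way, because the ratio estimate used in \eqref{eq_psi_comparable_K} controls it and its derivative, giving $|\psi'|\lesssim \rho^{-1}(1+\ell+\tau_\rho)k$. Therefore
\[
\int_0^2|D_rG|\,\sigma^{n-1}d\sigma\lesssim \rho^{\alpha-1}(1+\ell+\tau_\rho)\int_0^2 G\,\sigma^{n-1}d\sigma ,
\]
where on each piece we use the corresponding half of the integrals $I_1,I_2$ already estimated. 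By \eqref{eq_basic_R_schur_improved}, the right-hand side is $\lesssim (1+\ell+\tau_\rho)^{-1}\lesssim1$ for $\ell\ge1$.

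The obstacle is $\ell=0$, where \eqref{eq_basic_R_schur_ell=0} gives no factor $\rho^{1-\alpha}$. There I would use $\gamma_0=0$: then $\rho\phi'/\phi=\mathfrak a\tau_\rho S_{\nu_0}(\tau_\rho)\lesssim \tau_\rho\min(\tau_\rho,1)$, since $S_\nu(t)\sim t$ for small $t$. So the $\sigma>\rho$ piece carries $\rho^{\alpha-1}\tau_\rho\min(1,\tau_\rho)\int_\rho^2\psi\,\sigma^{n-1}$. For $\tau_\rho\le1$ this is $\lesssim\rho^{\alpha-1}\rho^{2\mathfrak a}|Y|^2\lesssim |Y|^2$, and for $\tau_\rho\ge1$ it is $\lesssim\rho^{\alpha-1}\tau_\rho(1+\tau_\rho)^{-2}\lesssim |Y|^{\cdots}$. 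I expect the powers to close via $\rho^{\alpha-1+\mathfrak a}=\rho^\beta\le1$, giving $\lesssim|Y|$ (or $\lesssim C$ when $|Y|\le C^*$).

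For the $\sigma<\rho$ piece I would instead compute exactly, through the Lemma~\ref{lem:Bessel:integral}$(ii)$ scaling with $\varepsilon$ and $q$ shifted by the derivative. The relation $\rho^{1-\delta_0}\int_0^\rho\sigma^{n-1}\sim\rho^{2-2\alpha}\cdot\rho^{-1}$ should give $\rho^{\alpha-1}\rho^{1-\alpha}\cdot$ bounded. Checking this $\ell=0$ radial piece for small $\tau_\rho$ is the step I expect to require the most care.
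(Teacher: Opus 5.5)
Your proposal is correct and is essentially the paper's proof. The logarithmic-derivative bounds for $\phi_{\ell,Y}$ and $\psi_{\ell,Y}$ give $\int_0^2|D_rG_{\ell,Y}|\,\sigma^{n-1}d\sigma\lesssim\rho^{\alpha-1}(1+\ell+\tau_\rho)\int_0^2G_{\ell,Y}\,\sigma^{n-1}d\sigma$, the multiplication operators $D_\theta$ and $D_{y,j}$ are read off from Proposition~\ref{prop:Green:Shur}, and at $\ell=0$ the vanishing of $\gamma_0$ rescues the radial term.

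Your $\ell=0$ handling differs slightly from the paper's. On the $\sigma>\rho$ piece, your shortcut $\rho^{\alpha-1}\tau_\rho=\mathfrak a^{-1}\rho^{\beta}|Y|$ is a valid replacement for the paper's appeal to Lemma~\ref{lem:Bessel:integral}. The $\sigma<\rho$ piece needs no exact Bessel computation either, since $\psi_{0,Y}(\rho)\int_0^\rho\phi_{0,Y}\,\sigma^{n-1}d\sigma\lesssim\rho^{2-2\alpha}(1+\tau_\rho)^{-1}$ by the argument of \eqref{eq_I1_bound_schur} with $\gamma_0=0$.

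There is one slip. The recurrence is $tK_\nu'=\nu K_\nu-tK_{\nu+1}=-\nu K_\nu-tK_{\nu-1}$, not $-\nu K_\nu-tK_{\nu+1}$. Your resulting bound survives because $K_{\nu-1}\le K_{\nu+1}$. However, the uniform estimate $tK_{\nu+1}(t)/K_\nu(t)=2\nu+tK_{\nu-1}(t)/K_\nu(t)\lesssim 1+\nu+t$ is asserted rather than proved, and it needs the short argument of \eqref{eq_K_log_derivative_standard}.
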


\begin{proof}
Suppose first $\ell\ge 1$. We claim that for $0<\rho\leq1$,
\begin{equation}\label{eq_log_derivative_phi_psi}
    |\rho^\alpha\phi_{\ell,Y}'(\rho)|
    \leq
    C(1+\ell+\tau_\rho) \rho^{\alpha-1}\phi_{\ell,Y}(\rho),
\end{equation}
and
\begin{equation}\label{eq_log_derivative_psi}
    |\rho^\alpha\psi_{\ell,Y}'(\rho)|
    \leq
    C(1+\ell+\tau_\rho) \rho^{\alpha-1}\psi_{\ell,Y}(\rho).
\end{equation}
To verify \eqref{eq_log_derivative_phi_psi}, use the identity $tI_\nu'(t)-\nu I_\nu(t)=tI_{\nu+1}(t)$,
which gives
\begin{align*}
\rho^\alpha\frac{\phi_{\ell,Y}'(\rho)}{\phi_{\ell,Y}(\rho)}
&=\rho^{\alpha-1}\Bigl[-\mathfrak b+\mathfrak a \tau_\rho \frac{I_{\nu_\ell}'(\tau_\rho)}{I_{\nu_\ell}(\tau_\rho)}\Bigr] =\rho^{\alpha-1}\Bigl[-\mathfrak b+\mathfrak a\nu_\ell+\mathfrak a \tau_\rho \frac{I_{\nu_\ell+1}(\tau_\rho)}{I_{\nu_\ell}(\tau_\rho)}\Bigr].
\end{align*}
Since $-\mathfrak b+\mathfrak a\nu_\ell =\gamma_\ell \sim 1+\ell$, and $0<\frac{I_{\nu+1}(t)}{I_\nu(t)}< 1$,
\begin{equation*}
    \Bigl|\rho^\alpha\frac{\phi_{\ell,Y}'(\rho)}{\phi_{\ell,Y}(\rho)}\Bigr|
    \le
    C\rho^{\alpha-1}\bigl(1+\ell + \tau_\rho\bigr).
\end{equation*}
This proves \eqref{eq_log_derivative_phi_psi}.

To prove \eqref{eq_log_derivative_psi}, write $F_{\ell,Y}(t):=K_{\nu_\ell}(t)-\eta_{\ell,Y}I_{\nu_\ell}(t)$ so that $\psi_{\ell,Y}(\rho)=\rho^{-\mathfrak b}F_{\ell,Y}(\tau_\rho)$.
\eqref{eq_psi_comparable_K} implies that $K_{\nu_\ell}(\tau_\rho)$ and $F_{\ell,Y}(\tau_\rho)$ are comparable, i.e.,
\begin{equation}\label{eq_F_comparable_K}
    cK_{\nu_\ell}(\tau_\rho)
    \le
    F_{\ell,Y}(\tau_\rho)
    \leq
    K_{\nu_\ell}(\tau_\rho),
\end{equation}
with $0<c<1$ independent of $\ell,Y,\rho$. Moreover, by Lemma~\ref{lem_S_derivative_bound}, for all $t>0$ and $\nu \ge \nu_0>0$,
\begin{equation}\label{eq_I_log_derivative_standard}
    \bigl|t\frac{I_\nu'(t)}{I_\nu(t)}\bigr|
    =
    \bigl| \nu + t S_{\nu}(t) \bigr|
    \le
    C(1+\nu+t).
\end{equation}
We also claim that
\begin{equation}\label{eq_K_log_derivative_standard}
    \Bigl|t\frac{K_\nu'(t)}{K_\nu(t)}\Bigr|
    \leq
    C(1+\nu+t),
    \quad
    t>0,\quad \nu\ge \nu_0>0.
\end{equation}
Indeed, since $(t^\nu K_\nu(t))'= -t^\nu K_{\nu-1}(t)$,
and $K_{\nu}(t)$ is increasing as a function of $\nu$ on $(0,\infty)$, for each $t>0$ and $\nu> 1$,
\begin{equation*}
\Bigl| t\frac{K_\nu'(t)}{K_\nu(t)} \Bigr| \le \nu + t \frac{K_{\nu-1}(t)}{K_\nu(t)} \le 1+\nu+t.
\end{equation*}
If instead $\nu_0 \le \nu \le 1$, then $1-\nu \le 1-\nu_0$, and therefore
\begin{align*}
    t\frac{K_{\nu-1}(t)}{K_{\nu}(t)}
    &=t \frac{K_{1-\nu}(t)}{K_{\nu}(t)}
    = t\frac{K_{1-\nu}(t)}{K_{1-\nu_0}(t)} \frac{K_{1-\nu_0}(t)}{K_{\nu_0}(t)} \frac{K_{\nu_0}(t)}{K_{\nu}(t)} \\
    &\le t\frac{K_{1-\nu_0}(t)}{K_{\nu_0}(t)}
    \le C_{\nu_0} \begin{cases}
        t^{2\nu_0}, & t\le 1,\\
        t, & t\ge 1.
    \end{cases}
\end{align*}
This proves \eqref{eq_K_log_derivative_standard}.

Combining \eqref{eq_I_log_derivative_standard}, \eqref{eq_K_log_derivative_standard}, and \eqref{eq_F_comparable_K},
\begin{equation*}
    |\tau_\rho F_{\ell,Y}'(\tau_\rho)|
    \le
    C(1+\nu_\ell+\tau_\rho) |F_{\ell,Y}(\tau_\rho)|.
\end{equation*}
Consequently,
\begin{equation}\label{eq:normal2}
\begin{aligned}
    \rho^\alpha
    \Bigl|
        \frac{\psi_{\ell,Y}'(\rho)}{\psi_{\ell,Y}(\rho)}
    \Bigr|
    &\le
    \rho^{\alpha-1}
    \bigl[
        |\mathfrak b|
        +
        \mathfrak a
        \bigl|
            \tau_\rho\frac{F_{\ell,Y}'(\tau_\rho)}{F_{\ell,Y}(\tau_\rho)}
        \bigr|
    \bigr]                                                \\
    &\le
    C\rho^{\alpha-1}(1+\nu_\ell+\tau_\rho)                    \\
    &\le C(1+\ell+\tau_\rho) \rho^{\alpha-1}.
\end{aligned}
\end{equation}
This establishes \eqref{eq_log_derivative_psi}.

We now estimate \eqref{eq_basic_D_R_schur}. By \eqref{eq_kernel_Green}, for
$0<\rho\le 1$,
\begin{align}\label{D:bdd:G}
    \int_0^2
        |D_r G_{\ell,Y}(\rho,\sigma)|
    \, \sigma^{n-1} \, d\sigma
    &\le
    C (1+\ell+\tau_\rho) \rho^{\alpha-1}
    \int_0^2
        G_{\ell,Y}(\rho,\sigma)
    \, \sigma^{n-1} \, d\sigma.
\end{align}
Indeed, when $\sigma\ge \rho$, the $\rho$-derivative falls on $\phi_{\ell,Y}(\rho)$, so \eqref{eq_log_derivative_phi_psi} applies; when $\sigma\le \rho$, the $\rho$-derivative falls on $\psi_{\ell,Y}(\rho)$, so \eqref{eq_log_derivative_psi} applies. It follows by Proposition~\ref{prop:Green:Shur} that if $|Y|\le C^*(1+\ell)$ (thus $\tau_\rho = \mathfrak a^{-1} |Y| \rho^{\mathfrak a} \le C(1+\ell)$), then for $\ell\ge 1$,
\begin{align*}
    \int_0^2 &|D_* G_{\ell,Y}(\rho,\sigma)| \, \sigma^{n-1} \, d\sigma \\
    &\le  \begin{cases}
        C (1+\ell)\rho^{\alpha-1} (1+\ell)^{-2}\rho^{1-\alpha}, & *=r,\\
    C\sqrt{\lambda_\ell} \rho^{\alpha-1} (1+\ell)^{-2}\rho^{1-\alpha}, & *=\theta,\\
    C|Y_j|\rho^\beta (1+\ell)^{-2}\rho^{1-\alpha}, & *=y,j.
    \end{cases}\\
    &\le C.
\end{align*}
This proves \eqref{eq_basic_D_R_schur} for $\ell\ge 1$ and $|Y|\le C^*(1+\ell)$.

Suppose next $|Y|>C^*(1+\ell)$. Again by Proposition~\ref{prop:Green:Shur}, for $\ell \ge 1$,
\begin{align*}
    \int_0^2 &|D_* G_{\ell,Y}(\rho,\sigma)| \, \sigma^{n-1} \, d\sigma \\
    &\le  \begin{cases}
        C (1+\ell+\tau_\rho) \rho^{\alpha-1} \frac{\rho^{n+\alpha-1}}{(1+\ell+\tau_\rho)^2}, & *=r,\\
    C (1+\ell) \rho^{\alpha-1} \frac{\rho^{n+\alpha-1}}{(1+\ell+\tau_\rho)^2}, & *=\theta,\\
    C|Y|\rho^\beta
    \frac{\rho^{n+\alpha-1}}{(1+\ell+\tau_\rho)^2}, & *=y,j.
    \end{cases}\\
    &\le C|Y|.
\end{align*}

To complete, assume $\ell=0$. The angular component, namely the $D_\theta$ part, is absent. The base component $D_{y,j}$ can be treated exactly as above, using \eqref{eq_basic_R_schur_ell=0}. For the radial part $D_r$, note that
\begin{align*}
\Bigl|\rho^\alpha\frac{\phi_{0,Y}'(\rho)}{\phi_{0,Y}(\rho)}\Bigr|
=\rho^{\alpha-1}\Bigl|-\mathfrak b+\mathfrak a\nu_0 + \mathfrak a \tau_\rho \frac{I_{\nu_0+1}(\tau_\rho)}{I_{\nu_0}(\tau_\rho)}\Bigr|= \mathfrak a \rho^{\alpha-1} \tau_\rho \frac{I_{\nu_0+1}(\tau_\rho)}{I_{\nu_0}(\tau_\rho)}.
\end{align*}
This together with \eqref{eq:normal2} gives
\begin{align*}
    \int_0^2
        |D_r G_{0,Y}(\rho,\sigma)|
    \, \sigma^{n-1} \, d\sigma &\le C (1+\tau_\rho) \rho^{\alpha-1} \int_0^\rho \psi_{0,Y}(\rho) \phi_{0,Y}(\sigma) \, \sigma^{n-1} \, d\sigma\\
    &\quad + C \tau_\rho \rho^{\alpha-1} \frac{I_{\nu_0+1}(\tau_\rho)}{I_{\nu_0}(\tau_\rho)}  \int_\rho^2 \phi_{0,Y}(\rho) \psi_{0,Y}(\sigma) \, \sigma^{n-1} \, d\sigma\\
    &\le C (1+\tau_\rho) \rho^{\alpha-1-\mathfrak b} K_{\nu_0}(\tau_\rho) \int_0^\rho \sigma^{n-\mathfrak b-1} I_{\nu_0}(\tau_\sigma) \, d\sigma\\
    &\quad + C \tau_\rho \rho^{\alpha-1-\mathfrak b} I_{\nu_0+1}(\tau_\rho)  \int_\rho^2 \sigma^{n-\mathfrak b-1} K_{\nu_0}(\tau_\sigma) \, d\sigma\\
    &\le  C \rho^{1-\alpha} (1+\tau_\rho) \tau_\rho^{\frac{2\alpha-2-\mathfrak b}{\mathfrak a}} K_{\nu_0}(\tau_\rho) \int_0^{\tau_\rho} u^{\frac{n-\mathfrak b}{\mathfrak a}-1} I_{\nu_0}(u) du\\
    &\quad + C \rho^{1-\alpha} \tau_\rho^{\frac{2\alpha-2-\mathfrak b}{\mathfrak a}+1} I_{\nu_0+1}(\tau_\rho) \int_{\tau_\rho}^\infty u^{\frac{n-\mathfrak b}{\mathfrak a}-1} K_{\nu_0}(u) du.
\end{align*}
By Lemma~\ref{lem:Bessel:integral}, the above is bounded by 
\begin{equation*}
    C \rho^{1-\alpha} II \left(\frac{2\alpha-2-\mathfrak b}{\mathfrak a},\frac{n-\mathfrak b}{\mathfrak a},\nu_0,\tau_\rho \right) +  C \rho^{1-\alpha} II \left(\frac{2\alpha-2-\mathfrak b}{\mathfrak a}+1,\frac{n-\mathfrak b}{\mathfrak a},\nu_0,\tau_\rho \right)\le C,
\end{equation*}
and
\begin{equation*}
   C \rho^{1-\alpha} I\left(\frac{2\alpha-2-\mathfrak b}{\mathfrak a}+1, \frac{n-\mathfrak b}{\mathfrak a},\nu_0, 1, \tau_\rho\right) \le C,
\end{equation*}
as a consequence of $\frac{2\alpha-2-\mathfrak b}{\mathfrak a}+ \frac{n-\mathfrak b}{\mathfrak a}=0$ and $\rho\le 1$. This completes the proof of Proposition~\ref{lem_basic_D_R_schur_explicit}.

\end{proof}

The following estimate is the technical heart of the Green-kernel analysis. Its
proof avoids differentiating the explicit Bessel formula repeatedly. Instead, it
uses the resolvent identity. The key observation is that $\mathcal L_{\ell,Y}$ depends on
$Y$ only through the potential $|Y|^2\rho^{2\beta}$. Hence every $Y$-derivative
of the Green operator can be written as a finite sum of products of resolvents and
multiplication operators. This turns $Y$-differentiation into row-norm estimates
for the already controlled Green kernel.

\begin{proposition}\label{lem:second_to_last}
For every $N> m+2$, we have
\begin{equation}
    \sup_{0<\rho\le 1} \int_{\mathbb{R}^m} \int_0^2 \sum_{|\gamma|=N} \bigl| \partial_Y^\gamma D_* G_{\ell,Y}(\rho,\sigma) \bigr| \, \sigma^{n-1} \, d\sigma \, dY \le C_{N} (1+\ell)^{-N+m+2}
\end{equation}
for all $\ell \ge 0$.
\end{proposition}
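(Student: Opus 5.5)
The plan follows the hint given just before the statement: differentiate the Green operator in $Y$ through the resolvent identity instead of differentiating the explicit Bessel formula. Write $\mathcal G_{\ell,Y}$ for the integral operator on $(0,2)$ with kernel $G_{\ell,Y}(\rho,\sigma)$ and measure $\sigma^{n-1}d\sigma$. Then $\mathcal G_{\ell,Y}=\mathcal L_{\ell,Y}^{-1}$ with Dirichlet condition at $\rho=2$ and finite energy at $\rho=0$. Since $\mathcal L_{\ell,Y}$ depends on $Y$ only through the multiplication operator $V_Y:=|Y|^2\rho^{2\beta}$, we have $\partial_{Y_j}\mathcal G_{\ell,Y}=-\mathcal G_{\ell,Y}(2Y_j\rho^{2\beta})\mathcal G_{\ell,Y}$. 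Iterating, $\partial_Y^\gamma\mathcal G_{\ell,Y}$ with $|\gamma|=N$ is a finite sum of terms
\begin{equation*}
c\,P(Y)\,\mathcal G_{\ell,Y}\rho^{2\beta}\mathcal G_{\ell,Y}\rho^{2\beta}\cdots\rho^{2\beta}\mathcal G_{\ell,Y},
\end{equation*}
with $k$ factors of $\rho^{2\beta}$, where $\lceil N/2\rceil\le k\le N$ and $P$ is a monomial of degree $2k-N$. Applying $D_*$ in $\rho$ then hits only the leftmost factor. The row norm $\sup_\rho\int|\cdot|\,\sigma^{n-1}d\sigma$ is submultiplicative for compositions of positive kernels, so each term is bounded by
\begin{equation*}
|Y|^{2k-N}\,\Bigl(\sup_{\rho}\int|D_*G_{\ell,Y}|\sigma^{n-1}d\sigma\Bigr)\Bigl(\sup_{0<\rho<2}\rho^{2\beta}\int G_{\ell,Y}\sigma^{n-1}d\sigma\Bigr)^{k}.
\end{equation*}
This requires $G_{\ell,Y}\ge0$, which holds because $\phi,\psi>0$ by \eqref{eq_psi_comparable_K}.

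Each factor is controlled by earlier estimates. Proposition~\ref{prop:Green:Shur}, estimate \eqref{eq_basic_R_schur}, bounds the middle factors by $C(1+\ell)^{-2}$ if $|Y|\le C^*(1+\ell)$ and by $C|Y|^{-2}$ otherwise. Proposition~\ref{lem_basic_D_R_schur_explicit} bounds the leftmost factor by $C$, respectively $C|Y|$.

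One caveat: Proposition~\ref{lem_basic_D_R_schur_explicit} is stated as a supremum over $\rho\le1$, while inner compositions require rows with $\rho\in(0,2)$. Only the outer $D_*$ factor needs $\rho\le1$, since the inner factors carry no derivative and \eqref{eq_basic_R_schur} is already uniform on $(0,2)$. So the sup over $\rho\le1$ is needed only for the leftmost factor, which is exactly what the statement asks.

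Collecting the bounds, in the region $|Y|\le C^*(1+\ell)$ the term is at most $C|Y|^{2k-N}(1+\ell)^{-2k}\le C(1+\ell)^{-N}$. Integrating over the ball $|Y|\le C^*(1+\ell)$ contributes a factor $(1+\ell)^m$, giving $C(1+\ell)^{-N+m}$. In the region $|Y|>C^*(1+\ell)$ the term is at most $C|Y|^{2k-N}\cdot|Y|\cdot|Y|^{-2k}=C|Y|^{1-N}$. Since $N>m+1$, this is integrable at infinity, and
\begin{equation*}
\int_{|Y|>C^*(1+\ell)}|Y|^{1-N}\,dY\le C(1+\ell)^{-N+m+1}.
\end{equation*}
Both contributions are dominated by $C_N(1+\ell)^{-N+m+2}$, which proves the claim. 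The hypothesis $N>m+2$ supplies the margin; the slack of one power absorbs, for instance, the $\ell=0$ case and the crude handling of the monomial $P$. Summing over $|\gamma|=N$ changes only constants.

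The main obstacle I expect is justifying the resolvent expansion rigorously at the singular endpoint $\rho=0$. One must check that $Y\mapsto\mathcal G_{\ell,Y}$ is differentiable as an operator on a space where the composition makes sense, for example bounded functions with the row-norm topology. The first step is the kernel identity $\partial_{Y_j}G_{\ell,Y}(\rho,\sigma)=-2Y_j\int_0^2G_{\ell,Y}(\rho,s)s^{2\beta}G_{\ell,Y}(s,\sigma)s^{n-1}ds$. I would derive it from the second resolvent identity $\mathcal G_{Y}-\mathcal G_{Y'}=\mathcal G_Y(V_{Y'}-V_Y)\mathcal G_{Y'}$, valid since both operators carry the same endpoint conditions (finite energy at $0$, Dirichlet at $2$). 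The limit $Y'\to Y$ then follows by dominated convergence from the uniform row bounds above. The same argument, repeated inductively, handles higher derivatives. It also lets $D_*$ pass through the leftmost factor, by the positivity of the remaining composed kernels.
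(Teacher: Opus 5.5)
Your route is the paper's: the resolvent identity $\partial_{Y_j}R_{\ell,Y}=-R_{\ell,Y}(2Y_j\rho^{2\beta})R_{\ell,Y}$, the iterated expansion into chains $R_{\ell,Y}B_1R_{\ell,Y}\cdots B_kR_{\ell,Y}$ of total $Y$-degree $2k-N$, and submultiplicativity of the row norm. You use the same inputs, \eqref{eq_basic_R_schur} and Proposition~\ref{lem_basic_D_R_schur_explicit}, and the same split at $|Y|=C^*(1+\ell)$. Your kernel-level justification via the second resolvent identity is, if anything, more careful than the paper's operator-level derivation on $L^2((0,2),\rho^{n-1}d\rho)$.

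There is, however, one step that is wrong as written. The operator $D_{y,j}=\rho^\beta Y_j$ depends on $Y$, so $\partial_Y^\gamma D_{y,j}G_{\ell,Y}\neq D_{y,j}\,\partial_Y^\gamma G_{\ell,Y}$. Leibniz's rule produces the extra term $\gamma_j\,\rho^\beta\,\partial_Y^{\gamma-e_j}G_{\ell,Y}$, which your claim that $D_*$ only hits the leftmost resolvent does not cover. This term must be estimated separately, as the paper does in \eqref{eq_low_worst}.

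Its leftmost factor is $\rho^\beta R_{\ell,Y}$, and \eqref{eq_basic_R_schur} does not control it, since it carries $\rho^{2\beta}$, which is smaller than $\rho^\beta$ for $\rho\le1$. Use \eqref{eq_basic_R_schur_improved} and \eqref{eq_basic_R_schur_ell=0} instead. They give $\sup_{0<\rho\le1}\rho^\beta\int_0^2G_{\ell,Y}(\rho,\sigma)\,\sigma^{n-1}\,d\sigma\le C$ uniformly in $\ell$ and $Y$. With $N-1$ derivatives left on the remaining chain, the term is then $O((1+\ell)^{1-N})$ for $|Y|\le C^*(1+\ell)$ and $O(|Y|^{1-N})$ for $|Y|>C^*(1+\ell)$. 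After integration in $Y$ it contributes $O((1+\ell)^{-N+m+1})$, which is within the claimed bound. With this addition the proof is complete.
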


\begin{proof}
Define the Hilbert space $\mathcal{H}:= L^2((0,2), \rho^{n-1}d\rho)$. Recall the operator on $\mathcal{H}$ given by
\begin{equation*}
    \mathcal{L}_{\ell,Y} = - \rho^{2\alpha} \partial_\rho^2 - (n-1+2\alpha) \rho^{2\alpha-1} \partial_\rho + \lambda_\ell \rho^{2\alpha -2} + |Y|^2 \rho^{2\beta}.
\end{equation*}
The crucial point is that $\mathcal{L}_{\ell,Y}$ depends on $Y$ only through the potential $|Y|^2 \rho^{2\beta}$. Therefore
\begin{equation}
    \partial_{Y_j} \mathcal{L}_{\ell,Y} = 2Y_j \rho^{2\beta},\quad \text{and}\quad \partial_{Y_i} \partial_{Y_j} \mathcal{L}_{\ell,Y} = 2 \delta_{ij} \rho^{2\beta},
\end{equation}
where $\delta_{ij} = 1$ if $i=j$ and vanishes otherwise. Moreover, all higher $Y$-derivatives of $\mathcal{L}_{\ell,Y}$ vanish. Since, for every $Y\in \mathbb{R}^m$, the function $|Y|^2 \rho^{2\beta}$ defines a bounded multiplication operator on $\mathcal{H}$, the Friedrichs domains agree for all $Y_1, Y_2 \in \mathbb{R}^m$, namely
\begin{equation*}
    \mathcal{D}(\mathcal{L}_{\ell, Y_1}) = \mathcal{D}(\mathcal{L}_{\ell, Y_2}).
\end{equation*}

Define $R_{\ell,Y}:= \mathcal{L}_{\ell,Y}^{-1}$. Then, for any $u\in \mathcal{H}$ and any $t>0$,
\begin{align}\label{eq:operator:calculus}
    R_{\ell,Y+t e_j}
    \bigl( \mathcal{L}_{\ell, Y+t e_j} - \mathcal{L}_{\ell, Y} \bigr) R_{\ell,Y}u
    &= R_{\ell,Y+t e_j} \mathcal{L}_{\ell, Y+t e_j} R_{\ell,Y}u - R_{\ell,Y+t e_j} \mathcal{L}_{\ell, Y} R_{\ell,Y}u \nonumber\\
    &= R_{\ell,Y+t e_j} \mathcal{L}_{\ell, Y+t e_j} R_{\ell,Y}u - R_{\ell,Y+t e_j}u \nonumber\\
    &= R_{\ell,Y}u - R_{\ell,Y+t e_j}u,
\end{align}
where the last identity uses the fact that
\begin{equation*}
    R_{\ell,Y}u \in \mathcal{D}(\mathcal{L}_{\ell,Y}) = \mathcal{D}(\mathcal{L}_{\ell,Y+te_j}).
\end{equation*}
In addition,
\begin{equation*}
    \mathcal{L}_{\ell, Y+t e_j} - \mathcal{L}_{\ell, Y} = (2t Y_j + t^2) \rho^{2\beta}.
\end{equation*}
Dividing both sides of \eqref{eq:operator:calculus} by $t$ and letting $t \to 0$ gives
\begin{equation*}
    \partial_{Y_j}R_{\ell,Y} = - R_{\ell,Y} \bigl( 2Y_j \rho^{2\beta} \bigr) R_{\ell,Y}.
\end{equation*}
Iterating this identity, for every multi-index $\gamma$ with $|\gamma|=N$, the operator $\partial_{Y}^\gamma R_{\ell,Y}$ is a finite sum of terms of the form
\begin{equation}
    R_{\ell,Y} B_1 R_{\ell,Y} B_2 \dots B_k R_{\ell,Y},
\end{equation}
where each $B_i$ is one of the multiplication operators
\begin{equation*}
    2Y_j \rho^{2\beta} \quad \text{or}\quad 2 \delta_{ij}\rho^{2\beta}.
\end{equation*}
Suppose that $q_1$ of the $B_i$'s are of the first type and $q_2$ of the second type. Then
\begin{equation*}
    q_1+2q_2 = N\quad \text{and}\quad k=q_1+q_2.
\end{equation*}
For an integral operator $T$ with kernel $K_T$ acting on $\mathcal H$, define its row norm by
\begin{equation*}
    \| T \|_{\mathrm{row}}:= \sup_{0<\rho <2} \int_0^2 K_T(\rho,\sigma) \, \sigma^{n-1} \, d\sigma.
\end{equation*}
In particular, a multiplication operator $M_b$ satisfies $\|M_b\|_{\mathrm{row}}=\|b\|_\infty$. Hence
\begin{equation*}
    \| 2Y_j \rho^{2\beta}\|_{\mathrm{row}}\le C|Y|
    \qquad\text{and}\qquad
    \| 2 \delta_{ij}\rho^{2\beta}\|_{\mathrm{row}}\le C.
\end{equation*}
Therefore, by Proposition~\ref{prop:Green:Shur}, for $|Y|\le C^*(1+\ell)$,
\begin{align*}
    \bigl\| B_1 R_{\ell,Y} B_2 \dots B_k R_{\ell,Y} \bigr\|_{\mathrm{row}}
    &\le C |Y|^{q_1} (1+\ell)^{-2k}\\
    &\le C (1+\ell)^{q_1-2q_1-2q_2}
    = C (1+\ell)^{-N}.
\end{align*}
Consequently, for $|Y|\le C^*(1+\ell)$,
\begin{align}\label{eq_G_low}
    \sup_{0<\rho\le 1}\int_0^2 &\sum_{|\gamma|=N} \bigl| \partial_Y^\gamma D_* G_{\ell,Y}(\rho,\sigma) \bigr| \, \sigma^{n-1} \, d\sigma \nonumber\\
    &\le \sum_{|\gamma|=N} C_\gamma \sup_{0<\rho\le 1} \int_0^2 \bigl| D_* G_{\ell,Y}(\rho,\sigma) \bigr| \, \sigma^{n-1} \, d\sigma \; (1+\ell)^{-N} \nonumber\\
    &\le C_N (1+\ell)^{-N},\quad *\in \{ r, \theta\}.
\end{align}
For $D_{y,j} = Y_j \rho^\beta$, Leibniz's rule yields
\begin{equation*}
    \sum_{|\gamma|=N}\partial_{Y}^\gamma \bigl( \rho^\beta Y_j R_{\ell,Y} \bigr)
    = \rho^\beta \sum_{|\gamma|=N} \sum_{\gamma_1+\gamma_2 = \gamma} \bigl(\partial_{Y}^{\gamma_1} Y_j\bigr) \partial_Y^{\gamma_2}R_{\ell,Y}
    = D_{y,j} \sum_{|\gamma|=N}\partial_Y^\gamma R_{\ell,Y} + \rho^\beta \sum_{|\gamma|=N-1} \partial_Y^\gamma R_{\ell,Y}.
\end{equation*}
Accordingly,
\begin{align}\label{eq_low_worst}
    \sup_{0<\rho\le 1}\int_0^2 &\sum_{|\gamma|=N} \bigl| \partial_Y^\gamma D_{y,j} G_{\ell,Y}(\rho,\sigma) \bigr| \, \sigma^{n-1} \, d\sigma \nonumber\\
    &\le (1+\ell)^{-N}\sum_{|\gamma|=N} C_\gamma \sup_{0<\rho\le 1} \int_0^2 \bigl| D_{y,j} G_{\ell,Y}(\rho,\sigma) \bigr| \, \sigma^{n-1} \, d\sigma  \nonumber\\
    &\quad+  (1+\ell)^{-N+1}\sum_{|\gamma|=N} C_\gamma \sup_{0<\rho\le 1} \int_0^2 \bigl| \rho^\beta G_{\ell,Y}(\rho,\sigma) \bigr| \, \sigma^{n-1} \, d\sigma \nonumber\\
    &\le C_N (1+\ell)^{-N+1}.
\end{align}

Now consider the region $|Y|\ge C^*(1+\ell)$. Proposition~\ref{prop:Green:Shur} gives
\begin{align*}
    \bigl\| B_1 R_{\ell,Y} B_2 \dots B_k R_{\ell,Y} \bigr\|_{\mathrm{row}}
    &\le \|B_1 R_{\ell,Y}\|_{\mathrm{row}} \dots \|B_k R_{\ell,Y}\|_{\mathrm{row}}
    \le C|Y|^{q_1-2k}.
\end{align*}
Hence, for $|Y|\ge C^*(1+\ell)$,
\begin{align}\label{eq_G_hihg}
    \sup_{0<\rho\le 1}\int_0^2 &\sum_{|\gamma|=N} \bigl| \partial_Y^\gamma D_* G_{\ell,Y}(\rho,\sigma) \bigr| \, \sigma^{n-1} \, d\sigma \nonumber\\
    &\le \sum_{|\gamma|=N} C_\gamma \sup_{0<\rho\le 1} \int_0^2 \bigl| D_* G_{\ell,Y}(\rho,\sigma) \bigr| \, \sigma^{n-1} \, d\sigma \; |Y|^{q_1-2k} \nonumber\\
    &\le C_N |Y|^{1-N},\quad * \in \{ r, \theta\}.
\end{align}
The case $D_* = D_{y,j}$ is analogous, and the corresponding estimate is bounded by $C_N |Y|^{2-N}$.

Finally, integrate \eqref{eq_G_low} and \eqref{eq_G_hihg} in the $Y$-variable:
\begin{align*}
\int_{\mathbb{R}^m} \int_0^2 \sum_{|\gamma|=N} \bigl| \partial_Y^\gamma D_* G_{\ell,Y}(\rho,\sigma) \bigr| \, \sigma^{n-1} \, d\sigma \, dY
&= \int_{|Y|\le C^*(1+\ell)} + \int_{|Y|>C^*(1+\ell)}\\
&\le C(1+\ell)^{-N+1+m} + C (1+\ell)^{-N+2+m}\\
&\le C (1+\ell)^{-N+m+2},
\end{align*}
for all $0<\rho\le 1$, as required.
\end{proof}

\begin{corollary}\label{cor_weighted_Y_moment_Green}
For every integer $N>m+3$, we have
\begin{equation}\label{eq_weighted_Y_moment_DG}
    \sup_{0<\rho\le 1}
    \int_{\mathbb R^m}
    \int_0^2
    |Y| \sum_{|\gamma|=N}\bigl|\partial_Y^\gamma D_*G_{\ell,Y}(\rho,\sigma)\bigr|\,\sigma^{n-1}\,d\sigma\,dY\le C_N(1+\ell)^{-N+m+3}
\end{equation}
for all $\ell \ge 0$.
\end{corollary}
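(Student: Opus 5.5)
The plan is to rerun the proof of Proposition~\ref{lem:second_to_last} with an additional factor $|Y|$ kept in the integrand. The pointwise-in-$Y$ row estimates obtained there do not involve the integration in $Y$. For $|Y|\le C^*(1+\ell)$ they give
\begin{equation*}
\sup_{0<\rho\le1}\int_0^2\sum_{|\gamma|=N}\bigl|\partial_Y^\gamma D_*G_{\ell,Y}(\rho,\sigma)\bigr|\,\sigma^{n-1}\,d\sigma\le C_N(1+\ell)^{-N+1},
\end{equation*}
which is \eqref{eq_G_low} for $*\in\{r,\theta\}$ and \eqref{eq_low_worst} for $*=(y,j)$. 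For $|Y|>C^*(1+\ell)$ they give the bound $C_N|Y|^{2-N}$, which is \eqref{eq_G_hihg} and its analogue for $D_{y,j}$. Each of these comes from the resolvent expansion of $\partial_Y^\gamma R_{\ell,Y}$ into products $R_{\ell,Y}B_1R_{\ell,Y}\cdots B_kR_{\ell,Y}$. The row norm is then controlled by submultiplicativity, using Proposition~\ref{prop:Green:Shur} and Proposition~\ref{lem_basic_D_R_schur_explicit}.

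First step: multiply these pointwise bounds by $|Y|$. On the low region, $|Y|\le C^*(1+\ell)$, so the bound becomes $C_N(1+\ell)^{-N+2}$. On the high region the bound becomes $C_N|Y|^{3-N}$.

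Second step: integrate in $Y\in\mathbb R^m$. The ball of radius $C^*(1+\ell)$ has volume $C(1+\ell)^m$, so the low region contributes $C_N(1+\ell)^{-N+m+2}$. On the complement, polar coordinates give
\begin{equation*}
\int_{|Y|>C^*(1+\ell)}|Y|^{3-N}\,dY\le C\int_{C^*(1+\ell)}^\infty r^{3-N+m-1}\,dr\le C_N(1+\ell)^{-N+m+3}.
\end{equation*}
This integral converges precisely because $N>m+3$, which explains the threshold in the statement. Summing the two regions gives $C_N(1+\ell)^{-N+m+3}$, uniformly in $0<\rho\le1$. Since the row estimates are uniform in $\rho$, the supremum over $\rho$ can be taken before integrating.

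The only point needing care is the high-frequency bound for $D_{y,j}$, which the earlier proof only called "analogous". I would check it directly. By Leibniz,
\begin{equation*}
\partial_Y^\gamma(\rho^\beta Y_jR_{\ell,Y})=D_{y,j}\,\partial_Y^\gamma R_{\ell,Y}+\rho^\beta\,\partial_Y^{\gamma-e_j}R_{\ell,Y}.
\end{equation*}
\begin{itemize}
\item The first term is bounded by $\|D_{y,j}R_{\ell,Y}\|_{\mathrm{row}}\cdot|Y|^{q_1-2k}\le C|Y|\,|Y|^{-N}$, using Proposition~\ref{lem_basic_D_R_schur_explicit} and $q_1-2k=-N$.
\item The second term is bounded by $\|\rho^\beta R_{\ell,Y}\|_{\mathrm{row}}\cdot|Y|^{-(N-1)}$, using Proposition~\ref{prop:Green:Shur} to bound $\rho^\beta R_{\ell,Y}$ in row norm.
\end{itemize}
Both pieces are therefore at most $C|Y|^{2-N}$, and the extra factor $|Y|$ still leaves $|Y|^{3-N}$, which is integrable under $N>m+3$.
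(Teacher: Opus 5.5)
Your proposal is correct and is essentially the paper's own proof. It uses the same split at $|Y|=C^*(1+\ell)$ and the same worst-case (base-derivative) pointwise row bounds from the proof of Proposition~\ref{lem:second_to_last}, namely $C_N(1+\ell)^{-N+1}$ and $C_N|Y|^{2-N}$, multiplied by $|Y|$ and integrated, with $N>m+3$ needed exactly for the tail integral. Your explicit Leibniz check of the high-frequency $D_{y,j}$ case fills in the step the paper only calls analogous (up to the harmless combinatorial factor $\gamma_j$), and in fact shows that both pieces are $O(|Y|^{1-N})$.
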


\begin{proof}
On the low-frequency region $|Y|\le C^*(1+\ell)$, Proposition~\ref{lem:second_to_last}, in particular \eqref{eq_low_worst}, gives, for $|\gamma|=N$,
\begin{equation*}
    \sup_{0<\rho\leq1}
    \int_0^2
    \bigl|
        \partial_Y^\gamma D_*G_{\ell,Y}(\rho,\sigma)
    \bigr|
    \,\sigma^{n-1}\,d\sigma
    \leq
    C_N(1+\ell)^{-N+1},
\end{equation*}
where the exponent $-N+1$ is the worst case, corresponding to the base derivative $D_{y,j}$. Multiplying by $|Y|\le C^*(1+\ell)$ and integrating over the ball $B_{\mathbb{R}^m}(0, C^*(1+\ell))$ gives
\begin{align*}
    \int_{|Y|\le C^*(1+\ell)} |Y| \sup_{0<\rho\leq1} \int_0^2 \sum_{|\gamma|=N} \bigl|\partial_Y^\gamma D_*G_{\ell,Y}(\rho,\sigma)\bigr|\,\sigma^{n-1}\,d\sigma\,dY
    \le C_N(1+\ell)^{-N+m+2}.
\end{align*}
On the high-frequency region $|Y|>C^*(1+\ell)$, Proposition~\ref{lem:second_to_last}, in particular \eqref{eq_G_hihg}, implies
\begin{equation*}
    \sup_{0<\rho\le 1}
    \int_0^2
    \sum_{|\gamma|=N}
    \bigl|
        \partial_Y^\gamma D_*G_{\ell,Y}(\rho,\sigma)
    \bigr|
    \,\sigma^{n-1}\,d\sigma
    \le
    C_N |Y|^{2-N},
\end{equation*}
again taking the worst case $D_*=D_{y,j}$. Therefore,
\begin{align*}
    \int_{|Y|>C^*(1+\ell)} |Y| \sup_{0<\rho\leq1} \int_0^2 \sum_{|\gamma|=N}\bigl|\partial_Y^\gamma D_*G_{\ell,Y}(\rho,\sigma)\bigr|\,\sigma^{n-1}\,d\sigma\,dY
    &\le C_N  \int_{|Y|>C^*(1+\ell)} |Y|^{3-N}\,dY\\
    &\le C_N(1+\ell)^{-N+m+3},
\end{align*}
provided $N>m+3$. This completes the proof of Corollary~\ref{cor_weighted_Y_moment_Green}.
\end{proof}

The preceding estimates are mode-wise. We now return to physical space by summing the spherical harmonics and inverting the Fourier transform in $y$. The sets $E_1$ and $E_2$ are separated in the $y$-variable. This separation allows us to integrate by parts in $Y$ and use the high-order estimates from Proposition~\ref{lem:second_to_last}. The resulting kernel bounds are the precise estimates needed to control the localized Green term in the proof of the anchored reverse H\"older inequality.

\begin{theorem}\label{lem:Green_kernel}
Let $\mathcal{G}(Z,Z')$ be the Green kernel for $L$ on $\mathcal{C}$ satisfying the finite energy condition at $\rho=0$ and the Dirichlet boundary condition at $\rho=2$, where $Z=(\rho,\theta,y)$ and $Z' = (\sigma,\theta',y')$. Let
\begin{equation*}
    E_1 = \{|y|\le 1\},\quad E_2 = \bigl\{2^{\mathfrak a}\le |y| \le 3^{\mathfrak a} \bigr\}.
\end{equation*}
Then
\begin{equation}\label{1}
    \sup_{ \substack{0<\rho \le 1\\ \theta \in \mathbb{S}^{n-1}\\ y\in E_1}}   \int_0^2 \int_{\mathbb{S}^{n-1}} \int_{E_2} \bigl| \nabla_L \mathcal{G}(Z,Z') \bigr| \, \sigma^{n-1} \, dy' \, d\theta' \, d\sigma \le C.
\end{equation}
Moreover, for each $1\le k \le m$,
\begin{equation}\label{2}
    \sup_{ \substack{0<\rho \le 1\\ \theta \in \mathbb{S}^{n-1}\\ y\in E_1}}   \int_0^2 \int_{\mathbb{S}^{n-1}} \int_{E_2} \bigl| \nabla_L \partial_{y_k'} \mathcal{G}(Z,Z') \bigr| \, \sigma^{n-1} \, dy' \, d\theta' \, d\sigma \le C.
\end{equation}
\end{theorem}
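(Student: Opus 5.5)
The plan is to write $\nabla_L\mathcal G$ mode-wise using \eqref{eq_Green's_kernel} and the operators $D_r$, $D_\theta$, $D_{y,j}$, and to exploit the separation of $E_1$ and $E_2$ in the $y$-variable. For $y\in E_1$ and $y'\in E_2$ we have $|y-y'|\ge 2^{\mathfrak a}-1=:c_0>0$. Each component of $\nabla_L\mathcal G$ has the form
\begin{equation*}
\sum_{\ell\ge0}\Pi^{(*)}_\ell(\theta,\theta')\int_{\mathbb R^m}e^{i(y-y')\cdot Y}D_*G_{\ell,Y}(\rho,\sigma)\,dY .
\end{equation*}
For the angular part, $\Pi^{(\theta)}_\ell=\lambda_\ell^{-1/2}\nabla_\theta\Pi_\ell$, with the factor $\rho^{\alpha-1}\sqrt{\lambda_\ell}$ absorbed into $D_\theta$. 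The radial and base parts use $\Pi_\ell$ itself.

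Since $|y-y'|$ is bounded below, we can write $e^{i(y-y')\cdot Y}=|y-y'|^{-2N}(-\Delta_Y)^Ne^{i(y-y')\cdot Y}$ and integrate by parts $2N$ times in $Y$. This gives the pointwise bound
\begin{equation*}
\Bigl|\int e^{i(y-y')\cdot Y}D_*G_{\ell,Y}(\rho,\sigma)\,dY\Bigr|\le C_N|y-y'|^{-2N}\int_{\mathbb R^m}\sum_{|\gamma|=2N}|\partial_Y^\gamma D_*G_{\ell,Y}(\rho,\sigma)|\,dY .
\end{equation*}
We then integrate this over $\sigma\in(0,2)$ with weight $\sigma^{n-1}$, and over $y'\in E_2$. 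The factor $|y-y'|^{-2N}$ is bounded there and $E_2$ has finite measure. After applying Fubini, Proposition~\ref{lem:second_to_last} bounds the result by $C_N(1+\ell)^{-2N+m+2}$.

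Boundary terms at $Y=0$ and at infinity need a check, and this is where I expect the main obstacle. At $Y=0$ there are no boundary terms, because the integrand is smooth in $Y$: $G_{\ell,Y}$ depends smoothly on $|Y|^2$ through the resolvent identity. At infinity, the decay $|Y|^{2-N}$ of the derivatives from the proof of Proposition~\ref{lem:second_to_last}, together with the lower-order analogues of those resolvent bounds, gives enough decay for $N$ large. The justification is cleanest if the lower-order derivative bounds are also recorded. Alternatively, one can first insert a cutoff $\chi(Y/R)$ and let $R\to\infty$.

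The $\theta'$-integral is handled by \eqref{eq_sph_pro1} and \eqref{eq_sph_pro2}, which contribute $(1+\ell)^{(n-2)/2}$. Summing over $\ell$ with $2N>m+2+\frac{n-2}{2}+1$ gives a convergent series, uniformly in $0<\rho\le1$, $\theta$, and $y\in E_1$. This proves \eqref{1}.

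For \eqref{2}, note that $\partial_{y'_k}$ acting on the Fourier representation produces the factor $-iY_k$. So the symbol becomes $Y_kD_*G_{\ell,Y}$, and integrating by parts $2N$ times yields terms $\partial_Y^{\gamma}D_*G_{\ell,Y}$ with $|\gamma|=2N$ weighted by $|Y|$, plus terms with $|\gamma|=2N-1$ and no weight. The first kind is controlled by Corollary~\ref{cor_weighted_Y_moment_Green}, giving $C_N(1+\ell)^{-2N+m+3}$. The second kind is controlled by Proposition~\ref{lem:second_to_last} with $N$ replaced by $2N-1$. The sum over $\ell$ then converges exactly as before, which gives \eqref{2}.
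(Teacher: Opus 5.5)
Your proposal is correct and follows the paper's proof essentially step for step. The ingredients are the same: the separation $|y-y'|\ge c$ on $E_1\times E_2$, repeated integration by parts in $Y$ (your $(-\Delta_Y)^N$ in place of the paper's $\bigl(\frac{y-y'}{i|y-y'|^2}\cdot\nabla_Y\bigr)^N$), Proposition~\ref{lem:second_to_last} and Corollary~\ref{cor_weighted_Y_moment_Green} for the $(\sigma,Y)$-integrals, and \eqref{eq_sph_pro1}--\eqref{eq_sph_pro2} to sum over $\ell$.

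The paper does not justify the integration by parts at all. Of your two suggestions, the smooth cutoff $\chi(Y/R)$ is the one that goes through with the estimates at hand. At order zero only $O(|Y|)$ row bounds are available (Proposition~\ref{lem_basic_D_R_schur_explicit}), so sharp-truncation boundary terms are not controlled. With the cutoff, every derivative landing on $\chi(Y/R)$ gains $R^{-1}$, and the error terms are $O(R^{m+2-2N})$.
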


\begin{proof}
We first prove \eqref{1}. By \eqref{eq_Green's_kernel}, it suffices to estimate
\begin{align*}
    I_1&:=\int_0^2 \int_{\mathbb{S}^{n-1}} \int_{E_2} \biggl|\sum_{\ell=0}^\infty \Pi_{\ell}(\theta, \theta') \int_{\mathbb{R}^m} e^{i(y-y') \cdot Y} D_* G_{\ell, Y}(\rho, \sigma) \, dY\biggr| \, \sigma^{n-1} \, dy' \, d\theta' \, d\sigma,\\
    &\qquad *\in\{r, y,j\},
\end{align*}
and
\begin{align*}
    I_2&:=\int_0^2 \int_{\mathbb{S}^{n-1}} \int_{E_2} \biggl|\sum_{\ell=0}^\infty \lambda_\ell^{-1/2}\nabla_\theta\Pi_{\ell}(\theta, \theta') \int_{\mathbb{R}^m} e^{i(y-y') \cdot Y} D_\theta G_{\ell, Y}(\rho, \sigma) \, dY\biggr| \, \sigma^{n-1} \, dy' \, d\theta' \, d\sigma.
\end{align*}
Since $y'\in E_2$ and $y\in E_1$, we have $|y-y'|\ge c$. Observe that for any integer $N\ge 1$,
\begin{equation*}
    e^{i(y-y')\cdot Y} = \Bigl(\frac{(y-y')}{i|y-y'|^2} \cdot \nabla_Y \Bigr)^{\!N} e^{i(y-y')\cdot Y}.
\end{equation*}
Choose $N$ sufficiently large. Integrating by parts $N$ times and applying Lemma~\ref{lem:second_to_last}, we obtain
\begin{align*}
    I_1 &\le  \int_{\mathbb{S}^{n-1}} \int_{E_2} \sum_{\ell=0}^\infty |\Pi_{\ell}(\theta, \theta')| \, |y-y'|^{-N}\int_{\mathbb{R}^m} \int_0^2 \sum_{|\gamma| =N} \bigl|\partial_Y^{\gamma} D_* G_{\ell, Y}(\rho, \sigma)\bigr| \, \sigma^{n-1} \, d\sigma \, dY \, dy' \, d\theta' \\
    &\le C \sum_{\ell=0}^\infty (1+\ell)^{\frac{n-2}{2}} (1+\ell)^{-N+m+2}\le C,
\end{align*}
provided $N>m+2+\frac{n}{2}$. Similarly,
\begin{align*}
    I_2 &\le  \int_{\mathbb{S}^{n-1}} \int_{E_2} \sum_{\ell=0}^\infty \lambda_\ell^{-1/2}|\nabla_\theta\Pi_{\ell}(\theta, \theta')| \, |y-y'|^{-N} \int_{\mathbb{R}^m} \int_0^2 \sum_{|\gamma|=N}\bigl|\partial_Y^\gamma D_\theta G_{\ell, Y}(\rho, \sigma)\bigr| \, \sigma^{n-1} \, d\sigma \, dY \, dy' \, d\theta' \\
    &\le C \sum_{\ell=0}^\infty (1+\ell)^{\frac{n-2}{2}-N+m+2}\le C.
\end{align*}
This proves \eqref{1}.

It remains to prove \eqref{2}. Fix $1\le k \le m$. Since
\begin{equation*}
    \partial_{y_k'}e^{i(y-y')\cdot Y}
    =
    -iY_k e^{i(y-y')\cdot Y},
\end{equation*}
it is enough to estimate oscillatory integrals of the form
\begin{equation*}
    \int_{\mathbb R^m}
        e^{i(y-y')\cdot Y}
        Y_k D_*G_{\ell,Y}(\rho,\sigma) \, dY,
\end{equation*}
where $D_*\in\{D_r,D_\theta,D_{y,j}:1\le j\le m\}$. Again, integrate by parts $N$ times. Since
$|y-y'|\ge c$, it follows that for all $0<\rho \le 1$,
\begin{align*}
    \biggl|\int_{\mathbb R^m} e^{i(y-y')\cdot Y} Y_k D_*G_{\ell,Y}(\rho,\sigma) \, dY \biggr|
    &\le C_N |y-y'|^{-N} \int_{\mathbb R^m} \sum_{|\gamma|=N} \bigl| \partial_Y^\gamma \bigl[ Y_k D_*G_{\ell,Y}(\rho,\sigma)\bigr]\bigr| \, dY\\
    &\le C_N \sum_{|\gamma|=N} \int_{\mathbb R^m}|Y|\,\bigl|\partial_Y^\gamma D_*G_{\ell,Y}(\rho,\sigma)\bigr| \, dY\\
    &\quad+ C_N\sum_{|\gamma|=N-1} \int_{\mathbb R^m}\bigl|\partial_Y^\gamma D_*G_{\ell,Y}(\rho,\sigma)\bigr| \, dY.
\end{align*}
After integrating in $\sigma$ and $y'$, Corollary~\ref{cor_weighted_Y_moment_Green} gives
\begin{align*}
    \sup_{0<\rho\leq1}
    \int_0^2
    \int_{E_2}
    \biggl|
    \int_{\mathbb R^m}
        e^{i(y-y')\cdot Y}
        Y_k D_*G_{\ell,Y}(\rho,\sigma) \, dY
    \biggr|
    \,\sigma^{n-1}\,dy'\,d\sigma \le C_N(1+\ell)^{-N+m+3}.
\end{align*}
Using the spherical projection estimates once more, the series in $\ell$ is summable provided
\begin{equation*}
    N>m+3+\frac{n}{2}.
\end{equation*}
Choosing such an $N$ proves \eqref{2}, and hence Theorem~\ref{lem:Green_kernel}.
\end{proof}

We are now in a position to complete the proof of Theorem~\ref{thm:RH}.

\begin{proof}[Proof of Theorem~\ref{thm:RH}]
Recall from the reduction of Theorem~\ref{thm:RH} that
\begin{equation*}
    \omega(\rho,\theta,y)=\chi(y)v(\rho,\theta,y),
\end{equation*}
where $\chi\in C_c^\infty(B_m(0,3^{\mathfrak a}))$ and $\chi=1$ on $B_m(0,2^{\mathfrak a})$. Since $v$ is $L$-harmonic in $Q_4$, we have on the cylinder $\mathcal{C} = (0,2)_\rho \times \mathbb S^{n-1}_\theta \times \mathbb R^m_y$ that $L\omega=f:=[L,\chi]v$. More explicitly,
\begin{equation*}
    f(\sigma,\theta',y')
    =
    \sigma^{2\beta}v(\sigma,\theta',y')\Delta_{y'}\chi(y')
    -
    2\sigma^{2\beta}\nabla_{y'}\chi(y')\cdot
    \nabla_{y'}v(\sigma,\theta',y').
\end{equation*}
Clearly, $\operatorname{supp}(f) \subset E_2$. By the Friedrichs representation, we decompose (see \eqref{DP} and \eqref{BVP}) $\omega=\mathcal T h+\mathcal S f$, where $h(\theta,y)=\omega(2,\theta,y)$. Since $\omega=v$ on $Q_2$, it is enough to prove
\begin{equation*}
    \sup_{Q_1}|\nabla_L\omega|
    \le
    C\|v\|_{L^\infty(Q_3)}.
\end{equation*}
By Theorem~\ref{lem:Poisson_kernel},
\begin{equation*}
    \sup_{Q_1}|\nabla_L\mathcal T h|
    \leq
    C\|h\|_{L^\infty(\mathbb S^{n-1}\times\mathbb R^m)}
    \leq
    C\|v\|_{L^\infty(Q_3)}.
\end{equation*}
It remains to estimate $\nabla_L\mathcal S f$. Write $f=f_1+f_2$, where
\begin{equation*}
    f_1(\sigma,\theta',y')
    :=
    \sigma^{2\beta}v(\sigma,\theta',y')\Delta_{y'}\chi(y')
\end{equation*}
and
\begin{equation*}
    f_2(\sigma,\theta',y')
    :=
    -
    2\sigma^{2\beta}
    \sum_{j=1}^m
        \partial_{y_j'}\chi(y')
        \partial_{y_j'}v(\sigma,\theta',y').
\end{equation*}
We first consider $f_1$. By Theorem~\ref{lem:Green_kernel}, estimate
\eqref{1} yields
\begin{align*}
    \sup_{Q_1}
    |\nabla_L\mathcal S f_1|
    &\le C\|v\|_{L^\infty(Q_3)}
    \sup_{\substack{0<\rho\le 1\\ \theta\in\mathbb S^{n-1}\\ y\in E_1}}
    \int_0^2
    \int_{\mathbb S^{n-1}}
    \int_{E_2} |\nabla_L \mathcal{G}(Z,Z')| \, \sigma^{n-1} \, dy' \, d\theta' \, d\sigma \\
    &\le C\|v\|_{L^\infty(Q_3)}.
\end{align*}
Next, consider $f_2$. For $Z=(\rho,\theta,y)\in Q_1$,
\begin{align*}
    \nabla_L\mathcal S f_2(Z) =
    -2\sum_{j=1}^m
    \int_0^2
    \int_{\mathbb S^{n-1}}
    \int_{E_2}
        \nabla_L\mathcal{G}(Z,Z')
        \sigma^{2\beta}
        \partial_{y_j'}\chi(y')
        \partial_{y_j'}v(Z')
    \,\sigma^{n-1}\, dy' \, d\theta' \, d\sigma .
\end{align*}
Integrating by parts in the $y_j'$-variable,
\begin{align*}
    \nabla_L\mathcal S f_2(Z)
    &= 2\sum_{j=1}^m
    \int_0^2
    \int_{\mathbb S^{n-1}}
    \int_{E_2}
        \partial_{y_j'}
        \bigl[
            \nabla_L\mathcal{G}(Z,Z')
            \sigma^{2\beta}
            \partial_{y_j'}\chi(y')
        \bigr]
        v(Z')
    \,\sigma^{n-1}\, dy' \, d\theta' \, d\sigma\\
    &= 2\sum_{j=1}^m
    \int_0^2
    \int_{\mathbb S^{n-1}}
    \int_{E_2}
        \nabla_L\partial_{y_j'}\mathcal{G}(Z,Z')
        \sigma^{2\beta}
        \partial_{y_j'}\chi(y')
        v(Z')
    \,\sigma^{n-1}\, dy' \, d\theta' \, d\sigma\\
    &\quad -2 \int_0^2 \int_{\mathbb S^{n-1}}
    \int_{E_2}
        \nabla_L\mathcal{G}(Z,Z')
        \sigma^{2\beta}
        \Delta_{y'}\chi(y')
        v(Z')
    \,\sigma^{n-1}\, dy' \, d\theta' \, d\sigma.
\end{align*}
By Theorem~\ref{lem:Green_kernel}, using \eqref{2} for the first term and \eqref{1} for the second, we obtain
\begin{align*}
    \sup_{Q_1}|\nabla_L \mathcal S f_2|
    &\le
    C\|v\|_{L^\infty(Q_3)}
    \sup_{\substack{0<\rho\leq1\\ \theta\in\mathbb S^{n-1}\\ y\in E_1}}
    \int_0^2
    \int_{\mathbb S^{n-1}}
    \int_{E_2}
        |\nabla_L \partial_{y_j'} \mathcal{G}(Z,Z')|
    \,\sigma^{n-1}\, dy' \, d\theta' \, d\sigma\\
    &\quad +
    C\|v\|_{L^\infty(Q_3)}
    \sup_{\substack{0<\rho\leq1\\ \theta\in\mathbb S^{n-1}\\ y\in E_1}}
    \int_0^2
    \int_{\mathbb S^{n-1}}
    \int_{E_2}
        |\nabla_L \mathcal{G}(Z,Z')|
    \,\sigma^{n-1}\, dy' \, d\theta' \, d\sigma\\
    &\le C\|v\|_{L^\infty(Q_3)}.
\end{align*}
Combining the estimates for $\mathcal T h$ and $\mathcal S f$ gives
\begin{equation*}
    \sup_{Q_1}|\nabla_L v| = \sup_{Q_1}|\nabla_L\omega|
    \le
    C\|v\|_{L^\infty(Q_3)},
\end{equation*}
as required. The proof of Theorem~\ref{thm:RH} is complete.
\end{proof}

\part{Riesz transform for \texorpdfstring{$n=1$}{n=1} and \texorpdfstring{$0<\alpha<\frac{1}{2}$}{0<alpha<1/2}}\label{part2}

We now turn to the one-dimensional weakly degenerate regime $n=1$ and
$0<\alpha<1/2$. In this range the Friedrichs form domain still imposes a common
trace across $x=0$, while the weak equation imposes matching of the conormal
flux. Consequently, the even and odd components of a solution satisfy different
half-line endpoint conditions: the even component satisfies a zero-flux condition,
whereas the odd component satisfies a zero-trace condition. This distinction is
responsible for the sharp threshold $p<\alpha^{-1}$: the even zero-flux branch
remains harmless, while the odd zero-trace branch produces exactly the singular
factor $|x|^{-\alpha}$ in the intrinsic gradient.

Throughout Part~\ref{part2}, we assume $n=1$, $m\ge 1$, $\beta\ge 0$, and $\alpha \in \big(0,\frac{1}{2}\big)$. and we use the notation
\begin{align*}
\nu:= \frac{-\mathfrak b}{\mathfrak a} = \frac{1-2\alpha}{2(\beta+1-\alpha)},\qquad \rho = |x|, \qquad \mathfrak A_t:= \frac{t^{\mathfrak a}}{\mathfrak a},\qquad \tau_t := |Y| \mathfrak A_t.
\end{align*}
Note that for $n=1$, $\mathfrak b = \alpha-\frac{1}{2} <0$ (unlike the case $n\ge 2$) and $\nu\in (0,1)$.

\section{Kernel construction for \texorpdfstring{$n=1$}{n=1}}\label{sec:kernel:n=1:alpha<1/2}

By \cite[Theorem~1.1]{RS2}, the Poincar\'e inequality \eqref{P2} still holds in the case $n=1$ and $0<\alpha<1/2$. It follows from Theorem~\ref{thm:CJKS}, Lemma~\ref{reduce_RH}, and Lemma~\ref{RH_remote} that showing the $L^p$-boundedness of the Riesz transform is equivalent to proving the following theorem.

\begin{theorem}\label{thm:RH2}
Let $n = 1$, $m \ge 1$, $\alpha\in (0,\frac{1}{2})$, and $\beta \ge 0$. Let $B=B\big((0,y_0), r\big)$ be an anchored Grushin ball. Suppose that $u$ is $L$-harmonic in $4B$. Then for every $2<p < \alpha^{-1}$, there exists a constant $C>0$ such that
\begin{equation*}
    \biggl( \fint_B |\nabla_L u|^p \, d\xi \biggr)^{\!\frac{1}{p}} \le  \frac{C}{r} \fint_{4B} |u| \, d\xi.
\end{equation*}
\end{theorem}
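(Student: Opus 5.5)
We would follow the scheme of Part~\ref{part1}, with the sphere replaced by the two-point set $\{\pm1\}$, and with the spherical harmonic sum replaced by the even--odd decomposition of Section~\ref{ssec:Friedrichs}. First rescale exactly as in the reduction of Theorem~\ref{thm:RH}: set $v(x,y)=u(r^{1/(1-\alpha)}x,y_0+r^{\mathfrak a/(1-\alpha)}y)$, which is $L$-harmonic in $Q_4$. By \cite[Proposition~2.1]{CJKS}, $\|v\|_{L^\infty(Q_3)}\le C\fint_{Q_4}|v|$. We do not aim for a pointwise gradient bound, since that is false for the odd branch. Instead we aim for
\begin{equation*}
|\nabla_L v(x,y)|\le C\bigl(1+|x|^{-\alpha}\bigr)\|v\|_{L^\infty(Q_3)},\qquad (x,y)\in Q_1,
\end{equation*}
which is in $L^p(Q_1)$ exactly when $\alpha p<1$. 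This gives the desired estimate for $2<p<\alpha^{-1}$. Next localize with $\chi(y)$ as before, and split $\omega=\chi v=\omega_e+\omega_o$ into even and odd parts in $x$. Both parts solve $L\omega_\bullet=f_\bullet$ on $(0,2)_\rho\times\mathbb R^m$, where $f_\bullet$ is the corresponding part of $[L,\chi]v$, supported in $E_2$. By Section~\ref{ssec:Friedrichs}, the even part carries the zero-flux condition at $\rho=0$ and the odd part carries the zero-trace condition.

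For each part we write $\omega_\bullet=\mathcal T_\bullet h_\bullet+\mathcal S_\bullet f_\bullet$. After Fourier transform in $y$, the radial ODE \eqref{eq_equation1} with $n=1$, $\lambda=0$ has solutions $\rho^{-\mathfrak b}I_{\pm\nu}(\tau_\rho)$ and $\rho^{-\mathfrak b}K_\nu(\tau_\rho)$, where $-\mathfrak b=\frac12-\alpha>0$ and $\nu\in(0,1)$. The branch $\rho^{-\mathfrak b}I_{-\nu}(\tau_\rho)\sim$ const has zero flux, so it is the even branch. The branch $\rho^{-\mathfrak b}I_{\nu}(\tau_\rho)\sim\rho^{1-2\alpha}$ vanishes at $0$, so it is the odd branch. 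Accordingly the Poisson multiplier is $M^e=H_{-\nu,\rho}(|Y|)$ in the even case and $M^o=(\rho/2)^{1-2\alpha}H_{\nu,\rho}(|Y|)$ in the odd case. The Green kernels are built as in \eqref{eq_kernel_Green}, with $\phi$ the appropriate $I_{\mp\nu}$ branch and $\psi$ the combination vanishing at $\rho=2$. By Remark~\ref{re:H} and Remark~\ref{re:multiplier}, the multiplier-to-kernel argument of Theorem~\ref{lem:Poisson_kernel} gives the following. The even Poisson part has bounded $\nabla_L$, because $\rho^\alpha\partial_\rho H_{-\nu,\rho}$ carries the factor $\rho^{\alpha}\rho^{\mathfrak a-1}|Y|S_{-\nu}$, which is harmless. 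The odd Poisson part gives the extra term $(1-2\alpha)\rho^{\alpha}\rho^{-2\alpha}(\ldots)$, which is bounded by $C\rho^{-\alpha}\|h\|_\infty$. The $y$-components are bounded in both cases.

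For the Green parts we repeat Propositions~\ref{prop:Green:Shur}, \ref{lem_basic_D_R_schur_explicit} and \ref{lem:second_to_last} and Theorem~\ref{lem:Green_kernel} for a single fixed order, using Lemma~\ref{lem:Bessel:integral} with $\nu$ replaced by $\pm\nu$; this is why that lemma allows $\nu>-1$ and the index $\varepsilon$. The row bounds become $\int_0^2G(\rho,\sigma)\,d\sigma\lesssim(1+\tau_\rho)^{-2}$ and $\rho^{2\beta}\int_0^2 G\lesssim |Y|^{-2}$, and the resolvent identity argument is unchanged. The one place where the odd case differs is the radial derivative $D_r G^o$ for $\sigma>\rho$, which contains $\rho^{\alpha}\partial_\rho(\rho^{1-2\alpha}\cdots)$. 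We expect to bound it by $C\rho^{-\alpha}$ times the $Y$-integrable bounds of Proposition~\ref{lem:second_to_last}. The separation $|y-y'|\ge c$ between $E_1$ and $E_2$ then allows the $N$-fold integration by parts in $Y$. The term $f_2$ is handled by moving $\partial_{y'}$ onto the kernel exactly as in the proof of Theorem~\ref{thm:RH}. Combining the four pieces gives the weighted pointwise bound above.

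The main obstacle is checking the index conditions of Lemma~\ref{lem:Bessel:integral} for the even branch with order $-\nu<0$. There the condition $q+s+\nu-|\nu|\ge0$ loses $2\nu$. We would verify it directly: with $n=1$, $q=\frac{2\beta-\mathfrak b}{\mathfrak a}$ and $s=\frac{1-\mathfrak b}{\mathfrak a}$, one has $q+s=2$. Also $q+s-2\nu=2+\frac{2\mathfrak b}{\mathfrak a}=\frac{2\beta+1}{\mathfrak a}>0$. So the condition holds, and if it holds, the rest is a transcription of Part~\ref{part1}.
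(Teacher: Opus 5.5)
Your proposal is correct and follows essentially the paper's route. The shared steps are: rescaling to $Q_4$ and $y$-localization; the even/odd split into the zero-flux ($I_{-\nu}$) and zero-trace ($I_{\nu}$) half-line problems; fixed-order Poisson multiplier and resolvent-identity Green estimates; and integrating the weighted bound $|\nabla_L v|\lesssim(1+|x|^{-\alpha})\|v\|_{L^\infty(Q_3)}$ over $Q_1$ for $p<\alpha^{-1}$. The only difference is cosmetic: the paper's odd row bound $\int_0^2 G_{o,Y}(\rho,\sigma)\,d\sigma\lesssim\rho^{1-2\alpha}(1+\tau_\rho)^{-1}$ carries a factor $\rho^{1-2\alpha}$ that gives the $\rho^{-\alpha}$ loss for $D_rG_{o,Y}$ directly via \eqref{D:bdd:G}, while you handle the $\sigma>\rho$ piece by hand; your even-branch index check $q+s-2\nu=(2\beta+1)/\mathfrak a>0$ is right.
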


\begin{proof}[Reduction of Theorem~\ref{thm:RH2}]
By an argument analogous to the \textit{Reduction of Theorem~\ref{thm:RH}}, it suffices to prove
\begin{equation*}
    \biggl( \fint_{Q_1} |\nabla_L v|^p \, d\xi \biggr)^{\!\frac{1}{p}} \le C \fint_{Q_4} |v| \, d\xi
\end{equation*}
for $1<p<\alpha^{-1}$, where $v$ is $L$-harmonic in $Q_4$.

Let $\chi \in C_c^\infty\bigl( B_m( 0, 3^{\mathfrak a} ) \bigr)$ with $\chi = 1$ on $B_m( 0, 2^{\mathfrak a} )$, and set $\omega(x,y) = \chi(y) v(x,y)$. Then
\begin{equation*}
    |\nabla_L v| = |\nabla_L \omega| \qquad \text{on } Q_1.
\end{equation*}
Consider the cylinder
\begin{equation*}
    \widetilde{\mathcal{C}}:= (-2,2)_x \times \mathbb{R}^m_y.
\end{equation*}
A straightforward computation gives
\begin{equation*}
    L\omega = f:= |x|^{2\beta} v \, \Delta_y \chi - 2 |x|^{2\beta} \nabla_y \chi \cdot \nabla_y v,
\end{equation*}
and $f$ is compactly supported in $\{2^{\mathfrak a}\le |y|\le 3^{\mathfrak a}\}$. Let
\begin{equation*}
    h_-(y):= \omega(-2,y), \qquad h_+(y):= \omega(2,y).
\end{equation*}
By the Friedrichs representation,
\begin{equation*}
    \omega = \mathcal{T}h + \mathcal{S}f,
\end{equation*}
where $\mathcal{T}h$ and $\mathcal{S}f$ are the unique solutions of the following problems, respectively:
\begin{equation}\label{eq:pde:n=1}
    \begin{cases}
        Lg = 0, & \text{in } \widetilde{\mathcal{C}},\\
        g(\pm 2,y) = h_\pm(y), \\
        \lim_{x\to 0^-} |x|^{2\alpha}\partial_x g = \lim_{x\to 0^+} |x|^{2\alpha}\partial_x g,\\
        g(0^-,y) = g(0^+,y),
    \end{cases} \qquad
    \begin{cases}
        Lg = f, & \text{in } \widetilde{\mathcal{C}},\\
        g(-2,y)=g(2,y)=0,\\
        \lim_{x\to 0^-} |x|^{2\alpha}\partial_x g = \lim_{x\to 0^+} |x|^{2\alpha}\partial_x g,\\
        g(0^-,y) = g(0^+,y),
    \end{cases}
\end{equation}
where the trace-matching and flux-matching Friedrichs conditions were discussed in Subsection~\ref{ssec:Friedrichs}; see \eqref{eq_subcritical_trace_matching} and \eqref{eq_subcritical_flux_matching}.

Define the reflection operator
\begin{equation*}
    \mathfrak Ru(x,y):= u(-x,y).
\end{equation*}
Since $L$ is invariant under reflection, the Friedrichs form is also reflection-invariant, namely
\begin{equation*}
    \mathfrak q(\mathfrak Ru,\mathfrak Rv) = \mathfrak q(u,v).
\end{equation*}
Equivalently,
\begin{equation*}
    \mathfrak q(\mathfrak Ru,v) = \mathfrak q(u,\mathfrak Rv)
\end{equation*}
for all admissible test functions $v$. Suppose that $g\in \mathcal{F}_{\mathrm{loc}}(\widetilde{\mathcal{C}})$ solves the equation $Lg=F$. Set
\begin{equation*}
    g_e:= \frac{g+\mathfrak Rg}{2},\qquad g_o:=\frac{g-\mathfrak Rg}{2},\qquad F_e:=\frac{F+\mathfrak RF}{2},\qquad F_o:=\frac{F-\mathfrak RF}{2}.
\end{equation*}
Then
\begin{equation*}
    Lg_e=F_e,\qquad L g_o = F_o.
\end{equation*}
Indeed, for every compactly supported Friedrichs test function $v$,
\begin{align*}
    \mathfrak q(g_e,v)
    &= \frac{1}{2} \mathfrak q(g,v) + \frac{1}{2} \mathfrak q(\mathfrak R g,v)\\
    &= \frac{1}{2} \mathfrak q(g,v) + \frac{1}{2} \mathfrak q(g,\mathfrak Rv)\\
    &= \frac{1}{2} \int F v \, dx\,dy  + \frac{1}{2} \int F (\mathfrak Rv) \, dx\,dy\\
    &= \int F_e v \, dx\,dy.
\end{align*}
The proof for $g_o$ is identical. Thus the even and odd components are not merely formal algebraic pieces, but genuine Friedrichs weak solutions of the corresponding even and odd equations.

Now set
\begin{equation*}
    h_{e}:= \frac{h_+ + h_-}{2},\qquad h_{o}:= \frac{h_+ - h_-}{2},\qquad f_e:= \frac{f + \mathfrak R f}{2},\qquad f_o:= \frac{f - \mathfrak R f}{2},
\end{equation*}
where `e' and `o' denote the even and odd parts, respectively. Recalling the endpoint Friedrichs conditions for even and odd solutions discussed in Subsection~\ref{ssec:Friedrichs}, decompose
\begin{align*}
    &\mathcal{T}h(x,y) = \mathcal{T}_e h_e(|x|, y) + \operatorname{sgn}(x)\, \mathcal{T}_o h_o(|x|,y),\\
    &\mathcal{S}f(x,y) = \mathcal{S}_e f_e(|x|,y) + \operatorname{sgn}(x)\, \mathcal{S}_o f_o(|x|,y),
\end{align*}
where $\mathcal{T}_{e}h_{e}$ and $\mathcal{T}_o h_o$ are the unique solutions of the half-line problems
\begin{align}\label{eq:evpdd:Poisson}
\begin{cases}
        Lg = 0, & \text{in } (0,2)\times \mathbb{R}^m,\\
        g(2,y)= h_e(y),\\
        \lim_{x\to 0^+} x^{2\alpha} \partial_x g = 0,
    \end{cases} \qquad
    \begin{cases}
        Lg = 0, & \text{in } (0,2)\times \mathbb{R}^m,\\
        g(2,y)= h_o(y),\\
        g(0^+,y) = 0.
    \end{cases}
\end{align}
Similarly, $\mathcal{S}_e f_e$ and $\mathcal{S}_o f_o$ solve
\begin{align}\label{eq:evpdd:Green}
\begin{cases}
        Lg = f_e, & \text{in } (0,2)\times \mathbb{R}^m,\\
        g(2,y)= 0,\\
        \lim_{x\to 0^+} x^{2\alpha} \partial_x g = 0,
    \end{cases} \qquad
    \begin{cases}
        Lg = f_o, & \text{in } (0,2)\times \mathbb{R}^m,\\
        g(2,y)= 0,\\
        g(0^+,y) = 0.
    \end{cases}
\end{align}
The uniqueness of these Friedrichs problems follows directly from the energy identity.

Accordingly, the problem has been reduced to solving \eqref{eq:evpdd:Poisson} and \eqref{eq:evpdd:Green}, and to estimating
\begin{equation*}
    |\nabla_L \mathcal{T}_e h_e|,\qquad |\nabla_L \mathcal{T}_o h_o|, \qquad |\nabla_L \mathcal{S}_o f_o|, \qquad |\nabla_L \mathcal{S}_e f_e|.
\end{equation*}
\end{proof}

Thus the decomposition is not merely formal: it is the Friedrichs decomposition of
the full-line problem into the zero-flux even half-line problem and the zero-trace
odd half-line problem. The uniqueness of the two half-line Friedrichs problems
follows from the energy identity. Therefore the estimates below may be proved
separately for the even and odd kernels and then recombined on the full line.

In the rest of this section, we construct the kernels of $\mathcal{T}$ and $\mathcal{S}$ explicitly, as we did in Part~\ref{part2}. The main statements and proofs will be postponed to Section~\ref{sec:pf:n=1:alpha<1/2}.

\subsection{The Poisson multiplier for \texorpdfstring{$n=1$}{n=1}}

We first treat equation \eqref{eq:evpdd:Poisson}. Consider the partial differential equation
\begin{equation*}
    Lg = -\partial_x (|x|^{2\alpha}\partial_x g) + |x|^{2\beta} \Delta_y g = 0.
\end{equation*}
Taking the Fourier transform in the $y$-variable, it is enough to consider, for $\rho>0$,
\begin{align*}
    - \rho^{2\alpha} \partial_\rho^2 \hat{g}(\rho,Y) - 2\alpha \rho^{2\alpha-1} \partial_\rho \hat{g}(\rho,Y) + \rho^{2\beta} |Y|^2 \hat{g}(\rho,Y) = 0,
\end{align*}
where
\begin{equation*}
    \hat{g}(\rho,Y) = \int_{\mathbb{R}^m} g(\rho,y) e^{-i y \cdot Y} \, dy.
\end{equation*}
Introduce
\begin{equation*}
    t:= \frac{|Y|}{\mathfrak a} \rho^{\mathfrak a}, \qquad U_Y(t) := \rho^{\mathfrak b} \hat{g}(\rho, Y).
\end{equation*}
Then the equation becomes
\begin{equation*}
    t^2 U_Y''(t) + t U_Y'(t) - (t^2+\nu^2)U_Y(t) = 0.
\end{equation*}
The modified Bessel equation above has two basic solutions:
\begin{equation*}
    \phi_Y(\rho):=\rho^{-\mathfrak b} I_\nu\!\left( \frac{|Y|}{\mathfrak a} \rho^{\mathfrak a} \right), \qquad \varphi_Y(\rho):=\rho^{-\mathfrak b} K_\nu\!\left( \frac{|Y|}{\mathfrak a} \rho^{\mathfrak a} \right).
\end{equation*}
The solution is required to satisfy either the zero-flux or the zero-trace condition. Note that for $0<\nu<1$, as $\rho \to 0$,
\begin{align*}
\phi_Y(\rho) &= \frac{1}{2^\nu \Gamma(\nu+1)} \Bigl(\frac{|Y|}{\mathfrak a}\Bigr)^{\!\nu} \rho^{-2\mathfrak b} + O_{\nu,Y}(\rho^{2\mathfrak a - 2\mathfrak b}), \\
\varphi_Y(\rho) &= 2^{\nu-1} \Gamma(\nu) \Bigl(\frac{|Y|}{\mathfrak a}\Bigr)^{\!-\nu} + 2^{-\nu-1} \Gamma(-\nu) \Bigl(\frac{|Y|}{\mathfrak a}\Bigr)^{\!\nu} \rho^{-2\mathfrak b} + O_{\nu,Y}(\rho^{2\mathfrak a}).
\end{align*}
Set
\begin{equation*}
    c_\nu := 2^{-1} \Gamma(-\nu) \Gamma(\nu+1) = -\frac{\pi}{2\sin{\pi \nu}}.
\end{equation*}
To match the zero-flux condition, define
\begin{align*}
    \Phi_Y(\rho)
    :&= \varphi_Y(\rho) - c_\nu \phi_Y(\rho)\\
    &= \rho^{-\mathfrak b} \Bigl( K_\nu\Bigl(\frac{|Y|}{\mathfrak a}\rho^\mathfrak a \Bigr) + \frac{\pi}{2\sin{\pi \nu}} I_\nu \Bigl(\frac{|Y|}{\mathfrak a}\rho^\mathfrak a \Bigr) \Bigr)\\
    &= \frac{\pi}{2\sin{\pi \nu}} \rho^{-\mathfrak b} I_{-\nu}\Bigl(\frac{|Y|}{\mathfrak a}\rho^\mathfrak a\Bigr)\\
    &= O_{\nu,Y}(1) + O_{\nu,Y}(\rho^{2\mathfrak a}),
\end{align*}
where the second-to-last equality follows from the identity
\begin{equation*}
    K_\nu(t) = \frac{\pi}{2\sin{\pi \nu}} \bigl( I_{-\nu}(t) - I_\nu(t) \bigr), \qquad 0<\nu<1, \quad t>0.
\end{equation*}
In particular,
\begin{equation*}
    \lim_{\rho \to 0} \rho^{2\alpha} \partial_\rho \Phi_Y(\rho) = \lim_{\rho \to 0} \rho^{2\beta+1} = 0.
\end{equation*}
To match the zero-trace condition, we simply take the solution $\phi_Y(\rho)$.

The boundary condition at $\rho =2$ must also be imposed. Define the multipliers
\begin{equation*}
    M_e(\rho,Y):= \frac{\Phi_Y(\rho)}{\Phi_Y(2)} = \frac{\rho^{-\mathfrak b}}{2^{-\mathfrak b}} \frac{I_{-\nu}\!\left(\frac{|Y|}{\mathfrak a}\rho^\mathfrak a\right)}{I_{-\nu}\!\left(\frac{|Y|}{\mathfrak a}2^\mathfrak a\right)}, \qquad M_o(\rho,Y):= \frac{\phi_Y(\rho)} {\phi_Y(2)} = \frac{\rho^{-\mathfrak b}}{2^{-\mathfrak b}} \frac{I_{\nu}\!\left(\frac{|Y|}{\mathfrak a}\rho^\mathfrak a\right)}{I_{\nu}\!\left(\frac{|Y|}{\mathfrak a}2^\mathfrak a\right)}.
\end{equation*}
Their values at $|Y|=0$ are understood by continuity, namely
\begin{equation*}
    M_e(\rho,0)=1 \qquad \text{and} \qquad M_o(\rho,0)= (\rho/2)^{1-2\alpha}.
\end{equation*}
Then $M_e(x,Y) \hat{h}_e(Y)$ and $M_o(x,Y) \hat{h}_o(Y)$ solve the Fourier-transformed equations corresponding to \eqref{eq:evpdd:Poisson}. Therefore, for $x\in (-2,2)$,
\begin{align*}
    \widehat{\mathcal{T}h}(x,Y) = M_e(|x|,Y) \hat{h}_e(Y) + \operatorname{sgn}(x) M_o(|x|,Y) \hat{h}_o(Y),
\end{align*}
and hence
\begin{align}
    \mathcal{T}h(x,y) &= \mathcal{T}_e h_e(x,y) + \mathcal{T}_o h_o(x,y) \nonumber\\
    &=\int_{\mathbb{R}^m} \biggl( \int_{\mathbb{R}^m} M_e(|x|, Y) e^{i (y-y')\cdot Y} \, dY \biggr) h_e(y') \, dy' \nonumber\\
    &\quad+ \operatorname{sgn}(x) \int_{\mathbb{R}^m} \biggl( \int_{\mathbb{R}^m} M_o(|x|, Y) e^{i (y-y')\cdot Y} \, dY \biggr) h_o(y') \, dy'.
\end{align}

\subsection{The Green kernel for \texorpdfstring{$n=1$}{n=1}}

We next turn to equation \eqref{eq:evpdd:Green}. The construction of the Green kernel is similar to that in the case $n\ge 2$, although here the equation is split into its even and odd parts.

Let $\mathcal{G}(Z,Z')$ (with $Z = (x,y)$ and $Z' = (x',y')$) denote the Green kernel associated with $L$ on $\widetilde{\mathcal{C}}$ under the Dirichlet boundary condition at $|x|=2$ and the local finite-energy condition at $x=0$. As before, taking the Fourier transform in the $y$-$y'$ variables shows that the Green kernel takes the form
\begin{equation}\label{eq_Green_kernel:n=1}
    \mathcal{G}(Z,Z') =  \int_{\mathbb{R}^m} e^{i(y-y') \cdot Y} G_{Y}(x, x') \, dY,
\end{equation}
where $G_{Y}$ is the one-dimensional Green kernel associated with the operator
\begin{equation*}
    \mathcal{L}_{Y} := - |x|^{2\alpha} \partial_{x}^2 - 2\alpha |x|^{2\alpha-1} \partial_x + |Y|^2 |x|^{2\beta}.
\end{equation*}
Equivalently,
\begin{equation*}
    \mathcal{S}f(Z) = \iint_{\widetilde{\mathcal{C}}} \mathcal{G}(Z,Z') f(Z') \, dx' \, dy'.
\end{equation*}
Similarly, decompose the Green kernel $G_Y(x,x')$ into even and odd parts corresponding to the Fourier-transformed half-line problems in \eqref{eq:evpdd:Green}. First, to impose the vanishing condition at $x=2$, recall the function
\begin{equation}\label{eq:psi}
    \psi_{Y}(\rho):= \rho^{-\mathfrak b} \Bigl[ K_{\nu}\Bigl( \frac{|Y| \rho^{\mathfrak a}}{\mathfrak a} \Bigr) - \eta_Y I_{\nu} \Bigl( \frac{|Y| \rho^{\mathfrak a}}{\mathfrak a} \Bigr) \Bigr], \qquad \eta_Y:= \frac{K_{\nu} \bigl( \frac{|Y| 2^{\mathfrak a}}{\mathfrak a} \bigr)}{I_{\nu} \bigl( \frac{|Y| 2^{\mathfrak a}}{\mathfrak a} \bigr)}.
\end{equation}
Note that $\psi_{Y}(2)=0$. Next, for the zero-flux condition at $x=0$, use the function $\Phi_Y(x)$. Finally, for the zero-trace condition at $x=0$, use the function $\phi_Y(x)$.

Denote by $G_{e,Y}$ and $G_{o,Y}$ the Green operators corresponding, respectively, to the Fourier-transformed equations in \eqref{eq:evpdd:Green}. A calculation parallel to that in Subsection~\ref{ssec:Green:n>2} yields
\begin{align}\label{eq_kernel_Green:n=1:even}
G_{e,Y}(\rho,\sigma) &= \mathfrak a^{-1} (\eta_Y - c_\nu)^{-1}
\begin{cases}
\Phi_{Y}(\rho) \, \psi_{Y}(\sigma), &0<\rho<\sigma,\\[4pt]
\Phi_{Y}(\sigma) \, \psi_{Y}(\rho), &\sigma < \rho < 2,
\end{cases}\nonumber\\
&=\mathfrak a^{-1} \frac{2\sin{\pi \nu}}{\pi} \frac{I_{\nu}\bigl( \frac{|Y| 2^{\mathfrak a}}{\mathfrak a} \bigr)}{I_{-\nu}\bigl( \frac{|Y| 2^{\mathfrak a}}{\mathfrak a} \bigr)}
\begin{cases}
\Phi_{Y}(\rho) \, \psi_{Y}(\sigma), &0<\rho<\sigma,\\[4pt]
\Phi_{Y}(\sigma) \, \psi_{Y}(\rho), &\sigma < \rho < 2,
\end{cases}
\end{align}
and
\begin{align}\label{eq_kernel_Green:n=1:odd}
G_{o,Y}(\rho,\sigma) = \mathfrak a^{-1}
\begin{cases}
\phi_{Y}(\rho) \, \psi_{Y}(\sigma), &0<\rho<\sigma,\\[4pt]
\phi_{Y}(\sigma) \, \psi_{Y}(\rho), &\sigma < \rho < 2.
\end{cases}
\end{align}
Consequently,
\begin{align}\label{eq:Green:n=1:full}
    \mathcal{S}f(x,y) &= \int_0^2 \int_{\mathbb{R}^m} \biggl( \int_{\mathbb{R}^m} G_{e,Y}(|x|,x') e^{i (y-y')\cdot Y} \, dY \biggr) f_e(x',y') \, dy' \, dx' \nonumber\\
    &\quad + \operatorname{sgn}(x) \int_0^2 \int_{\mathbb{R}^m} \biggl( \int_{\mathbb{R}^m} G_{o,Y}(|x|,x') e^{i (y-y')\cdot Y} \, dY \biggr) f_o(x',y') \, dy' \, dx'.
\end{align}

\section{Proof of Theorem~\ref{thm:RH2}}\label{sec:pf:n=1:alpha<1/2}

This section is devoted to proving Theorem~\ref{thm:RH2} and hence completing the positive result of Theorem~\ref{thm:n=1:alpha<1/2}.

Recall the notation
\begin{equation*}
    \mathcal{I}_\nu (t) := t^{-\nu} I_\nu(t), \qquad S_\nu(t):= \frac{I_{\nu+1}(t)}{I_\nu(t)},\qquad H_{\nu,\rho}(r):= \frac{\mathcal{I}_\nu(\mathfrak A_\rho r)}{\mathcal{I}_\nu(\mathfrak A_2 r)}, \qquad \nu>-1.
\end{equation*}

\begin{lemma}\label{lem:Poisson:n=1}
Let $N\ge 0$, $\mu\in \{-\nu, \nu \}$, and $0<\rho \le 1$. Then for every multi-index $|\gamma|\le N$,
\begin{equation*}
        |\partial_Y^{\gamma} H_{\mu,\rho}(|Y|)| \le \begin{cases}
            C_N, & |Y|\le C^*,\\[4pt]
            C_N (1+|Y|)^{A_N} e^{-c_N |Y|}, & |Y|\ge C^*,
        \end{cases}
\end{equation*}
where $C^*>1$ is a sufficiently large constant.
\end{lemma}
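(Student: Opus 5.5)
This is the fixed-order version of Proposition~\ref{lem_transition_ratio_derivatives} announced in Remark~\ref{re:H}. I would repeat that proof with the order frozen at $\mu\in\{-\nu,\nu\}\subset(-1,1)$, keeping track only of the places where positivity of the order or $\nu_\ell\ge\nu_0>0$ was used. Since $\mu$ takes only two values, all constants may depend on $\mu$, and there is no $\ell$-uniformity to maintain. The starting point is the identity $\mathcal I_\mu'(t)=t^{-\mu}I_{\mu+1}(t)$, which holds for every $\mu>-1$. It gives
\begin{equation*}
\log H_{\mu,\rho}(r)=-\int_{\mathfrak A_\rho r}^{\mathfrak A_2 r}S_\mu(t)\,dt,
\end{equation*}
together with the derivative formula \eqref{eq_log_H_derivative_formula} for $\frac{d^j}{dr^j}\log H_{\mu,\rho}$. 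Because $I_{\mu+1},I_\mu>0$ on $(0,\infty)$ when $\mu>-1$, we have $S_\mu>0$. Hence $0<H_{\mu,\rho}\le 1$ for $\rho\le 2$.

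\emph{Large frequencies, $|Y|\ge C^*$.} By Lemma~\ref{lem_bessel_inputs_high_frequency}, applied with $\nu_*=-\nu>-1$, we have $S_\mu(t)\ge c$ for $t\ge C$. Choosing $C^*$ large, every $t\in[2^{-\mathfrak a}\mathfrak A_2 r,\mathfrak A_2 r]$ exceeds this threshold, and therefore $H_{\mu,\rho}(r)\le e^{-cr}$ uniformly in $0<\rho\le1$. Lemma~\ref{lem_S_derivative_bound} gives $|S_\mu^{(j)}(t)|\le C(1+t)^{A}$. Combined with $\mathfrak A_\rho\le\mathfrak A_2$, this shows that the derivatives of $\log H_{\mu,\rho}$ grow at most polynomially in $r$. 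Faà di Bruno's formula \eqref{eq_faa_di_bruno_H} then yields $|H^{(k)}_{\mu,\rho}(r)|\le C(1+r)^{A_N}e^{-c r}$. Since $|Y|\ge C^*>1$, the chain rule for radial functions converts this into the stated $\partial_Y^\gamma$ bound.

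\emph{Small frequencies, $|Y|\le C^*$.} This is the step I expect to need care, because the chain rule for radial functions degenerates at $Y=0$. The cure is the power series. Write $\mathcal I_\mu(t)=\bigl(2^\mu\Gamma(\mu+1)\bigr)^{-1}E_\mu(t^2)$, where
\begin{equation*}
E_\mu(s)=\sum_{q\ge0}\frac{(s/4)^q}{q!(\mu+1)_q}.
\end{equation*}
For $\mu>-1$ the Pochhammer symbols $(\mu+1)_q$ are positive, so $E_\mu\ge1$ and $E_\mu$ is entire with all derivatives bounded on compact sets. This is the fixed-order analogue of \eqref{eq_A_derivatives_small}. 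Then $H_{\mu,\rho}(|Y|)=h_{\mu,\rho}(|Y|^2)$ with
\begin{equation*}
h_{\mu,\rho}(s)=\frac{E_\mu(\mathfrak A_\rho^2 s)}{E_\mu(\mathfrak A_2^2 s)}.
\end{equation*}
On $0\le s\le (C^*)^2$ the denominator is at least $1$, and every derivative of the numerator and denominator is bounded uniformly in $0<\rho\le1$, since $\mathfrak A_\rho\le\mathfrak A_2$. The quotient rule therefore gives $|h^{(k)}_{\mu,\rho}(s)|\le C_k$. Composing with the polynomial map $Y\mapsto|Y|^2$ gives $|\partial_Y^\gamma H_{\mu,\rho}(|Y|)|\le C_N$ on $|Y|\le C^*$. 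Working on the whole ball $|Y|\le C^*$ at once avoids any separate intermediate region.

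The only point to verify for $\mu=-\nu<0$ is that the cited Bessel inputs do not require positive order. Lemmas~\ref{lem_S_derivative_bound} and~\ref{lem_bessel_inputs_high_frequency} are stated for every $\nu\ge\nu_*>-1$, and the power-series argument uses only $\mu>-1$, so both values of $\mu$ are covered.
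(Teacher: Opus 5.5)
Your proposal is correct and matches the paper's argument: the paper's proof simply reruns the proof of Proposition~\ref{lem_transition_ratio_derivatives} with the order frozen at $\mu\in\{-\nu,\nu\}$, as announced in Remark~\ref{re:H}. The only cosmetic difference is that you treat the whole ball $|Y|\le C^*$ with the power series for $E_\mu$, instead of splitting at $\tau_2=1$ as the paper does; this is legitimate here because the order is fixed and $C^*$ is an absolute constant.
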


\begin{proof}
By Lemma~\ref{lem_S_derivative_bound}, the estimates for $S_\mu$ imply the corresponding estimates for $H_{\mu,\rho}$. The proof is the same as that of Lemma~\ref{lem_transition_ratio_derivatives} and Proposition~\ref{lem_transition_ratio_derivatives}; see also Remark~\ref{re:H}.
\end{proof}

\begin{theorem}\label{prop:Poisson:n=1}
Let $n= 1$, $m\ge 1$, $\alpha\in \big(0,\frac{1}{2} \big)$, and $\beta \ge 0$. Let $0<\rho\le 1$. Then for every $N\ge 0$ and all $|\gamma|\le N$,
\begin{align}\label{eq:prop:Poisson:n=1:even}
    \bigl| \partial_Y^{\gamma} m_{e,\rho}^{(*)}(Y) \bigr| \le \begin{cases}
        C_N, & |Y|\le C^*,\\[4pt]
        C_N (1+|Y|)^{A_N} e^{-c_N |Y|}, & |Y|\ge C^*,
    \end{cases}
\end{align}
while
\begin{align}\label{eq:prop:Poisson:n=1:odd}
    \bigl| \partial_Y^{\gamma} m_{o,\rho}^{(*)}(Y) \bigr| \le \rho^{-\alpha} \begin{cases}
        C_N, & |Y|\le C^*,\\[4pt]
        C_N (1+|Y|)^{A_N} e^{-c_N |Y|}, & |Y|\ge C^*,
    \end{cases}
\end{align}
where $*\in\{\mathrm{rad}, \mathrm{bas},j\}$.
\end{theorem}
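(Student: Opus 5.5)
The plan is to reduce both estimates to the fixed-order ratio bounds of Lemma~\ref{lem:Poisson:n=1}, as Remark~\ref{re:multiplier} suggests. First I write the multipliers explicitly:
\begin{equation*}
m^{(\mathrm{rad})}_{e,\rho}=\rho^\alpha\partial_\rho M_e(\rho,Y),\qquad m^{(\mathrm{bas},j)}_{e,\rho}=iY_j\rho^\beta M_e(\rho,Y),
\end{equation*}
and similarly for the odd multipliers with $M_o$. Using $I_\mu(t)=t^\mu\mathcal I_\mu(t)$ and $\mathfrak b=-\mathfrak a\nu$, I would factor
\begin{equation*}
M_e(\rho,Y)=\Bigl(\frac{\rho}{2}\Bigr)^{-\mathfrak b-\mathfrak a\nu}H_{-\nu,\rho}(|Y|)=H_{-\nu,\rho}(|Y|),\qquad M_o(\rho,Y)=\Bigl(\frac{\rho}{2}\Bigr)^{-\mathfrak b+\mathfrak a\nu}H_{\nu,\rho}(|Y|)=\Bigl(\frac{\rho}{2}\Bigr)^{1-2\alpha}H_{\nu,\rho}(|Y|).
\end{equation*}
This matches the values at $Y=0$ recorded earlier. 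Both $\mu=\pm\nu$ lie in $(-1,1)$, so Lemma~\ref{lem_S_derivative_bound} (with $\nu_*=-\nu>-1$) and Lemma~\ref{lem:Poisson:n=1} apply.

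\textbf{Base multipliers.} Here $iY_j\rho^\beta H_{\mu,\rho}(|Y|)$ with $\rho\le1$. Leibniz's rule and Lemma~\ref{lem:Poisson:n=1} give the required bound: the extra factor $Y_j$ costs only a polynomial, which is harmless on $|Y|\le C^*$ and is absorbed into $(1+|Y|)^{A_N}$ otherwise. In the odd case the additional prefactor $(\rho/2)^{1-2\alpha}\le1\le\rho^{-\alpha}$.

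\textbf{Radial multipliers.} As in \eqref{eq_rad_M_H}, I would use $\partial_\rho\log H_{\mu,\rho}=|Y|\rho^{\mathfrak a-1}S_\mu(\mathfrak A_\rho|Y|)$ and $\alpha+\mathfrak a-1=\beta$. This gives
\begin{align*}
m^{(\mathrm{rad})}_{e,\rho}&=\rho^\beta|Y|S_{-\nu}(\mathfrak A_\rho|Y|)\,H_{-\nu,\rho}(|Y|),\\
m^{(\mathrm{rad})}_{o,\rho}&=(1-2\alpha)2^{2\alpha-1}\rho^{-\alpha}H_{\nu,\rho}(|Y|)+\Bigl(\frac{\rho}{2}\Bigr)^{1-2\alpha}\rho^\beta|Y|S_\nu(\mathfrak A_\rho|Y|)\,H_{\nu,\rho}(|Y|).
\end{align*}
The first odd term is exactly where the loss $\rho^{-\alpha}$ appears; its $Y$-derivatives are controlled by Lemma~\ref{lem:Poisson:n=1}. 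The remaining terms contain the factor $Y\mapsto |Y|S_\mu(\mathfrak A_\rho|Y|)$. I need polynomial bounds for its $Y$-derivatives that are uniform in $0<\rho\le1$, after which Leibniz combined with Lemma~\ref{lem:Poisson:n=1} gives \eqref{eq:prop:Poisson:n=1:even} and \eqref{eq:prop:Poisson:n=1:odd}.

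\textbf{Main obstacle: the factor $|Y|S_\mu(\mathfrak A_\rho|Y|)$.} The difficulty is that writing this factor as $\mathfrak A_\rho^{-1}\,(|Z|S_\mu(|Z|))$ with $Z=\mathfrak A_\rho Y$ introduces $\mathfrak A_\rho^{-1}$, which blows up as $\rho\to0$. I would avoid that rescaling and instead use the power series from the proof of Proposition~\ref{lem_transition_ratio_derivatives}:
\begin{equation*}
S_\mu(t)=\frac{t}{2(\mu+1)}\,\frac{E_{\mu+1}(t^2)}{E_\mu(t^2)},\qquad\text{so}\qquad |Y|S_\mu(\mathfrak A_\rho|Y|)=\mathfrak A_\rho|Y|^2\,g_\mu(\mathfrak A_\rho^2|Y|^2),
\end{equation*}
where $g_\mu$ is smooth on $[0,\infty)$. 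Here $E_\mu\ge1$ still holds for $\mu>-1$, since $(\mu+1)_q>0$.

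On the region $\mathfrak A_\rho|Y|\le1$, the function $g_\mu$ and its derivatives are bounded on $[0,1]$. The chain rule then gives $|\partial_Y^\gamma(\cdot)|\le C_N(1+|Y|)^{2}$, uniformly in $\rho$, because $\mathfrak A_\rho\le\mathfrak A_1$.

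On the region $\mathfrak A_\rho|Y|\ge1$, I would apply the chain rule directly: each $Y$-derivative produces factors $\mathfrak A_\rho^{k}S_\mu^{(k)}$ together with powers of $|Y|^{-1}\le\mathfrak A_\rho$. These are bounded by Lemma~\ref{lem_S_derivative_bound} times $(1+|Y|)^{A_N}$, again with no negative powers of $\mathfrak A_\rho$.

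In the regime $|Y|\le C^*$, the result is bounded by $C_N$. In the regime $|Y|\ge C^*$, the polynomial growth is absorbed into $(1+|Y|)^{A_N}e^{-c_N|Y|}$, which comes from the exponential decay of $H_{\mu,\rho}$ in Lemma~\ref{lem:Poisson:n=1}.
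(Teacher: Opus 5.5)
Your proof is correct and matches the paper's argument. You use the same factorizations $M_e=H_{-\nu,\rho}(|Y|)$ and $M_o=(\rho/2)^{1-2\alpha}H_{\nu,\rho}(|Y|)$, the same explicit radial multipliers (with the $\rho^{-\alpha}$ term isolated in the odd case), and the same reduction to Lemma~\ref{lem:Poisson:n=1} plus polynomial bounds on $|Y|S_\mu(\mathfrak A_\rho|Y|)$. The only difference is cosmetic: the paper (through the $\ell=0$ case of Proposition~\ref{lem_detailed_poisson_multiplier}) handles that factor by scaling, $\partial_Y^\gamma\bigl(|Y|S_\mu(\mathfrak A_\rho|Y|)\bigr)=\mathfrak A_\rho^{|\gamma|-1}(\partial^\gamma F)(\mathfrak A_\rho Y)$ with $F(Z)=|Z|S_\mu(|Z|)$. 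So the factor $\mathfrak A_\rho^{-1}$ you worried about appears only when $\gamma=0$, where $S_\mu\le C$ suffices; your power-series split at $\mathfrak A_\rho|Y|=1$ is an equally valid way to reach the same bound.
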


\begin{proof}
We first consider the even multiplier $M_{e,\rho}(Y) = M_{e}(\rho,Y)$. Clearly,
\begin{equation*}
    M_e(\rho,Y) = H_{-\nu,\rho}(|Y|).
\end{equation*}
Thus an argument parallel to that in Lemma~\ref{lem_detailed_poisson_multiplier}, with the order $\nu_\ell$ replaced by the fixed order $-\nu$, applies here as well. In particular, note that
\begin{equation*}
    m_{e,\rho}^{\mathrm{rad}}(Y) = \rho^\alpha \partial_\rho H_{-\nu,\rho}(|Y|) = \rho^\beta |Y| S_{-\nu}\bigl(\mathfrak A_\rho |Y| \bigr) H_{-\nu,\rho}(|Y|).
\end{equation*}
Combining this with Lemma~\ref{lem:Poisson:n=1} gives \eqref{eq:prop:Poisson:n=1:even} for $*=\mathrm{rad}$. The case $*=\mathrm{bas},j$ is entirely similar.

Next, consider $M_{o,\rho}(Y)$. Observe that
\begin{equation*}
    M_{o}(\rho,Y) = \Bigl(\frac{\rho}{2}\Bigr)^{\!1-2\alpha} H_{\nu,\rho}(|Y|).
\end{equation*}
Consequently,
\begin{align*}
    m_{o,\rho}^{\mathrm{rad}}(Y) &= 2^{2\alpha-1}(1-2\alpha) \rho^{-\alpha} H_{\nu,\rho}(|Y|) + 2^{2\alpha-1} \rho^{1-\alpha} \partial_\rho H_{\nu,\rho}(|Y|)\\
    &= 2^{2\alpha-1}(1-2\alpha) \rho^{-\alpha} H_{\nu,\rho}(|Y|) + 2^{2\alpha-1} \rho^{\beta+1-2\alpha} |Y| S_{\nu}(\mathfrak A_\rho |Y|) H_{\nu,\rho}(|Y|).
\end{align*}
Since $\rho^{\beta+1-2\alpha}\le \rho^{-\alpha}$, as $\beta+1-\alpha >0$, \eqref{eq:prop:Poisson:n=1:odd} for $*=\mathrm{rad}$ follows from Lemma~\ref{lem:Poisson:n=1}. The case $*=\mathrm{bas},j$ is analogous.
\end{proof}

\begin{remark}\label{re:even:Poisson}
The proof of the even multiplier estimate in Theorem~\ref{prop:Poisson:n=1} does not use the restriction $0<\alpha<1/2$, except through the notation associated with the weakly degenerate regime. The relevant branch is the zero-flux Friedrichs branch, which is expressed by the normalized Bessel ratio with order $-\nu>-1$. The same argument therefore applies to the zero-flux branch in the strongly degenerate case $1/2\leq\alpha<1$, after replacing $-\nu$ by the corresponding non-negative order. This observation will be used in Part~\ref{part4}.
\end{remark}

\begin{proposition}\label{prop:Green:n=1}
Let $n=1$, $m\ge 1$, $\alpha\in \big(0,\frac{1}{2}\big)$, and $\beta\ge 0$. Then for $0<\rho\le 2$,
\begin{align}\label{eq:prop:Green:1}
    \int_0^2 |G_{o,Y}(\rho,\sigma)| \, d\sigma \le C\,\frac{\rho^{2-2\alpha} + \rho^{1-2\alpha}}{1+\tau_\rho}\le C.
\end{align}
Moreover, for $|Y|>C^*$,
\begin{align}\label{eq:prop:Green:2}
    \sup_{0<\rho\le 2} \rho^{2\beta}\int_0^2 |G_{o,Y}(\rho,\sigma)| \, d\sigma \le C|Y|^{-2}.
\end{align}
In addition, for $0<\rho\le 1$,
\begin{equation}\label{eq:prop:Green:3}
     \int_0^2 |D_* G_{o,Y}(\rho,\sigma)| \, d\sigma \le \rho^{-\alpha}\begin{cases}
         C, & |Y|\le C^*,\\[4pt]
         C|Y|, & |Y|>C^*,
     \end{cases}
\end{equation}
where $*\in\{r,y,j\}$.
\end{proposition}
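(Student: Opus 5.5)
I would follow the scheme of Proposition~\ref{prop:Green:Shur} and Proposition~\ref{lem_basic_D_R_schur_explicit}, now with $n=1$, the fixed order $\nu\in(0,1)$, and the odd kernel \eqref{eq_kernel_Green:n=1:odd}. The first step is to record the endpoint facts. Since $\mathfrak b=\alpha-\tfrac12<0$, we have $\phi_Y(\rho)=\rho^{-\mathfrak b}I_\nu(\tau_\rho)$ and $\mathfrak a\nu-\mathfrak b=1-2\alpha$. Monotonicity of $\mathcal I_\nu$ gives $\phi_Y(\sigma)/\phi_Y(\rho)\le(\sigma/\rho)^{1-2\alpha}$ for $\sigma\le\rho$. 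The argument leading to \eqref{eq_psi_comparable_K} (monotonicity of $\mathcal K_\nu$) gives $c\rho^{-\mathfrak b}K_\nu(\tau_\rho)\le\psi_Y(\rho)\le\rho^{-\mathfrak b}K_\nu(\tau_\rho)$ for $\rho\le 1$. For $1\le\rho\le2$ I would only use $0\le\psi_Y\le\rho^{-\mathfrak b}K_\nu(\tau_\rho)$, which is enough for upper bounds. In particular $G_{o,Y}\ge0$, so the absolute values in the statement are harmless. Finally, \eqref{eq:product:Bessel} gives $\phi_Y\psi_Y(\rho)\le C\rho^{1-2\alpha}(1+\tau_\rho)^{-1}$.

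For \eqref{eq:prop:Green:1}, I would split $\int_0^2G_{o,Y}(\rho,\sigma)\,d\sigma=I_1+I_2$ at $\sigma=\rho$, exactly as in the $n\ge2$ case. I would then treat small and large $\tau_\rho$ separately.
\begin{itemize}
\item If $\tau_\rho\lesssim1$, the power bounds give $I_1\lesssim\rho\,\phi_Y\psi_Y(\rho)\lesssim\rho^{2-2\alpha}$.
\item For $I_2$ the $K$-branch is handled through $\mathcal K_\nu$. Writing $\psi_Y(\sigma)/\psi_Y(\rho)\lesssim(\rho/\sigma)^{\delta}$ with $\delta=\mathfrak b+\mathfrak a\nu=0$, the ratio is essentially bounded. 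This gives $I_2\lesssim\phi_Y\psi_Y(\rho)\cdot 2\lesssim\rho^{1-2\alpha}$, which is exactly where the term $\rho^{1-2\alpha}$ in \eqref{eq:prop:Green:1} comes from.
\item If $\tau_\rho\gtrsim1$, I would use \eqref{eq_S_lower_large} and \eqref{eq_K_normalized_exp_decay} as in \eqref{eq_I_ratio_integral_final} and \eqref{eq_K_ratio_large_ell}. Both ratio integrals then localize to $|\sigma-\rho|\lesssim\rho/\tau_\rho$, which yields the factor $(1+\tau_\rho)^{-1}$.
\end{itemize}
Since $\rho\le2$, the whole bound is $\le C$.

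For \eqref{eq:prop:Green:2}, the natural route is the substitution $u=\tau_\sigma$, as in \eqref{eq_finite_left_scaling}. This turns both pieces into $C|Y|^{-2}\,II(q,s,\nu,\tau_\rho)$ and $C|Y|^{-2}\,I(q,s,\nu,0,\tau_\rho)$, with $q=(2\beta-\mathfrak b)/\mathfrak a$ and $s=(1-\mathfrak b)/\mathfrak a$. Then $q+s=(2\beta+2-2\alpha)/\mathfrak a=2$ and $q+\nu=(2\beta+1-2\alpha)/\mathfrak a\ge0$, so Lemma~\ref{lem:Bessel:integral} applies and gives uniform boundedness. The upper limit $\sigma=2$ is replaced by $\infty$, and the restriction $|Y|>C^*$ is not even needed.

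For \eqref{eq:prop:Green:3}, I would first establish the logarithmic-derivative bounds
\[
|\rho^\alpha\phi_Y'|\le C(1+\tau_\rho)\rho^{\alpha-1}\phi_Y,\qquad |\rho^\alpha\psi_Y'|\le C(1+\tau_\rho)\rho^{\alpha-1}\psi_Y .
\]
These follow from $tI_\nu'=\nu I_\nu+tI_{\nu+1}$, \eqref{eq_K_log_derivative_standard}, and the comparability of $\psi_Y$ with the $K$-branch for $\rho\le1$. By the analogue of \eqref{D:bdd:G}, $\int|D_rG_{o,Y}|\,d\sigma$ is then bounded by $C(1+\tau_\rho)\rho^{\alpha-1}$ times the bound in \eqref{eq:prop:Green:1}, that is, by
\[
C\rho^{\alpha-1}(\rho^{2-2\alpha}+\rho^{1-2\alpha})\le C\rho^{-\alpha}.
\]
For $D_{y,j}=\rho^\beta Y_j$, the bound $|Y|\rho^\beta(\rho^{2-2\alpha}+\rho^{1-2\alpha})$ is at most $C\rho^{-\alpha}$ for $|Y|\le C^*$ and $C|Y|\rho^{-\alpha}$ otherwise, since $\beta+1-\alpha>0$.

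The main obstacle is the high-frequency part of $I_2$ when $\rho$ is small but $\tau_\rho\gtrsim1$. There the factor $(1+\tau_\rho)$ coming from the derivative must be absorbed by the $(1+\tau_\rho)^{-1}$ gain in \eqref{eq:prop:Green:1}, so that gain has to be proved genuinely. If the localization argument is delicate near $\sigma=2$, where $\psi_Y$ vanishes, I would fall back on Lemma~\ref{lem:Bessel:integral} with $\varepsilon=1$ to handle the $I_{\nu+1}$ term directly, as was done in the $\ell=0$ radial case of Proposition~\ref{lem_basic_D_R_schur_explicit}.
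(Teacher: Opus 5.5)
Your proposal is correct and follows the paper's proof. The ingredients are the same: the split at $\sigma=\rho$ with the ratio bounds and $\phi_Y\psi_Y(\rho)\le C\rho^{1-2\alpha}(1+\tau_\rho)^{-1}$ for \eqref{eq:prop:Green:1}; the substitution $u=\tau_\sigma$ with Lemma~\ref{lem:Bessel:integral} and $q+s=2$ for \eqref{eq:prop:Green:2}; and the logarithmic-derivative bound \eqref{D:bdd:G} combined with \eqref{eq:prop:Green:1} for \eqref{eq:prop:Green:3}.

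The large-$\tau_\rho$ localization, and the ``main obstacle'' you flag, are unnecessary. The factor $(1+\tau_\rho)^{-1}$ is already supplied by \eqref{eq:product:Bessel}, so the crude bounds $\int_0^\rho(\sigma/\rho)^{1-2\alpha}\,d\sigma\le\rho$ and $\int_\rho^2 \psi_Y(\sigma)\,d\sigma\le 2\rho^{-\mathfrak b}K_\nu(\tau_\rho)$ work uniformly in $\tau_\rho$; this is exactly what the paper does. The second bound follows from the monotonicity of $\mathcal K_\nu$ and also covers $1<\rho\le 2$, where your comparability for $\psi_Y$ is not available.
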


\begin{proof}
The proof parallels those of Proposition~\ref{prop:Green:Shur}, Lemma~\ref{lem_basic_D_R_schur_explicit}, Proposition~\ref{lem:second_to_last}, and Corollary~\ref{cor_weighted_Y_moment_Green}. Recall that
\begin{align*}
G_{o,Y}(\rho,\sigma) = \mathfrak a^{-1}
\begin{cases}
\phi_{Y}(\rho) \, \psi_{Y}(\sigma), &0<\rho<\sigma,\\[4pt]
\phi_{Y}(\sigma) \, \psi_{Y}(\rho), &\sigma < \rho < 2.
\end{cases}
\end{align*}
It follows from the argument in Proposition~\ref{prop:Green:Shur} that
\begin{equation*}
    \frac{\phi_Y(\sigma)}{\phi_Y(\rho)}\le \Bigl(\frac{\sigma}{\rho}\Bigr)^{\!-2\mathfrak b}\quad \text{if}\quad \sigma\le \rho,\qquad \frac{\psi_Y(\sigma)}{\psi_Y(\rho)}\le C \quad \text{if}\quad \rho \le \sigma\le 2.
\end{equation*}
Moreover, Lemma~\ref{lem_bessel_inputs_high_frequency} and \eqref{eq:product:Bessel} give $\phi_Y(\rho) \psi_Y(\rho) \le C\rho^{-2\mathfrak b}(1+\tau_\rho)^{-1}$.
Hence, for $0<\rho\le 2$, a straightforward computation shows
\begin{align*}
    \int_0^2 G_{o,Y}(\rho,\sigma) \, d\sigma
    &\le C \phi_Y(\rho) \psi_Y(\rho)  \left(\int_0^\rho \frac{\phi_Y(\sigma)}{\phi_Y(\rho)} \, d\sigma  +  \int_\rho^2 \frac{\psi_Y(\sigma)}{\psi_Y(\rho)} \, d\sigma \right)\le C\,\frac{\rho^{2-2\alpha} + \rho^{1-2\alpha}}{1+\tau_\rho}.
\end{align*}
Since $1-2\alpha >0$, this proves \eqref{eq:prop:Green:1}. In particular, if $\rho \le 1$, then $\int_0^2 G_{o,Y}(\rho,\sigma) \, d\sigma \le C\rho^{1-2\alpha}(1+\tau_\rho)^{-1}$.

Estimate \eqref{eq:prop:Green:2} follows from the same argument as in \textit{Case 2} of the proof of Proposition~\ref{prop:Green:Shur}, using the asymptotic expansions of the Bessel functions. Indeed, for $0<\rho\le 2$,
\begin{align*}
     \rho^{2\beta}\int_0^2 |G_{o,Y}(\rho,\sigma)| \, d\sigma &\le \rho^{2\beta} \int_0^\rho \phi_Y(\sigma) \psi_Y(\rho) d\sigma + \rho^{2\beta} \int_\rho^2 \phi_Y(\rho) \psi_Y(\sigma) d\sigma\\
     &\le C \rho^{2\beta -\mathfrak b} K_\nu(\tau_\rho) \int_0^\rho \sigma^{-\mathfrak b} I_\nu(\tau_\sigma) d\sigma + C \rho^{2\beta -\mathfrak b} I_\nu(\tau_\rho) \int_\rho^\infty \sigma^{-\mathfrak b} K_\nu(\tau_\sigma) d\sigma\\
     &=C|Y|^{-2} \tau_\rho^{\frac{2\beta-\mathfrak b}{\mathfrak a}} K_\nu(\tau_\rho) \int_0^{\tau_\rho} u^{\frac{1-\mathfrak b}{\mathfrak a}-1} I_\nu(u) du + C|Y|^{-2} \tau_\rho^{\frac{2\beta-\mathfrak b}{\mathfrak a}} I_\nu(\tau_\rho) \int_{\tau_\rho}^\infty u^{\frac{1-\mathfrak b}{\mathfrak a}-1} K_\nu(u) du\\
     &= C|Y|^{-2} \left( II\left(\frac{2\beta-\mathfrak b}{\mathfrak a}, \frac{1-\mathfrak b}{\mathfrak a},\nu, \tau_\rho \right) + I\left(\frac{2\beta-\mathfrak b}{\mathfrak a}, \frac{1-\mathfrak b}{\mathfrak a},\nu,0, \tau_\rho \right) \right) \le C |Y|^{-2},
\end{align*}
where the last inequality follows by Lemma~\ref{lem:Bessel:integral}.

We next prove the gradient estimate \eqref{eq:prop:Green:3}. By Proposition~\ref{lem_basic_D_R_schur_explicit} (see~\eqref{D:bdd:G}), for all $0<\rho\le 1$,
\begin{align*}
    \int_0^2 |D_r G_{o,Y}(\rho,\sigma)| \, d\sigma
    &\le C(1+\tau_\rho) \rho^{\alpha-1} \int_0^2 G_{o,Y}(\rho,\sigma) \, d\sigma \le C \rho^{-\alpha}.
\end{align*}
This together with \eqref{eq:prop:Green:1} yield
\begin{align*}
    \int_0^2 |D_r G_{o,Y}(\rho,\sigma)| \, d\sigma \le C \rho^{-\alpha},\qquad 0<\rho\le 1.
\end{align*}
Finally, consider the base derivative $D_{y,j}$. Clearly,
\begin{align*}
    \int_0^2 |D_{y,j}G_{o,Y}(\rho,\sigma)| \, d\sigma
    &\le C|Y| \rho^{\beta} \int_0^2 G_{o,Y}(\rho,\sigma) \, d\sigma \le C|Y| \rho^{\beta+1-2\alpha}\le C |Y| \rho^{-\alpha}.
\end{align*}
This proves \eqref{eq:prop:Green:3}.
\end{proof}

\begin{proposition}\label{prop:Green:n=1:even}
Let $n=1$, $m\ge 1$, $\alpha\in (0,1)$, and $\beta\ge 0$. Then for $|Y|\le C^*$,
\begin{align}\label{eq:prop:Green:even:1}
\sup_{0<\rho\le 2} \int_0^2 |G_{e,Y}(\rho,\sigma)| \, d\sigma \le C.
\end{align}
Moreover, for $|Y|>C^*$,
\begin{align}\label{eq:prop:Green:even:2}
    \sup_{0<\rho\le 2} \rho^{2\beta}\int_0^2 |G_{e,Y}(\rho,\sigma)| \, d\sigma \le C|Y|^{-2}.
\end{align}
In addition, for $0<\rho\le 1$,
\begin{equation}\label{eq:prop:Green:even:3}
     \int_0^2 |D_* G_{e,Y}(\rho,\sigma)| \, d\sigma \le \begin{cases}
         C, & |Y|\le C^*,\\[4pt]
         C|Y|, & |Y|>C^*,
     \end{cases}
\end{equation}
where $*\in\{r,y,j\}$.
\end{proposition}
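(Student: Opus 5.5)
The plan is to redo the argument of Proposition~\ref{prop:Green:Shur} and Proposition~\ref{lem_basic_D_R_schur_explicit} for the zero-flux half-line kernel. This time $\alpha\in(0,1)$ is arbitrary, so I first put the even kernel \eqref{eq_kernel_Green:n=1:even} into a form that is uniform in $\alpha$. Set $\mu:=\mathfrak b/\mathfrak a$ with $\mathfrak b=\alpha-\frac12$. One has $|\mathfrak b|<\mathfrak a$ because $2\alpha<\beta+\frac32$, so $\mu\in(-1,1)$. For $\alpha<\frac12$, $\mu=-\nu$, and the zero-flux branch is $\Phi_Y(\rho)\propto\rho^{-\mathfrak b}I_{\mu}(\tau_\rho)$. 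For $\alpha\ge\frac12$, Subsection~\ref{ssec:Friedrichs} and Remark~\ref{re:even:Poisson} again select the branch $\rho^{-\mathfrak b}I_\mu(\tau_\rho)$, now with $\mu\ge0$. In both cases
\begin{equation*}
\Phi_Y(\rho)=c\,|Y|^{\mu}\,\mathcal I_\mu(\tau_\rho),
\end{equation*}
which is increasing in $\rho$ and tends to a nonzero constant as $\rho\to0$. The Dirichlet solution $\psi_Y$ vanishing at $\rho=2$ is $\rho^{-\mathfrak b}[K_{|\mu|}(\tau_\rho)-\tilde\eta_Y I_{\mu}(\tau_\rho)]$, with the convention $K_0$ at $\mu=0$, i.e.\ $\alpha=\frac12$. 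I then compute the weighted Wronskian $W_Y$ of $(\Phi_Y,\psi_Y)$ explicitly from $I_\mu'K_{|\mu|}-I_\mu K_{|\mu|}'$. When $\mu<0$ this uses the identity relating $K_\nu$ and $I_{\pm\nu}$, which produces the factor $I_\nu(\tau_2)/I_{-\nu}(\tau_2)$ in \eqref{eq_kernel_Green:n=1:even}. This gives
\begin{equation*}
G_{e,Y}(\rho,\sigma)=W_Y^{-1}\,\Phi_Y(\rho\wedge\sigma)\,\psi_Y(\rho\vee\sigma).
\end{equation*}

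Next come the pointwise ingredients, proved as in Proposition~\ref{prop:Green:Shur}. First, $\Phi_Y(\sigma)/\Phi_Y(\rho)\le1$ for $\sigma\le\rho$, by monotonicity of $\mathcal I_\mu$. Second, $\psi_Y(\sigma)/\psi_Y(\rho)\le C$ for $\rho\le\sigma\le2$, using that $\psi_Y$ is comparable to $\rho^{-\mathfrak b}K_{|\mu|}(\tau_\rho)$ as in \eqref{eq_psi_comparable_K}, together with monotonicity of $t^{|\mu|}K_{|\mu|}$ and exponential decay for large $\tau$ from Lemma~\ref{lem_bessel_inputs_high_frequency}. Third, $W_Y^{-1}\Phi_Y(\rho)\psi_Y(\rho)\le C\rho^{-2\mathfrak b}(1+\tau_\rho)^{-1}$. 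For $|Y|\le C^*$ these give \eqref{eq:prop:Green:even:1}, since near $\rho=0$
\begin{equation*}
\psi_Y(\sigma)\lesssim \sigma^{-\mathfrak b-\mathfrak a|\mu|}=\sigma^{\min(0,\,1-2\alpha)}
\end{equation*}
(up to a logarithm when $\alpha=\frac12$), which is integrable because $1-2\alpha>-1$. For $|Y|>C^*$ I substitute $u=\tau_\sigma$ exactly as in Case~2 of Proposition~\ref{prop:Green:Shur}. This reduces $\rho^{2\beta}\int_0^2 G_{e,Y}\,d\sigma$ to $|Y|^{-2}$ times $II(q,s,\mu,\tau_\rho)$ and $I(q,s,\mu,0,\tau_\rho)$ with $q=\frac{2\beta-\mathfrak b}{\mathfrak a}$ and $s=\frac{1-\mathfrak b}{\mathfrak a}$. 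Lemma~\ref{lem:Bessel:integral} then applies, since $q+s=2$, $q+\mu\ge0$ and $q+s+\mu-|\mu|\ge0$. This is \eqref{eq:prop:Green:even:2}. Wherever Lemma~\ref{lem:Bessel:integral} requires $s>|\mu|$, I check it directly.

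For \eqref{eq:prop:Green:even:3}, the key gain over the odd case is that the zero-flux branch has no homogeneous term $\gamma\rho^{\alpha-1}$ in its log-derivative. Precisely,
\begin{equation*}
\rho^\alpha\,\frac{\Phi_Y'(\rho)}{\Phi_Y(\rho)}=\mathfrak a\,\rho^{\alpha-1}\tau_\rho S_\mu(\tau_\rho)\le C\rho^{\beta}|Y|\min(1,\tau_\rho),
\end{equation*}
while $\rho^\alpha|\psi_Y'/\psi_Y|\le C(1+\tau_\rho)\rho^{\alpha-1}$ by \eqref{eq:normal2}. I split $\int_0^2 D_rG_{e,Y}\,d\sigma$ into $\sigma<\rho$, where the derivative falls on $\psi_Y(\rho)$, and $\sigma>\rho$, where it falls on $\Phi_Y(\rho)$. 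In the region $\sigma<\rho$ the bound is
\begin{equation*}
(1+\tau_\rho)\rho^{\alpha-1}\cdot\rho\cdot\rho^{-2\mathfrak b}(1+\tau_\rho)^{-1}=\rho^{1-\alpha}\le C.
\end{equation*}
In the region $\sigma>\rho$ it is bounded by $C\rho^\beta|Y|$ times the bound of the previous step, i.e.\ by $C$ for small $|Y|$ and $C|Y|$ otherwise. For the large-$|Y|$ regime I recast these terms once more as $I(\cdot)$ and $II(\cdot)$ expressions, as at the end of the proof of Proposition~\ref{lem_basic_D_R_schur_explicit}. The base derivative $D_{y,j}=\rho^\beta Y_j$ follows immediately from \eqref{eq:prop:Green:even:1}--\eqref{eq:prop:Green:even:2}.

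The main obstacle is the uniformity near the singular point across the three regimes $\mu<0$, $\mu=0$ ($\alpha=\frac12$, with logarithmic $K_0$) and $\mu>0$. In particular, the constant $W_Y^{-1}$, i.e.\ the factor $(\eta_Y-c_\nu)^{-1}$ or its analogue, must stay bounded both as $|Y|\to0$ and as $|Y|\to\infty$. I would handle this by computing $W_Y$ in closed form and checking its two limits via the small- and large-argument asymptotics, treating $\alpha=\frac12$ separately with $\log$-bounds absorbed by a power $\rho^{-\varepsilon}$.
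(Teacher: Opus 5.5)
Your treatment of $|Y|>C^*$ is the paper's argument and works for every $\mu$: you reduce \eqref{eq:prop:Green:even:2} and the large-frequency part of \eqref{eq:prop:Green:even:3} to the functionals $I$ and $II$ of Lemma~\ref{lem:Bessel:integral}. The gap is the regime $|Y|\le C^*$ when $\alpha<\frac12$, i.e.\ $\mu=-\nu<0$, which is exactly the case the paper's proof treats. There you import two facts from the odd branch that fail for the zero-flux branch.

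\textbf{The diagonal bound.} The quantity $W_Y^{-1}\Phi_Y(\rho)\psi_Y(\rho)=G_{e,Y}(\rho,\rho)$ does not depend on normalizations. For $\alpha<\frac12$, both $\Phi_Y$ and $\psi_Y$ have finite positive limits as $\rho\to0$. So $G_{e,Y}(\rho,\rho)$ tends to a positive constant, while $\rho^{-2\mathfrak b}=\rho^{1-2\alpha}\to0$. Already at $Y=0$ one has $G_{e,0}(\rho,\sigma)=\bigl(2^{1-2\alpha}-(\rho\vee\sigma)^{1-2\alpha}\bigr)/(1-2\alpha)$. Hence your bound $W_Y^{-1}\Phi_Y\psi_Y\le C\rho^{-2\mathfrak b}(1+\tau_\rho)^{-1}$ is false. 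Your estimate for the $\sigma<\rho$ part of $D_rG_{e,Y}$ was derived from it and is left unsupported.

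\textbf{The comparability.} With your normalization $\psi_Y=\rho^{-\mathfrak b}[K_\nu(\tau_\rho)-\tilde\eta_YI_{-\nu}(\tau_\rho)]$, comparability with $\rho^{-\mathfrak b}K_\nu(\tau_\rho)$ breaks down as $|Y|\to0$. Near $\tau=0$ the function $K_\nu=\frac{\pi}{2\sin\pi\nu}(I_{-\nu}-I_\nu)$ is dominated by its $I_{-\nu}$-part, which your subtraction cancels. Indeed $\psi_Y(\rho)\approx c|Y|^{\nu}(2^{1-2\alpha}-\rho^{1-2\alpha})$, whereas $\rho^{-\mathfrak b}K_\nu(\tau_\rho)\approx c|Y|^{-\nu}$. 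The bound $\psi_Y(\sigma)\lesssim\sigma^{\min(0,1-2\alpha)}$ you feed into \eqref{eq:prop:Green:even:1} therefore carries a constant of size $|Y|^{-\nu}$. Combined with $\Phi_Y\sim|Y|^{-\nu}$, you lose a factor $|Y|^{-2\nu}$. So neither \eqref{eq:prop:Green:even:1} nor the small-$|Y|$ part of \eqref{eq:prop:Green:even:3} is obtained uniformly in $|Y|\le C^*$. The same problem appears at $\alpha=\frac12$: $K_0(\tau_\sigma)$ contains $\log(1/|Y|)$. This cannot be absorbed by $\rho^{-\varepsilon}$; it has to cancel against $\frac{K_0(\tau_2)}{I_0(\tau_2)}I_0(\tau_\sigma)$.

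What is missing is an estimate that retains this cancellation. The paper writes $G_{e,Y}=\mathfrak a^{-1}\mathcal U_Y(\rho\wedge\sigma)\mathcal V_Y(\rho\vee\sigma)$ with $\mathcal U_Y=H_{-\nu,\rho}(|Y|)$. Using the identity for $K_\nu$, it writes $\mathcal V_Y=C\,\mathcal I_\nu(\tau_2)\mathcal I_{-\nu}(\tau_2)\bigl[H_{-\nu,\rho}(|Y|)-(\rho/2)^{1-2\alpha}H_{\nu,\rho}(|Y|)\bigr]$, where the cancellation is explicit. For $|Y|\le C^*$ the prefactor is bounded, and the fixed-order ratio bounds give $|\mathcal U_Y|+|\mathcal V_Y|\le C$, $|D_r\mathcal U_Y|\le C$ and $|D_r\mathcal V_Y|\le C\rho^{-\alpha}$. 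Then \eqref{eq:prop:Green:even:1} and the small-$|Y|$ part of \eqref{eq:prop:Green:even:3} follow at once.

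Within your framework, the equivalent repair is to subtract $I_\nu$ rather than $I_{-\nu}$ in $\psi_Y$, as in \eqref{eq:psi}. Comparability then does hold for $\rho\le1$, and $W_Y^{-1}\propto I_\nu(\tau_2)/I_{-\nu}(\tau_2)\sim\tau_2^{2\nu}$. The point is not that $W_Y^{-1}$ stays bounded, as your last paragraph suggests. It is that $W_Y^{-1}$ vanishes at exactly the rate compensating $I_{-\nu}(\tau)K_\nu(\tau)\sim\tau^{-2\nu}$. The correct diagonal bound is $G_{e,Y}(\rho,\rho)\le C$, not $C\rho^{1-2\alpha}$.

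A quicker route to \eqref{eq:prop:Green:even:1}, for every $\alpha\in(0,1)$, is $0\le G_{e,Y}\le G_{e,0}$. This follows from the resolvent identity, since the potential $|Y|^2\rho^{2\beta}$ is nonnegative. The explicit $Y=0$ kernel has bounded $\sigma$-integral. This comparison also gives $\int_0^2G_{e,Y}\,d\sigma\le C$ for $|Y|>C^*$, which your $D_{y,j}$ step tacitly needs. Indeed, \eqref{eq:prop:Green:even:2} alone does not bound $\rho^\beta\int_0^2G_{e,Y}\,d\sigma$ as $\rho\to0$. Finally, $\mu<1$ is false in general (e.g.\ $\beta=0$, $\alpha\ge\frac34$), although your $\mu\ge0$ argument never uses it.
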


\begin{proof}
Recall that
\begin{align}\label{eq:normal}
G_{e,Y}(\rho,\sigma) = \mathfrak a^{-1} \frac{2\sin{\pi \nu}}{\pi} \frac{I_{\nu}\!\bigl( \frac{|Y| 2^{\mathfrak a}}{\mathfrak a} \bigr)}{I_{-\nu}\!\bigl( \frac{|Y| 2^{\mathfrak a}}{\mathfrak a} \bigr)}
\begin{cases}
\Phi_{Y}(\rho) \, \psi_{Y}(\sigma), &0<\rho<\sigma,\\[4pt]
\Phi_{Y}(\sigma) \, \psi_{Y}(\rho), &\sigma < \rho < 2.
\end{cases}
\end{align}
To prove \eqref{eq:prop:Green:even:1}, define
\begin{equation*}
    \mathcal{U}_Y(\rho):= \frac{\rho^{- \mathfrak b} I_{-\nu}(\tau_\rho)}{2^{-\mathfrak b} I_{-\nu}(\tau_2)},\qquad \mathcal{V}_Y(\rho):= 2^{-\mathfrak b} \rho^{-\mathfrak b} \bigl[ I_\nu(\tau_2) K_\nu(\tau_\rho) - K_\nu(\tau_2) I_\nu(\tau_\rho) \bigr],
\end{equation*}
so that $G_{e,Y}(\rho,\sigma)$ can be written in the normalized form
\begin{equation*}
G_{e,Y}(\rho,\sigma) = \mathfrak a^{-1}
\begin{cases}
\mathcal{U}_{Y}(\rho) \, \mathcal{V}_{Y}(\sigma), &0<\rho<\sigma,\\[4pt]
\mathcal{U}_{Y}(\sigma) \, \mathcal{V}_{Y}(\rho), &\sigma < \rho < 2.
\end{cases}
\end{equation*}
Since $\mathfrak b = - \mathfrak a \nu$, it is clear that $\mathcal{U}_Y(\rho) = H_{-\nu,\rho}(|Y|)$. Moreover, using the identity $K_\nu(t) = \frac{\pi}{2 \sin{\pi \nu}} \bigl( I_{-\nu}(t) - I_\nu(t) \bigr)$,
one obtains
\begin{equation*}
    \mathcal{V}_Y(\rho) = C_{\alpha,\beta,\nu} \, \mathcal{I}_\nu(\tau_2) \, \mathcal{I}_{-\nu}(\tau_2) \Bigl[ H_{-\nu,\rho}(|Y|) - \Bigl(\frac{\rho}{2}\Bigr)^{\!1-2\alpha} H_{\nu,\rho}(|Y|) \Bigr].
\end{equation*}
If $|Y|\le C^*$, then $\tau_2\le C$, and therefore $\mathcal{I}_\nu(\tau_2) \mathcal{I}_{-\nu}(\tau_2)$ is bounded by a constant depending only on $\alpha,\beta,\nu$. Hence Lemma~\ref{lem:Poisson:n=1} implies
\begin{equation*}
    |\mathcal{U}_Y(\rho)|+|\mathcal{V}_Y(\rho)|\le C.
\end{equation*}
Furthermore, the identity $\rho^{\alpha} \partial_\rho H_{\pm \nu, \rho}(|Y|) = \rho^{\beta} |Y| S_{\pm \nu}(\mathfrak A_\rho |Y|) H_{\pm \nu, \rho}(|Y|)$, together with
Lemma~\ref{lem_S_derivative_bound} gives
\begin{equation*}
    |D_r \mathcal{U}_Y(\rho)|\le C
    \qquad\text{and}\qquad
    |D_r \mathcal{V}_Y(\rho)|\le C \rho^{-\alpha}.
\end{equation*}
Immediately, for all $0<\rho\le 1$,
\begin{align*}
    \int_0^2 |D_r G_{e,Y}(\rho,\sigma)| \, d\sigma \le C \int_0^\rho \rho^{-\alpha}\, d\sigma + C\int_\rho^2 \, d\sigma \le C \rho^{1-\alpha} + C\le C.
\end{align*}
The estimate for the base derivative $D_{y,j}$ is straightforward. Therefore \eqref{eq:prop:Green:even:1} and \eqref{eq:prop:Green:even:3} for $|Y|\le C^*$ follow directly.

To prove \eqref{eq:prop:Green:even:2}, first note that for $|Y|\ge C^*$, the factor $\frac{I_\nu(\tau_2)}{I_{-\nu}(\tau_2)}\le C$ since $\tau_2 \ge \mathfrak a^{-1} 2^{\mathfrak a}$. Next, write
\begin{align*}
    \rho^{2\beta}\int_0^2 |G_{e,Y}(\rho,\sigma)| \, d\sigma &\le C \rho^{2\beta} \psi_Y(\rho) \int_0^\rho \Phi_Y(\sigma) \,d\sigma + C \rho^{2\beta} \Phi_Y(\rho) \int_\rho^2 \psi_Y(\sigma)\, d\sigma\\
    &\le C \rho^{2\beta - \mathfrak b} K_\nu(\tau_\rho) \int_0^\rho \sigma^{-\mathfrak b} I_{-\nu}(\tau_\sigma) \, d\sigma + C \rho^{2\beta - \mathfrak b} I_{-\nu}(\tau_\rho) \int_\rho^\infty \sigma^{-\mathfrak b} K_{\nu}(\tau_\sigma) \, d\sigma.
\end{align*}
By change of variables, it equals
\begin{align*}
    &C |Y|^{-2} \tau_\rho^{\frac{2\beta-\mathfrak b}{\mathfrak a}} K_\nu(\tau_\rho) \int_0^{\tau_\rho} u^{\frac{1-\mathfrak b}{\mathfrak a}-1}  I_{-\nu}(u) \, du + C |Y|^{-2} \tau_\rho^{\frac{2\beta-\mathfrak b}{\mathfrak a}} I_{-\nu}(\tau_\rho) \int_{\tau_\rho}^\infty u^{\frac{1-\mathfrak b}{\mathfrak a}-1}  K_{\nu}(u) \,  du \\
    &= C |Y|^{-2} II\left(\frac{2\beta-\mathfrak b}{\mathfrak a}, \frac{1-\mathfrak b}{\mathfrak a}, -\nu, \tau_\rho \right) + C |Y|^{-2} I\left(\frac{2\beta-\mathfrak b}{\mathfrak a}, \frac{1-\mathfrak b}{\mathfrak a}, -\nu, 0, \tau_\rho \right)\le C|Y|^{-2},
\end{align*}
where the last inequality follows by Lemma~\ref{lem:Bessel:integral} and $\frac{2\beta-\mathfrak b}{\mathfrak a}+ \frac{1-\mathfrak b}{\mathfrak a}=2$ and $\nu\in (0,1)$. This proves \eqref{eq:prop:Green:even:2}.


It remains to prove \eqref{eq:prop:Green:even:3} for $|Y|\ge C^*$. Use the expression \eqref{eq:normal}. The prefactor $I_{\nu}( \tau_2 )/I_{-\nu}( \tau_2 )$ is again harmless. The key observation is that
\begin{align*}
\rho^\alpha\frac{\Phi_{Y}'(\rho)}{\Phi_{Y}(\rho)}
&= \mathfrak a \rho^{\alpha-1}\tau_\rho \frac{I_{1-\nu}(\tau_\rho)}{I_{-\nu}(\tau_\rho)}.
\end{align*}
Also, the same argument used for \eqref{eq:normal2} shows that
\begin{equation*}
    \Bigl|\rho^\alpha \frac{\psi_Y'(\rho)}{\psi_Y(\rho)} \Bigr| \le C (1+\tau_\rho) \rho^{\alpha-1}.
\end{equation*}
Therefore, for $0<\rho\le 1$, Lemma~\ref{lem:Bessel:integral} implies
\begin{align*}
    &\int_0^2 |D_r G_{e,Y}(\rho,\sigma)| \, d\sigma
    \le C(1+\tau_\rho) \rho^{\alpha-1} \int_0^\rho \psi_Y(\rho) \Phi_Y(\sigma) \, d\sigma + C\tau_\rho \rho^{\alpha-1} \frac{I_{1-\nu}(\tau_\rho)}{I_{-\nu}(\tau_\rho)} \int_\rho^2 \psi_Y(\sigma) \Phi_Y(\rho) \, d\sigma \\
    &\le C (1+\tau_\rho) \rho^{\alpha-1-\mathfrak b} K_\nu(\tau_\rho) \int_0^\rho \sigma^{-\mathfrak b} I_{-\nu}(\tau_\sigma) \, d\sigma + C\tau_\rho \rho^{\alpha-1-\mathfrak b} I_{1-\nu}(\tau_\rho) \int_\rho^\infty \sigma^{-\mathfrak b} K_\nu(\tau_\sigma) \, d\sigma\\
    &= C |Y|^{-\theta} (1+\tau_\rho) \tau_\rho^{\theta-\nu-\frac{1}{\mathfrak a}} K_\nu(t) \int_0^{\tau_\rho} u^{\nu+\frac{1}{\mathfrak a}-1}I_{-\nu}(u) \, du + C |Y|^{-\theta} t^{\theta+1-\nu-\frac{1}{\mathfrak a}} I_{1-\nu}(t) \int_t^\infty u^{\nu+\frac{1}{\mathfrak a}-1} K_\nu(u) \, du\\
    &= C |Y|^{-\theta} \left[ II\left( \frac{\mathfrak b - \alpha}{\mathfrak a}, \frac{1-\mathfrak b}{\mathfrak a},-\nu,\tau_\rho \right)  +  II\left( \frac{\mathfrak b - \alpha}{\mathfrak a}+1, \frac{1-\mathfrak b}{\mathfrak a},-\nu,\tau_\rho \right)    + I\left( \frac{\mathfrak b - \alpha}{\mathfrak a}+1, \frac{1-\mathfrak b}{\mathfrak a}, -\nu,1,\tau_\rho \right) \right]\\
    &\le C |Y|^{-\theta}\le C,
\end{align*}
where $\theta:= \frac{1-\alpha}{\mathfrak a}\in (0,1]$.

To close, consider the base derivative $D_{y,j}=i\rho^{\beta}Y_j$. By Lemma~\ref{lem:Bessel:integral} and a parallel argument of \eqref{eq:prop:Green:even:2}, for $|Y|\ge C^*$,
\begin{align*}
    \int_0^2 |D_{y,j}G_{e,Y}(\rho,\sigma)| \, d\sigma \le C|Y|^{-\theta} \left[II\left(\frac{\beta-\mathfrak b}{\mathfrak a}, \frac{1-\mathfrak b}{\mathfrak a}, -\nu, \tau_\rho \right) + I\left(\frac{\beta-\mathfrak b}{\mathfrak a}, \frac{1-\mathfrak b}{\mathfrak a}, -\nu, 0, \tau_\rho \right) \right]\le C.
\end{align*}
This proves \eqref{eq:prop:Green:even:3} and completes the proof of Proposition~\ref{prop:Green:n=1:even}.
\end{proof}

It follows that

\begin{theorem}\label{thm:Green:n=1:secondtolast}
For every $N>m+3$, the following estimates hold for all $0<\rho\le 1$:
\begin{equation}
    \int_{\mathbb{R}^m} \int_0^2 \sum_{|\gamma|=N} \bigl| \partial_Y^{\gamma} D_* G_{\#,Y}(\rho,\sigma) \bigr| \, d\sigma \, dY \le C_N \kappa_{\#}(\rho),
\end{equation}
and
\begin{equation}
    \int_{\mathbb{R}^m} \int_0^2 |Y| \sum_{|\gamma|=N} \bigl| \partial_Y^{\gamma} D_* G_{\#,Y}(\rho,\sigma) \bigr| \, d\sigma \, dY \le C_N \kappa_{\#}(\rho),
\end{equation}
where $*\in\{r,y,j\}$, $\# \in \{ e,o\}$, and
\begin{equation*}
    \kappa_{\#}(\rho) = \begin{cases}
        1, & \# = e,\\[4pt]
        \rho^{-\alpha}, & \# = o.
    \end{cases}
\end{equation*}
\end{theorem}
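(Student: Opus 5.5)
The plan is to repeat the resolvent-identity argument of Proposition~\ref{lem:second_to_last} and Corollary~\ref{cor_weighted_Y_moment_Green}. The inputs are the mode-free row bounds of Propositions~\ref{prop:Green:n=1} and~\ref{prop:Green:n=1:even}. Fix $\#\in\{e,o\}$ and work on $\mathcal H=L^2((0,2),d\sigma)$. There, $\mathcal L_{\#,Y}$ is the Friedrichs realization of $-\partial_\rho(\rho^{2\alpha}\partial_\rho)+|Y|^2\rho^{2\beta}$, with the Dirichlet condition at $\rho=2$ and the zero-flux condition ($\#=e$) or zero-trace condition ($\#=o$) at $\rho=0$.

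The key structural fact, as in Part~\ref{part1}, is that the endpoint conditions do not depend on $Y$. Hence the domains coincide for all $Y$, because $|Y|^2\rho^{2\beta}$ is bounded on $(0,2)$. The identity $\partial_{Y_j}R_{\#,Y}=-R_{\#,Y}(2Y_j\rho^{2\beta})R_{\#,Y}$ then holds verbatim. Consequently $\partial_Y^\gamma R_{\#,Y}$ with $|\gamma|=N$ is a finite sum of words $R B_1R\cdots B_kR$, where $q_1$ of the $B_i$ equal $2Y_j\rho^{2\beta}$, $q_2$ equal $2\delta_{ij}\rho^{2\beta}$, $q_1+2q_2=N$ and $k=q_1+q_2$.

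For each word I apply $D_*$ to the leftmost resolvent and bound the rest in row norm:
\begin{equation*}
\bigl\|D_*R\,B_1R\cdots B_kR\bigr\|_{\mathrm{row},\,\rho\le1}\le \sup_{0<\rho\le1}\int_0^2|D_*G_{\#,Y}(\rho,\sigma)|\,d\sigma\;\prod_{i=1}^k\|B_iR_{\#,Y}\|_{\mathrm{row}}.
\end{equation*}
On $|Y|\le C^*$ every factor is $O(1)$. For $\#=o$ this uses \eqref{eq:prop:Green:1}, since $\rho^{2\beta}\le C$; for $\#=e$ it uses \eqref{eq:prop:Green:even:1}. The $D_*$ factor is $C\kappa_\#(\rho)$ by \eqref{eq:prop:Green:3} or \eqref{eq:prop:Green:even:3}. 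On $|Y|>C^*$, estimates \eqref{eq:prop:Green:2} and \eqref{eq:prop:Green:even:2} give $\|B_iR\|_{\mathrm{row}}\le C|Y|^{-1}$ or $C|Y|^{-2}$ according to type. The $D_*$ factor is $C\kappa_\#(\rho)|Y|$, so each word is bounded by $C\kappa_\#(\rho)|Y|^{1+q_1-2k}=C\kappa_\#(\rho)|Y|^{1-N}$.

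The case $D_{y,j}=\rho^\beta Y_j$ requires a Leibniz step. It produces $D_{y,j}\partial^\gamma R$ plus $\rho^\beta\partial^{\gamma'}R$ with $|\gamma'|=N-1$. The second term is estimated with the leftmost factor $\rho^\beta R$ in place of $D_*R$, which costs at most $|Y|^{-1}$ by the same bounds. This gives $C\kappa_\#(\rho)|Y|^{2-N}$ at worst; since $\kappa_o(\rho)\ge1$, the bound is always at most $C\kappa_\#(\rho)|Y|^{2-N}$.

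Integration in $Y$ finishes the argument. Over $|Y|\le C^*$ the integral contributes $C\kappa_\#(\rho)$. Over $|Y|>C^*$ the bound $|Y|^{2-N}$, or $|Y|^{3-N}$ with the extra weight $|Y|$, is integrable when $N>m+3$, which yields both claimed estimates.

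The step I expect to need care is justifying the resolvent calculus and the row-norm submultiplicativity for $\#=e$. Here $G_{e,Y}$ carries the prefactor $I_\nu(\tau_2)/I_{-\nu}(\tau_2)$, and it must be checked that $G_{e,Y}\ge0$ so that its row norm equals the integral of its kernel. Positivity of the kernels for both the zero-flux and zero-trace branches follows from \eqref{eq_psi_comparable_K} and positivity of $I_{\pm\nu}$. Sup-norm boundedness of $\sup_\rho\int|D_*G|$ near $\rho\to0$ is exactly where the factor $\kappa_o(\rho)=\rho^{-\alpha}$ enters, and only through the leftmost factor.
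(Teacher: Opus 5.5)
Your proposal is correct and follows the paper's own route. The resolvent identity $\partial_{Y_j}R_{\#,Y}=-R_{\#,Y}(2Y_j\rho^{2\beta})R_{\#,Y}$ expands $\partial_Y^\gamma R_{\#,Y}$ into words whose row norms are controlled by Propositions~\ref{prop:Green:n=1} and~\ref{prop:Green:n=1:even}, and the $Y$-independent factor $\kappa_{\#}(\rho)$ enters only through the leftmost $D_*$-factor, exactly as in Proposition~\ref{lem:second_to_last} and Corollary~\ref{cor_weighted_Y_moment_Green}. One correction: for $\#=o$ the leftmost factor must be bounded pointwise in $\rho$, because the quantity $\sup_{0<\rho\le1}\int_0^2|D_*G_{o,Y}(\rho,\sigma)|\,d\sigma$ in your display is actually infinite, and the positivity check for $G_{e,Y}$ is unnecessary once row norms are taken with absolute values.
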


\begin{proof}
The proof is the fixed-order analogue of Proposition~\ref{lem:second_to_last}
and Corollary~\ref{cor_weighted_Y_moment_Green}. We record the details needed to make the
reduction explicit.

For each fixed $Y\in\mathbb R^m$, the half-line operators defining
$G_{e,Y}$ and $G_{o,Y}$ depend on $Y$ only through the same potential
$|Y|^2\rho^{2\beta}$. Hence the resolvent identity used in the proof of
Proposition~\ref{lem:second_to_last} remains valid without modification:
\begin{equation*}
    \partial_{Y_j}R_Y = -R_Y\bigl(2Y_j\rho^{2\beta}\bigr)R_Y,
\end{equation*}
and the higher $Y$-derivatives are finite sums of products of resolvents and
multiplication operators of the form $2Y_j\rho^{2\beta}$ or $2\delta_{ij}\rho^{2\beta}$.

For the even kernel, Proposition~\ref{prop:Green:n=1:even} provides the same row bounds as in the higher-dimensional case, uniformly for $0<\rho\leq1$. Therefore, the argument of Proposition~\ref{lem:second_to_last} gives
\begin{equation*}
    \int_{\mathbb R^m}\int_0^2
\sum_{|\gamma|=N}\bigl|\partial_Y^\gamma D_*G_{e,Y}(\rho,\sigma)\bigr|
\, d\sigma \, dY
\leq C_N.
\end{equation*}
The corresponding weighted estimate, with one additional factor $|Y|$, follows by the same argument.

For the odd kernel, Proposition~\ref{prop:Green:n=1} has the same structure, except for the additional factor $\rho^{-\alpha}$ in the intrinsic derivative bounds. Since this factor is independent of $Y$, it passes unchanged through the
resolvent-differentiation argument. Consequently,
\begin{equation*}
    \int_{\mathbb R^m}\int_0^2
\sum_{|\gamma|=N}\bigl|\partial_Y^\gamma D_*G_{o,Y}(\rho,\sigma)\bigr|
\, d\sigma \, dY
\leq C_N\rho^{-\alpha},
\end{equation*}
and the same bound remains valid with one extra factor $|Y|$, provided
$N>m+3$. Combining the even and odd estimates proves both assertions.
\end{proof}

We conclude this section by proving Theorem~\ref{thm:RH2}.

\begin{proof}[Proof of Theorem~\ref{thm:RH2}]
By repeating the arguments used in Theorem~\ref{lem:Poisson_kernel}, Theorem~\ref{lem:Green_kernel}, and the proof of Theorem~\ref{thm:RH}, together with Theorem~\ref{thm:Green:n=1:secondtolast}, the following estimates are obtained: let $n=1$, $m\ge 1$, $\alpha\in (0,\frac{1}{2})$, and $\beta \ge 0$. Then for $0<|x|\le 1$ and $y\in \mathbb{R}^m$,
\begin{itemize}
    \item $|\nabla_L \mathcal{T}_e h_e(x,y)| + |x|^{\alpha} |\nabla_L \mathcal{T}_o h_o(x,y)| \le C \|h\|_\infty \le C \|v\|_{L^\infty(Q_3)}$,
    \item $|\nabla_L \mathcal{S}_e f_e(x,y)| + |x|^{\alpha} |\nabla_L \mathcal{S}_o f_o(x,y)| \le C \|v\|_{L^\infty(Q_3)}$.
\end{itemize}
Therefore,
\begin{align*}
    \biggl( \fint_{Q_1} |\nabla_L v|^p \, d\xi \biggr)^{\!\frac{1}{p}}
    &= \biggl( \fint_{Q_1} |\nabla_L \omega|^p \, d\xi \biggr)^{\!\frac{1}{p}}\\
    &\le C\biggl( \fint_{Q_1} |\nabla_L \mathcal{T}_e h_e|^p + |\nabla_L \mathcal{T}_o h_o|^p + |\nabla_L \mathcal{S}_e f_e|^p + |\nabla_L \mathcal{S}_o f_o|^p \biggr)^{\!\frac{1}{p}}\\
    &\le C \|v\|_{L^\infty(Q_3)} \biggl( \fint_{Q_1} |x|^{-\alpha p} \, d\xi \biggr)^{\!\frac{1}{p}}\\
    &\le C \biggl(\fint_{Q_4} |v| \, d\xi \biggr) \biggl( \int_0^1 s^{-\alpha p} \, ds \biggr)^{\!\frac{1}{p}}\\
    &\le C \fint_{Q_4} |v| \, d\xi,
\end{align*}
since $p <\alpha^{-1}$. This completes the proof of Theorem~\ref{thm:RH2}.
\end{proof}

\section{The unboundedness of the Riesz transform}

In this section we complete the proof of Theorem~\ref{thm:n=1:alpha<1/2} by showing the unboundedness of the Riesz transform. By Theorem~\ref{thm:CJKS}, the negative result is equivalent to the failure of \eqref{eq:RH:Jiang}. 

\begin{proposition}\label{prop_RH_failure_n1}
Let $n=1$, $m\ge 1$, $\alpha\in (0,\frac{1}{2})$, and $\beta\ge 0$. Then \eqref{eq:RH:Jiang} fails for every $p\ge \alpha^{-1}$.
\end{proposition}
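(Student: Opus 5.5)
The plan is to exhibit one explicit $L$-harmonic function whose intrinsic gradient has the exact singularity $|x|^{-\alpha}$ on a fixed anchored ball. The natural candidate is the odd zero-trace branch with $Y=0$, namely
\begin{equation*}
    u(x,y):=\operatorname{sgn}(x)|x|^{1-2\alpha},
\end{equation*}
which is independent of $y$. First we will check that $u$ is a Friedrichs $L$-harmonic function on all of $\mathbb R^{1+m}$. Away from $x=0$ we have $|x|^{2\alpha}\partial_xu=(1-2\alpha)$, a constant, so $-\partial_x(|x|^{2\alpha}\partial_xu)=0$ and $\Delta_yu=0$. Local finite energy holds because
\begin{equation*}
    \int_{-1}^1|x|^{2\alpha}|\partial_xu|^2dx=(1-2\alpha)^2\int_{-1}^1|x|^{-2\alpha}dx<\infty
\end{equation*}
for $\alpha<1/2$. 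The traces match, $u(0^\pm,y)=0$, and the conormal fluxes are $F_\pm=1-2\alpha$, so they agree, consistent with \eqref{eq_subcritical_trace_matching} and \eqref{eq_subcritical_flux_matching}.

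To make this rigorous, we will test against $\varphi\in\mathcal F$ with compact support. We integrate by parts on $\{|x|>\varepsilon\}$. The boundary terms are $(1-2\alpha)[\varphi(\varepsilon,y)-\varphi(-\varepsilon,y)]$, and these tend to $0$ by the trace matching of Friedrichs functions. One should also confirm that $u\in\mathcal F_{\mathrm{loc}}$, i.e.\ that $u$ is locally approximable by $C_c^\infty$ functions in the form norm. Since $u$ is continuous and has finite weighted energy, mollification in $x$ should suffice. Alternatively, $u$ is even $C^{0,1-2\alpha}$, and cut-offs in $y$ are harmless.

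Next we compute $\nabla_Lu=\bigl((1-2\alpha)|x|^{-\alpha},0\bigr)$. Take the anchored ball $B=B((0,0),1)$, with $\lambda$ the constant in \eqref{eq:RH:Jiang}. By Lemma~\ref{le_volume}, $B\supset B_1(0,c_3)\times B_m(0,c_3)$ and $\lambda B\subset B_1(0,C)\times B_m(0,C)$. The right-hand side is then finite:
\begin{equation*}
    \fint_{\lambda B}|u|\lesssim 1.
\end{equation*}
The left-hand side, however, satisfies
\begin{equation*}
    \int_B|\nabla_Lu|^p\gtrsim\int_0^{c_3}x^{-\alpha p}\,dx=\infty
\end{equation*}
exactly when $p\ge\alpha^{-1}$. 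Hence \eqref{eq:RH:Jiang} fails; for $p=\infty$ the same holds, since the supremum is infinite. For an arbitrary scale, the dilation $\delta_r$ maps the example to $r^{1-2\alpha}$ times itself, so no choice of $\lambda$ or $C$ can help.

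The main obstacle is the rigorous verification that $u$ is $L$-harmonic in the Friedrichs sense, rather than just piecewise. This means justifying that the interface term vanishes for every test function in $\mathcal F$ supported near $\{x=0\}$. We will handle it using the trace estimate from Subsection~\ref{ssec:Friedrichs}: the bound $|\varphi(\varepsilon,y)-\varphi(-\varepsilon,y)|^2\lesssim\varepsilon^{1-2\alpha}\int_{-\varepsilon}^{\varepsilon}|t|^{2\alpha}|\partial_t\varphi|^2\,dt$ holds for a.e.\ $y$, and integrating in $y$ shows the interface term tends to $0$.

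With \eqref{eq:RH:Jiang} failing, Theorem~\ref{thm:CJKS} (applicable since \eqref{P2} holds for $n=1$, $\alpha<1/2$) gives the failure of \eqref{R_p} for $p\ge\alpha^{-1}$. This yields the unboundedness half of Theorem~\ref{thm:n=1:alpha<1/2}.
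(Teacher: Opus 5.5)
Your proposal is correct and follows essentially the same route as the paper: the same odd counterexample $\operatorname{sgn}(x)|x|^{1-2\alpha}$, weak harmonicity from the constant conormal flux $|x|^{2\alpha}\partial_xu=1-2\alpha$, and the divergence of $\int_B|x|^{-\alpha p}$ against a finite right-hand side $\fint_{\lambda B}|u|$. The only difference is that you are more careful than the paper. You test against general compactly supported $\varphi\in\mathcal F$ via the trace estimate, and you check $u\in\mathcal F_{\mathrm{loc}}$ by mollification. The paper tests only against $\varphi\in C_c^\infty$, where $\int\partial_x\varphi\,dx=0$ directly.
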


\begin{proof}
Consider
\begin{equation}\label{eq_counterexample_function}
    u(x,y):=\operatorname{sgn}(x)|x|^{1-2\alpha}.
\end{equation}
Since $\alpha \in (0,\frac{1}{2})$, the function $u$ is bounded near $x=0$ and has locally finite energy. Indeed,
\begin{equation*}
\partial_xu(x,y) = (1-2\alpha)|x|^{-2\alpha}, \qquad x\neq0,
\end{equation*}
and therefore
\begin{equation*}
|\nabla_Lu(x,y)| = |x|^\alpha |\partial_x u(x,y)| = (1-2\alpha)|x|^{-\alpha} \in L^2_{\mathrm{loc}}.
\end{equation*}
We first verify that $u$ is weakly $L$-harmonic. For $x\neq0$,
\begin{equation*}
    |x|^{2\alpha}\partial_xu(x,y)
    =
    1-2\alpha.
\end{equation*}
Thus the weighted flux is the same constant on both sides of the singular line. Consequently, for every test function $\varphi\in C_c^\infty$,
\begin{align*}
\int L u \, \varphi \, dx\,dy &= \int |x|^{2\alpha} \partial_x u \, \partial_x \varphi \, dx\,dy\\
&= (1-2\alpha)\int \partial_x\varphi \, dx\,dy = 0.
\end{align*}
Hence $Lu=0$ in the weak sense.

Now let $B$ be any anchored ball centered on the singular line $\{x=0\}$. Such a ball contains a product sub-box of the form $\{|x|<\varepsilon\}\times B_y$ with $|B_y|>0$. It follows that
\begin{align*}
\int_B|\nabla_Lu|^p \, dx\,dy \ge c\int_{-\varepsilon}^{\varepsilon}|x|^{-\alpha p} \, dx.
\end{align*}
The latter integral diverges whenever $p\ge \alpha^{-1}$. On the other hand, $u$ is bounded on $\lambda B$, so
\begin{equation*}
\fint_{\lambda B}|u|<\infty, \qquad \lambda \ge 1.
\end{equation*}
This proves Proposition~\ref{prop_RH_failure_n1}.
\end{proof}

\begin{remark}\label{rem_counterexample_other_regimes}
The counterexample \eqref{eq_counterexample_function} is specific to the case $n=1$ and $0<\alpha<1/2$. Indeed, this construction is no longer available when $n=1$ and $\alpha\ge 1/2$. If
$\alpha>1/2$, the function $|x|^{1-2\alpha}$ is unbounded and
\begin{equation*}
\int_0^1
x^{2\alpha}
\bigl|
\partial_x x^{1-2\alpha}
\bigr|^2 \, dx
=
C\int_0^1x^{-2\alpha}\,dx
=
\infty.
\end{equation*}
At $\alpha=1/2$, the second independent solution is $\log|x|$, which again has
infinite energy. Although $\operatorname{sgn}(x)$ is then locally Friedrichs harmonic,
its form gradient vanishes, so it produces no failure of a reverse H\"older inequality. 
\end{remark}

\part{Reverse Riesz inequality}\label{part3}

In this part, we study the reverse Riesz inequality. Inspired by \cite{Gilles}, the decomposition below separates the region where the two $x$-variables are of comparable scale from the region where one point lies much farther from the singular set than the other. The most delicate contribution comes from the critical remote interaction. Its leading term is harmonic and therefore cannot be controlled by size estimates alone; the harmonic annihilation argument removes this obstruction and reduces the problem to Hardy-type inequalities.

Although our main interest is in the case $n=1$, the argument below applies verbatim to the general case $n\ge 2$. We therefore do not restrict ourselves to $n=1$. Throughout Part~\ref{part3}, we assume that $n,m\ge 1$, $\alpha\in(0,1)$, and $\beta\ge 0$.

\section{Analysis of the Riesz kernel}

Let $\kappa > 2^{10}$ be large. We split the double space $\mathbb{R}^{n+m}\times \mathbb{R}^{n+m}$ into
\begin{align*} 
    \mathcal{D}_1 &= \{(\xi,\eta)\in \mathbb{R}^{n+m}\times \mathbb{R}^{n+m}: |x-x'| \ge \kappa^{-1} |x| \ \text{and}\ |x'| \le \kappa |x| \},\\
    \mathcal{D}_2 &= \{(\xi,\eta)\in \mathbb{R}^{n+m}\times \mathbb{R}^{n+m}: |x-x'| \le \kappa^{-1} |x| \ \text{and}\ |x'| \le \kappa |x| \},\\
    \mathcal{D}_3 &= \{(\xi,\eta)\in \mathbb{R}^{n+m}\times \mathbb{R}^{n+m}: |x'| \ge \kappa |x| \}.
\end{align*}
Note that in $\mathcal{D}_1$, we have $|x-x'| \sim |x|$, and in $\mathcal{D}_3$, $|x|+|x'|\sim |x'|$. Set $\mathcal R:= \nabla_L L^{-1/2}$ and
\begin{align*}
    \mathcal{R}_i(\xi,\eta) := \nabla_L L^{-1/2}(\xi,\eta) \mathbf{1}_{\mathcal{D}_i} = c \int_0^\infty \nabla_L e^{-tL}(\xi,\eta) \mathbf{1}_{\mathcal{D}_i} \, \frac{dt}{\sqrt{t}}.
\end{align*}

\subsection{Estimates of the good part}

In this subsection, we estimate the good part of the Riesz transform, i.e., $\mathcal{R}_1$.

\begin{proposition}\label{R1}
The good part of the Riesz transform is bounded on $L^p$ for all $\frac{n}{n-1+\alpha}<p< \infty$, i.e.,
\begin{equation*}
    \| \mathcal{R}_1f\|_{p}\le C \|f\|_p,\qquad \forall\,
     \frac{n}{n-1+\alpha}<p< \infty,
\end{equation*}
for all $f\in C_c^\infty(\mathbb{R}^{n+m})$.
\end{proposition}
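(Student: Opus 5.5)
The plan is a pointwise bound on the kernel of $\mathcal R_1$, obtained from the heat-kernel gradient estimate, followed by a reduction to a weighted Hardy-type operator acting in the $x$-variables alone.

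\emph{Step 1: kernel bound.} Write $\mathcal R_1(\xi,\eta)=c\int_0^\infty \nabla_L e^{-tL}(\xi,\eta)\,\mathbf 1_{\mathcal D_1}\,t^{-1/2}\,dt$ and insert Lemma~\ref{Grushin_gradient}(ii). The $t^{-1/2}$ part contributes $C/V(\xi,d(\xi,\eta))$ in the usual way. The $|x|^{\alpha-1}$ part gives
\begin{equation*}
|x|^{\alpha-1}\int_0^\infty V(\xi,\sqrt t)^{-1}e^{-d^2/ct}\,t^{-1/2}\,dt\lesssim |x|^{\alpha-1}\frac{d(\xi,\eta)}{V(\xi,d(\xi,\eta))}.
\end{equation*}
This uses \eqref{D} with the polynomial lower growth $V(\xi,r)\gtrsim r^{\mathcal Q}$ for large $r$, and $\mathcal Q>1$.

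On $\mathcal D_1$ we have $|x-x'|\ge\kappa^{-1}|x|$ and $|x|+|x'|\le(1+\kappa)|x|$. Theorem~\ref{RS_Grushin}(i) then gives $d(\xi,\eta)\gtrsim |x|^{1-\alpha}$, so the first term is dominated by the second. Since $d\gtrsim|x|^{1-\alpha}$, the volume is in the anchored regime and $V(\xi,d)\sim d^{\mathcal Q}$. Hence on $\mathcal D_1$
\begin{equation*}
|\mathcal R_1(\xi,\eta)|\lesssim |x|^{\alpha-1}\, d(\xi,\eta)^{1-\mathcal Q},\qquad d(\xi,\eta)\sim |x|^{1-\alpha}+\min\bigl(|y-y'|\,|x|^{-\beta},\,|y-y'|^{\frac{1-\alpha}{\mathfrak a}}\bigr).
\end{equation*}

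\emph{Step 2: integrate out $y'$.} A direct computation with $\mathcal Q=(n+m\mathfrak a)/(1-\alpha)$ gives
\begin{equation*}
\int_{\mathbb R^m}(|x|^{1-\alpha}+\dots)^{1-\mathcal Q}\,dy'\lesssim |x|^{(1-\alpha)(1-\mathcal Q)+m\mathfrak a}=|x|^{1-\alpha-n},
\end{equation*}
uniformly in $y$. By Young's inequality in $y$, $\|\mathcal R_1 f(x,\cdot)\|_{L^p_y}$ is therefore controlled by a positive operator in $x$ alone:
\begin{equation*}
\|\mathcal R_1 f(x,\cdot)\|_{L^p_y}\lesssim |x|^{-n}\int_{|x'|\le \kappa|x|}F(x')\,dx',\qquad F(x'):=\|f(x',\cdot)\|_{L^p_y}.
\end{equation*}
The dangerous feature is the lack of decay as $|x'|\to0$ relative to $|x|$; this is where the lower threshold should enter.

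\emph{Step 3: boundedness of the $x$-operator.} The averaging operator $F\mapsto |x|^{-n}\int_{|x'|\lesssim|x|}F$ is bounded on $L^p(\mathbb R^n)$ for all $1<p<\infty$ by Hardy's inequality. I would nevertheless run a Schur test with power weights $w(x)=|x|^{-s}$, because the crude bound in Step 2 discards structure. This is the main obstacle. The honest kernel in $\mathcal D_1$ must be checked more carefully in the regime $|x'|\ll|x|$, where the gradient falls on the $\xi$-variable while $\eta$ sits near the singular set. There I would instead use Lemma~\ref{Grushin_gradient}(ii) together with symmetry of the heat kernel and (UE) at $\eta$, that is, the volume $V(\eta,\sqrt t)$ with $|x'|$ small. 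This may produce an extra factor $(|x'|/|x|)^{\,n-1+\alpha-n}$, i.e.\ a kernel of the form
\begin{equation*}
|x|^{-n}\Bigl(\frac{|x|}{|x'|}\Bigr)^{1-\alpha}\mathbf 1_{|x'|\lesssim|x|}.
\end{equation*}
Testing this kernel against $|x'|^{-s}$ in Schur's lemma converges at $x'\to0$ precisely when $s p' < n-(1-\alpha)p'$. Optimising $s$ yields boundedness exactly for $p>\frac{n}{n-1+\alpha}$, with no restriction as $p\to\infty$.

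\emph{Step 4: conclusion.} I would verify both Schur integrals, the row integral over $\eta$ and the column integral over $\xi$, with this weight, and conclude $\|\mathcal R_1 f\|_p\lesssim\|f\|_p$ for $\frac{n}{n-1+\alpha}<p<\infty$, first for $f\in C_c^\infty$.
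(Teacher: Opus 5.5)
Your argument is correct. In fact, Steps~1--2 together with the Hardy-operator remark at the start of Step~3 already form a complete proof, and it works for every $1<p<\infty$, not only for $p>\frac{n}{n-1+\alpha}$.

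Step~1 coincides with the paper's Lemma~\ref{kernel_good}: on $\mathcal D_1$ one has $|\mathcal R_1(\xi,\eta)|\lesssim |x|^{\alpha-1}d(\xi,\eta)^{1-\mathcal Q}$. The $t^{-1/2}$ contribution $d^{-\mathcal Q}$ is dominated because $d\gtrsim|x|^{1-\alpha}$.

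The two routes part ways after this point.
\begin{itemize}
\item The paper dominates the kernel by $T_1,T_2$ and bounds these by maximal functions over Grushin balls. After decomposing $|y-y'|\sim 2^j|x|^{\mathfrak a}$, it encloses the box $B_n(0,\kappa|x|)\times B_m(y,2^{j+1}|x|^{\mathfrak a})$ in an anchored ball of radius $\sim 2^{j(1-\alpha)/\mathfrak a}|x|^{1-\alpha}$. That ball has volume larger by a factor $2^{jn/\mathfrak a}$.
\item The tail term $|x|^{\alpha-1}|y-y'|^{-(\mathcal Q-1)(1-\alpha)/\mathfrak a}$ supplies only the decay $2^{-j(n-1+\alpha)/\mathfrak a}$. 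The volume loss is therefore recovered only through H\"older's inequality and $M_q$ with $q>\frac{n}{n-1+\alpha}$. This is the sole origin of the threshold in Proposition~\ref{T1,T2}(iii).
\item Your mixed-norm argument avoids this loss. Young's inequality in $y$ applies because the kernel has $L^1_{y'}$-norm $\lesssim |x|^{-n}$ uniformly in $y$. The resulting operator $F\mapsto |x|^{-n}\int_{|x'|\le\kappa|x|}F$ is pointwise $\lesssim MF(x)$ on $\mathbb R^n$, since $B_n(0,\kappa|x|)\subset B_n(x,(1+\kappa)|x|)$.
\end{itemize}
Your route respects the product structure of the kernel, so it is both more elementary and sharper. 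The paper's route only buys uniformity with the Grushin-ball maximal-function framework it uses elsewhere.

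The ``main obstacle'' you raise in Step~3 does not exist, and you should drop that part. Lemma~\ref{Grushin_gradient}(ii) bounds the gradient at the differentiated point $\xi$, with the factor $t^{-1/2}+|x|^{\alpha-1}$ and the volume $V(\xi,\sqrt t)$, uniformly in $\eta$. Nothing changes when $|x'|\ll|x|$. Replacing $V(\xi,\sqrt t)$ by $V(\eta,\sqrt t)$ costs at most $(1+d/\sqrt t)^{\mathcal Q}$, which the Gaussian absorbs. So no factor $(|x|/|x'|)^{1-\alpha}$ arises, and the exponent $\frac{n}{n-1+\alpha}$ has nothing to do with $x'\to0$.

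The Schur test on your hypothetical larger kernel is logically harmless, since that kernel dominates the true one. Note, however, that its convergence condition at $x'\to0$ is $sp'+(1-\alpha)<n$, not $sp'<n-(1-\alpha)p'$. Only the former, combined with $sp>1-\alpha$ at infinity, yields the range $p>\frac{n}{n-1+\alpha}$.
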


\begin{proof}
See Proposition~\ref{T1,T2} below.
\end{proof}

\begin{lemma}\label{kernel_good}
Let $n,m\ge 1$, $\alpha\in (0,1)$, and $\beta \ge 0$. Then the good part of the Riesz kernel satisfies the estimate
\begin{align}\label{eq_kernel_good}
    |\mathcal{R}_1(\xi,\eta)| \le C\mathbf{1}_{|x'|\le \kappa|x|} \begin{cases}
        |x|^{-\mathcal{Q}(1-\alpha)}, & |x|^{\mathfrak a} \ge |y-y'|,\\[4pt]
        |y-y'|^{-\frac{\mathcal{Q}(1-\alpha)}{\mathfrak a}} + |x|^{-(1-\alpha)} |y-y'|^{-\frac{(\mathcal{Q}-1)(1-\alpha)}{\mathfrak a}}, & |x|^{\mathfrak a} \le |y-y'|,
    \end{cases}
\end{align}
where $\xi=(x,y)$ and $\eta= (x',y')$.
\end{lemma}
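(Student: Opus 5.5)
The plan is to integrate the heat kernel gradient bound of Lemma~\ref{Grushin_gradient}(ii) in $t$. On $\mathcal D_1$ we have $x\neq 0$ (for a.e.\ $\xi$), $|x-x'|\ge\kappa^{-1}|x|$ and $|x'|\le\kappa|x|$, so $|x|+|x'|\sim|x|$. Then
\begin{equation*}
    |\mathcal R_1(\xi,\eta)|\le C\int_0^\infty\Bigl(\frac1{\sqrt t}+\frac1{|x|^{1-\alpha}}\Bigr)\frac{1}{V(\xi,\sqrt t)}e^{-d(\xi,\eta)^2/(ct)}\,\frac{dt}{\sqrt t}.
\end{equation*}
By Theorem~\ref{RS_Grushin}(i), on $\mathcal D_1$ we get $d:=d(\xi,\eta)\sim |x|^{1-\alpha}+|y-y'|/\bigl(|x|^\beta+|y-y'|^{\beta/\mathfrak a}\bigr)$. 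Hence $d\sim|x|^{1-\alpha}$ when $|y-y'|\le|x|^{\mathfrak a}$, and $d\sim|y-y'|^{(1-\alpha)/\mathfrak a}$ when $|y-y'|\ge|x|^{\mathfrak a}$. In particular $d\gtrsim|x|^{1-\alpha}$ throughout, so the Gaussian factor localizes the integral to $\sqrt t\gtrsim$ (roughly) $d$, where the volume is in the anchored regime $\sqrt t\ge|x|^{1-\alpha}$ and Theorem~\ref{RS_Grushin}(ii) gives $V(\xi,\sqrt t)\sim t^{\mathcal Q/2}$.

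First I will split the $t$-integral at $\sqrt t=|x|^{1-\alpha}$. For $\sqrt t\le|x|^{1-\alpha}$, I use $V\gtrsim t^{(n+m)/2}|x|^{n\alpha+m\beta}$ together with $e^{-d^2/ct}\le C_M(t/d^2)^M$ for large $M$. Since $d\gtrsim|x|^{1-\alpha}\ge\sqrt t$, this piece is dominated by the value at the endpoint $\sqrt t\sim|x|^{1-\alpha}$, times powers of $(|x|^{1-\alpha}/d)^{M}$. It is therefore bounded by the same expression as the large-$t$ piece.

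For $\sqrt t\ge|x|^{1-\alpha}$, the term with $1/\sqrt t$ gives
\begin{equation*}
    \int t^{-\mathcal Q/2-1}e^{-d^2/ct}\,dt\sim d^{-\mathcal Q}.
\end{equation*}
The term with $|x|^{\alpha-1}$ gives
\begin{equation*}
    |x|^{\alpha-1}\int t^{-\mathcal Q/2-1/2}e^{-d^2/ct}\,dt\sim|x|^{\alpha-1}d^{1-\mathcal Q}.
\end{equation*}
Convergence requires $\mathcal Q>1$, which holds since $\mathcal Q\ge(n+m\mathfrak a)/(1-\alpha)>1$.

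Substituting the two regimes for $d$ gives the statement. When $|x|^{\mathfrak a}\ge|y-y'|$, we have $d\sim|x|^{1-\alpha}$, so both terms are $\lesssim|x|^{-\mathcal Q(1-\alpha)}$. When $|x|^{\mathfrak a}\le|y-y'|$, we have $d\sim|y-y'|^{(1-\alpha)/\mathfrak a}$, so the first term is $|y-y'|^{-\mathcal Q(1-\alpha)/\mathfrak a}$ and the second is $|x|^{-(1-\alpha)}|y-y'|^{-(\mathcal Q-1)(1-\alpha)/\mathfrak a}$. This is exactly \eqref{eq_kernel_good}.

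The main point to check carefully is the lower bound $d\gtrsim|x|^{1-\alpha}$ on $\mathcal D_1$ in the large-$|y-y'|$ regime, where the $x$-part and $y$-part of $D(\xi,\eta)$ must be compared. Here I use $|y-y'|^{(1-\alpha)/\mathfrak a}\ge|x|^{1-\alpha}$, so the sum is comparable to the larger term. The second delicate point is that the small-$t$ piece with the non-anchored volume does not produce a worse power. I will handle it by choosing $M>(n+m)/2+1$, which makes the integral converge at $t=0$ and dominates it by its endpoint value.
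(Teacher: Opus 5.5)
Your proposal is correct and follows essentially the paper's route: split the time integral at $t=|x|^{2-2\alpha}$, use the two volume regimes of Theorem~\ref{RS_Grushin}, trade the Gaussian decay in the small-time piece for powers of $d(\xi,\eta)\gtrsim|x|^{1-\alpha}$, and then insert $d\sim|x|^{1-\alpha}$ or $d\sim|y-y'|^{(1-\alpha)/\mathfrak a}$. One correction: dominating the small-time piece by $d^{-\mathcal Q}$ requires $2M\ge\mathcal Q$, which is exactly the paper's choice $\varepsilon=\frac{n\alpha+m\beta}{1-\alpha}$ since $n+m+\varepsilon=\mathcal Q$; your condition $M>\frac{n+m}{2}+1$ can fall short of this when $\alpha$ is near $1$ or $\beta$ is large, but since $M$ is free the fix costs nothing.
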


\begin{proof}
For $(\xi,\eta)\in \mathcal{D}_1$, one has $\kappa^{-1}|x| \le |x-x'| \le (1+\kappa)|x|$, so $|x|\sim |x-x'|$ and $|x|+|x'|\sim |x|$. Split the integral at the threshold $t = |x|^{2-2\alpha}$. Then
\begin{align*}
    |\mathcal{R}_1(\xi,\eta)|
    &\le \int_0^\infty \bigl| \nabla_L e^{-tL}(\xi,\eta) \bigr| \mathbf{1}_{\mathcal{D}_1} \, \frac{dt}{\sqrt{t}}\\
    &\le C\int_0^{|x|^{2-2\alpha}} \frac{\mathbf{1}_{\mathcal{D}_1}}{V(\xi,\sqrt{t})} \exp\!\Bigl( - \frac{d(\xi,\eta)^2}{c t} \Bigr) \frac{dt}{t}\\
    &\quad + C\int_{|x|^{2-2\alpha}}^\infty \frac{\mathbf{1}_{\mathcal{D}_1}}{|x|^{1-\alpha} V(\xi,\sqrt{t})} \exp\!\Bigl( - \frac{d(\xi,\eta)^2}{c t} \Bigr) \frac{dt}{\sqrt{t}} \\
    &:= I_1 + I_2.
\end{align*}
For $I_1$, since $\sqrt{t}\le |x|^{1-\alpha}$, Lemma~\ref{le_volume} implies that
\begin{equation*}
    V(\xi,\sqrt{t}) \sim t^{\frac{n+m}{2}} |x|^{n\alpha+m\beta}.
\end{equation*}
Hence,
\begin{align*}
    I_1
    &\le C |x|^{-n\alpha - m\beta} d(\xi,\eta)^{-n-m}\mathbf{1}_{\mathcal{D}_1} \int_{d(\xi,\eta)^2/|x|^{2-2\alpha}}^\infty e^{-s} s^{\frac{n+m}{2}-1} \, ds\\
    &\le C |x|^{-n\alpha - m\beta} d(\xi,\eta)^{-n-m}\mathbf{1}_{\mathcal{D}_1} \exp\!\Bigl( - \frac{d(\xi,\eta)^2}{2|x|^{2-2\alpha}} \Bigr)\\
    &\le C |x|^{-n\alpha-m\beta+\varepsilon(1-\alpha)} d(\xi,\eta)^{-n-m-\varepsilon} \mathbf{1}_{\mathcal{D}_1},\quad \forall\, \varepsilon\ge 0\\
    &= C d(\xi,\eta)^{-\mathcal{Q}} \mathbf{1}_{\mathcal{D}_1},
\end{align*}
where in the last step we choose $\varepsilon = \frac{n\alpha+m\beta}{1-\alpha}$.

Next, consider $I_2$. Since $\sqrt{t}\ge |x|^{1-\alpha}$, Lemma~\ref{le_volume} gives
\begin{equation*}
    V(\xi,\sqrt{t}) \sim t^{\frac{\mathcal{Q}}{2}}.
\end{equation*}
Therefore,
\begin{align*}
    I_2
    &\le C |x|^{\alpha-1} d(\xi,\eta)^{-\mathcal{Q}+1} \mathbf{1}_{\mathcal{D}_1} \int_0^{d(\xi,\eta)^2/|x|^{2-2\alpha}} e^{-s} s^{\frac{\mathcal{Q}-1}{2}-1} \, ds\\
    &\le C |x|^{\alpha-1} d(\xi,\eta)^{-\mathcal{Q}+1} \mathbf{1}_{\mathcal{D}_1}.
\end{align*}
Now note that for $(\xi,\eta)\in \mathcal{D}_1$,
\begin{align*}
    d(\xi,\eta)
    &\sim \frac{|x-x'|}{(|x|+|x'|)^\alpha} + \frac{|y-y'|}{(|x|+|x'|)^\beta + |y-y'|^{\frac{\beta}{\mathfrak a}}}\\
    &\sim \begin{cases}
        |x|^{1-\alpha} + |x|^{-\beta} |y-y'|, & |x|^{\mathfrak a} \ge |y-y'|,\\[4pt]
        |x|^{1-\alpha} + |y-y'|^{\frac{1-\alpha}{\mathfrak a}}, & |x|^{\mathfrak a} \le |y-y'|.
    \end{cases}
\end{align*}
Substituting these expressions into the bounds for $I_1$ and $I_2$ yields the stated estimate.
\end{proof}

According to Lemma~\ref{kernel_good}, define the following operators. Note that
\begin{equation*}
    V\bigl((0,y), |x|^{1-\alpha}\bigr) \sim |x|^{\mathcal{Q}(1-\alpha)} \sim |B_n(0,|x|)| \times |B_m(y,|x|^{\mathfrak a})|.
\end{equation*}
Set
\begin{align}\label{def_T1}
    T_1&: u \mapsto \frac{1}{V\bigl( (0,y), |x|^{1-\alpha} \bigr)} \int_{B_n(0,\kappa|x|)} \int_{B_m(y,|x|^{\mathfrak a})} |u(\eta)| \, d\eta,\\[4pt]
    \label{def_T2}
    T_2&: u \mapsto |x|^{-\varepsilon} \int_{B_n(0,\kappa|x|)} \int_{B_m(y, |x|^{\mathfrak a})^c} \frac{|u(\eta)|}{|y-y'|^{\frac{\mathcal{Q}(1-\alpha)-\varepsilon}{\mathfrak a}}} \, d\eta, \qquad \varepsilon = 0,\, 1-\alpha.
\end{align}

It is immediate from Lemma~\ref{kernel_good} that Proposition~\ref{R1} follows once the $L^p$-boundedness of $T_1$ and $T_2$ is established.

\begin{proposition}\label{T1,T2}
Let $n,m\ge 1$, $\alpha\in (0,1)$, and $\beta \ge 0$. Then
\begin{enumerate}[label=(\roman*)]
    \item $T_1$ is bounded on $L^p$ for all $1<p\le \infty$,
    \item $T_2$ is bounded on $L^p$ for all $1<p\le \infty$ if $\varepsilon=0$,
    \item $T_2$ is bounded on $L^p$ for all $\frac{n}{n-1+\alpha}<p\le  \infty$ if $\varepsilon=1-\alpha$.
\end{enumerate}
\end{proposition}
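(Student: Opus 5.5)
The plan is to bound both operators pointwise by a composition of two classical positive operators: the Hardy--Littlewood maximal function in the $y$-variable, and a Hardy averaging operator in the $x$-variable. Throughout I use $\mathcal Q(1-\alpha)=n+m\mathfrak a$, so that
\begin{equation*}
V\bigl((0,y),|x|^{1-\alpha}\bigr)\sim |x|^{n}\,|x|^{m\mathfrak a}.
\end{equation*}
Write $M_y$ for the Hardy--Littlewood maximal operator acting in $y$ alone, with $x'$ frozen. Set
\begin{equation*}
\mathcal H F(x):=|x|^{-n}\int_{|x'|\le\kappa|x|}F(x')\,dx'
\end{equation*}
for nonnegative $F$ on $\mathbb R^n$. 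The target is the pointwise bound
\begin{equation*}
T_1u(x,y)+T_2u(x,y)\le C\,\mathcal H\bigl[(M_y u)(\cdot,y)\bigr](x).
\end{equation*}
After that, boundedness of $M_y$ on $L^p(\mathbb R^m)$ (uniformly in $x'$, then integrated in $x'$) and boundedness of $\mathcal H$ on $L^p(\mathbb R^n)$ (uniformly in $y$, then integrated in $y$) give the claim for $1<p\le\infty$. The case $p=\infty$ is immediate in every instance, since each kernel has $\eta$-integral bounded uniformly in $\xi$.

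\textbf{Step 1: $T_1$.} The inner integral over $B_m(y,|x|^{\mathfrak a})$, divided by $|x|^{m\mathfrak a}$, is at most $C M_y u(x',y)$. The remaining factor $|x|^{-n}\int_{B_n(0,\kappa|x|)}$ is exactly $\mathcal H$.

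\textbf{Step 2: $T_2$, both values of $\varepsilon$.} For fixed $x$, the $y$-kernel
\begin{equation*}
k_x(z)=|z|^{-(n+m\mathfrak a-\varepsilon)/\mathfrak a}\,\mathbf 1_{|z|\ge|x|^{\mathfrak a}}
\end{equation*}
is radial and radially nonincreasing, with exponent $m+(n-\varepsilon)/\mathfrak a>m$. This holds for $\varepsilon=0$, and also for $\varepsilon=1-\alpha$ because $n-1+\alpha>0$. A direct computation gives
\begin{equation*}
\|k_x\|_{L^1(\mathbb R^m)}\sim |x|^{-(n-\varepsilon)}.
\end{equation*}
By the standard majorization of convolution with a radially decreasing integrable kernel, $k_x*|u|(x',\cdot)\le \|k_x\|_1\,M_yu(x',\cdot)$. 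Multiplying by the prefactor $|x|^{-\varepsilon}$ gives $|x|^{-n}M_yu(x',y)$ in both cases, and integrating over $|x'|\le\kappa|x|$ again produces $\mathcal H(M_yu)$.

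\textbf{Step 3: the Hardy operator, which is the only point needing care.} Since $\mathcal H F(x)$ depends only on $r=|x|$, write
\begin{equation*}
\mathcal HF(x)=r^{-n}\int_0^{\kappa r}s^{n-1}g(s)\,ds,\qquad g(s)=\int_{\mathbb S^{n-1}}F(s\theta)\,d\theta .
\end{equation*}
By H\"older on the sphere, $g(s)\le C\bigl(\int_{\mathbb S^{n-1}}F(s\theta)^p\,d\theta\bigr)^{1/p}$. It then suffices to show that
\begin{equation*}
G\mapsto r^{-n}\int_0^{\kappa r}s^{n-1}G(s)\,ds
\end{equation*}
is bounded on $L^p((0,\infty),r^{n-1}dr)$. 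Substituting $s=r\tau$ and applying Minkowski's inequality bounds its norm by
\begin{equation*}
\int_0^{\kappa}\tau^{n-1}\,\tau^{-n/p}\,d\tau,
\end{equation*}
which is finite exactly when $p>1$.

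This yields (i) and (ii), and in fact (iii) on the full range $1<p\le\infty$, which contains the stated range $p>\frac{n}{n-1+\alpha}$. If the majorization in Step 2 for $\varepsilon=1-\alpha$ turned out to be lossy, the fallback is a direct Schur test with power weights $|x|^{-s}$. That test produces precisely the restriction $p>\frac{n}{n-1+\alpha}$, coming from local integrability of $|x'|^{-(n-1+\alpha)p'}$-type factors.
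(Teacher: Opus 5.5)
Your argument is correct, and it takes a genuinely different route from the paper.

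\textbf{How the paper argues.} For $T_1$, the paper uses the Grushin maximal function: $B_n(0,\kappa|x|)\times B_m(y,|x|^{\mathfrak a})$ lies in an anchored Grushin ball of radius $\sim|x|^{1-\alpha}$ that contains $\xi$. For $T_2$, it decomposes dyadically in $|y-y'|$. It then encloses each box $B_n(0,\kappa|x|)\times B_m(y,2^{j+1}|x|^{\mathfrak a})$ in the anchored ball $B\bigl((0,y),C2^{j(1-\alpha)/\mathfrak a}|x|^{1-\alpha}\bigr)$. This ball's $x$-extent is inflated by a factor $2^{j/\mathfrak a}$. The loss is paid for with H\"older's inequality and $M_q$, and the resulting series converges only when $n/q'>\varepsilon$. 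That condition is exactly the source of the restriction $p>\frac{n}{n-1+\alpha}$ in (iii).

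\textbf{What your route buys.} Your factorization through $M_y$ followed by the Hardy operator in $x$ never enlarges the $x$-extent. It therefore avoids this loss and gives (iii) on all of $1<p\le\infty$; the same computation works for any $\varepsilon<n$. So the restriction in (iii), and the corresponding one in Proposition~\ref{R1}, is an artifact of the enlargement step. The weaker range is all the paper needs, since it applies these bounds on $L^{p'}$ with $p<\frac{n}{1-\alpha}$. The paper's argument stays within the Grushin-ball and doubling framework that it reuses for $R_{j,2}$ and $T_3,T_4$. Yours exploits the Euclidean product structure of the kernels and is sharper. Your fallback Schur test is not needed.

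\textbf{A small correction in Step 2.} $k_x$ is not radially nonincreasing, since it vanishes on $\{|z|<|x|^{\mathfrak a}\}$. It is, however, dominated by the nonincreasing envelope $\min\{|z|^{-\gamma},|x|^{-\mathfrak a\gamma}\}$ with $\gamma=m+\frac{n-\varepsilon}{\mathfrak a}$. This envelope's $L^1$-norm is still $\sim|x|^{-(n-\varepsilon)}$; alternatively, use a dyadic decomposition in $|y-y'|$. Either way, your bound $k_x*|u(x',\cdot)|\le C|x|^{-(n-\varepsilon)}M_yu(x',\cdot)$ stands.

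\textbf{A shortcut for Step 3.} Since $B_n(0,\kappa|x|)\subset B_n(x,(1+\kappa)|x|)$, your operator satisfies $\mathcal HF\le C_{n,\kappa}M_xF$ pointwise, where $M_x$ is the Euclidean maximal operator on $\mathbb R^n$.
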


\begin{proof}
For $(i)$, Lemma~\ref{le_volume} implies that
\begin{equation*}
    B_n(0,\kappa|x|) \times B_m\bigl(y,|x|^{\mathfrak a}\bigr) \subset  B\bigl( (0,y), \kappa c_3^{-1}|x|^{1-\alpha} \bigr).
\end{equation*}
The conclusion then follows from \eqref{D} and the maximal theorem.

We next prove $(ii)$ and $(iii)$. Applying a standard dyadic decomposition to the $y'$-space gives
\begin{align*}
    |T_2f(\xi)|
    &\le |x|^{-\varepsilon} \sum_{j=0}^\infty \int_{B_n(0,\kappa |x|)} \int_{B_m( y, 2^{j+1}|x|^{\mathfrak a} )\setminus B_m ( y, 2^{j}|x|^{\mathfrak a} )} |y-y'|^{-\frac{\mathcal{Q}(1-\alpha)-\varepsilon}{\mathfrak a}} |f(\eta)| \, dy' \, dx'\\
    &\le C |x|^{-\varepsilon} \sum_{j=0}^\infty 2^{-j\frac{\mathcal{Q}(1-\alpha)-\rho}{\mathfrak a}} |x|^{-\mathcal{Q}(1-\alpha)+\varepsilon} \int_{B_n(0,\kappa|x|)} \int_{B_m( y, 2^{j+1}|x|^{\mathfrak a} )} |f(\eta)| \, d\eta.
\end{align*}
Let $q>1$. Applying H\"older's inequality twice yields
\begin{equation}
    |T_2f(\xi)|\le C |x|^{-\mathcal{Q}(1-\alpha)} \sum_{j=0}^\infty 2^{-j\frac{\mathcal{Q}(1-\alpha)-\varepsilon}{\mathfrak a} + j\frac{m}{q'}} |x|^{\frac{\mathcal{Q}(1-\alpha)}{q'}} \biggl( \int_{B_n(0,\kappa|x|) \times B_m(y, 2^{j+1}|x|^{\mathfrak a})} |f(\eta)|^q \, d\eta \biggr)^{\!\frac{1}{q}}.
\end{equation}
Furthermore, Lemma~\ref{le_volume} gives
\begin{align*}
    B_n(0,\kappa|x|) \times B_m\bigl(y, 2^{j+1}|x|^{\mathfrak a}\bigr)
    &\subset B_n\bigl(0,\kappa 2^{\frac{j+1}{\mathfrak a}}|x|\bigr) \times B_m\bigl(y, \kappa^{\mathfrak a} 2^{j+1}|x|^{\mathfrak a}\bigr)\\
    &\subset B\bigl((0,y), c_3^{-1} \kappa^{1-\alpha} 2^{\frac{(j+1)(1-\alpha)}{\mathfrak a}} |x|^{1-\alpha}\bigr).
\end{align*}
Hence,
\begin{equation*}
    |T_2f(\xi)|\le C M_q f(\xi) \sum_{j=0}^\infty 2^{-j\bigl( \frac{n}{q'(\mathfrak a)} - \frac{\varepsilon}{\mathfrak a} \bigr)} \le C M_q f(\xi),
\end{equation*}
provided that
\begin{equation*}
    q > \frac{n}{n-\varepsilon} = \begin{cases}
        1, &\varepsilon=0,\\[4pt]
        \frac{n}{n-1+\alpha}, &\varepsilon=1-\alpha.
    \end{cases}
\end{equation*}
The result now follows from the maximal theorem.
\end{proof}

\subsection{Estimates of the diagonal part}

In this subsection, our aim is to prove the following.

\begin{proposition}\label{R2}
The diagonal part of the Riesz transform is bounded on $L^p$ for all $p>2$, in the sense that
\begin{equation*}
    \|\mathcal{R}_2f\|_p \le C \|f\|_p,\qquad p>2,
\end{equation*}
for all $f\in C_c^\infty(\mathbb{R}^n \setminus \{0\} \times \mathbb{R}^m)$.
\end{proposition}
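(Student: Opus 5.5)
The plan is to treat $\mathcal R_2$ as a localized Riesz transform on remote regions, where the operator is uniformly elliptic after rescaling, and then glue the local bounds together using the bounded overlap of a Whitney-type cover of $\{x\neq0\}$.

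\emph{Step 1: the cover.} On $\mathcal D_2$ one has $|x-x'|\le\kappa^{-1}|x|$, so $|x'|\sim|x|$. Cover $\mathbb R^n\setminus\{0\}\times\mathbb R^m$ by remote balls $B_k=B(\xi_k,r_k)$ with $r_k=\varepsilon_0|x_k|^{1-\alpha}/10$, chosen so that the dilates $\gamma B_k$ have bounded overlap; this is possible by \eqref{D}.

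\emph{Step 2: localize.} Decompose $f=\sum_k f\mathbf 1_{E_k}$ along a partition $\{E_k\}$ subordinate to $\{B_k\}$. For $\xi\in B_k$, a point $\eta$ with $(\xi,\eta)\in\mathcal D_2$ need not lie in a fixed dilate $CB_k$, because the $y$-separation is unrestricted. I therefore split $\mathcal R_2$ further into two pieces:
\begin{itemize}
\item a local piece with $d(\xi,\eta)\le c|x|^{1-\alpha}$;
\item a piece with $d(\xi,\eta)\ge c|x|^{1-\alpha}$, still inside $\mathcal D_2$.
\end{itemize}

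\emph{Step 3: the far piece.} Here I reuse the kernel bounds from the proof of Lemma~\ref{kernel_good}. These rely only on $|x|+|x'|\sim|x|$ and Lemma~\ref{Grushin_gradient}(ii). With the cutoff $d\gtrsim|x|^{1-\alpha}$, the factor $\exp(-d^2/|x|^{2-2\alpha})$ in $I_1$ is harmless. The resulting bounds are $d^{-\mathcal Q}+|x|^{\alpha-1}d^{1-\mathcal Q}$, with $x'$ confined to $B_n(x,\kappa^{-1}|x|)$ rather than $B_n(0,\kappa|x|)$. Hence this piece is dominated by $T_1+T_2$ with $\varepsilon=0$: the worse factor $|x|^{\alpha-1}$ is compensated because $|x-x'|\lesssim|x|$ and $|y-y'|\gtrsim|x|^{\mathfrak a}$. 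Proposition~\ref{T1,T2} then gives boundedness for all $p>1$. I should double-check that the $\varepsilon=1-\alpha$ term costs nothing here. The reason it should not is that the $x'$-region has measure $\sim|x|^n$ instead of growing, so no $n/(n-1+\alpha)$ restriction appears.

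\emph{Step 4: the local piece, the main obstacle.} The kernel now lives in $\bigcup_k CB_k\times CB_k$. Since the Riesz kernel is only Calder\'on--Zygmund-like up to scale $|x|^{1-\alpha}$, I would not try to estimate it by size alone. Instead I would write it as the full operator $\mathcal R$ minus the other pieces, and apply $\mathcal R$ to $f\mathbf 1_{E_k}$ restricted to $CB_k$, using an $L^p$ bound for $\mathcal R$ on functions supported in a remote ball, uniform in $k$ after rescaling. For $p>2$, this local bound comes from the Coulhon--Jiang--Koskela--Sikora mechanism. Remote balls satisfy $(\mathrm{RH}_\infty)$ (Lemma~\ref{RH_remote}) and the Poincar\'e inequality (Lemma~\ref{PE}), so the good-$\lambda$/Shen-type argument behind Theorem~\ref{thm:CJKS} yields a localized estimate $\|\mathcal R g\|_{L^p(CB_k)}\le C\|g\|_p$ for $\operatorname{supp}g\subset CB_k$. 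I would run Shen's criterion at scales at most $r_k$, using $L^2$-boundedness of $\mathcal R$ together with the remote reverse H\"older estimate, and control the local piece's contribution from scales $\sqrt t>r_k$ by the same off-diagonal size bound as in Step 3. This localization is the delicate step, and it is why the statement only claims $p>2$ and restricts to $f$ supported away from $\{x=0\}$.

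\emph{Step 5: sum.} Bounded overlap lets me sum the local estimates in $\ell^p$:
\[
\|\mathcal R_2 f\|_p^p\lesssim\sum_k\|f\mathbf 1_{E_k}\|_p^p=\|f\|_p^p.
\]
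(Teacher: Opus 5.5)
Your architecture matches the paper's: a Whitney-type cover by remote pieces on which $|x'|\sim|x|$; size bounds for the part with large $|y-y'|$ or large time; a localized good-$\lambda$ argument near the diagonal; and bounded-overlap summation. The paper cuts first in time at $t=r_j^{2-2\alpha}$ and then at $|y-y'|=r_j^{\mathfrak a}$, which gives essentially the same pieces. Your remark that uniformity in $k$ follows by rescaling is also fine: dilations $\delta_r$, rotations in $x$ and translations in $y$ intertwine $|\mathcal Rf|$, rescale all $L^p$ norms by the same factor, and permute remote balls of fixed relative size.

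The gap is at the step you yourself flag as delicate: Shen's criterion cannot be run for $T=\mathcal R$ with Lemma~\ref{RH_remote}. Theorem~\ref{thm:Shen} requires a reverse H\"older bound for $\mathcal Rf$ on $B$ whenever $f$ vanishes on $\lambda_2B$. But $L^{-1/2}f$ is not $L$-harmonic on $\lambda_1B$: one has $L(L^{-1/2}f)=L^{1/2}f$, and $L^{1/2}$ is nonlocal. So a reverse H\"older inequality for harmonic functions says nothing about $\mathcal Rf$. In the paper, Shen's criterion is used only for the Hodge projector, where $L^{-1}\operatorname{div}_L\omega$ is genuinely harmonic off $\operatorname{supp}\omega$.

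The missing ingredient is the Auscher--Coulhon--Duong--Hofmann version with approximating operators, Theorem~\ref{thm_ACDH}. The paper applies it with $A_r=e^{-r^2L}$ to the truncated operator $T=\int_0^{r_j^{2-2\alpha}}\nabla_Le^{-tL}\,dt/\sqrt t$, from $L^p(E_1)$ to $L^p(E_2)$, where $E_1=B_n^j\times B_m^k$ and $E_2=4E_1$.
\begin{itemize}
\item Condition~\eqref{ACDH1} follows from the $L^2$ bound and Gaussian off-diagonal decay.
\item For \eqref{ACDH2}, write $TA_rf=|\nabla_Le^{-r^2L}h|$ with $h=\int_0^{r_j^{2-2\alpha}}e^{-tL}f\,dt/\sqrt t$, so that $|\nabla_Lh|=Tf$. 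Subtract $h(\xi)$ using stochastic completeness, then combine the pointwise gradient bound of Lemma~\ref{Grushin_gradient}(ii) with the remote Poincar\'e inequality in the telescoped form of Lemma~\ref{le_tele}.
\item The leftover error $r_j^{\alpha-1}(Mh+|h|)$ is harmless because the truncation gives $\|h\|_p\lesssim r_j^{1-\alpha}\|f\|_p$.
\end{itemize}
So the real inputs are the heat-kernel gradient bound and Poincar\'e on remote balls, not $(\mathrm{RH}_\infty)$. After your reduction to a single fixed remote ball, local Calder\'on--Zygmund theory via interior parabolic regularity would be another valid route, but some such argument must be supplied.

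Step~3 reaches the right conclusion for the wrong reason. The term $|x|^{\alpha-1}d^{1-\mathcal Q}$ is not dominated by $d^{-\mathcal Q}$; since $d\gtrsim|x|^{1-\alpha}$ there, it is the larger one. It is the $T_2$ kernel with $\varepsilon=1-\alpha$, the analogue of the term $I_2$ in Proposition~\ref{Rj,2}. For that kernel the maximal-function argument of Proposition~\ref{T1,T2}(iii) only yields $p>n/(n-1+\alpha)$, which exceeds $2$ when $n=1$, $\alpha<\frac12$. The measure of the $x'$-region is not the point either, since it is $\sim|x|^n$ in $T_2$ too.

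What does work is Schur's test. Because $|x|\sim|x'|$ on $\mathcal D_2$, the kernel $|x|^{\alpha-1}|y-y'|^{-m-(n-1+\alpha)/\mathfrak a}\mathbf 1_{\{|x-x'|\lesssim|x|,\ |y-y'|\gtrsim|x|^{\mathfrak a}\}}$ has row and column integrals of order $|x|^{\alpha-1}\cdot|x|^{n}\cdot|x|^{-(n-1+\alpha)}=O(1)$. Hence it is bounded on every $L^p$. Applied to $I_2$ in Proposition~\ref{Rj,2}, the same observation covers $2<p\le\alpha^{-1}$ for $n=1$, $\alpha<\frac12$. The paper's proof of that proposition, as written, does not reach this range.
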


\begin{proof}
The proof follows from Proposition~\ref{Rj,2}, Lemma~\ref{Rj,1,2}, and Proposition~\ref{Rj,1,1} below.
\end{proof}

We introduce the following covering lemma; see also \cite{Gilles}.

\begin{lemma}\label{lemma_remotecover}
Let $\varepsilon>0$ and $\delta \ge 1$. There exists a sequence of balls $\{B_n^j = B_n(x_j, r_j)\}_{j \ge 0}$ such that
\begin{enumerate}[label=(\roman*)]
    \item\label{(1)} $\mathbb{R}^n = \bigcup_{j\ge 0} B_n^j$, where $B_n^0 = B_n(0,\varepsilon)$ and, for $j\ge 1$, $B_n^j = B_n(x_j,r_j)$ is remote in the sense that $r_j \le \frac{|x_j|}{2}$,
    \item\label{(2)} for all $j \ge 1$, $2^{-10-\delta}|x_j|\le r_j \le 2^{-9-\delta} |x_j|$,
    \item\label{(3)} there exists a finite overlap constant $C_L>0$ such that $\sum_j \mathbf{1}_{B_n^j}(x) \le C_L$ for all $x\in \mathbb{R}^n$, and $C_L$ does not depend on $\varepsilon$.
\end{enumerate}
\end{lemma}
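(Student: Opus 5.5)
The plan is to build the covering by a Whitney-type construction on dyadic annuli. Put $B_n^0=B_n(0,\varepsilon)$. For $k\in\mathbb Z$ with $2^{k+1}>\varepsilon/2$, let $A_k:=\{x\in\mathbb R^n: 2^k\le |x|<2^{k+1}\}$. Only annuli meeting $\{|x|\ge \varepsilon\}$ need to be covered, but including a few extra ones near $\varepsilon$ is harmless.

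Fix the target radius $s_k:=2^{-9-\delta}2^{k}\cdot\tfrac12$. In each $A_k$, choose a maximal $s_k$-separated set $\{x_j\}\subset A_k$ and assign the radius $r_j:=2s_k=2^{-9-\delta}2^k$ to every $x_j$ in that set. By maximality the balls $B_n(x_j,r_j)$ cover $A_k$, since every point of $A_k$ lies within $s_k<r_j$ of some $x_j$. Because $2^k\le|x_j|<2^{k+1}$, we get
\[
2^{-10-\delta}|x_j|< 2^{-9-\delta}2^k=r_j\le 2^{-9-\delta}|x_j|,
\]
which is \ref{(2)}. Remoteness $r_j\le |x_j|/2$ then follows trivially because $\delta\ge1$. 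Taking the union over $k$ together with $B_n^0$ covers $\mathbb R^n$, which gives \ref{(1)}. The countably many centres are then enumerated as $j\ge1$.

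The main point is the finite overlap \ref{(3)}, with a constant independent of $\varepsilon$ (and it may depend on $n$ and $\delta$). Fix $x\ne0$. If $x\in B_n(x_j,r_j)$, then $|x-x_j|\le r_j\le 2^{-10}|x_j|$, so $|x_j|\sim|x|$. Hence $x_j$ lies in one of at most three annuli $A_k$ adjacent to the annulus containing $x$. Within a fixed $A_k$, the centres $x_j$ with $x\in B_n(x_j,r_j)$ lie in $B_n(x,2s_k)$ and are $s_k$-separated. A volume-packing argument with disjoint balls $B_n(x_j,s_k/2)\subset B_n(x,3s_k)$ bounds their number by $6^n$. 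Adding $1$ for $B_n^0$ gives $C_L\le 3\cdot 6^n+1$, which does not depend on $\varepsilon$; this is scale invariance at work.

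I do not expect any real obstacle. The only care needed is to keep the numerical constants consistent with the exact powers $2^{-10-\delta}$ and $2^{-9-\delta}$ in \ref{(2)}, and to check that the packing bound uses only ratios, so that $\varepsilon$ never enters the count.
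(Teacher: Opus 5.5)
Your proof is correct and follows essentially the same route as the paper: dyadic annuli, a maximal disjoint (equivalently, separated) family of equal-radius balls in each annulus, and a volume-packing count that uses only ratios of radii. The paper phrases the middle step via Vitali's lemma on the $\varepsilon$-scaled annuli $B_n(0,\varepsilon 2^N)\setminus B_n(0,\varepsilon 2^{N-1})$ with radii $\varepsilon 2^{N-\delta-10}$; your direct packing inside $B_n(x,3s_k)$ even yields an overlap constant $3\cdot 6^n+1$ independent of $\delta$, slightly sharper than the paper's $C(n,\delta)$, though this plays no role later.
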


\begin{proof}
See Appendix~\ref{app:tele}.
\end{proof}

\begin{remark}\label{remark_delta}
As is clear from the proof, the finite overlap constant depends only on $n$ and $\delta$. However, we may choose $\delta$ sufficiently large, depending only on the geometric structure of the Grushin spaces, for instance on $c_1,c_2,c_3,c_4,\varepsilon_0,\gamma,n,m,\alpha,\beta$. In fact, for later use, we may choose $\delta \ge 1$ such that
\begin{equation}\label{delta}
    \delta > \max \Bigl( \frac{2}{\mathfrak a} - 9,\; - \log_2\!\Bigl({\frac{2^7 c_1 \varepsilon_0}{9}}\Bigr),\;    - \log_2\!\Bigl({\frac{2^6 c_1 \varepsilon_0}{3\gamma}}\Bigr),\; \frac{-\log_2\!\bigl({\frac{c_1 \varepsilon_0}{36}}\bigr)}{\mathfrak a}-9,\;    \frac{-\log_2\!\bigl({\frac{c_1 \varepsilon_0}{24 \gamma}}\bigr)}{\mathfrak a}-9    \Bigr).
\end{equation}
\end{remark}

Let $\varepsilon>0$ and let $\delta \ge 1$ be chosen later. By Lemma~\ref{lemma_remotecover}, there exists a sequence of balls $\{B_n^j=B_n(x_j,r_j)\}_j$ satisfying properties \eqref{(1)}, \eqref{(2)}, and \eqref{(3)} of Lemma~\ref{lemma_remotecover}. Let $\{\mathcal{X}_j\}_j$ be a smooth partition of unity subordinate to $\{B_n^j\}_j$.

By choosing $\kappa$ sufficiently large, say $\kappa = 2^{10}$, the support property of $\mathcal{D}_2$ ensures that
\begin{equation*}
    |\mathcal{R}_2f(\xi)| \le \sum_{j=0}^\infty \bigl| \mathbf{1}_{4B_n^j}(x) \mathcal{R} \bigl( f \mathcal{X}_j \bigr)(\xi) \bigr| =: \sum_{j=0}^\infty R_j f(\xi).
\end{equation*}
Suppose that $f\in C_c^\infty(B_n(0,\varepsilon)^c \times \mathbb{R}^m)$. Since $\mathcal{X}_0$ is supported in $B_n(0,\varepsilon)$, the term $R_0f$ vanishes.

Next, by the finite overlap property, Proposition~\ref{R2} follows once the following statements are established for $p>2$:
\begin{itemize}
    \item for all $f\in C_c^\infty(B_n(0,\varepsilon)^c \times \mathbb{R}^m)$, one has $\sup_{j\ge 1} \| R_jf\|_{p}\le C\|f\|_p$,
    \item the constant $C$ above does not depend on $\varepsilon$.
\end{itemize}
Then Proposition~\ref{R2} follows by letting $\varepsilon \to 0$.

To this end, we further decompose $R_j$ ($j\ge 1$) as $R_j = R_{j,1}+R_{j,2}$, where
\begin{align}
    R_{j,1}f &= \mathbf{1}_{4B_n^j}(x) \int_0^{r_j^{2-2\alpha}} \nabla_L e^{-tL}\bigl(f \mathcal{X}_j\bigr) \, \frac{dt}{\sqrt{t}},\\[4pt]
    R_{j,2}f &= \mathbf{1}_{4B_n^j}(x) \int_{r_j^{2-2\alpha}}^\infty \nabla_L e^{-tL}\bigl(f \mathcal{X}_j\bigr) \, \frac{dt}{\sqrt{t}}.
\end{align}

\begin{proposition}\label{Rj,2}
We have
    \begin{equation*}
        \sup_{j\ge 1} \|R_{j,2}\|_{p\to p} <\infty
    \end{equation*}
    for all $\frac{n}{n-1+\alpha}<p\le \infty$.
\end{proposition}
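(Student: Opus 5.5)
The plan is to bound the kernel of $R_{j,2}$ pointwise with the heat kernel gradient estimate of Lemma~\ref{Grushin_gradient}(ii), and then dominate the resulting operator by averaging operators of the type $T_1,T_2$ from Proposition~\ref{T1,T2}. Two facts fix the geometry. For $x\in 4B_n^j$ we have $|x|\sim|x_j|\sim r_j$ (the constants depend only on $\delta$). The source point $x'$ ranges over $\operatorname{supp}\mathcal X_j\subset B_n^j$, so $|x'|\sim|x_j|$ as well. Since $t\ge r_j^{2-2\alpha}\gtrsim |x|^{2-2\alpha}$, the factor $|x|^{\alpha-1}$ in Lemma~\ref{Grushin_gradient}(ii) dominates $t^{-1/2}$, up to constants depending on $\delta$. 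The volume also satisfies $V(\xi,\sqrt t)\sim t^{\mathcal Q/2}$ by Theorem~\ref{RS_Grushin}(ii). The kernel of $R_{j,2}$ is therefore bounded by
\begin{equation*}
C\,\mathbf 1_{4B_n^j}(x)\mathbf 1_{B_n^j}(x')\,|x|^{\alpha-1}\int_{r_j^{2-2\alpha}}^\infty t^{-\frac{\mathcal Q}{2}-\frac12}e^{-\frac{d(\xi,\eta)^2}{ct}}\,dt
\le C\,\mathbf 1_{4B_n^j}(x)\mathbf 1_{B_n^j}(x')\,|x|^{\alpha-1}\min\bigl(r_j^{(1-\alpha)(1-\mathcal Q)},\,d(\xi,\eta)^{1-\mathcal Q}\bigr).
\end{equation*}
This is exactly the $I_2$ computation in Lemma~\ref{kernel_good}.

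Next we convert the distance into Euclidean quantities using Theorem~\ref{RS_Grushin}(i). Since $|x|+|x'|\sim|x|$, we get $d(\xi,\eta)\gtrsim|x|^{-\beta}|y-y'|$ when $|y-y'|\le|x|^{\mathfrak a}$, and $d(\xi,\eta)\gtrsim|y-y'|^{(1-\alpha)/\mathfrak a}$ when $|y-y'|\ge|x|^{\mathfrak a}$. We then split by the size of $|y-y'|$.

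In the near region $|y-y'|\le|x|^{\mathfrak a}$, we bound the minimum by $r_j^{(1-\alpha)(1-\mathcal Q)}\sim|x|^{(1-\alpha)(1-\mathcal Q)}$. Together with the prefactor $|x|^{\alpha-1}$ this gives $|x|^{-\mathcal Q(1-\alpha)}$ on $\{|x'|\le\kappa|x|,\ |y-y'|\le|x|^{\mathfrak a}\}$. This piece is controlled by $T_1$, which is bounded for all $1<p\le\infty$ by Proposition~\ref{T1,T2}(i).

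In the far region $|y-y'|\ge|x|^{\mathfrak a}$, the kernel is at most $|x|^{-(1-\alpha)}|y-y'|^{-(\mathcal Q-1)(1-\alpha)/\mathfrak a}$. This is precisely the $T_2$ kernel with $\varepsilon=1-\alpha$, and Proposition~\ref{T1,T2}(iii) makes it bounded for $\frac{n}{n-1+\alpha}<p\le\infty$. All constants depend only on $\delta$ and the structural constants, not on $j$ or $\varepsilon$, which gives the claimed uniform bound for $\sup_j\|R_{j,2}\|_{p\to p}$. The cutoffs $\mathbf 1_{4B_n^j}$ and $\mathcal X_j$ only shrink supports and do not affect positivity of the dominating kernels.

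The main obstacle is the far-region step. The loss $|x|^{\alpha-1}$ from the gradient bound cannot be absorbed by a $t^{-1/2}$ factor, and it forces the restriction $p>\frac{n}{n-1+\alpha}$ through the $x'$-integration over $B_n(0,\kappa|x|)$ in Proposition~\ref{T1,T2}(iii). To make the reduction valid, I will check carefully that the supports satisfy $|x'|\le\kappa|x|$, which uses the choice of $\delta$ in Remark~\ref{remark_delta}. I will also check that $|x|\gtrsim r_j$ holds uniformly on $4B_n^j$, which follows from Lemma~\ref{lemma_remotecover}(ii).
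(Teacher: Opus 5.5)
Your proposal is correct and follows essentially the paper's route. The steps match: the pointwise bound from Lemma~\ref{Grushin_gradient}(ii) with $t^{-1/2}\lesssim r_j^{\alpha-1}\sim|x|^{\alpha-1}$ and $V(\xi,\sqrt t)\sim t^{\mathcal Q/2}$ for $t\ge r_j^{2-2\alpha}$, then a split of $|y-y'|$ at the scale $r_j^{\mathfrak a}\sim|x|^{\mathfrak a}$, then domination of the far part by $T_2$ with $\varepsilon=1-\alpha$ (which is where $p>\frac{n}{n-1+\alpha}$ enters). The only difference is the near part. You use the endpoint bound $C r_j^{-\mathcal Q(1-\alpha)}$ and reduce directly to $T_1$, whereas the paper keeps a weak singularity $|y-y'|^{-(\mathcal Q-1-\varepsilon)}$ with $\varepsilon<\mathcal Q-1$ and applies H\"older to reach $M_qf$; your version is slightly simpler and equally valid.
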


\begin{proof}
To simplify the notation, set
\begin{equation}\label{sigma}
    \sigma = \frac{|y-y'|}{r_j^\beta + |y-y'|^{\frac{\beta}{\mathfrak a}}}.
\end{equation}
For $x,x'\in 4 B_n^j$, Lemma~\ref{lemma_remotecover} implies that $|x|,|x'|\sim r_j$, and hence
\begin{equation*}
    \bigl|\mathbf{1}_{4B_n^j}(x) \nabla_L e^{-tL}(\xi,\eta) \mathbf{1}_{B_n^j}(x')\bigr|\le \Bigl( \frac{1}{\sqrt{t}} + \frac{1}{r_j^{1-\alpha}} \Bigr) \frac{C}{V(\xi,\sqrt{t})} \exp\!\Bigl( - \frac{\sigma^2}{c t} \Bigr).
\end{equation*}
A straightforward computation gives
\begin{align*}
    |R_{j,2}(\xi,\eta)|
    &\le C \mathbf{1}_{4B_n^j}(x) \mathbf{1}_{B_n^j}(x') r_j^{\alpha-1} \sigma^{-\mathcal{Q}+1} \int_0^{\sigma^2/r_j^{2-2\alpha}} e^{-s} s^{\frac{\mathcal{Q}-1}{2}-1} \, ds\\
    &\le C \mathbf{1}_{4B_n^j}(x) \mathbf{1}_{B_n^j}(x') r_j^{(\alpha-1)(1+\varepsilon)} \sigma^{-\mathcal{Q}+1+\varepsilon},\quad \forall\, 0\le \varepsilon < \mathcal{Q}-1.
\end{align*}
Observe that
\begin{equation*}
    \sigma \sim \begin{cases}
        r_j^{-\beta} |y-y'|, & |y-y'|\le r_j^{\mathfrak a},\\[4pt]
        |y-y'|^{\frac{1-\alpha}{\mathfrak a}}, & |y-y'|\ge r_j^{\mathfrak a}.
    \end{cases}
\end{equation*}
It follows that
\begin{align*}
    |R_{j,2}f(\xi)|
    &\le C \mathbf{1}_{4B_n^j}(x) r_j^{\beta(\mathcal{Q}-1-\varepsilon) - (1-\alpha)(1+\varepsilon)} \int_{B_n^j \times B_m( y, r_j^{\mathfrak a} )} \frac{|f(\eta)|}{|y-y'|^{\mathcal{Q}-1-\varepsilon}} \, d\eta\\
    &\quad + C \mathbf{1}_{4B_n^j}(x) r_j^{\alpha-1} \int_{B_n^j \times B_m( y, r_j^{\mathfrak a} )^c} \frac{|f(\eta)|}{|y-y'|^{\frac{\mathcal{Q}(1-\alpha)-(1-\alpha)}{\mathfrak a}}} \, d\eta\\
    &:= I_1f + I_2f.
\end{align*}
Exactly the same argument as in Proposition~\ref{T1,T2}~(iii), with $|x|$ replaced by $r_j$, shows that $I_2$ is bounded on $L^p$ for $p>\frac{n}{n-1+\alpha}$. It remains to estimate $I_1$. Fix $q>1$. Using Lemma~\ref{lemma_remotecover} and H\"older's inequality, one obtains
\begin{align}\label{I1}
    |I_1f(\xi)|
    &\le C  \mathbf{1}_{4B_n^j}(x) r_j^{\beta(\mathcal{Q}-1-\varepsilon) - (1-\alpha)(1+\varepsilon)} \biggl( \int_{B_n(0, 2^{11+\delta} r_j) \times B_m (y, r_j^{\mathfrak a} )} |f(\eta)|^q \, d\eta \biggr)^{\!\frac{1}{q}} \nonumber\\
    &\qquad\times \biggl( \int_{B_n^j} \int_{B_m(y,r_j^{\mathfrak a})} \frac{dy'\,dx'}{|y-y'|^{q'(\mathcal{Q}-1-\varepsilon)}} \biggr)^{\!\frac{1}{q'}}.
\end{align}
First, Lemma~\ref{le_volume} gives
\begin{equation*}
    B_n\bigl(0, 2^{11+\delta} r_j\bigr) \times B_m \bigl(y, r_j^{\mathfrak a} \bigr) \subset B\bigl( (0,y), 2^{11+\delta} c_3^{-1} r_j^{1-\alpha} \bigr).
\end{equation*}
Since $x\in 4B_n^j \subset B_n(0, 2^{11+\delta} r_j)$, the first factor in \eqref{I1} is bounded by
\begin{equation}\label{I1(1)}
    C r_j^{\frac{\mathcal{Q}(1-\alpha)}{q}} M_q f(\xi).
\end{equation}
Second, if we choose $\mathcal{Q}-1-\frac{m}{q'}<\varepsilon<\mathcal{Q}-1$, then a direct computation shows that the second factor in \eqref{I1} equals
\begin{equation}\label{I1(2)}
    C |B_n^j|^{\frac{1}{q'}} \biggl(\int_0^{r_j^{\mathfrak a}} s^{-q'(\mathcal{Q}-1-\varepsilon) +m-1} \, ds \biggr)^{\!\frac{1}{q'}} = C r_j^{\frac{n + (\mathfrak a)( m-q'(\mathcal{Q}-1-\varepsilon) )}{q'}}.
\end{equation}
Combining \eqref{I1(1)} and \eqref{I1(2)}, we conclude that
\begin{equation*}
    |I_1f(\xi)| \le C M_q f(\xi),
\end{equation*}
and the $L^p$-boundedness of $I_1$ for $1<p\le \infty$ follows from the maximal theorem. This completes the proof of Proposition~\ref{Rj,2}.
\end{proof}

Our next goal is to analyze $R_{j,1}$. Decompose
\begin{equation}\label{furthersplit}
    R_{j,1}(\xi,\eta) = R_{j,1}(\xi,\eta) \mathbf{1}_{|y-y'|\le r_{j}^{\mathfrak a}} + R_{j,1}(\xi,\eta) \mathbf{1}_{|y-y'|\ge r_{j}^{\mathfrak a}}:= R_{j,1}^1(\xi,\eta) + R_{j,1}^2(\xi,\eta).
\end{equation}

\begin{lemma}\label{Rj,1,2}
We have
    \begin{equation*}
        \sup_{j\ge 1} \|R_{j,1}^2\|_{p\to p} <\infty
    \end{equation*}
    for all $1<p\le \infty$.
\end{lemma}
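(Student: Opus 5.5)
We bound the kernel of $R_{j,1}^2$ pointwise and dominate the operator by a maximal-type operator, following the pattern of Proposition~\ref{Rj,2}. Fix $j\ge1$, $x\in 4B_n^j$ and $x'\in B_n^j$, so that $|x|\sim|x'|\sim r_j$ by Lemma~\ref{lemma_remotecover}. On the support of $R^2_{j,1}$ we have $|y-y'|\ge r_j^{\mathfrak a}$. By Theorem~\ref{RS_Grushin}(i), $d(\xi,\eta)\gtrsim\sigma\sim|y-y'|^{\frac{1-\alpha}{\mathfrak a}}\gtrsim r_j^{1-\alpha}$, where $\sigma$ is as in \eqref{sigma}.

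\textbf{Pointwise kernel bound.} In the range $t\le r_j^{2-2\alpha}$, Lemma~\ref{Grushin_gradient}(ii) gives the factor $t^{-1/2}+r_j^{\alpha-1}\le 2t^{-1/2}$. Also $\sqrt t\le r_j^{1-\alpha}\lesssim |x|^{1-\alpha}$, so Theorem~\ref{RS_Grushin}(ii) gives $V(\xi,\sqrt t)\sim t^{\frac{n+m}{2}}r_j^{n\alpha+m\beta}$; for $\sqrt t$ slightly larger than $\varepsilon_0|x|^{1-\alpha}$ one uses doubling instead. Hence
\begin{equation*}
|R_{j,1}^2(\xi,\eta)|\le C\,r_j^{-n\alpha-m\beta}\int_0^{r_j^{2-2\alpha}} t^{-\frac{n+m}{2}-1}e^{-\sigma^2/(ct)}\,dt .
\end{equation*}
The exponent satisfies $\sigma^2/t\gtrsim (|y-y'|/r_j^{\mathfrak a})^{\frac{2(1-\alpha)}{\mathfrak a}}\ge1$. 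We pull out an arbitrarily large power of the Gaussian:
\begin{equation*}
|R_{j,1}^2(\xi,\eta)|\le C_M\, r_j^{-\mathcal Q(1-\alpha)}\Bigl(\frac{|y-y'|}{r_j^{\mathfrak a}}\Bigr)^{-M}\mathbf 1_{4B_n^j}(x)\mathbf 1_{B_n^j}(x')\mathbf 1_{|y-y'|\ge r_j^{\mathfrak a}} .
\end{equation*}
This follows from $t^{-\frac{n+m}{2}}e^{-\sigma^2/(2ct)}\lesssim r_j^{-(n+m)(1-\alpha)}(\sigma^2/r_j^{2-2\alpha})^{-K}$ and $r_j^{(n+m)(1-\alpha)+n\alpha+m\beta}=r_j^{\mathcal Q(1-\alpha)}$. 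The remaining $dt/t$ integral is controlled by the leftover Gaussian factor $e^{-\sigma^2/(2ct)}$, which is integrable in $dt/t$ over $(0,r_j^{2-2\alpha}]$ uniformly, since $\sigma\gtrsim r_j^{1-\alpha}$.

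\textbf{Maximal function bound.} We decompose dyadically in $|y-y'|\sim 2^k r_j^{\mathfrak a}$ for $k\ge0$. This gives
\begin{equation*}
|R^2_{j,1}f(\xi)|\le C\sum_{k\ge0}2^{-kM}r_j^{-\mathcal Q(1-\alpha)}\int_{B_n^j\times B_m(y,2^{k+1}r_j^{\mathfrak a})}|f| .
\end{equation*}
By Lemma~\ref{le_volume} \eqref{le_volume3}, $B_n^j\times B_m(y,2^{k+1}r_j^{\mathfrak a})\subset B((0,y),C2^{k}r_j^{1-\alpha})$, a ball containing $\xi$ whose volume is $\lesssim 2^{k\mathcal Q}r_j^{\mathcal Q(1-\alpha)}$ by \eqref{D}. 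Choosing $M>\mathcal Q$ therefore yields $|R^2_{j,1}f|\le C Mf$, with $C$ independent of $j$ and $\varepsilon$. Boundedness on $L^p$ for $1<p\le\infty$ then follows from the maximal theorem.

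\textbf{Main obstacle.} The only delicate point is the volume bookkeeping near $t\approx r_j^{2-2\alpha}$. There the remote-ball volume formula $t^{\frac{n+m}2}|x|^{n\alpha+m\beta}$ must be shown to hold uniformly up to a constant multiple of $|x|^{1-\alpha}$. We plan to handle this via the comparability $r_j\sim|x|$, with constants depending on $\delta$, together with doubling \eqref{D}.
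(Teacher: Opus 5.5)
Your argument is correct and is essentially the paper's: you use the small-time Gaussian gradient bound together with the remote volume $V(\xi,\sqrt t)\sim t^{\frac{n+m}{2}}r_j^{n\alpha+m\beta}$, then dominate by a maximal function. The only difference is that you keep arbitrarily high decay $(|y-y'|/r_j^{\mathfrak a})^{-M}$ and sum directly against $Mf$, whereas the paper keeps only the critical decay $|y-y'|^{-\mathcal Q(1-\alpha)/\mathfrak a}$ and quotes the $\varepsilon=0$ case of Proposition~\ref{T1,T2}, which needs $M_q$ with $q>1$ for the dyadic sum to converge. The obstacle you flag is not one: on $4B_n^j$ one has $r_j\le|x|$, so Theorem~\ref{RS_Grushin}(ii) applies directly for all $t\le r_j^{2-2\alpha}$ without any appeal to doubling.
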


\begin{proof}
Recall \eqref{sigma}. By Lemma~\ref{Grushin_gradient} and Lemma~\ref{RS_Grushin}, the kernel satisfies the estimate
\begin{align*}
    |R_{j,1}^2(\xi,\eta)|
    &\le C \mathbf{1}_{4B_n^j}(x) \mathbf{1}_{B_n^j}(x') \mathbf{1}_{|y-y'|\ge r_{j}^{\mathfrak a}} r_j^{-n\alpha-m\beta} \sigma^{-n-m} \int_{\sigma^2/r_j^{2-2\alpha}}^\infty e^{-s} s^{\frac{n+m}{2}-1} \, ds\\
    &\le C \mathbf{1}_{4B_n^j}(x) \mathbf{1}_{B_n^j}(x') \mathbf{1}_{|y-y'|\ge r_{j}^{\mathfrak a}} r_j^{-n\alpha-m\beta+\rho(1-\alpha)} \sigma^{-n-m-\varepsilon}\\
    &\le C \mathbf{1}_{4B_n^j}(x) \mathbf{1}_{B_n^j}(x') \mathbf{1}_{|y-y'|\ge r_{j}^{\mathfrak a}} |y-y'|^{-\frac{\mathcal{Q}(1-\alpha)}{\mathfrak a}},
\end{align*}
where in the last step we choose $\varepsilon = \frac{n\alpha+m\beta}{1-\alpha}$. The conclusion then follows from Lemma~\ref{T1,T2}~(ii), with $|x|$ replaced by $r_j$.
\end{proof}

It remains to prove the $L^p$-boundedness of $R_{j,1}^1$. Recall the following theorem from \cite{ACDH}.

\begin{theorem}[{\cite[Theorem~2.4]{ACDH}}]\label{thm_ACDH}
Let $(M,d,\mu)$ be a metric space satisfying the doubling condition. Suppose that $T$ is a bounded sublinear operator which is bounded on $L^2(M,\mu)$, and let $A_r$, $r>0$, be a family of linear operators acting on $L^2(M,\mu)$. Let $E_1,E_2$ be two subsets of $M$ such that $E_1\subset E_2$ and $\mu(E_2)<\infty$. Assume that
\begin{align}\label{ACDH1}
    M^{\#}_{E_2}f(\xi) := \sup_{B\ni \xi} \biggl(\frac{1}{\mu(B\cap E_2)} \int_{B\cap E_2} |T(I-A_{r(B)})f|^2 \, d\mu\biggr)^{\!\frac{1}{2}},
\end{align}
is bounded from $L^p(E_1)$ to $L^p(E_2)$ for all $p\in (2,\infty)$, and that there exists a sublinear operator $S$ bounded from $L^p(E_1)$ to $L^p(E_2)$ for all $p\in (2,\infty)$ such that
\begin{align}\label{ACDH2}
    \sup_{\eta\in B\cap E_2} |TA_{r(B)}f(\eta)| \le C\bigl( M_{E_2}(|Tf|^2)^{\frac{1}{2}}(\xi) + Sf(\xi) \bigr),
\end{align}
for all $f\in L^2$ supported in $E_1$, all balls $B\subset M$, and all $\xi\in B\cap E_2$, where $r(B)$ denotes the radius of $B$. If $2<p<\infty$ and $Tf\in L^p(E_2)$ whenever $f\in L^p(E_1)$, then $T$ is bounded from $L^p(E_1)$ to $L^p(E_2)$, and its operator norm is bounded by a constant depending only on the operator norm of $T$ on $L^2$, the $L^p(E_1)\to L^p(E_2)$ operator norms of $M_{E_2}^{\#}$ and $S$, the doubling constant $C_D$, $p$, and the constants $C$ from \eqref{ACDH1} and \eqref{ACDH2}.
\end{theorem}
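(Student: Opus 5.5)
The plan is to prove the statement by a good-$\lambda$ argument in the spirit of the original Auscher--Coulhon--Duong--Hofmann proof, working throughout with the restricted measure $\mu|_{E_2}$. Fix $2<p<\infty$ and $f\in L^2\cap L^p(E_1)$, and set $F:=|Tf|^2\mathbf 1_{E_2}$. Write $M_{E_2}$ for the uncentred maximal function relative to $E_2$,
\begin{equation*}
M_{E_2}g(\xi)=\sup_{B\ni\xi}\frac{1}{\mu(B\cap E_2)}\int_{B\cap E_2}|g|\,d\mu ,
\end{equation*}
and aim to show that $M_{E_2}F\in L^{p/2}(E_2)$ with
\begin{equation*}
\|M_{E_2}F\|_{L^{p/2}(E_2)}\le C\Bigl(\|M^\#_{E_2}f\|_{L^p(E_2)}^2+\|Sf\|_{L^p(E_2)}^2\Bigr)+\tfrac12\|M_{E_2}F\|_{L^{p/2}(E_2)} .
\end{equation*}
Since $Tf\in L^p(E_2)$ by hypothesis, and $M_{E_2}$ is bounded on $L^{p/2}(E_2)$ because $p/2>1$, the last term is finite and can be absorbed. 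Combined with the assumed $L^p(E_1)\to L^p(E_2)$ bounds for $M^\#_{E_2}$ and $S$, this gives the theorem. This is exactly where the qualitative assumption $Tf\in L^p(E_2)$ is used.

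\textbf{Good-$\lambda$ estimate.} The core step is the following inequality. For $K$ large and every $\gamma\in(0,1)$,
\begin{equation*}
\mu\bigl(\{M_{E_2}F>K\lambda\}\cap\{(M^\#_{E_2}f)^2+(Sf)^2\le\gamma\lambda\}\cap E_2\bigr)\le C\Bigl(\frac1K+\frac{\gamma}{K}\Bigr)\mu\bigl(\{M_{E_2}F>\lambda\}\cap E_2\bigr).
\end{equation*}
Multiplying by $\lambda^{p/2-1}$ and integrating in $\lambda$ gives the displayed estimate once $K$ is fixed large and then $\gamma$ small.

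To prove it, Whitney-decompose the open set $\Omega_\lambda=\{M_{E_2}F>\lambda\}$, which has finite measure because $\mu(E_2)<\infty$. Doubling is used here, but only through $\mu$ itself. Fix a Whitney ball $B$ that contains a point $\xi_0$ of the good set. On $B$, split
\begin{equation*}
Tf=T(I-A_{r(B)})f+TA_{r(B)}f .
\end{equation*}
For the second piece, \eqref{ACDH2} bounds $\sup_{B\cap E_2}|TA_{r(B)}f|^2$ by $C(M_{E_2}F(\xi_0)+Sf(\xi_0)^2)$. The Whitney property, namely a dilate of $B$ meeting $\Omega_\lambda^c$, gives $M_{E_2}F\lesssim\lambda$ at suitable nearby points. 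Combined with $Sf(\xi_0)^2\le\gamma\lambda$, this makes the second piece bounded by $C_0\lambda$. For the first piece, the defining property \eqref{ACDH1} gives
\begin{equation*}
\frac{1}{\mu(B\cap E_2)}\int_{B\cap E_2}|T(I-A_{r(B)})f|^2\,d\mu\le M^\#_{E_2}f(\xi_0)^2\le\gamma\lambda .
\end{equation*}
Weak type $(1,1)$ of the localized maximal operator then controls the portion of $B\cap E_2$ where $M_{E_2}(|T(I-A_{r(B)})f|^2\mathbf 1_{B})$ exceeds a fixed multiple of $\lambda$ by $C\gamma K^{-1}\mu(B\cap E_2)$. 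A standard localization step handles the contributions from balls much larger than $B$, which are bounded by $C\lambda$. Summing over the Whitney balls gives the estimate.

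\textbf{Main obstacle.} I expect the main difficulty to be the localization to $E_2$. The measure $\mu|_{E_2}$ need not be doubling, so the Whitney decomposition and the weak-type bound for $M_{E_2}$ cannot be taken for granted. I would handle this by decomposing the ambient doubling space and normalizing every average by $\mu(B\cap E_2)$; the uncentred weak $(1,1)$ bound for $M_{E_2}$ then follows from the Vitali covering lemma in $(M,d,\mu)$, since it only requires doubling of the covering balls in the ambient metric. I would also need to check that restricting the good set to $E_2$ keeps the Whitney comparison $\mu(B\cap E_2)\lesssim\mu(\Omega_\lambda\cap E_2\cap B^*)$. The constants obtained depend only on $C_D$, $p$, $\|T\|_{2\to2}$, the norms of $M^\#_{E_2}$ and $S$, and the constants in \eqref{ACDH1}--\eqref{ACDH2}, as the statement asserts.
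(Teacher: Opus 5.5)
The paper gives no proof of this statement; it is imported from \cite{ACDH}. A good-$\lambda$/Whitney argument of the kind you describe is the standard route, but your outline has genuine gaps.

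\textbf{The role of $\mu(E_2)<\infty$.} You claim that the open set $\Omega_\lambda=\{M_{E_2}F>\lambda\}\subset M$ has finite measure; this is false in general. If $E_2$ is bounded, as in the application, every point of $M$ lies in a ball containing $E_2$. Hence $M_{E_2}F\ge\lambda_*:=\mu(E_2)^{-1}\int_{E_2}|Tf|^2\,d\mu$ everywhere, so $\Omega_\lambda=M$ for $\lambda<\lambda_*$ and no Whitney decomposition exists. The low levels $\lambda\le C\lambda_*$ must be treated separately with the trivial bound $\mu(\{M_{E_2}F>K\lambda\}\cap E_2)\le\mu(E_2)$. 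By H\"older on $E_1$, their contribution is $\lesssim K^{p/2}\lambda_*^{p/2}\mu(E_2)\le K^{p/2}\|T\|_{2\to2}^p\|f\|_p^p\,(\mu(E_1)/\mu(E_2))^{p/2-1}$. This is where $\mu(E_2)<\infty$ and $E_1\subset E_2$ are used, and it explains why the constant does not depend on $\mu(E_2)$, consistent with the remark after the statement. Your outline never uses $E_1\subset E_2$.

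\textbf{The good-$\lambda$ factor.} The factor $C(K^{-1}+\gamma K^{-1})$ you display cannot be absorbed for $p>2$, because $K^{p/2}\cdot CK^{-1}$ grows with $K$. Your own sup bound on $TA_rf$ shows that this piece contributes nothing once $K$ is large. The factor should therefore be $C\gamma K^{-1}$ for $K\ge K_0$.

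\textbf{The relative maximal function.} Your resolution of the obstacle you flag does not work. Vitali only gives $\mu(\{M_{E_2}g>t\}\cap E_2)\le\sum_i\mu(5B_i\cap E_2)$, and doubling of $\mu$ gives no control of $\mu(5B_i\cap E_2)/\mu(B_i\cap E_2)$. In fact $M_{E_2}$ fails to be of weak type $(1,1)$ on $E_2$ already for $M=\mathbb R^2$ and $E_2=[0,1]^2$. For $\epsilon\le x_2\le x_1\le 1/3$, the ball $B((0,2x_2-R),R)$ with $R=9x_1^2/(4x_2)$ contains both $x$ and $[0,\epsilon]^2$, and meets $E_2$ in a set of measure $\le 6x_1x_2$. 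So for $g=\epsilon^{-2}\mathbf{1}_{[0,\epsilon]^2}$ we get $M_{E_2}g(x)\ge(6x_1x_2)^{-1}$. At height $t\simeq\epsilon^{-1}$ the level set then has measure $\gtrsim\epsilon\log(1/\epsilon)$, while $\|g\|_{L^1(E_2)}=1$.

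The same uncontrolled ratio $\mu(\tilde B\cap E_2)/\mu(B\cap E_2)$, for $B\subset\tilde B$ of comparable radius, enters three of your steps:
\begin{enumerate}[label=(\roman*)]
\item bounding averages over large balls by $C\lambda$ through a Whitney point;
\item comparing a Whitney ball with its dilate;
\item the $L^{p/2}(E_2)$-boundedness of $M_{E_2}$ that you invoke for the absorption.
\end{enumerate}
A covering or relative-doubling input is missing here, and ambient doubling does not supply it.

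\textbf{The estimate for $TA_{r(B)}f$.} You evaluate $M_{E_2}F$ at a Whitney point outside $\Omega_\lambda$, which need not even lie in $E_2$, but evaluate $Sf$ at the good point $\xi_0$. However, \eqref{ACDH2} bounds the supremum by the sum of both terms at a single $\xi\in B\cap E_2$, and $M_{E_2}F(\xi_0)>\lambda$ is not controlled. You need one point of the dilated ball lying in $E_2$ where both terms are $\lesssim\lambda$. One way is to put a maximal function of $|Sf|^2$ into the good set and use Chebyshev together with a weak-type bound for the localized $M_{E_2}F$. That again relies on the missing maximal-function estimates.
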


\begin{remark}
In \eqref{ACDH2}, $M_E$ denotes the Hardy--Littlewood maximal operator relative to a measurable subset $E$:
\begin{equation*}
    M_E f(\xi) := \sup_{B\ni \xi} \frac{1}{\mu(B \cap E)} \int_{B\cap E} |f| \, d\mu, \qquad f\in L_{\mathrm{loc}}^1(M).
\end{equation*}
We emphasize that the assumption $\mu(E_2)<\infty$ is crucial in the proof of Theorem~\ref{thm_ACDH}; see \cite[Lemma~2.5, 2.6]{ACDH}. However, the operator norm $\|T\|_{L^p(E_1)\to L^p(E_2)}$ does not depend on the size of $E_2$, that is, on $\mu(E_2)$.
\end{remark}

\begin{proposition}\label{Rj,1,1}
We have
    \begin{equation*}
        \sup_{j\ge 1} \|R_{j,1}^1\|_{p\to p} <\infty
    \end{equation*}
    for all $2<p<\infty$.
\end{proposition}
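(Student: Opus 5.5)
We apply Theorem~\ref{thm_ACDH} with $T=R_{j,1}^1$ (or rather the full localized operator $f\mapsto \mathbf 1_{4B_n^j}\mathcal R(f\mathcal X_j)$ truncated in time), with $E_1 := B_n^j\times\mathbb R^m$ and $E_2 := 4B_n^j\times\mathbb R^m$. To respect the requirement $\mu(E_2)<\infty$, we further cut $E_2$ in the $y$-variable into boxes $4B_n^j\times Q$ with $Q$ a cube of side comparable to $r_j^{\mathfrak a}$. This is harmless because the kernel of $R^1_{j,1}$ is supported in $|y-y'|\le r_j^{\mathfrak a}$, so the pieces interact only with finitely many neighbours and the $L^p$ bounds can be summed. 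On such a region every point satisfies $|x|\sim r_j$, so all balls of radius $\lesssim \varepsilon_0 r_j^{1-\alpha}$ centered there are remote. After the intrinsic rescaling $x\mapsto x_j+r_j^{\alpha}\cdot$, $y\mapsto y+r_j^\beta\cdot$ from Lemma~\ref{PE}, the geometry becomes uniformly Euclidean with uniformly elliptic coefficients. We choose $A_r := I-(I-e^{-r^2L})^K$ for a large integer $K$, as in \cite{ACDH}.

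The $L^2$ boundedness of $T$ follows from that of $\mathcal R$ together with the $L^2$ bounds for $R_{j,1}^2$ and $R_{j,2}$ (Lemma~\ref{Rj,1,2}, Proposition~\ref{Rj,2}), uniformly in $j$. For \eqref{ACDH1}, we use the Gaussian bounds \eqref{UE} and Lemma~\ref{Grushin_gradient} to get off-diagonal $L^2$ estimates for $\nabla_L e^{-tL}(I-e^{-r^2L})^K$. Following \cite{ACDH}, this bounds $M^\#_{E_2}f$ by $C M_{E_2}(|f|^2)^{1/2}$, which is bounded on $L^p$ for $p>2$; relative doubling on $E_2$ holds because $E_2$ is, up to the rescaling, a Euclidean box. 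For balls with $r(B)\gtrsim r_j^{1-\alpha}$, the time truncation $t\le r_j^{2-2\alpha}$ in $R_{j,1}$ makes $T$ act essentially trivially at scale $r(B)$. We handle these balls by direct size estimates of the kernel, exactly as in Proposition~\ref{Rj,2}, which produce a term dominated by $M_qf$ with $q<p$; this is absorbed into $S$.

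The main step is \eqref{ACDH2}: we need to control $\sup_{B\cap E_2}|\nabla_L e^{-kr^2L}f|$ (expanding $A_r$) by $M_{E_2}(|Tf|^2)^{1/2}+Sf$. Here we use that $e^{-kr^2L}f$ is a solution of the heat equation. On remote balls, the local Li--Yau gradient estimate \cite{ZZ} with curvature bound $-C/r_j^{2-2\alpha}$ (Lemma~\ref{Grushin_gradient}(i)) holds, as in Lemma~\ref{RH_remote}, together with a parabolic Caccioppoli/telescoping argument (Lemma~\ref{le_tele} and Lemma~\ref{PE}). Since $r(B)\lesssim r_j^{1-\alpha}$, the curvature term contributes at most $r(B)^{-1}$. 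This yields an $L^\infty$--$L^2$ reverse Hölder estimate for $\nabla_L e^{-kr^2L}f$ on $B$ in terms of averages of $|\nabla_L e^{-kr^2 L} f|$ over dilates. Via the identity $\nabla_L e^{-kr^2L}f=\nabla_L L^{-1/2}e^{-kr^2L}L^{1/2}$ and the commutation with the semigroup, these averages are bounded by $M_{E_2}(|Tf|^2)^{1/2}$ plus tail terms, and the tail terms are collected into $S$, a sum of maximal functions. Finally, $Tf\in L^p(E_2)$ for $f\in L^p(E_1)$ holds qualitatively on the bounded pieces, since the kernel is locally integrable there for fixed $j$. We expect the hardest part to be checking that all constants are independent of $j$ (and of $\varepsilon$). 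This is guaranteed by the scale invariance $\delta_r$ and by the choice of $\delta$ in Remark~\ref{remark_delta}, which keeps all the relevant dilates remote.
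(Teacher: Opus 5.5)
\textbf{Verdict: genuine gap in the key step.} Your framework matches the paper's. You localize to $E_1=B_n^j\times B_m^k\subset E_2=4E_1$ with $B_m^k$ of radius $r_j^{\mathfrak a}$, and apply Theorem~\ref{thm_ACDH} to the time-truncated operator $T=\int_0^{r_j^{2-2\alpha}}\nabla_Le^{-tL}\,\frac{dt}{\sqrt t}$. This $T$, rather than $R^1_{j,1}$ with its $y$-cutoff, is the right choice: only then can $TA_rf$ be expressed through $h:=\int_0^{r_j^{2-2\alpha}}e^{-tL}f\,\frac{dt}{\sqrt t}$, with $|\nabla_Lh|=Tf$. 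One returns to $R^1_{j,1}=R_{j,1}-R^2_{j,1}$ via Lemma~\ref{Rj,1,2}. Taking $A_r=I-(I-e^{-r^2L})^K$ instead of $e^{-r^2L}$ is harmless; because of the time truncation, $K=1$ already suffices. The gap is in the decisive step \eqref{ACDH2} for balls of radius $r\le r_j^{1-\alpha}$.

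There you must bound $\sup_{B\cap E_2}|\nabla_Le^{-kr^2L}h|$ (not $\nabla_Le^{-kr^2L}f$) by local averages of $|\nabla_Lh|$. Your plan first bounds $\nabla_Le^{-kr^2L}h$ in terms of its own averages at time $\sim r^2$. It then asserts these are controlled by averages of $|\nabla_Lh|$ via $\nabla_Le^{-sL}=\nabla_LL^{-1/2}e^{-sL}L^{1/2}$ and commutation with the semigroup. That gives no locality, since $\nabla_LL^{-1/2}$ is nonlocal. Relating $\nabla_Le^{-sL}h$ locally to $\nabla_Lh$ needs an extra input, which here is the Poincar\'e inequality on remote balls.

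The paper uses it as follows. Let $\widetilde B\supset E_2$ be a ball of radius $\sim r_j^{1-\alpha}$, chosen so that $6\gamma\widetilde B$ is remote (Remark~\ref{remark_delta}). By stochastic completeness,
\begin{equation*}
\nabla_Le^{-r^2L}h(z)=\int\nabla_Le^{-r^2L}(z,\eta)\,\bigl(h(\eta)-h(\xi)\bigr)\,d\eta .
\end{equation*}
On $3\widetilde B$, Lemma~\ref{le_tele} bounds $|h(\eta)-h(\xi)|$ by $d(\xi,\eta)$ times $M_2(|\nabla_Lh|\mathbf 1_{6\gamma\widetilde B})$ evaluated at $\xi$ and $\eta$. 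The pointwise bound of Lemma~\ref{Grushin_gradient}(ii) is of size $r^{-1}$ when $r\le r_j^{1-\alpha}$, so it absorbs the factor $d(\xi,\eta)/r$. No Li--Yau or parabolic Caccioppoli step is needed.

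Two further ingredients, absent from your sketch, are essential:
\begin{enumerate}[label=(\roman*)]
\item $|\nabla_Lh|$ on $6\gamma\widetilde B\setminus E_2$ is invisible to $M_{E_2}(|Tf|^2)$. It must be controlled by off-diagonal decay using $\operatorname{supp}f\subset E_1$, which produces $M_2(Mf)$.
\item On $(3\widetilde B)^c$ one only has $|h(\eta)|+|h(\xi)|$, which produces $r_j^{\alpha-1}(Mh+|h|)$. This is an admissible $S$ only because $\|h\|_p\le Cr_j^{1-\alpha}\|f\|_p$.
\end{enumerate}

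Smaller points:
\begin{enumerate}[label=(\roman*)]
\item Proposition~\ref{Rj,2} does not include $p=2$ when $n=1$ and $\alpha\le\frac12$, so it cannot supply the $L^2$ bound. Use spectral calculus instead, as the paper does: $\sup_{\lambda>0}\int_0^{r_j^{2-2\alpha}}\lambda^{1/2}e^{-t\lambda}\frac{dt}{\sqrt t}\le\Gamma(\frac12)$.
\item The kernel of $T$ has a Calder\'on--Zygmund singularity of size $V(\xi,d(\xi,\eta))^{-1}$ on the diagonal, which is not locally integrable. So the qualitative hypothesis $Tf\in L^p(E_2)$ should be obtained by truncating at $t\ge\epsilon$, proving bounds uniform in $\epsilon$, and letting $\epsilon\to0$.
\item The large-ball estimates you describe belong to the bound for $M^\#_{E_2}$ (for instance $M^\#_{E_2}f\lesssim M_{E_2}(|f|^2)^{1/2}+Mf$), not to $S$.
\end{enumerate}
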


\begin{proof}
For each $j\ge 1$, let
\begin{equation*}
    \bigl\{B_m^k = B_m \bigl(y_k, r_j^{\mathfrak a}\bigr)\bigr\}_k
\end{equation*}
be a maximal $r_j^{\mathfrak a}$-separated subset of $\mathbb{R}^m$ such that
\begin{enumerate}[label=(\roman*)]
    \item $\mathbb{R}^m = \bigcup_k B_m^k$,
    \item for $k\ne l$, $\frac{B_m^k}{2} \cap \frac{B_m^l}{2} = \emptyset$,
    \item $\sum_k \mathbf{1}_{B_m^k}(y)\le C$ for all $y\in \mathbb{R}^m$,
\end{enumerate}
where the finite overlap constant $C>0$ is independent of $r_j$.

Let $\{\mathcal{Y}_k\}_k$ be a smooth partition of unity subordinate to $\{B_m^k\}_k$. Since $R_{j,1}^1(\xi,\eta)$ is supported in the region $|y-y'|\le r_j^{\mathfrak a}$, we infer that
\begin{equation*}
    |R_{j,1}^1f| \le \sum_k \bigl| \mathbf{1}_{4B_n^j \times 4B_m^k} R_{j,1} \bigl( f \mathcal{Z}_{j,k} \bigr) \bigr|,
\end{equation*}
where
\begin{equation*}
    \mathcal{Z}_{j,k}(\eta) = \mathcal{X}_j(x') \mathcal{Y}_k(y').
\end{equation*}
Therefore, by the finite overlap property, it is enough to prove that
\begin{equation*}
    \sup_{j,k} \| R_{j,1} \|_{L^p( B_n^j \times B_m^k ) \to L^p( 4B_n^j \times 4B_m^k )} < \infty.
\end{equation*}
To this end, we apply Theorem~\ref{thm_ACDH} with $A_r = e^{-r^2 L}$, $E_1 = B_n^j \times B_m^k$, $E_2 = 4(B_n^j \times B_m^k)$, and
\begin{equation*}
    T = \int_0^{r_j^{2-2\alpha}} \nabla_L e^{-tL} \, \frac{dt}{\sqrt{t}}.
\end{equation*}
The $L^2$-boundedness of $T$ is immediate. Indeed,
\begin{align*}
    \|T\|_{2\to 2}
    &\le \bigl\| \nabla_L L^{-1/2} \bigr\|_{2\to 2} \; \biggl\| \int_0^{r_j^{2-2\alpha}} L^{1/2} e^{-tL} \, \frac{dt}{\sqrt{t}} \biggr\|_{2\to 2}\\
    &\le \sup_{\lambda >0} \int_0^{r_j^{2-2\alpha}} \lambda^{1/2} e^{-t \lambda} \, \frac{dt}{\sqrt{t}}
    \le \Gamma\Bigl(\frac{1}{2}\Bigr).
\end{align*}
It remains to verify \eqref{ACDH1} and \eqref{ACDH2}. Throughout the proof, let $B\subset \mathbb{R}^{n+m}$ be a ball of radius $r(B)=r$.

\textit{Claim 1:} \eqref{ACDH1} holds, in the sense that
\begin{equation*}
    M_{E_2}^{\#}f(\xi)^2 \le C \bigl(M_{E_2}(|f|^2)(\xi) + M f(\xi)\bigr)
\end{equation*}
for all $f\in L^2$ supported in $E_1$ and all $\xi \in E_2$.

The argument follows that of \cite[Subsection~3.2, Section~4]{ACDH}. We include it here for completeness. Let $f\in L^2$ be supported in $E_1$, and let $\xi \in B$. Write
\begin{equation*}
    f = f \mathbf{1}_{2B} + f ( 1- \mathbf{1}_{2B} ) =: f_1+f_2.
\end{equation*}
For $f_1$, the $L^2$-boundedness of $T$ and $A_r$ gives
\begin{align}\label{ACDHclaim1}
    \int_{B \cap E_2} |T(I-A_r)f_1|^2
    \le C \int_{2B \cap E_1} |f|^2
    \le C|B \cap E_2| \, M_{E_2}(|f|^2)(\xi),
\end{align}
where the last inequality follows from \eqref{D}.

Next, for $z\in B \cap E_2$, one has
\begin{align}\label{f2}
    |T(I-A_r)f_2(z)|\le \int_{\mathbb{R}^{n+m}} |K_r(z,\eta)| \, |f_2(\eta)| \, d\eta,
\end{align}
where
\begin{align*}
    K_r(z,\eta) = \int_0^\infty g_r(t) \mathbf{1}_{E_2}(z) \nabla_L e^{-tL}(z,\eta) \mathbf{1}_{E_1}(\eta) \, dt,
\end{align*}
and
\begin{equation*}
    g_r(t) = \frac{\mathbf{1}_{[0,r_j^{2-2\alpha}]}(t)}{\sqrt{t}} - \frac{\mathbf{1}_{[r^2,\, r_j^{2-2\alpha}+r^2]}(t)}{\sqrt{t-r^2}}.
\end{equation*}
Since $z\in E_2$, Lemma~\ref{Grushin_gradient} implies
\begin{align}\label{Kr}
    |K_r(z,\eta)|
    &\le C \mathbf{1}_{E_2}(z) \mathbf{1}_{E_1}(\eta) \int_0^{r_j^{2-2\alpha}} |g_r(t)| \exp\!\Bigl( - \frac{d(z,\eta)^2}{c t} \Bigr) \frac{dt}{\sqrt{t} \, V(z,\sqrt{t})}\nonumber\\
    &\quad + C \mathbf{1}_{E_2}(z) \mathbf{1}_{E_1}(\eta) \int_{r_j^{2-2\alpha}}^{r_j^{2-2\alpha}+r^2} r_j^{\alpha-1} |g_r(t)| \exp\!\Bigl(-\frac{d(z,\eta)^2}{c t}\Bigr) \frac{dt}{V(z,\sqrt{t})}.
\end{align}
Now $f_2$ is supported in $(2B)^c$, and therefore $d(z,\eta)\ge r$. By \cite[Lemma~3.1]{CD},
\begin{equation}\label{CD}
    \int_{\mathbb{R}^{n+m}} \exp\!\Bigl( - \frac{d(z,\eta)^2}{c t} \Bigr) |v(\eta)| \, d\eta \le C V(z,\sqrt{t}) \exp\!\Bigl( -\frac{r^2}{c t} \Bigr) \inf_{a\in B(z,r)} Mv(a).
\end{equation}
Combining \eqref{CD}, \eqref{Kr}, and \eqref{f2}, we obtain
\begin{align*}
    |T(I-A_r)f_2(z)|
    &\le C Mf(\xi) \biggl( \int_0^{r_j^{2-2\alpha}} |g_r(t)| \exp\!\Bigl(-\frac{r^2}{c t}\Bigr) \frac{dt}{\sqrt{t}} \\
    &\qquad\qquad\qquad\qquad + \int_{r_j^{2-2\alpha}}^{r_j^{2-2\alpha}+r^2} |g_r(t)| \exp\!\Bigl(-\frac{r^2}{c t}\Bigr) \frac{dt}{r_j^{1-\alpha}} \biggr)\\
    &:= C Mf(\xi) (I_1+I_2).
\end{align*}
It remains to verify that both integrals are finite.

\textit{Case 1: $r\le r_j^{1-\alpha}$.} In this case, split $I_1 = \int_0^{r^2}+\int_{r^2}^{r_j^{2-2\alpha}}$. Then
\begin{equation*}
    \int_0^{r^2} \dots = \int_0^{r^2} \exp\!\Bigl( - \frac{r^2}{c t} \Bigr) \frac{dt}{t} = C \int_1^\infty e^{-s} \, \frac{ds}{s}<\infty.
\end{equation*}
Moreover,
\begin{align*}
    \int_{r^{2}}^{r_j^{2-2\alpha}} \dots
    &= \int_{r^{2}}^{r_j^{2-2\alpha}} \biggl( \frac{1}{\sqrt{t}} - \frac{1}{\sqrt{t-r^2}} \biggr) \exp\!\Bigl( - \frac{r^2}{c t} \Bigr) \frac{dt}{\sqrt{t}}\\
    &\le C\int_0^1 \Bigl| 1- \frac{1}{\sqrt{1-s}} \Bigr| e^{-s} \, \frac{ds}{s} < \infty.
\end{align*}
For $I_2$,
\begin{align*}
    I_2
    = r_j^{\alpha-1}\int_{r_j^{2-2\alpha}}^{r_j^{2-2\alpha}+r^2} \frac{1}{\sqrt{t-r^2}} \exp\!\Bigl( - \frac{r^2}{c t} \Bigr) \, dt
    \le r_j^{\alpha-1} \int_{0}^{r_j^{2-2\alpha}} \frac{ds}{\sqrt{s}}<\infty.
\end{align*}

\textit{Case 2: $r\ge r_j^{1-\alpha}$.} In this case,
\begin{align*}
    I_1 \le \int_0^{r_j^{2-2\alpha}} \exp\!\Bigl( - \frac{r^2}{c t} \Bigr) \frac{dt}{t}
    \le \int_0^{r^2} \exp\!\Bigl( - \frac{r^2}{c t} \Bigr) \frac{dt}{t}<\infty,
\end{align*}
and
\begin{align*}
    I_2
    = r_j^{\alpha-1} \int_{r^2}^{r^2+r_j^{2-2\alpha}} \exp\!\Bigl( - \frac{r^2}{c t} \Bigr)\frac{dt}{\sqrt{t-r^2}}
    \le r_j^{\alpha-1} \int_{0}^{r_j^{2-2\alpha}} \frac{ds}{\sqrt{s}}<\infty.
\end{align*}
Hence, for all $z\in B\cap E_2$,
\begin{equation*}
    |T(I-A_r)f_2(z)| \le C Mf(\xi).
\end{equation*}
Combining this with \eqref{ACDHclaim1}, \textit{Claim}~1 follows.

\textit{Claim 2:} \eqref{ACDH2} holds, in the sense that
\begin{equation*}
    \sup_{z\in B\cap E_2} |TA_{r(B)}f(z)| \le C\Bigl( M_{E_2}(|Tf|^2)^{\frac{1}{2}}(\xi) + Sf(\xi) \Bigr),
\end{equation*}
where
\begin{equation*}
    Sf = M_2( Mf ) + r_j^{\alpha-1} Mh + r_j^{\alpha-1} h
\end{equation*}
and
\begin{equation*}
    h = \int_0^{r_j^{2-2\alpha}}e^{-tL}f \, \frac{dt}{\sqrt{t}}.
\end{equation*}
Note that $TA_r f = |\nabla_L e^{-r^2L}h|$ and that $|\nabla_L h| = Tf$. Let $\xi,z \in B \cap E_2$. Set
\begin{equation*}
    C_\delta = 2^{-7-\delta} \vee 2^{2-(\delta+9)(\mathfrak a)}.
\end{equation*}
By Lemma~\ref{lemma_remotecover} and Lemma~\ref{le_volume},
\begin{align}\label{B}
    E_2
    &\subset B_n\bigl( x_j, 2^{-7-\delta}|x_j| \bigr) \times B_m\bigl( y_k, 2^{2-(\delta+9)(\mathfrak a)} |x_j|^{\mathfrak a} \bigr)\nonumber\\
    &\subset B\bigl( (x_j,y_k), C_\delta c_1^{-1} |x_j|^{1-\alpha} \bigr):= \widetilde{B}.
\end{align}
By the choice of $\delta$ in Remark~\ref{remark_delta}, it is easy to verify that both $9\widetilde{B}$ and $6\gamma \widetilde{B}$ are remote.

\textit{Case 1: $r\le r_j^{1-\alpha}$.} By stochastic completeness (\cite[Lemma]{RS}),
\begin{align}
    |\nabla_Le^{-r^2L}h(z)|
    &= |\nabla_Le^{-r^2L}( h(z) - h(\xi) )| \nonumber\\
    &\le \int_{\mathbb{R}^{n+m}} |\nabla_L e^{-r^2L}(z,\eta)| \, |h(\eta)-h(\xi)| \, d\eta\nonumber\\
    &\le \int_{3\widetilde{B}} |\nabla_L e^{-r^2L}(z,\eta)| \, |h(\eta)-h(\xi)| \, d\eta
      + \int_{(3\widetilde{B})^c} |\nabla_L e^{-r^2L}(z,\eta)| \, |h(\eta)-h(\xi)| \, d\eta \nonumber\\
    &:= I_1 + I_2.
\end{align}
By Lemma~\ref{PE} and Lemma~\ref{le_tele}, for $\eta \in 3\widetilde{B}$,
\begin{equation*}
    |h(\eta)-h(\xi)| \le C d(\xi,\eta)\bigl(M_2(|\nabla_L h|\mathbf{1}_{6\gamma \widetilde{B}})(\xi)+M_2(|\nabla_L h|\mathbf{1}_{6\gamma \widetilde{B}})(\eta)\bigr).
\end{equation*}
Hence, by Lemma~\ref{Grushin_gradient},
\begin{align*}
    I_1
    &\le \frac{C}{V(z,r)} \int_{2\widetilde{B}} \frac{d(\xi,\eta)}{r} \exp\!\Bigl( - \frac{d(z,\eta)^2}{c r^2} \Bigr) \bigl( M_2(|\nabla_L h|\mathbf{1}_{6\gamma \widetilde{B}})(\xi)+M_2(|\nabla_L h|\mathbf{1}_{6\gamma \widetilde{B}})(\eta) \bigr) \, d\eta.
\end{align*}
Since $d(\xi,\eta)\le d(z,\eta) + 2r$, the factor $\frac{d(\xi,\eta)}{r}$ can be absorbed into the exponential. Thus $I_1$ is bounded by a constant multiple of
\begin{align*}
    \frac{M_2(|\nabla_L h|\mathbf{1}_{6\gamma \widetilde{B}})(\xi)}{V(z,r)}
    \int_{\mathbb{R}^{n+m}} \exp\!\Bigl( - \frac{d(z,\eta)^2}{c' r^2} \Bigr) \, d\eta
    + \frac{1}{V(z,r)} \int_{\mathbb{R}^{n+m}}  \exp\!\Bigl( - \frac{d(z,\eta)^2}{c'r^2} \Bigr) M_2(|\nabla_L h|\mathbf{1}_{6\gamma \widetilde{B}})(\eta) \, d\eta.
\end{align*}
By \cite[Lemma~3.1]{CD}, the first term is bounded by $M_2(|\nabla_L h|\mathbf{1}_{6\gamma \widetilde{B}})(\xi)$. For the second term, if $\frac{d(\xi,\eta)}{r}\le 4$, then
\begin{equation*}
    \exp\!\biggl(-\frac{d(z,\eta)^2}{c'r^2} \biggr) \le 1 \le e^{16} \exp\!\biggl(-\frac{d(\xi,\eta)^2}{r^2} \biggr).
\end{equation*}
On the other hand, if $\frac{d(\xi,\eta)}{r} > 4$, then $\frac{d(z,\eta)}{r}\ge \frac{d(\xi,\eta)}{2r}$, and therefore
\begin{equation*}
    \exp\!\biggl(-\frac{d(z,\eta)^2}{c'r^2} \biggr) \le  \exp\!\biggl(-\frac{d(\xi,\eta)^2}{4c' r^2} \biggr).
\end{equation*}
Consequently,
\begin{equation}
    I_1 \le C \bigl(M_2(|\nabla_L h|\mathbf{1}_{6\gamma \widetilde{B}})(\xi) + M M_2( |\nabla_L h| \mathbf{1}_{6\gamma \widetilde{B}} )(\xi) \bigr)\le C M_2(|\nabla_L h|\mathbf{1}_{6\gamma \widetilde{B}})(\xi),
\end{equation}
where the last inequality follows from \cite{CR}.

Recall that
\begin{equation*}
    M_2(|\nabla_L h|\mathbf{1}_{6\gamma \widetilde{B}})(\xi) = \sup_{\Bar{B} \ni \xi} \biggl( \frac{1}{|\Bar{B}|} \int_{\Bar{B} \cap 6\gamma \widetilde{B}}  |\nabla_L h(\tau)|^2 \, d\tau \biggr)^{\!\frac{1}{2}}.
\end{equation*}
Let $\Bar{B}\ni \xi$. First,
\begin{align}\label{Rkey11}
    \frac{1}{|\Bar{B}|}\int_{\Bar{B}\cap E_2} |\nabla_L h(\tau)|^2 \mathbf{1}_{6\gamma \widetilde{B}} \, d\tau
    &\le  \frac{|\Bar{B}\cap E_2|}{|\Bar{B}|} M_{E_2}\bigl(|\nabla_Lh|^2\bigr)(\xi)\nonumber\\
    &\le M_{E_2}\bigl(|\nabla_Lh|^2\bigr)(\xi).
\end{align}
Next,
\begin{align}\label{Rkey}
    \int_{\Bar{B} \setminus E_2} |\nabla_L h(\tau)|^2 \mathbf{1}_{6\gamma \widetilde{B}} \, d\tau
    \le \int_{(\Bar{B} \cap 6\gamma \widetilde{B}) \setminus E_2} \biggl| \int_0^{r_j^{2-2\alpha}} \int_{E_1} |\nabla_L e^{-tL}(\tau, \eta)| \, |f(\eta)| \, d\eta \, \frac{dt}{\sqrt{t}} \biggr|^2 d\tau.
\end{align}
Since $\tau \in 6 \gamma \widetilde{B} \setminus E_2$ and $\eta \in E_1$, one has $d(\tau,\eta)\ge c r_j^{1-\alpha}$. Therefore Lemma~\ref{Grushin_gradient} yields
\begin{align*}
    \eqref{Rkey}
    &\le C \int_{(\Bar{B} \cap 6\gamma \widetilde{B}) \setminus E_2} \biggl| \int_0^{r_j^{2-2\alpha}} \int_{d(\tau,\eta)\ge c r_j^{1-\alpha}} \frac{1}{V(\tau,\sqrt{t})} \exp\!\Bigl(-\frac{d(\tau,\eta)^2}{c t}\Bigr) |f(\eta)| \, d\eta \, \frac{dt}{t} \biggr|^2 d\tau\\
    &\le C \int_{(\Bar{B} \cap 6\gamma \widetilde{B}) \setminus E_2} \biggl| \int_0^{r_j^{2-2\alpha}} \exp\!\Bigl(-\frac{r_j^{2-2\alpha}}{c t}\Bigr) Mf(\tau) \, \frac{dt}{t} \biggr|^2 d\tau\\
    &\le C \int_{\Bar{B}} Mf(\tau)^2 \, d\tau.
\end{align*}
Combining this with \eqref{Rkey11}, we conclude that
\begin{equation}\label{est_I1}
    I_1\le C M_2(|\nabla_L h|\mathbf{1}_{6\gamma \widetilde{B}})(\xi) \le C \bigl( M_{E_2}(|\nabla_L h|^2)^{\frac{1}{2}}(\xi) +  M_2( Mf )(\xi) \bigr).
\end{equation}

For $I_2$, since $\eta \in (3 \widetilde{B})^c$, one has $d(z,\eta)\ge c r_j^{1-\alpha}$. Hence
\begin{align*}
    I_2
    &\le \frac{C}{r V(z,r)} \int_{d(z,\eta)\ge 2r(\widetilde{B})} \exp\!\Bigl(-\frac{d(z,\eta)^2}{c r^2}\Bigr) |h(\eta)| \, d\eta\\
    &\quad + \frac{C|h(\xi)|}{r V(z,r)} \int_{d(z,\eta)\ge 2r(\widetilde{B})} \exp\!\Bigl(-\frac{d(z,\eta)^2}{c r^2}\Bigr) \, d\eta\\
    &\le C\frac{r_j^{1-\alpha}}{r} \exp\!\Bigl(-\frac{r_j^{2-2\alpha}}{c r^2}\Bigr) \frac{Mh(\xi)}{r_j^{1-\alpha}} + C\frac{r_j^{1-\alpha}}{r} \exp\!\Bigl(-\frac{r_j^{2-2\alpha}}{c r^2}\Bigr) \frac{|h(\xi)|}{r_j^{1-\alpha}}\\
    &\le C r_j^{\alpha-1}\bigl( Mh(\xi) + |h(\xi)| \bigr),
\end{align*}
where the second inequality follows from \cite[Lemma~3.1]{CD} and the fact that $r(\widetilde{B}) \sim r_j^{1-\alpha}$.

Summarizing, in the case $r\le r_j^{1-\alpha}$, we have for all $z,\xi \in B \cap E_2$,
\begin{equation}\label{rsmall}
    |TA_rf(z)| \le C \bigl( M_{E_2}(|Tf|^2)^{\frac{1}{2}}(\xi) +  M_2( Mf )(\xi) + r_j^{\alpha-1} Mh(\xi) + r_j^{\alpha-1} |h(\xi)| \bigr).
\end{equation}

\textit{Case 2: $r\ge r_j^{1-\alpha}$.} In this case, Lemma~\ref{Grushin_gradient} directly gives, for all $z,\xi\in B \cap E_2$,
\begin{align*}
    |\nabla_L e^{-r^2L}h(z)|
    &\le \frac{C r_j^{\alpha-1}}{V(z,r)}  \int_{\mathbb{R}^{n+m}} \exp\!\Bigl( - \frac{d(z,\eta)^2}{c r^2} \Bigr) |h(\eta)| \, d\eta\\
    &\le \frac{C r_j^{\alpha-1}}{V(z,r)}  \int_{\mathbb{R}^{n+m}} \exp\!\Bigl( - \frac{d(\xi,\eta)^2}{c' r^2} \Bigr) |h(\eta)| \, d\eta\\
    &\le C r_j^{\alpha-1} \frac{V(\xi,r)}{V(z,r)} Mh(\xi)\\
    &\le C r_j^{\alpha-1} Mh(\xi),
\end{align*}
where the last inequality follows from the doubling property. Together with \eqref{rsmall}, this proves \textit{Claim}~2.

Finally, by \textit{Claim}~1, the operator $M_{E_2}^{\#}$ is bounded from $L^p(E_1)$ to $L^p(E_2)$ for all $2<p<\infty$. Moreover, the sublinear operator $S$ satisfies
\begin{equation*}
    \|Sf\|_p \le \| M_2( Mf ) \|_p + r_j^{\alpha-1} \| Mh \|_p + r_j^{\alpha-1} \|h\|_p \le C \|f\|_p
\end{equation*}
for all $2<p<\infty$, where the last inequality follows from the maximal theorem and the contractivity of the heat semigroup, namely
\begin{equation*}
    \|h\|_p \le C r_j^{1-\alpha}\|f\|_p.
\end{equation*}
Therefore, Theorem~\ref{thm_ACDH} applies, and the proof of Proposition~\ref{Rj,1,1} is complete.
\end{proof}

\section{Proof of Theorem~\ref{thm_main2}}

\subsection{Hardy inequality}

We first establish the following weighted Hardy inequality.

\begin{theorem}\label{hardy_x}
Let $L$ be the Grushin-type operator with parameters $n,m\ge 1$, $\alpha \in (0,1)$,
and $\beta \ge 0$, as defined in \eqref{Grushin}, and let $\nabla_L$ be the associated
gradient operator given in \eqref{Grushin2}. Then the following Hardy-type inequality
holds:
\begin{equation}\label{eq_hardy_x}
    \int_{\mathbb R^{n+m}}
    \frac{|u(\xi)|^p}{|x|^{p(1-\alpha)}}\,d\xi
    \leq
    C
    \int_{\mathbb R^{n+m}}
    |\nabla_L u(\xi)|^p \, d\xi,
\end{equation}
for all
\[
    1<p<\frac{n}{1-\alpha}
\]
and all $u\in C_c^\infty(\mathbb R^{n+m})$.
\end{theorem}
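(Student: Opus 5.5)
The plan is to reduce \eqref{eq_hardy_x} to a classical weighted Hardy inequality in the $x$-variable alone, holding $y$ fixed. The $y$-component of $\nabla_L$ plays no role. Since $|\nabla_L u|\ge |x|^\alpha|\nabla_x u|$ pointwise, it suffices to prove, for each fixed $y$ and every $v\in C_c^\infty(\mathbb R^n)$,
\begin{equation*}
    \int_{\mathbb R^n}\frac{|v(x)|^p}{|x|^{p(1-\alpha)}}\,dx\le C\int_{\mathbb R^n}|x|^{p\alpha}|\nabla_x v(x)|^p\,dx .
\end{equation*}
Integrating this in $y$ and applying Fubini then gives the theorem.

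For the one-variable inequality I will use the standard divergence (vector field) method. Take $F(x)=x|x|^{-s}$ with $s=p(1-\alpha)$. Then $\operatorname{div}F=(n-s)|x|^{-s}$ on $\mathbb R^n\setminus\{0\}$, and the constant $n-s$ is positive exactly when $p<n/(1-\alpha)$.

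The key step is to integrate $\operatorname{div}F\,|v|^p$ by parts. This gives
\begin{equation*}
    (n-s)\int\frac{|v|^p}{|x|^{s}}\,dx=-p\int |v|^{p-2}\operatorname{Re}(\bar v\nabla v)\cdot x\,|x|^{-s}\,dx\le p\int\frac{|v|^{p-1}}{|x|^{s-1}}|\nabla v|\,dx .
\end{equation*}
I then write $|x|^{1-s}=|x|^{-s(p-1)/p}\cdot|x|^{\alpha}$, which uses $s/p=1-\alpha$, and apply H\"older with exponents $p/(p-1)$ and $p$. This bounds the right-hand side by
\[
p\Bigl(\int|v|^p|x|^{-s}\Bigr)^{(p-1)/p}\Bigl(\int|x|^{p\alpha}|\nabla v|^p\Bigr)^{1/p}.
\]
Absorbing the first factor yields the inequality with constant $C=(p/(n-s))^p$.

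The only delicate point is justifying the integration by parts across the singularity at $x=0$. I will excise a ball $B_n(0,\epsilon)$ and note that the boundary term is $O(\epsilon^{\,n-s})\|v\|_\infty^p$, which tends to zero precisely because $s<n$. The same condition makes $|v|^p|x|^{-s}$ locally integrable, so the left-hand side is finite and the absorption step is legitimate. When $1<p<2$, $|v|^p$ is only $C^1$, but the chain-rule identity $\nabla|v|^p=p|v|^{p-2}\operatorname{Re}(\bar v\nabla v)$ still holds, or one can regularize with $(|v|^2+\delta^2)^{p/2}$ and let $\delta\to0$. Beyond this justification I do not expect any obstacle. The range $p<n/(1-\alpha)$ is exactly the positivity condition $n-s>0$.
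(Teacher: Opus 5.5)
Your proposal is correct and follows essentially the paper's route. The paper also uses the divergence (vector-field) argument with H\"older absorption, applied to a regularized version $r_\varepsilon^{-p(1-\alpha)}x$, $r_\varepsilon=(\varepsilon^2+|x|^2)^{1/2}$, of your field $x|x|^{-p(1-\alpha)}$, whose divergence is at least $(n-p(1-\alpha))\,r_\varepsilon^{-p(1-\alpha)}$. The differences are minor. The paper handles $x=0$ by this $\varepsilon$-regularization followed by a Fatou/dominated-convergence limit, whereas you excise $B_n(0,\epsilon)$ and control the boundary term, which in fact has a favorable sign. You also make the fiberwise reduction in $x$ explicit, which gives the clean constant $(p/(n-p(1-\alpha)))^p$.
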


\begin{proof}
The proof follows the idea in \cite{ambrosio}. Because of the singular set
$\{x=0\}$, we use an $\varepsilon$-regularization argument.

For $\varepsilon>0$, define
\begin{itemize}
    \item $r_\varepsilon=\bigl(\varepsilon^2+|x|^2\bigr)^{\frac12}$,
    \item $\nabla_L^\varepsilon=\bigl(r_\varepsilon^\alpha\nabla_x,\,
    r_\varepsilon^\beta\nabla_y\bigr)$,
    \item $\operatorname{div}_L^\varepsilon(h)
    =
    \operatorname{div}_x(r_\varepsilon^\alpha h_x)
    +
    \operatorname{div}_y(r_\varepsilon^\beta h_y)$, where $h=(h_x,h_y)$ is a sufficiently smooth vector field.
\end{itemize}
If $\operatorname{div}_L^\varepsilon(h)>0$, then Hölder's inequality
(see \cite[Theorem~3.5]{ambrosio}) yields, for any
$u\in C_c^\infty(\mathbb R^{n+m})$,
\begin{equation}\label{divergence_x}
    \int_{\mathbb R^{n+m}}
    |u|^p\operatorname{div}_L^\varepsilon(h)\,d\xi
    \leq
    C
    \int_{\mathbb R^{n+m}}
    |h|^p
    \bigl(
        \operatorname{div}_L^\varepsilon(h)
    \bigr)^{1-p}
    |\nabla_L^\varepsilon u|^p \, d\xi.
\end{equation}
Choose
\begin{equation}\label{choice_h_x}
    h_\varepsilon(\xi)
    =
    \bigl(
        r_\varepsilon^{\alpha(p-1)-p}x,\,
        0
    \bigr).
\end{equation}
We claim that $\operatorname{div}_L^\varepsilon(h_\varepsilon)>0$. Indeed, a direct computation gives
\begin{align*}
    \operatorname{div}_L^\varepsilon(h_\varepsilon)
    &=
    \operatorname{div}_x
    \bigl(
        r_\varepsilon^\alpha r_\varepsilon^{\alpha(p-1)-p}x
    \bigr)
    =r_\varepsilon^{-p(1-\alpha)}
    \Bigl[
        n
        -
        p(1-\alpha)\frac{|x|^2}{r_\varepsilon^2}
    \Bigr].
\end{align*}
Since $0\leq \frac{|x|^2}{r_\varepsilon^2}\leq 1$, it follows that
\begin{equation}\label{>0_x}
    n
    -
    p(1-\alpha)\frac{|x|^2}{r_\varepsilon^2}
    \geq
    n-p(1-\alpha)>0,
\end{equation}
where the last inequality follows from the assumption $p<\frac{n}{1-\alpha}$. Consequently,
\begin{equation}\label{div_lower_x}
    \operatorname{div}_L^\varepsilon(h_\varepsilon)
    \geq
    C r_\varepsilon^{-p(1-\alpha)}
    >0,
\end{equation}
where $C>0$ is independent of $\varepsilon$.

We next estimate the right-hand side of \eqref{divergence_x}. First,
\[
    |h_\varepsilon|^p
    =
    r_\varepsilon^{p(\alpha(p-1)-p)}|x|^p.
\]
Second, using the lower bound for $\operatorname{div}_L^\varepsilon(h_\varepsilon)$, we obtain
\begin{align*}
    |h_\varepsilon|^p
    \bigl(
        \operatorname{div}_L^\varepsilon(h_\varepsilon)
    \bigr)^{1-p}
    &\leq
    C
    r_\varepsilon^{p(\alpha(p-1)-p)}
    |x|^p
    \bigl(
        r_\varepsilon^{-p(1-\alpha)}
    \bigr)^{1-p}                                      \\
    &=
    C
    |x|^p
    r_\varepsilon^{p(\alpha(p-1)-p)+p(1-\alpha)(p-1)}.
\end{align*}
A simple calculation shows that
\[
    p(\alpha(p-1)-p)+p(1-\alpha)(p-1)= -p.
\]
Therefore,
\begin{equation}\label{RHS_upper_x}
    |h_\varepsilon|^p
    \bigl(
        \operatorname{div}_L^\varepsilon(h_\varepsilon)
    \bigr)^{1-p}
    \leq
    C\frac{|x|^p}{r_\varepsilon^p}
    \leq C,
\end{equation}
where $C>0$ is independent of $\varepsilon$. Combining \eqref{divergence_x}, \eqref{div_lower_x}, and \eqref{RHS_upper_x}, we arrive at
\begin{equation}\label{hardy_x_final}
    \int_{\mathbb R^{n+m}}
    |u|^p r_\varepsilon^{-p(1-\alpha)}\,d\xi
    \leq
    C
    \int_{\mathbb R^{n+m}}
    |\nabla_L^\varepsilon u|^p \, d\xi,
\end{equation}
where $C>0$ is independent of $\varepsilon$.

Letting $\varepsilon\to0$, one has $r_\varepsilon\to |x|$ pointwise away from $\{x=0\}$. Applying Fatou's lemma to the left-hand side of
\eqref{hardy_x_final}, and the dominated convergence theorem to the right-hand side of
\eqref{hardy_x_final}, noting that $u$ has compact support, we obtain
\[
    \int_{\mathbb R^{n+m}}
    \frac{|u(\xi)|^p}{|x|^{p(1-\alpha)}}\,d\xi
    \leq
    C
    \int_{\mathbb R^{n+m}}
    |\nabla_L u(\xi)|^p \, d\xi.
\]
This proves \eqref{eq_hardy_x}.
\end{proof}

\begin{remark}\label{re:Hardy}
For later use, we record the following extension of the Hardy inequality. Assume $n=1$ and $1/2\leq\alpha<1$, and let $u$ be a distribution such that 
\begin{enumerate}[label=(\roman*)] 
    \item $|u(\xi)| \to 0$ as $d(\xi,0) \to \infty$,
    \item $|\nabla_L u| \in L^p(\mathbb{R}^{1+m})$ for some $1<p<\frac{1}{1-\alpha}$.
\end{enumerate}
Then we claim that the Hardy inequality \eqref{eq_hardy_x} also holds for $u$. Indeed, we prove the estimate on $\Omega_+:=(0,\infty)\times\mathbb R^m$; the
argument on $\Omega_-:=(-\infty,0)\times\mathbb R^m$ is identical. Change variables via $r:=x^{1-\alpha}$ and set $U(r,y):= u(r^{(1-\alpha)^{-1}},y)$. For a.e.\ $y$, the function $U(\cdot,y)$ is locally absolutely continuous on
$(0,\infty)$. Moreover, condition (i) implies $U(r,y)\to 0$ as $r\to \infty$.
Since $1<p<(1-\alpha)^{-1}$, the one-dimensional Hardy inequality (see, for example, \cite[Example~1.2.8]{G}) gives
\begin{equation*}
\int_0^\infty |U(r,y)|^p r^{\frac{1}{1-\alpha}-1-p} \, dr \le C_{p,\alpha} \int_0^\infty |\partial_r U(r,y)|^p r^{\frac{1}{1-\alpha}-1} \, dr.
\end{equation*}
Equivalently,
\begin{equation*}
\int_0^\infty \frac{|u(x,y)|^p}{x^{p(1-\alpha)}} \, dx \le C_{p,\alpha} \int_0^\infty x^{\alpha p}|\partial_x u(x,y)|^p \, dx. 
\end{equation*}
Integrating in $y$ yields
\begin{equation*}
\Bigl\| \frac{u}{x^{1-\alpha}} \Bigr\|_{L^p(\Omega_+)} \le C_{p,\alpha} \|x^\alpha\partial_xu\|_{L^p(\Omega_+)} \le C_{p,\alpha} \| \nabla_L u\|_{L^p(\Omega_+)}.
\end{equation*}
The same estimate on $\Omega_-$ gives \eqref{eq_hardy_x}.
\end{remark}

\begin{remark}
It is also worth mentioning that the $L^2$ Hardy and Rellich inequalities for local Dirichlet forms and weighted second-order operators, including certain Grushin-type examples, were studied by Robinson in the quadratic form setting; see \cite{Robinson_Hardy}. However, the form we needed later is an $L^p$ inequality for the intrinsic Grushin gradient, valid in the sharp range. 
\end{remark}

\subsection{Harmonic annihilation}
We next prove Theorem~\ref{thm_main2} using the so-called harmonic annihilation method, developed from \cite{H2}, to eliminate the dangerous harmonic leading term arising in the regime $\mathcal{D}_3$. In fact, by \cite{RS}, \eqref{R_p} holds for all $1<p\le 2$, and therefore, by duality, \eqref{RR_p} holds for all $p\ge 2$.

\begin{theorem}\label{prop_RR}
Let $\alpha\in (0,1)$, $\beta \ge 0$, and $n,m\ge 1$. The reverse Riesz transform associated to $L$ is bounded on $L^p$ for all 
\begin{equation}\label{p3}
   p\in \begin{cases}
        \bigl(1,\frac{1}{1-\alpha}\bigr) \cup \bigl(\frac{1}{1-\alpha},\infty \bigr), & n=1 \text{ and } \alpha \in \bigl(0,\frac{1}{2}\bigr),\\[4pt]
        (1,\infty), & n\ge 2 \text{ or } n=1 \text{ and } \alpha\in \bigl[\frac{1}{2},1\bigr),
    \end{cases}
\end{equation}
and all $f\in C_c^\infty(\mathbb{R}^{n+m})$.
\end{theorem}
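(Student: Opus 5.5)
The plan is to cover the range $p\ge 2$ by soft arguments and to prove the range $1<p<2$ with a fresh argument. For $p\ge2$ nothing new is needed: \eqref{R_p} holds for $1<p\le 2$ by \cite{RS} and \cite{Sikora}, so \eqref{RR_p} holds for $p\ge2$ by duality \cite{CD}. For $n\ge2$, Theorem~\ref{thm_main1} together with duality gives the full range. For $n=1$, $0<\alpha<\frac12$, Theorem~\ref{thm:n=1:alpha<1/2} together with duality gives $p>(1-\alpha)^{-1}$. What remains is $1<p<\min(2,(1-\alpha)^{-1})$ in the case $n=1$; we run the argument for general $n$ with $1<p<\min(2,n(1-\alpha)^{-1})$, since that is the range of Theorem~\ref{hardy_x} and Remark~\ref{re:Hardy}.

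For this remaining range we argue by duality in bilinear form. We write $\|L^{1/2}f\|_p=\sup\{|\langle L^{1/2}f,g\rangle|:\|g\|_{p'}\le1\}$ and note that
\[
\langle L^{1/2}f,g\rangle=\langle L f,L^{-1/2}g\rangle=\langle\nabla_L f,\nabla_L L^{-1/2}g\rangle=\int\!\!\int \nabla_L f(\xi)\cdot\overline{\mathcal R(\eta,\xi)}\,g(\eta)\,d\xi\,d\eta .
\]
We then split the Riesz kernel as $\mathcal R=\mathcal R_1+\mathcal R_2+\mathcal R_3$ according to $\mathcal D_1,\mathcal D_2,\mathcal D_3$. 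The pieces $\mathcal R_1$ and $\mathcal R_2$ are bounded on $L^{p'}$ by Propositions~\ref{R1} and \ref{R2}, since $p'>2$ and $p'>\frac{n}{n-1+\alpha}$; the latter holds because $p<n/(1-\alpha)$, which is exactly $p'>\frac{n}{n-1+\alpha}$. Their contributions are therefore bounded by $\|\nabla_Lf\|_p\|g\|_{p'}$. (The approximation $f\in C_c^\infty(B_n(0,\varepsilon)^c\times\mathbb R^m)$ used in Proposition~\ref{R2} passes to the dual side by a density argument.)

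The critical piece is $\mathcal D_3$, where $|x'|\ge\kappa|x|$ and the second variable lies far from the singular set compared with the first. On this region the gradient falls on a point close to $\{x=0\}$ while the integration variable is remote. The heat kernel $e^{-tL}(\xi,\eta)$ in the variable near the singular set is, to leading order, its value on $\{x=0\}$ (the zero-flux branch, which is constant in $x$). The leading profile is $L$-harmonic in that variable, so after integration by parts against $f$ its contribution vanishes: pairing $\nabla_L f$ with the gradient of an $L$-harmonic function gives $\mathfrak q(f,\text{harmonic})=0$. This is the harmonic annihilation step. Concretely, we subtract from the kernel $\nabla_L L^{-1/2}(\xi,\eta)$ on $\mathcal D_3$ its value frozen at $x=0$, or an appropriate harmonic extension of it. We then integrate by parts to convert the pairing into $\int f(\xi)\,[\text{remainder}]$, and write $f(\xi)=\frac{f(\xi)}{|x|^{1-\alpha}}\cdot|x|^{1-\alpha}$. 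The remainder gains a factor $|x|^{1-\alpha}/|x'|^{1-\alpha}$ from Lemma~\ref{Grushin_gradient} and the H\"older/gradient bounds on the heat kernel. We then estimate $\|f/|x|^{1-\alpha}\|_p\lesssim\|\nabla_Lf\|_p$ by Theorem~\ref{hardy_x}, or by Remark~\ref{re:Hardy} for $\alpha\ge\frac12$, where the two half-lines are treated separately. The remaining task is to show that the resulting positive kernel, $|x|^{1-\alpha}\times$(remainder), defines an $L^p$-bounded operator. We do this by a Schur test or by domination with $T_1$, $T_2$-type operators and maximal functions, exactly as in Proposition~\ref{T1,T2}.

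The main obstacle is this last step: constructing the correct subtracted harmonic profile on $\mathcal D_3$ and proving the pointwise remainder bound
\[
|\text{remainder}(\xi,\eta)|\lesssim\Big(\tfrac{|x|}{|x'|}\Big)^{1-\alpha}\frac{1}{|x|^{1-\alpha}}\,\frac{1}{V(\eta,d(\xi,\eta))}.
\]
Achieving this bound requires a H\"older-type estimate of the heat kernel in the near-singular variable of order $1-\alpha$. I would try to derive this from the zero-flux structure: for $n=1$ and $\alpha\ge\frac12$, use Remark~\ref{re:even:Poisson}; otherwise use the even/odd decomposition of Part~\ref{part2}, where only the even branch enters at leading order. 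In each case I would combine it with Lemma~\ref{Grushin_gradient}. The endpoint $p=(1-\alpha)^{-1}$ is lost precisely because the Hardy inequality fails there.
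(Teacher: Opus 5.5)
Your reduction to $1<p<\min\{2,n/(1-\alpha)\}$, the bilinear duality, and your treatment of the $\mathcal D_1$ and $\mathcal D_2$ pieces by Propositions~\ref{R1} and~\ref{R2} match the paper. The gap is the $\mathcal D_3$ piece, which is the whole content of the theorem. You never prove the remainder bound you display, and the mechanism you propose for it fails in the new case. A H\"older gain of order $1-\alpha$ in $x$ for $e^{-tL}(\cdot,\eta)$ near $\{x=0\}$ is false when $n=1$ and $0<\alpha<\frac12$. There the odd part in $x$ of $e^{-tL}(\cdot,\eta)$ does not vanish when $\eta\notin\{x=0\}$, and it has the zero-trace leading behaviour $\operatorname{sgn}(x)|x|^{1-2\alpha}$; this is the same branch that destroys $(\mathrm{R}_p)$ for $p\ge\alpha^{-1}$. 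So the kernel's deviation from its trace on $\{x=0\}$ is of size $|x|^{1-2\alpha}\gg|x|^{1-\alpha}$, and it is not true that only the even branch matters at leading order. Moreover, if your subtracted profile were genuinely $L$-harmonic in $\xi$, it would drop out after integration by parts, leaving only $L_\xi$ of the kernel itself. So constructing the profile is beside the point. Finally, integrating by parts against the sharp indicator $\mathbf 1_{\mathcal D_3}$ leaves a boundary term on $\{|x|=\kappa^{-1}|x'|\}$ that you do not address.

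The missing idea is that no subtraction and no regularity in $x$ are needed. Replace $\mathbf 1_{\mathcal D_3}$ by the smooth cutoff $\vartheta_{\kappa^{-1}|x'|}(x)$ and integrate by parts in $\xi$ directly, as in \eqref{eq:HA}. The pairing of $\nabla_L f$ with $\vartheta\,\nabla_L e^{-tL}(\cdot,\eta)$ becomes the pairing of $f$ with $\vartheta\,L_\xi e^{-tL}(\cdot,\eta)=-\vartheta\,\partial_t e^{-tL}(\cdot,\eta)$, minus the pairing of $f$ with $\nabla_L\vartheta\cdot\nabla_L e^{-tL}(\cdot,\eta)$. The analyticity bound $|\partial_t e^{-tL}(\xi,\eta)|\lesssim (tV(\eta,\sqrt t))^{-1}e^{-d(\xi,\eta)^2/ct}$, integrated against $t^{-1/2}dt$, gives a kernel $\lesssim\sigma^{-\mathcal Q-1}$, where $\sigma\sim d(\xi,\eta)\gtrsim|x'|^{1-\alpha}$ on the support of $\vartheta$. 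This already contains the factor $|x'|^{-(1-\alpha)}$ your target bound asks for. The singular factor $|x|^{\alpha-1}$ of Lemma~\ref{Grushin_gradient} never appears, because no gradient falls on the kernel near the singular set; that is the annihilation. The commutator term lives on $|x|\sim|x'|$, where $|\nabla_L\vartheta|\lesssim|x'|^{\alpha-1}$ and Lemma~\ref{Grushin_gradient} is harmless. Together with the first term, it yields the kernel bounds of Lemma~\ref{lemma_key2} and hence the operators $T_3,T_4,T_5$. Note that $T_5\lesssim T_2$ with $\varepsilon=1-\alpha$ is bounded on $L^{p'}$ only for $p'>n/(n-1+\alpha)$, so this cutoff term also forces $p<n/(1-\alpha)$, not just the Hardy inequality. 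Writing $f=\frac{f}{|x|^{1-\alpha}}\,|x|^{1-\alpha}$ and applying Theorem~\ref{hardy_x} then finishes the proof.
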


\begin{proof}
By Theorem~\ref{thm_main1} and duality, \eqref{RR_p} holds for all $1<p<\infty$ when $n\ge 2$. Also, Theorem~\ref{thm:n=1:alpha<1/2} gives \eqref{R_p} for $1<p<\alpha^{-1}$. Hence, by duality, \eqref{RR_p} holds for $p>(1-\alpha)^{-1}$ when $n=1$ and $\alpha\in (0,\frac{1}{2})$. Moreover, \cite[Theorem~8.1]{RS} yields \eqref{R_p} for $1<p\le 2$, which in turn implies \eqref{RR_p} for $p\ge 2$. Therefore, it remains to prove the reverse inequality for $1<p<\frac{n}{1-\alpha}$, namely,
\begin{equation*}
    \| L^{1/2}f\|_p \le C \| \nabla_L f\|_p,\qquad 1<p<\frac{n}{1-\alpha}.
\end{equation*}

Let $\vartheta_\varepsilon$ ($\varepsilon>0$) be a smooth function on $\mathbb{R}^n$ such that
\begin{itemize}
    \item $\operatorname{supp}(\vartheta_\varepsilon) \subset B_n(0,4 \varepsilon/3)$,
    \item $\vartheta_\varepsilon=1$ on $B_n(0,\varepsilon)$,
    \item $\|\vartheta_\varepsilon\|_\infty + \varepsilon \|\nabla \vartheta_\varepsilon\|_\infty \lesssim 1$.
\end{itemize}
Then $\vartheta_{\kappa^{-1} |x'|}(x)$ is a smooth function supported in the region
\begin{equation*}
    \{(x,x')\in \mathbb{R}^n \times \mathbb{R}^n : 4|x'|\ge 3\kappa |x|\}.
\end{equation*}
We decompose the Riesz kernel as
\begin{align}\label{eq_Grushin_1}
    \mathcal{R}(\xi,\eta) = \int_0^\infty \vartheta_{\kappa^{-1} |x'|}(x)& \nabla_L e^{-tL}(\xi,\eta) \, \frac{dt}{\sqrt{\pi t}} \nonumber\\
    &+ \int_0^\infty \bigl[1 - \vartheta_{\kappa^{-1} |x'|}(x) \bigr] \nabla_L e^{-tL}(\xi,\eta) \, \frac{dt}{\sqrt{\pi t}}
    := \mathcal{S}_1(\xi,\eta) + \mathcal{S}_2(\xi,\eta).
\end{align}
Note that $\mathcal{S}_2$ is supported in the region
\begin{align*}
    \{(\xi,\eta)\in \mathbb{R}^{n+m}\times \mathbb{R}^{n+m} \setminus \{\xi=\eta\}: |x'|\le \kappa |x|\} \subset \mathcal{D}_1 \cup \mathcal{D}_2.
\end{align*}
Hence $\mathcal{S}_2(\xi,\eta)$ is bounded by $|\mathcal{R}_1(\xi,\eta)| + |\mathcal{R}_2(\xi,\eta)|$, up to a constant.

Let $1<p<\frac{n}{1-\alpha}$, let $f\in C_c^\infty(\mathbb{R}^{n+m})$, and let $g\in C_c^\infty(\mathbb{R}^{n}\setminus \{0\} \times \mathbb{R}^m)$ satisfy $\|g\|_{p'}=1$. Since $C_c^\infty(\mathbb{R}^{n}\setminus \{0\} \times \mathbb{R}^m)$ is dense in $L^{p'}(\mathbb{R}^{n+m})$, duality reduces the problem to proving
\begin{align*}
    \bigl| \langle L^{1/2}f, g \rangle \bigr| \le C \|\nabla_L f\|_p \|g\|_{p'},\qquad \forall\, 1<p<\frac{n}{1-\alpha},
\end{align*}
for some constant $C>0$ independent of $g$.

We use the identity
\begin{equation*}
    L^{1/2}f
    =
    \frac{1}{\sqrt{\pi}}
    \int_0^\infty
    Le^{-tL}f\,\frac{dt}{\sqrt t}.
\end{equation*}
Strictly speaking, this formula is understood in the truncated sense: for
$0<t_1<t_2<\infty$, the integral on the right-hand side is interpreted as
$\lim_{\substack{t_1\to 0\\ t_2 \to \infty}} \int_{t_1}^{t_2}$; see \cite{AC} for details. However, all estimates below are uniform in $t_1$ and $t_2$. For simplicity of notation, we continue to write the integral over $(0,\infty)$.

Next, by positivity and self-adjointness of $L$,
\begin{align*}
    \langle L^{1/2}f, g \rangle
    = \biggl\langle \int_0^\infty L e^{-tL}f \, \frac{dt}{\sqrt{\pi t}}, g \biggr\rangle
    &= \bigl\langle \nabla_L f, \int_0^\infty \nabla_L e^{-tL}g \, \frac{dt}{\sqrt{\pi t}} \bigr\rangle\\
    &= \langle \nabla_L f, \mathcal{S}_1g \rangle + \langle \nabla_L f, \mathcal{S}_2 g \rangle.
\end{align*}
It follows from Proposition~\ref{R1} and Proposition~\ref{R2} that for all $p\in \bigl(1, \frac{n}{1-\alpha}\bigr)$,
\begin{align}\label{S2}
|\langle \nabla_L f, \mathcal{S}_2 g \rangle|
\le \|\nabla_L f\|_p \|\mathcal{S}_2g\|_{p'}
\le C \|\nabla_L f\|_p \|\mathcal{R}_1g+\mathcal{R}_2g\|_{p'}
\le C \|\nabla_L f\|_p \|g\|_{p'}.
\end{align}
Consider the bilinear form
\begin{equation*}
    \mathcal{B}(f,g) := \int_{\mathbb{R}^{n+m}} \nabla_L f(\xi) \cdot \int_{\mathbb{R}^{n+m}} \int_0^\infty \vartheta_{\kappa^{-1} |x'|}(x) \nabla_L e^{-tL}(\xi,\eta) g(\eta)\,\frac{dt}{\sqrt{\pi t}}\,d\eta\,d\xi.
\end{equation*}
Integration by parts gives
\begin{align}\label{eq:HA}
    \mathcal{B}(f,g)
    &= \int_{\mathbb{R}^{n+m}} f(\xi) \int_{\mathbb{R}^{n+m}} \int_0^\infty \vartheta_{\kappa^{-1} |x'|}(x) L_{\xi} e^{-tL}(\xi,\eta) g(\eta) \, \frac{dt}{\sqrt{\pi t}} \, d\eta \, d\xi \nonumber\\
    &\quad - \int_{\mathbb{R}^{n+m}} f(\xi) \int_{\mathbb{R}^{n+m}} \int_0^\infty (\nabla_L)_x [\vartheta_{\kappa^{-1} |x'|}(x)] \cdot \nabla_L e^{-tL}(\xi,\eta) g(\eta) \, \frac{dt}{\sqrt{\pi t}} \, d\eta \, d\xi.
\end{align}
In view of \eqref{S2}, it is enough to prove
\begin{equation*}
    |\mathcal{B}(f,g)| \le C \|\nabla_L f\|_p \|g\|_{p'},\qquad 1<p<\frac{n}{1-\alpha}.
\end{equation*}

\begin{lemma}\label{lemma_key2}
Under the assumptions of Proposition~\ref{prop_RR}, the following estimates hold:
\begin{align*}
    &\int_0^\infty  \bigl|(\nabla_L)_x [\vartheta_{\kappa^{-1} |x'|}(x)] \cdot \nabla_L e^{-tL}(\xi,\eta)\bigr| + \bigl|\vartheta_{\kappa^{-1} |x'|}(x) \partial_t e^{-tL}(\xi,\eta)\bigr| \, \frac{dt}{\sqrt{t}} \\
    &\qquad \lesssim 
    \begin{cases}
        |x'|^{-(1-\alpha)(\mathcal{Q}+1)} \mathbf{1}_{|x|\le c_1|x'|}, & |y-y'|\le |x'|^{\mathfrak a},\\[4pt]
        |x'|^{-2+2\alpha} |y-y'|^{-\frac{(1-\alpha)(\mathcal{Q}-1)}{\mathfrak a}} \mathbf{1}_{c_2|x|\le |x'| \le c_3|x|} + |y-y'|^{-\frac{(1-\alpha)(\mathcal{Q}+1)}{\mathfrak a}}\mathbf{1}_{|x|\le c_4|x'|}, & |y-y'|\ge |x'|^{\mathfrak a},
    \end{cases}
\end{align*}
for some constants $c_1,c_2,c_3,c_4>0$.
\end{lemma}

\begin{proof}
Note that if $|x'|\ge \frac{3\kappa}{4} |x|$, then $|x-x'|\sim |x'|$ provided $\kappa$ is sufficiently large. Hence, by Theorem~\ref{RS_Grushin},
\begin{align*}
    d(\xi,\eta) \sim |x'|^{1-\alpha} + \frac{|y-y'|}{|x'|^\beta + |y-y'|^{\frac{\beta}{\mathfrak a}}} := \sigma.
\end{align*}
Clearly,
\begin{align}\label{eq_delta}
    \sigma \sim \begin{cases}
        |x'|^{1-\alpha}, & |y-y'|\le |x'|^{\mathfrak a},\\[4pt]
        |y-y'|^{\frac{1-\alpha}{\mathfrak a}}, & |y-y'|\ge |x'|^{\mathfrak a}.
    \end{cases}
\end{align}
Moreover, for $x$ in the support of $\nabla_L \vartheta_{\kappa^{-1} |x'|}$, one has $|x|\sim |x'|$. In addition,
\begin{equation*}
    |\nabla_L \vartheta_{\kappa^{-1} |x'|}|\lesssim |x|^\alpha |x'|^{-1} \mathbf{1}_{|x|\sim |x'|}.
\end{equation*}
By Theorem~\ref{RS_Grushin} and Lemma~\ref{Grushin_gradient},
\begin{align*}
    \int_0^{|x|^{2-2\alpha}} \bigl|\nabla_L [\vartheta_{\kappa^{-1} |x'|}(x)]\bigr| \, \bigl|\nabla_L e^{-tL}(\xi,\eta)\bigr| \, \frac{dt}{\sqrt{t}}
    &\lesssim \int_0^{|x|^{2-2\alpha}} \frac{\mathbf{1}_{|x|\sim |x'|}}{|x'|^{1-\alpha}} \frac{e^{-\frac{\sigma^2}{c t}}}{|x|^{n\alpha+m\beta}t^{\frac{n+m}{2}}} \frac{dt}{t}\\
    &\lesssim |x'|^{\alpha-1-n\alpha-m\beta} \mathbf{1}_{|x|\sim |x'|} \int_0^{|x'|^{2-2\alpha}} t^{-\frac{n+m}{2}-1} e^{-\frac{\sigma^2}{c t}} \, dt\\
    &\lesssim |x'|^{\alpha-1-n\alpha-m\beta} \sigma^{-n-m} e^{-\frac{\sigma^2}{c|x'|^{2-2\alpha}}} \mathbf{1}_{|x|\sim |x'|}\\
    &\lesssim_\varepsilon |x'|^{\alpha-1-n\alpha -m\beta+(1-\alpha)\varepsilon} \sigma^{-n-m-\varepsilon} \mathbf{1}_{|x|\sim |x'|},\quad \forall\, \varepsilon \ge 0\\
    &\lesssim \sigma^{-\mathcal{Q}-1} \mathbf{1}_{|x|\sim|x'|},
\end{align*}
where we choose $\varepsilon = \mathcal{Q}+1-n-m>0$.

On the other hand,
\begin{align*}
    \int_{|x|^{2-2\alpha}}^\infty \bigl|\nabla_L [\vartheta_{\kappa^{-1} |x'|}(x)]\bigr| \, \bigl|\nabla_L e^{-tL}(\xi,\eta)\bigr| \, \frac{dt}{\sqrt{t}}
    &\lesssim \int_{|x|^{2-2\alpha}}^\infty \frac{\mathbf{1}_{|x|\sim |x'|}}{|x'|^{1-\alpha}} \frac{e^{-\frac{\sigma^2}{c t}}}{t^{\frac{\mathcal{Q}}{2}}} \frac{dt}{|x|^{1-\alpha}\sqrt{t}}\\
    &\lesssim |x'|^{-2+2\alpha} \mathbf{1}_{|x|\sim |x'|} \int_{|x'|^{2-2\alpha}}^\infty e^{-\frac{\sigma^2}{c t}} t^{-\frac{\mathcal{Q}+1}{2}} \, dt\\
    &\lesssim |x'|^{-2+2\alpha} \sigma^{-\mathcal{Q}+1} \mathbf{1}_{|x|\sim|x'|}.
\end{align*}
Similarly, using the time-derivative estimate for the heat kernel, namely
\begin{equation*}
    \bigl|\partial_t e^{-tL}(\xi,\eta) \bigr| \le \frac{C}{t V(\eta,\sqrt{t})} e^{-\frac{d(\xi,\eta)^2}{c t}}, \qquad \forall\, \xi,\eta\in \mathbb{R}^{n+m},\quad \forall\, t>0,
\end{equation*}
which is a standard consequence of the analyticity of the heat semigroup; see \cite{D1989} or \cite{G1}, we obtain
\begin{align*}
    \int_0^\infty \bigl|\vartheta_{\kappa^{-1} |x'|}(x)\bigr| \, &\bigl|L_{\xi} e^{-tL}(\xi,\eta) \bigr| \, \frac{dt}{\sqrt{t}} \\
    &\lesssim  \int_0^\infty \frac{t^{-\frac{3}{2}} \mathbf{1}_{|x|\le c|x'|}}{V(\eta,\sqrt{t})} e^{-\frac{d(\xi,\eta)^2}{c t}} \, dt\\
    &\lesssim \int_0^{|x'|^{2-2\alpha}} \frac{ \mathbf{1}_{|x|\le c|x'|}}{|x'|^{n\alpha+m\beta}(\sqrt{t})^{n+m}} e^{-\frac{\sigma^2}{c t}} \frac{dt}{t^{3/2}}\\
    &\quad + \int_{|x'|^{2-2\alpha}}^\infty \frac{ \mathbf{1}_{|x|\le c|x'|}}{(\sqrt{t})^{\mathcal{Q}}} e^{-\frac{\sigma^2}{c t}} \frac{dt}{t^{3/2}}\\
    &\lesssim |x'|^{-n\alpha-m\beta+(1-\alpha)\varepsilon} \sigma^{-n-m-1-\varepsilon} \mathbf{1}_{|x|\le c|x'|} + \sigma^{-\mathcal{Q}-1} \mathbf{1}_{|x|\le c|x'|},\quad \forall\, \varepsilon\ge 0\\
    &\lesssim \sigma^{-\mathcal{Q}-1} \mathbf{1}_{|x|\le c|x'|},
\end{align*}
where we choose $\varepsilon = \mathcal{Q}-n-m \ge 0$. The result follows from \eqref{eq_delta}.
\end{proof}

With Lemma~\ref{lemma_key2} in mind, define the operators
\begin{gather*}
    T_3: u \mapsto |x|^{1-\alpha} \int_{B_n(0,c|x|)^c} |x'|^{-(1-\alpha)(\mathcal{Q}+1)} \int_{B_m(y,|x'|^{\mathfrak a})} |u(x',y')| \, dy' \, dx',\\
    T_4: u \mapsto |x|^{1-\alpha} \int_{B_n(0,c|x|)^c} \int_{B_m(y,|x'|^{\mathfrak a})^c} |y-y'|^{-\frac{(1-\alpha)(\mathcal{Q}+1)}{\mathfrak a}}|u(x',y')| \, dy' \, dx',\\
    T_5: u \mapsto |x|^{1-\alpha} \int_{c_1|x|\le |x'|\le c_2|x|} |x'|^{-2+2\alpha} \int_{B_m(y,|x'|^{\mathfrak a})^c} |y-y'|^{-\frac{(1-\alpha)(\mathcal{Q}-1)}{\mathfrak a}}|u(x',y')| \, dy' \, dx'.
\end{gather*}

\begin{lemma}\label{T3T4}
$T_3$ and $T_4$ are bounded on $L^p$ for all $1<p\le \infty$.
\end{lemma}

\begin{proof}
The proof is similar to that of Lemma~\ref{T1,T2}. By dyadic decomposition,
\begin{equation*}
    |T_3 f(\xi)|\le |x|^{1-\alpha} \sum_{j\ge 0} \int_{B_n(0,2^{j+1}\kappa |x|) \setminus B_n(0, 2^j \kappa |x|)} \bigl(2^j |x| \bigr)^{-(1-\alpha)(\mathcal{Q}+1)} \int_{B_m(y,|x'|^{\mathfrak a})} |f(\eta)| \, d\eta.
\end{equation*}
The $j$th term is bounded by
\begin{align*}
    2^{-j(1-\alpha)(\mathcal{Q}+1)} |x|^{-(1-\alpha)(\mathcal{Q}+1)} \int_{ B_n(0, 2^{j+1}\kappa |x|) \times B_m (y, (2^{j+1}\kappa |x|)^{\mathfrak a}) } |f(\eta)| \, d\eta.
\end{align*}
By Lemma~\ref{le_volume},
\begin{equation*}
    B_n\bigl(0, 2^{j+1}\kappa |x|\bigr) \times B_m \bigl(y, (2^{j+1}\kappa |x|)^{\mathfrak a}\bigr) \subset  B\bigl( (0,y), c_3^{-1} 2^{(j+1)(1-\alpha)} \kappa^{1-\alpha} |x|^{1-\alpha} \bigr).
\end{equation*}
Therefore the $j$th term is bounded by
\begin{equation*}
    2^{-j(1-\alpha)} |x|^{\alpha-1} Mf(\xi),
\end{equation*}
and the $L^p$ estimate for $T_3$ follows by summing over $j\ge 0$ and applying the maximal theorem.

Next, for $T_4$, applying dyadic decomposition twice shows that $|T_4 f(\xi)|$ is bounded by
\begin{equation*}
    |x|^{1-\alpha} \sum_{i\ge 0} \sum_{j\ge 0} \int_{B_n(0,2^{i+1}\kappa |x|) \setminus B_n(0, 2^i \kappa |x|)} \int_{B_m(y,2^{j+1}|x'|^{\mathfrak a}) \setminus B_m(y, 2^j |x'|^{\mathfrak a})} \frac{|f(\eta)|\,d\eta}{|y-y'|^{\frac{(\mathcal{Q}+1)(1-\alpha)}{\mathfrak a}}}.
\end{equation*}
For each $i,j\ge 0$, one has
\begin{equation*}
    B_n\bigl(0,2^{i+1}\kappa |x|\bigr) \times B_m\bigl(y,2^{j+1}|x'|^{\mathfrak a}\bigr)  \subset B \bigl( (0,y), c_3^{-1} 2^{\frac{(j+1)(1-\alpha)}{\mathfrak a} + (i+1)(1-\alpha)} |x|^{1-\alpha} \bigr),
\end{equation*}
and an argument parallel to that for $T_3$ yields the bound
\begin{equation*}
    2^{-i(1-\alpha)} 2^{-j \frac{1-\alpha}{\mathfrak a}} Mf(\xi).
\end{equation*}
The conclusion follows at once.
\end{proof}

By Lemma~\ref{T3T4}, it remains to treat $T_5$. Since $|x|\sim |x'|$, one has $T_5 f(\xi) \lesssim T_2f(\xi)$ with $\varepsilon = 1-\alpha$; see \eqref{def_T2}. Hence, by Lemma~\ref{T1,T2}, $T_5$ is bounded on $L^q$ for $q> \frac{n}{n-1+\alpha} = \Bigl(\frac{n}{1-\alpha}\Bigr)'$. Therefore, by Lemma~\ref{lemma_key2} and Theorem~\ref{hardy_x}, the bilinear form satisfies
\begin{align}\label{eq_final}
    |\mathcal{B}(f,g)|
    &\le C \int_{\mathbb{R}^{n+m}} \frac{|f(\xi)|}{|x|^{1-\alpha}} \, \bigl|(T_3+ T_4 + T_5)g(\xi)\bigr| \, d\xi \nonumber\\
    &\le C \Bigl\| \frac{f(\xi)}{|x|^{1-\alpha}} \Bigr\|_p \sum_{j=3}^5 \|T_j g\|_{p'} \nonumber\\
    &\le C \| \nabla_L f\|_p \|g\|_{p'}
\end{align}
for all $p\in \bigl(1,\frac{n}{1-\alpha}\bigr)$, as required. This completes the proof of Theorem~\ref{prop_RR}.
\end{proof}

\begin{remark}\label{re:harmonic:annihilation}
For later use, we focus on the case $n=1$ and $1/2 \le \alpha <1$. We use the convention
\begin{equation*}
\operatorname{div}_L F = -\partial_x(|x|^\alpha F_x) +\operatorname{div}_y(|x|^\beta F_y),
\end{equation*}
where $F = (F_x, F_y)$. 
Let $u:= L^{-1} \operatorname{div}_L \omega$ with $\omega \in C_c^\infty(\mathbb R^{1+m};\mathbb C^{1+m})$, and let $g\in C_c^\infty((\mathbb R\setminus\{0\})\times\mathbb R^m)$. For $0<t_1 < t_2 < \infty$, define
\begin{equation*}
F_{t_1,t_2}(\xi) := \int_{\mathbb R^{1+m}} \int_{t_1}^{t_2} \vartheta_{\kappa^{-1} |x'|}(x) \nabla_{L,\xi}e^{-tL}(\xi,\eta) g(\eta) \, \frac{dt}{\sqrt{\pi t}} \, d\eta ,
\end{equation*}
where $e^{-tL}(\xi,\eta)$ is the heat kernel of the one-dimensional separated Friedrichs realization. We want to check that the key step \eqref{eq:HA} holds for $u$. The subtle point is that, because of the separation phenomenon, $u$ can have different traces across $x=0$, which may cause trouble in \eqref{eq:HA}.

For $\varepsilon>0$, integration by parts on $\{|x|>\varepsilon\}$ gives
\begin{align*}
\int_{|x|>\varepsilon} \nabla_L u \cdot F_{t_1,t_2} \, d\xi &= \int_{|x|>\varepsilon} u \operatorname{div}_L F_{t_1,t_2} \, d\xi\\
&\qquad + \int_{\mathbb R^m} \Bigl[ u(-\varepsilon,y)\varepsilon^\alpha (F_{t_1,t_2})_x(-\varepsilon,y) - u(\varepsilon,y)\varepsilon^\alpha (F_{t_1,t_2})_x(\varepsilon,y) \Bigr] dy.
\end{align*}
We claim that the above boundary contribution tends to zero as
$\varepsilon\downarrow0$. Indeed, the $x$-component of $F_{t_1,t_2}$ is
\begin{equation*}
(F_{t_1,t_2})_x(\xi) = \int_{\mathbb R^{1+m}} \int_{t_1}^{t_2} \vartheta_{\kappa^{-1} |x'|}(x) |x|^\alpha\partial_xp_t(\xi,\eta) g(\eta) \, \frac{dt}{\sqrt{\pi t}} \, d\eta.
\end{equation*}
Therefore
\begin{align*}
\varepsilon^\alpha (F_{t_1,t_2})_x(\pm\varepsilon,y) &= \int_{\mathbb R^{1+m}} \int_{t_1}^{t_2} \vartheta_{\kappa^{-1} |x'|}(\pm\varepsilon) \varepsilon^{2\alpha} \partial_x e^{-tL}((\pm\varepsilon,y),\eta) g(\eta) \, \frac{dt}{\sqrt{\pi t}} \, d\eta.
\end{align*}
Since $\alpha\ge 1/2$, the Friedrichs realization separates the two half-lines.
Thus $e^{-tL}(\xi,\eta)=0$ when $\xi$ and $\eta$ lie on different half-lines, and on each half-line the heat kernel satisfies the zero-flux endpoint condition
\begin{equation*}
    \lim_{\varepsilon \to 0}
    \varepsilon^{2\alpha}\partial_x e^{-tL}((\pm\varepsilon,y),\eta)=0
\end{equation*}
in the Friedrichs trace sense. Because $g$ is compactly supported away from
$\{x=0\}$ and $t\in[t_1,t_2]$, the heat-kernel estimates give a dominating
function in the $y$-variable. Hence
\begin{align*}
\lim_{\varepsilon\to 0} \int_{\mathbb R^m} \Bigl[ u(-\varepsilon,y)\varepsilon^\alpha (F_{t_1,t_2})_x(-\varepsilon,y) - u(\varepsilon,y)\varepsilon^\alpha (F_{t_1,t_2})_x(\varepsilon,y) \Bigr] dy = 0.
\end{align*}
Consequently,
\begin{equation*}
\int_{\mathbb R^{1+m}} \nabla_Lu \cdot F_{t_1,t_2} \, d\xi = \int_{\mathbb R^{1+m}} u \operatorname{div}_L F_{t_1,t_2} \, d\xi.
\end{equation*}
Letting $t_1 \to 0$ and $t_2 \to \infty$ gives \eqref{eq:HA}.
\end{remark}

\begin{remark}\label{re:applicable:HA}
Without ambiguity, this harmonic annihilation method (Theorem~\ref{prop_RR}) applies to a broader class of functions. In fact, the argument of Theorem~\ref{prop_RR} works verbatim for a function $u$ if
\begin{itemize}
    \item \eqref{eq:HA} holds for $u$,
    \item the Hardy inequality \eqref{eq_hardy_x} holds for $u$.
\end{itemize}
In particular, by Remarks~\ref{re:Hardy} and \ref{re:harmonic:annihilation} above, in the case $n=1$ and $1/2\le \alpha<1$, the reverse Riesz inequality \eqref{RR_p} holds for $u=L^{-1} \operatorname{div}_L \omega$ with $\omega \in C_c^\infty(\mathbb{R}^{1+m}; \mathbb{C}^{1+m})$ provided $u(\xi) \to 0$ as $d(\xi,0)\to \infty$ and $|\nabla_L u|\in L^p$.
\end{remark}

\part{Riesz transform for \texorpdfstring{$n=1$}{n=1} and \texorpdfstring{$\frac{1}{2}\le \alpha<1$}{1/2<alpha<1}}\label{part4}

In this final part we treat the strongly degenerate one-dimensional regime $n=1$ and $1/2\le \alpha<1$. The route from Parts~\ref{part1} and~\ref{part2} cannot be used
directly: the Poincar\'e inequality across the singular line fails, so the abstract reverse-H\"older characterization of the Riesz transform is no longer available (see Remark~\ref{re:H}). At the same time, the Friedrichs realization separates the two half-lines and selects the zero-flux branch on each of them. This separated structure allows one
to prove the required Hodge projector estimate. Combining this Hodge estimate with the reverse Riesz inequality from Part~\ref{part3} then recovers the full $L^p$ range
for the Riesz transform.

\section{Proof of Theorem~\ref{thm:n=1:alpha>1/2}}

\subsection{From reverse H\"older to Hodge projector}

Consider the Hilbert space $L^2(\mathbb R^{1+m};\mathbb C^{1+m})$. We denote by
\begin{equation*}
    \operatorname{div}_L:=\nabla_L^*
\end{equation*}
the $L^2$-adjoint of $\nabla_L$. Thus, for smooth compactly supported
$u$ and vector fields $F=(F_x,F_y)$,
\begin{equation*}
    \int_{\mathbb R^{1+m}}
     \nabla_Lu \cdot F \, d\xi =
    \int_{\mathbb R^{1+m}}
    u \operatorname{div}_L F \, d\xi ,
\end{equation*}
where explicitly
\begin{equation*}
\operatorname{div}_L F = -\partial_x(|x|^\alpha F_x) - \sum_{j=1}^m\partial_{y_j}(|x|^\beta F_{y_j}).
\end{equation*}
With this convention, $L=\operatorname{div}_L\nabla_L$ in the sense of the Friedrichs form. The Hodge projector is defined on $L^2(\mathbb{R}^{1+m}; \mathbb{C}^{1+m})$ by
\begin{equation*}
   \mathcal P:=\nabla_L L^{-1}\operatorname{div}_L =  \nabla_L L^{-1/2} \bigl(\nabla_L L^{-1/2}\bigr)^*.
\end{equation*}
Since $\nabla_L L^{-1/2}$ is a partial isometry on $L^2$, the operator $\mathcal P$ is an orthogonal projection on $L^2$. In particular,
\begin{equation*}
    \mathcal P^2= \mathcal P,\qquad \mathcal P^*= \mathcal P,\qquad \|\mathcal P\|_{2\to2}\le 1,
\end{equation*}
and we say the Hodge projector is bounded on $L^p$ if
\begin{equation}\label{Pi_p}\tag{$\Pi_p$}
    \| \nabla_L L^{-1} \mathrm{div}_L \omega \|_p \le C \| \omega \|_p.
\end{equation}
The aim of this subsection is first to prove \eqref{Pi_p} in the strongly degenerate one-dimensional regime $n=1$ and $1/2 \le \alpha <1$. Once this is obtained, we combine it with the reverse Riesz inequality from
Theorem~\ref{prop_RR} to prove Theorem~\ref{thm:n=1:alpha>1/2}.

The following property is due to Shen \cite{Shen}.

\begin{theorem}[{\cite{Shen}}]\label{thm:Shen}
Let $(E,\mu)$ be a measured metric space satisfying the doubling property (D). Let $T$ be a bounded sublinear operator on $L^2(E,\mu)$. Assume that for some $q\in (2,\infty]$, $1<\lambda_1<\lambda_2$, and $C>0$, we have
\begin{equation}\label{eq:Shen}
    \biggl( \fint_B |Tf|^q \, d\mu \biggr)^{\!\frac{1}{q}} \le C \biggl( \fint_{\lambda_1 B} |Tf|^2 \, d\mu \biggr)^{\!\frac{1}{2}}
\end{equation}
for all balls $B \subset E$ and all $f\in L^2(E,\mu)$ supported in $E\setminus \lambda_2 B$. Then $T$ is bounded on $L^p(E,\mu)$ for $2<p<q$.  
\end{theorem}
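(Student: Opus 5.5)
The plan is the real-variable good-$\lambda$ argument of Shen, in the form later streamlined by Auscher. The idea is to compare the level sets of $M(|Tf|^2)$ with those of $M(|f|^2)$, where $M$ is the Hardy--Littlewood maximal operator on $(E,\mu)$. Fix $f\in L^2\cap L^p$ and a ball $B$. We split $f=f\mathbf 1_{\lambda_2B}+f\mathbf 1_{E\setminus\lambda_2B}$ and set
\[
G_B:=T(f\mathbf 1_{\lambda_2B}),\qquad R_B:=T(f\mathbf 1_{E\setminus\lambda_2B}).
\]
Sublinearity gives $|Tf|\le|G_B|+|R_B|$ on $E$.

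\textbf{Step 1: local estimates.} We bound the two pieces separately.
\begin{enumerate}
\item For $G_B$, the $L^2$ boundedness of $T$ together with \eqref{D} gives
\[
\fint_{\lambda_1B}|G_B|^2\le C\fint_{\lambda_2B}|f|^2\le C\inf_{B}M(|f|^2).
\]
\item For $R_B$, we apply \eqref{eq:Shen} to $f\mathbf 1_{E\setminus\lambda_2B}$ and then use $|R_B|\le|Tf|+|G_B|$. This gives
\[
\Bigl(\fint_B|R_B|^q\Bigr)^{1/q}\le C\Bigl(\fint_{\lambda_1B}|R_B|^2\Bigr)^{1/2}\le C\Bigl(\inf_B M(|Tf|^2)+\inf_B M(|f|^2)\Bigr)^{1/2}.
\]
\end{enumerate}

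\textbf{Step 2: good-$\lambda$ inequality.} We want: for $A$ large and $\gamma$ small,
\[
\mu\bigl(\{M(|Tf|^2)>A\lambda,\ M(|f|^2)\le\gamma\lambda\}\bigr)\le C\bigl(A^{-q/2}+\gamma A^{-1}\bigr)\,\mu\bigl(\{M(|Tf|^2)>\lambda\}\bigr).
\]
\begin{enumerate}
\item Since $Tf\in L^2$, the open set $\{M(|Tf|^2)>\lambda\}$ has finite measure. We cover it by a Whitney-type family of balls $B$ with bounded overlap, chosen so that a fixed dilate of each $B$ meets the complement, where $M(|Tf|^2)\le\lambda$.
\item On such a ball, and for $A$ large relative to the doubling constant, $M(|Tf|^2)>A\lambda$ forces $M\bigl(\mathbf 1_{cB}|Tf|^2\bigr)>A\lambda/C$. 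Hence it forces either $M(\mathbf 1_{cB}|G_{cB}|^2)>A\lambda/C$ or $M(\mathbf 1_{cB}|R_{cB}|^2)>A\lambda/C$.
\item We control the $G$-part by the weak $(1,1)$ bound for $M$ and Step~1, which gives the term $C\gamma\lambda\mu(B)/(A\lambda)$.
\item We control the $R$-part by the weak $(q/2,q/2)$ bound for $M$ and Step~1 (with $\inf M(|Tf|^2)\le\lambda$ and $\inf M(|f|^2)\le\gamma\lambda$), which gives the term $C\bigl(\lambda(1+\gamma)\bigr)^{q/2}\mu(B)/(A\lambda)^{q/2}$.
\item When $q=\infty$ the $R$-part is empty once $A$ is large.
\end{enumerate}
Summing over the Whitney balls gives the claimed inequality.

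\textbf{Step 3: integration in $\lambda$.} Write $p=2s$ with $1<s<q/2$. We multiply the good-$\lambda$ inequality by $s\lambda^{s-1}$ and integrate over $\lambda\in(0,N)$. This gives
\[
\int_0^{N}\! s\lambda^{s-1}\mu\bigl(M(|Tf|^2)>A\lambda\bigr)\,d\lambda\le CA^{s}\bigl(A^{-q/2}+\gamma A^{-1}\bigr)\int_0^N\! s\lambda^{s-1}\mu\bigl(M(|Tf|^2)>A\lambda\bigr)\,d\lambda+C_\gamma\|f\|_p^p .
\]
The finite cutoff $N$ together with $Tf\in L^2$ makes both sides finite. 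We choose $A$ large so that $CA^{s-q/2}<1/4$; this needs $s<q/2$, i.e.\ $p<q$. Then we choose $\gamma$ small so that $CA^{s-1}\gamma<1/4$. We absorb the first term on the right into the left and let $N\to\infty$. Since $|Tf|^2\le M(|Tf|^2)$ a.e., this yields $\|Tf\|_p\le C\|f\|_p$ for $2<p<q$.

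The main obstacle is Step~2. The Whitney-type localization must be compatible with the two dilation parameters $\lambda_1<\lambda_2$, so that the hypothesis \eqref{eq:Shen} is applied on exactly the balls where the level-set comparison is made. Handling only a doubling measure, rather than a Euclidean one, means both the covering and the $cB$ versus $\lambda_2 B$ bookkeeping must rely on \eqref{D} alone. A secondary technical point is the a priori finiteness needed in Step~3, which we handle through the truncation at $N$.
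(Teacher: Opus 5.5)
The paper does not prove this statement. It is quoted from \cite{Shen}, in the doubling-space form of \cite{AC}, and used as a black box in Theorem~\ref{thm:Hodge:n1}. Your argument is correct and self-contained, and it takes a different route from Shen's own proof.

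What you run is the global good-$\lambda$ scheme of \cite{ACDH}, i.e.\ the mechanism behind Theorem~\ref{thm_ACDH}, with $TA_{r(B)}$ replaced by the splitting $f=f\mathbf 1_{\lambda_2B}+f\mathbf 1_{E\setminus\lambda_2B}$. The pieces fit together as follows:
\begin{itemize}
\item the near part is controlled by the $L^2$ bound and \eqref{D};
\item the far part is controlled by \eqref{eq:Shen} together with $|R_B|\le|Tf|+|G_B|$;
\item the a priori finiteness in Step~3 holds because $\mu(\{M(|Tf|^2)>A\lambda\})\lesssim\|Tf\|_2^2/(A\lambda)$ and $s>1$, so $\int_0^N\lambda^{s-2}\,d\lambda<\infty$.
\end{itemize}

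Shen's original proof is instead local. It works inside a fixed cube $Q_0$ and runs a Calder\'on--Zygmund (Caffarelli--Peral type) decomposition over dyadic subcubes. This gives $(\fint_{Q_0}|Tf|^p)^{1/p}\lesssim(\fint_{4Q_0}|Tf|^2)^{1/2}+(\fint_{4Q_0}|f|^p)^{1/p}$. Letting $Q_0$ grow, the $L^2$ term disappears, since $\mu(Q_0)^{1/p-1/2}\|Tf\|_2\to0$ for $p>2$. That route needs a dyadic structure (Christ cubes on a general doubling space) but no Whitney covering of level sets. Yours needs only balls and the global maximal function.

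The dilation bookkeeping you single out as the main obstacle is in fact harmless. You apply Step~1 directly to the ball $cB$, which contains a point of $\{M(|Tf|^2)\le\lambda\}$, and the dilates $\lambda_1 cB$, $\lambda_2 cB$ are absorbed by doubling.

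One point should be added. If $\mu(E)<\infty$, then $E$ is bounded, and $\{M(|Tf|^2)>\lambda\}$ may equal all of $E$ for small $\lambda$, so no Whitney covering exists. For such $\lambda$, take $B=E$ itself. Then $R_B=0$, and the $G$-part estimate alone gives the good-$\lambda$ bound. This case does not arise in the paper's application on $\mathbb R^{1+m}$.
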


To apply Theorem~\ref{thm:Shen} to $\mathcal{P}$, we first recall the following more natural reverse H\"older inequality used in \cite{Shen}:
\begin{equation}\label{eq:RH:Shen}\tag{$\widetilde{\mathrm{RH}}_p$}
     \biggl( \fint_B |\nabla_L u|^p \, d\mu \biggr)^{\!\frac{1}{p}} \le C \biggl( \fint_{\lambda B} |\nabla_L u|^2 \, d\mu \biggr)^{\!\frac{1}{2}}.
\end{equation}

Under \eqref{D} and \eqref{UE}, the Caccioppoli inequality together with the mean value estimate \eqref{meanvalue} gives the implication $\eqref{eq:RH:Shen} \implies \eqref{eq:RH:Jiang}$. However, for the opposite direction, one usually needs \eqref{P2}. 

From our discussion in Part~\ref{part2}, we are able to establish the following.

\begin{theorem}\label{thm:RH:n1:alpha>1/2}
Let $n=1$, $m\ge1$, $\alpha \in [\frac{1}{2},1)$, and $\beta \ge0$. Then \eqref{eq:RH:Jiang} holds for all $1<p\le \infty$.
\end{theorem}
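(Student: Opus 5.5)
We reduce to $(\mathrm{RH}_\infty)$, exactly as in Parts~\ref{part1} and~\ref{part2}. Lemma~\ref{reduce_RH} does not use the Poincaré inequality, so it reduces the problem to remote and anchored balls. Lemma~\ref{RH_remote} handles remote balls for every $1<p\le\infty$: on remote balls the coefficients are uniformly elliptic after rescaling, and the mean value estimate from \cite[Proposition~2.1]{CJKS} needs only \eqref{D} and \eqref{UE}. It therefore remains to show, for an anchored ball and $v$ harmonic in $Q_4$ after the rescaling of the \emph{Reduction of Theorem~\ref{thm:RH}}, that
\begin{equation*}
\sup_{Q_1}|\nabla_L v|\le C\|v\|_{L^\infty(Q_3)},
\end{equation*}
where $\|v\|_{L^\infty(Q_3)}\lesssim\fint_{Q_4}|v|$ again by \cite[Proposition~2.1]{CJKS}.

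For the anchored estimate, localize with $\chi(y)$ as before and write $\omega=\chi v$, so that $L\omega=f=[L,\chi]v$ on $\widetilde{\mathcal C}=(-2,2)\times\mathbb R^m$. The key structural difference from Part~\ref{part2} comes from Subsection~\ref{ssec:Friedrichs}. For $\alpha\ge1/2$ the Friedrichs realization separates the half-lines, so no even/odd decomposition is needed. Instead, we solve independently on $(0,2)\times\mathbb R^m$ and on $(-2,0)\times\mathbb R^m$, imposing on each the zero-flux condition \eqref{eq_supercritical_zero_flux} at $\rho=0$ and Dirichlet data $h_\pm=\omega(\pm2,\cdot)$. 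On each half-line we have $\omega=\mathcal T_\pm h_\pm+\mathcal S_\pm f_\pm$. Now $\mathfrak b=\alpha-\frac12\ge0$, and the finite-energy branch is $\rho^{-\mathfrak b}I_{\mu}(\tau_\rho)$ with $\mu=\mathfrak b/\mathfrak a\ge0$. Indeed $\rho^{-\mathfrak b}I_\mu\sim$ const, which has zero flux, while the $K_\mu$ branch behaves like $\rho^{1-2\alpha}$ or $\log\rho$ and has infinite energy.

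Hence the Poisson multiplier is $M(\rho,Y)=H_{\mu,\rho}(|Y|)$ with no power prefactor, and the Green kernel is $\mathfrak a^{-1}\phi(\rho_<)\psi(\rho_>)$ with $\phi=\rho^{-\mathfrak b}I_\mu$ and $\psi$ as in \eqref{eq:psi} with order $\mu$. This is exactly the "even" structure. By Remark~\ref{re:even:Poisson}, Theorem~\ref{prop:Poisson:n=1} gives the even multiplier bounds for $m^{\mathrm{rad}}=\rho^\beta|Y|S_\mu(\mathfrak A_\rho|Y|)H_{\mu,\rho}$ and $m^{\mathrm{bas},j}$, uniformly in $0<\rho\le1$, with no $\rho^{-\alpha}$ loss. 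The multiplier-to-kernel argument of Theorem~\ref{lem:Poisson_kernel}, with the spherical sum absent, then yields $\sup|\nabla_L\mathcal T_\pm h_\pm|\le C\|h\|_\infty$.

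For the Green part we redo Proposition~\ref{prop:Green:n=1:even} with order $\mu\ge0$ in place of $-\nu$. This requires the row bound $\sup_\rho\int_0^2G\,d\sigma\le C$ for $|Y|\le C^*$, the bound $\rho^{2\beta}\int G\lesssim|Y|^{-2}$ for large $|Y|$, and $\int|D_*G|\lesssim 1$ or $|Y|$ accordingly. The bounds come from Lemma~\ref{lem:Bessel:integral} with exponents $q=\frac{2\beta-\mathfrak b}{\mathfrak a}$, $s=\frac{1-\mathfrak b}{\mathfrak a}$, for which $q+s=2$. For the radial derivative we use $\rho^\alpha\phi'/\phi=\mathfrak a\rho^{\alpha-1}\tau_\rho S_\mu(\tau_\rho)$, which vanishes like $\rho^{\alpha-1+2\mathfrak a}=\rho^{2\beta+1-\alpha}$ near $0$, and the bound on $\rho^\alpha\psi'/\psi$ from \eqref{eq:normal2}. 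Then the resolvent identity of Proposition~\ref{lem:second_to_last} turns these into the $Y$-derivative estimates of Theorem~\ref{thm:Green:n=1:secondtolast} with $\kappa\equiv1$. Integrating by parts in $y'$ as in the proof of Theorem~\ref{thm:RH}, using the separation $E_1$ versus $E_2$, gives $\sup_{Q_1}|\nabla_L\mathcal S_\pm f_\pm|\le C\|v\|_{L^\infty(Q_3)}$.

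The main obstacle is verifying the hypotheses of Lemma~\ref{lem:Bessel:integral} near $t\to0$ now that $\mu\ge0$ and $\mathfrak b\ge0$. In particular, for the term $\int_\rho^2\psi\,d\sigma$ we have $\psi(\sigma)\sim\sigma^{-2\mathfrak b}=\sigma^{1-2\alpha}$ at small $\tau$, and this must be checked against the $\rho^{\alpha-1}$ factor in $D_r$. At $\alpha=1/2$ we have $\mu=0$ and $K_0$ carries a logarithm, so Lemma~\ref{lem:Bessel:integral} (which requires $s>|\nu|$) needs the $\nu=0$ modification, where the extra $|\log t|$ must be absorbed by a power. If this bookkeeping fails, we would first fall back on direct small/large-argument asymptotics for the single fixed order $\mu$, as in Case~2b of Proposition~\ref{prop:Green:Shur}.
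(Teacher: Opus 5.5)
Your proposal is correct and follows the paper's proof essentially step for step: you reduce to the anchored fixed-cylinder estimate, solve two independent zero-flux half-line problems, use the regular branch $\rho^{-\mathfrak b}I_\mu(\tau_\rho)$ with $\mu=\mathfrak b/\mathfrak a\ge0$ to get the even-type multiplier $H_{\mu,\rho}$, and then reuse the even Green estimates, the resolvent identity and integration by parts in $y'$. The bookkeeping you flag does close ($q+s=2$; $s>\mu$ precisely because $\alpha<1$; and at $\mu=0$ the logarithm of $K_0$ is absorbed because the exponent sums that occur, $2$, $\frac{1-\alpha}{\mathfrak a}$ and $1+\frac{1-\alpha}{\mathfrak a}$, are strictly positive), and your fallback to direct fixed-order asymptotics for $|Y|\le C^*$ is in fact what is needed there, since the identity $K_\mu=\frac{\pi}{2\sin\pi\mu}(I_{-\mu}-I_\mu)$ behind Proposition~\ref{prop:Green:n=1:even} is only usable for $0<\mu<1$.
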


\begin{proof}
We prove the anchored estimate; the remote case follows from Lemma~\ref{RH_remote} and the reduction from arbitrary balls to remote and anchored balls is the same as in Lemma~\ref{reduce_RH}. After rescaling an anchored ball, it is enough to prove the following fixed-cylinder estimate. If $v$ is $L$-harmonic in $Q_4=(-4,4)\times B_m(0,4^{\mathfrak a})$, then
\begin{equation*}
    \sup_{Q_1}|\nabla_L v| \le C\|v\|_{L^\infty(Q_3)}.
\end{equation*}
The $L^p$ reverse H\"older inequality then follows immediately for every
$1<p\le \infty$. Let $\chi\in C_c^\infty(B_m(0,3^{\mathfrak a}))$ be such that $\chi=1$ on $B_m(0,2^{\mathfrak a})$, and set $\omega(x,y):=\chi(y)v(x,y)$. Then $\omega=v$ on $Q_2$, and
\begin{equation*}
L\omega=f:=[L,\chi]v = |x|^{2\beta}v\Delta_y\chi - 2|x|^{2\beta}\nabla_y\chi\cdot\nabla_yv.
\end{equation*}
The function $f$ is supported in the annulus $2^{\mathfrak a}\leq |y|\leq 3^{\mathfrak a}$.

Since $1/2\le \alpha<1$, the Friedrichs realization separates the two half-lines. Thus the restrictions
\begin{equation*}
    \omega_\pm(\rho,y):=\omega(\pm\rho,y),
    \qquad
    f_\pm(\rho,y):=f(\pm\rho,y),
    \qquad
    h_\pm(y):=\omega(\pm2,y)
\end{equation*}
solve independent half-line Friedrichs problems on $(0,2)\times\mathbb R^m$.
By the Friedrichs representation,
\begin{equation*}
    \omega_\pm=\mathcal T h_\pm+ \mathcal S f_\pm,
\end{equation*}
where $\mathcal T$ and $\mathcal S$ are respectively the Poisson and Green operators for the half-line problem
\begin{align*}
\begin{cases}
    Lg=0, & 0<\rho<2,\\
    g(2,y)=h_\pm(y),\\
    \displaystyle \lim_{\rho\to0}\rho^{2\alpha}\partial_\rho g(\rho,y)=0,
\end{cases}\qquad \mathrm{and}\qquad \begin{cases}
    Lg=f_\pm, & 0<\rho<2,\\
    g(2,y)=0,\\
    \displaystyle \lim_{\rho\to 0}\rho^{2\alpha}\partial_\rho g(\rho,y)=0.
\end{cases}   
\end{align*}
We now identify the Fourier--Bessel branch. Taking the Fourier transform in
the $y$-variable gives the ordinary differential equation
\begin{equation*}
-\partial_\rho\left(\rho^{2\alpha}\partial_\rho \hat g(\rho,Y)\right)+\rho^{2\beta}|Y|^2\hat g(\rho,Y)=0,\qquad 0<\rho<2.
\end{equation*}
Equivalently,
\begin{equation*}
\hat g''(\rho,Y)+\frac{2\alpha}{\rho}\hat g'(\rho,Y)-|Y|^2\rho^{2(\beta-\alpha)}\hat g(\rho,Y)=0.
\end{equation*}
Set $\nu:=\frac{\mathfrak b}{\mathfrak a}$ and $\tau_\rho:=\frac{|Y|\rho^\mathfrak a}{\mathfrak a}$. Note that $\mathfrak b\ge 0$, $\nu\ge 0$, and $\mathfrak b=\mathfrak a \nu$. With the substitution $\hat g(\rho,Y)=\rho^{-b}U(\tau_\rho)$, the equation becomes the modified Bessel equation
\begin{equation*}
    \tau^2U''(\tau)+\tau U'(\tau)-(\tau^2+\nu^2)U(\tau)=0.
\end{equation*}
Hence the two local branches are $\rho^{-\mathfrak b}I_\nu(\tau_\rho)$ and $\rho^{-\mathfrak b}K_\nu(\tau_\rho)$.

The Friedrichs finite-energy condition excludes the $K_\nu$ branch, while the regular branch
\begin{equation*}
    \Phi_Y(\rho):=\rho^{-b}I_\nu(\tau_\rho)
\end{equation*}
satisfies the zero-flux endpoint condition. Indeed, as $\rho\to 0$, $\Phi_Y(\rho)=C_Y\left(1+O(\rho^{2a})\right)$, and therefore
\begin{equation*}
\rho^{2\alpha}\Phi_Y'(\rho) = O\left(\rho^{2\alpha+2a-1}\right) = O\left(\rho^{2\beta+1}\right) \to 0.
\end{equation*}
Thus the Poisson multiplier is
\begin{equation*}
M(\rho,Y) := \frac{\Phi_Y(\rho)}{\Phi_Y(2)} = \frac{\mathcal I_\nu(\tau_\rho)}{\mathcal I_\nu(\tau_2)}, \qquad \mathcal I_\nu(t) := t^{-\nu} I_\nu(t).
\end{equation*}
For the Green operator, let
\begin{equation*}
\Psi_Y(\rho) := \rho^{-\mathfrak b} \left[ K_\nu(\tau_\rho) - \frac{K_\nu(\tau_2)} {I_\nu(\tau_2)} I_\nu(\tau_\rho) \right].
\end{equation*}
Then $\Psi_Y(2)=0$, and the half-line Green kernel is
\begin{equation*}
G_Y(\rho,\sigma) = \mathfrak a^{-1}
\begin{cases}
\Phi_Y(\rho)\Psi_Y(\sigma), & 0<\rho<\sigma<2,\\
\Phi_Y(\sigma)\Psi_Y(\rho), & 0<\sigma<\rho<2.
\end{cases}
\end{equation*}
The fixed-order estimates from Part~\ref{part2} now apply with the order $\nu\ge 0$. More precisely, the proof of the zero-flux Poisson estimate in
Proposition~\ref{prop:Poisson:n=1}, together with
Remark~\ref{re:even:Poisson}, gives
\begin{equation*}
\sup_{Q_1} |\nabla_L \mathcal T h_\pm| \le C\|h_\pm\|_{L^\infty(\mathbb R^m)} \le C\|v\|_{L^\infty(Q_3)}.
\end{equation*}
It remains to estimate the Green term. The fixed-order Green estimates obtained
in Proposition~\ref{prop:Green:n=1:even} and Theorem~\ref{thm:Green:n=1:secondtolast}
apply verbatim to the above zero-flux kernel. In particular, the row estimates
for $\nabla_L \mathcal G$ and for $\nabla_L\partial_{y_k'} \mathcal G$ (see Theorem~\ref{lem:Green_kernel}) imply
\begin{equation*}
\sup_{Q_1}|\nabla_L \mathcal S f_\pm| \le C\|v\|_{L^\infty(Q_3)}.
\end{equation*}
Indeed, the part of $f_\pm$ containing $v\Delta_y\chi$ is controlled directly by the row bound for $\nabla_L \mathcal G$. The part containing $\nabla_y\chi\cdot\nabla_yv$ is treated by integrating by parts in the
$y_k'$-variable, exactly as in the proof of Theorem~\ref{thm:RH};
the support of $\nabla_y\chi$ is separated from $Q_1$ in the $y$-variable, and
the row bound for $\nabla_L\partial_{y_k'} \mathcal G$ gives the desired estimate.

Combining the Poisson and Green estimates gives
\begin{equation*}
\sup_{Q_1}|\nabla_L\omega| \le C\|v\|_{L^\infty(Q_3)}.
\end{equation*}
Since $\omega=v$ on $Q_2$, we conclude that
\begin{equation*}
\sup_{Q_1}|\nabla_Lv| \le C\|v\|_{L^\infty(Q_3)}.
\end{equation*}
This completes the proof of Theorem~\ref{thm:RH:n1:alpha>1/2}

\end{proof}

Next, we pass from \eqref{eq:RH:Jiang} to \eqref{eq:RH:Shen}.

\begin{proposition}\label{prop_u_RH_to_Shen_RH_large_alpha}
Let $n=1$, $m\ge1$, $\alpha \in [\frac{1}{2},1)$, and $\beta \ge0$. Then \eqref{eq:RH:Shen} holds for all $1<p\le \infty$.
\end{proposition}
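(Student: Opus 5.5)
The natural route is to upgrade Theorem~\ref{thm:RH:n1:alpha>1/2} using a Poincar\'e inequality that holds on the separated geometry, even though the global inequality \eqref{P2} fails. Fix a ball $B$ and let $u$ be $L$-harmonic in $\lambda B$. I would treat three cases.

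\emph{Remote balls.} If $\lambda B$ is remote, it lies in one half-space. Lemma~\ref{PE} gives the $L^2$-Poincar\'e inequality there. Applying \eqref{eq:RH:Jiang} to $u-u_{B'}$ for an intermediate ball $B'$ (harmonicity is preserved by subtracting a constant) and then Poincar\'e yields
\begin{equation*}
\Bigl(\fint_B |\nabla_L u|^p\Bigr)^{1/p}\le \frac{C}{r}\fint_{B'}|u-u_{B'}|\le C\Bigl(\fint_{\gamma B'}|\nabla_L u|^2\Bigr)^{1/2}.
\end{equation*}

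\emph{Anchored balls.} Here I replace the single constant by a piecewise constant. Since $\mathrm{sgn}(x)$ is locally $L$-harmonic with $\nabla_L\mathrm{sgn}(x)=0$ by \eqref{eq:gradient:sgn}, the function $\tilde u := u - c_+\mathbf 1_{\{x>0\}} - c_-\mathbf 1_{\{x<0\}}$ is still $L$-harmonic with $\nabla_L\tilde u=\nabla_L u$, for any constants $c_\pm$. Choose $c_\pm$ to be the averages of $u$ over $B'\cap\{\pm x>0\}$. Applying \eqref{eq:RH:Jiang} to $\tilde u$ reduces matters to a half-ball Poincar\'e inequality
\begin{equation*}
\fint_{B'\cap\{\pm x>0\}}|u-c_\pm| \le C r\Bigl(\fint_{\lambda' B'}|\nabla_L u|^2\Bigr)^{1/2}.
\end{equation*}
For an anchored ball, $B'\cap\{x>0\}$ is comparable to the box $(0,R)\times B_m(y_0,R^{\mathfrak a})$ by Lemma~\ref{le_volume}. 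This inequality is just Poincar\'e on that one-sided box, where no crossing of $\{x=0\}$ is needed. I would prove it by the rescaling of the reduction of Theorem~\ref{thm:RH} to the fixed box $(0,1)\times B_m(0,1)$, where the weights $x^{2\alpha},x^{2\beta}$ are fixed. The weighted Poincar\'e inequality on this fixed box can be obtained in one of two ways: a Whitney/chaining argument over remote sub-boxes (Lemma~\ref{PE}, Lemma~\ref{le_tele}), whose chains toward $x=0$ form a doubling half-space geometry, or the doubling-plus-$L^2$-Poincar\'e structure of the one-sided Grushin space proved in \cite{RS2}, which holds for each half-space separately.

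\emph{Other balls.} Balls that are neither remote nor anchored reduce to anchored ones as in Lemma~\ref{reduce_RH}, using doubling.

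Finally, the statement for $p\le 2$ is just H\"older's inequality, and $p=\infty$ is included by the same argument since \eqref{eq:RH:Jiang} holds for $p=\infty$.

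I expect the main obstacle to be the one-sided Poincar\'e inequality near $\{x=0\}$ with constants uniform in scale. The weight $x^{-2\alpha}$ is not integrable when $\alpha\ge1/2$, so the integration path must stay away from $x=0$, and the argument must use chains in the $y$-direction at positive $x$ rather than passing through the singular line. I would first try to import it directly from \cite{RS2} restricted to a half-space; failing that, I would do the Whitney chaining by hand on $\{x>0\}$.
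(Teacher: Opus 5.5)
Your proposal is correct and follows essentially the paper's argument. Remote balls are handled via Lemma~\ref{PE} and \eqref{eq:RH:Jiang} applied to $u$ minus a constant; anchored balls by subtracting the separated constant $c_+\mathbf 1_{\{x>0\}}+c_-\mathbf 1_{\{x<0\}}$, which for $\alpha\ge 1/2$ is locally $L$-harmonic with zero intrinsic gradient; and the remaining balls by the reduction of Lemma~\ref{reduce_RH}. The half-ball Poincar\'e inequality you flag as the main obstacle is exactly what the paper imports, as the separated $L^2$-Poincar\'e inequality of Robinson--Sikora \cite[Theorem~1.1 III]{RS2}, so your first option is the one used and no hand-made chaining argument is needed.
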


\begin{proof}
The key point is that although the global scale-invariant Poincar\'e inequality across the singular set $\{x=0\}$ fails, a separated $L^2$-Poincar\'e inequality remains valid. More precisely, for $1/2\le \alpha<1$, Robinson--Sikora \cite[Theorem~1.1 III]{RS2} proves that
\begin{equation}\label{eq_separated_Poincare_ball}
\int_B \bigl| f - f_{B^+}\mathbf 1_{B^+} - f_{B^-}\mathbf 1_{B^-}\bigr|^2 \, d\xi \le C r(B)^2 \int_{\lambda B}|\nabla_Lf|^2 \, d\xi
\end{equation}
for some $\lambda \ge 1$, where $B^+:= B \cap \{x>0\}$ and $B^-:=B\cap \{x<0\}$.

We first consider the case where $B$ is remote. Then the implication \eqref{eq:RH:Jiang} $\implies$ \eqref{eq:RH:Shen} is immediate. Indeed, if $B$ is remote, then necessarily $B\cap \{x=0\} = \emptyset$. Moreover, by Lemma~\ref{PE}, the $L^2$-Poincar\'e inequality holds on $B$. Now suppose that $u$ is $L$-harmonic in $\lambda B$; here $\varepsilon_0$ is chosen sufficiently small so that $\lambda B$ remains remote. Then Theorem~\ref{thm:RH:n1:alpha>1/2}, together with H\"older's inequality, yields
\begin{align*}
    \biggl( \fint_B |\nabla_L u|^p \, d\xi \biggr)^{\!\frac{1}{p}}
    &=\biggl( \fint_B |\nabla_L (u-u_{\lambda B})|^p \, d\xi \biggr)^{\!\frac{1}{p}} \\
    &\le \frac{C}{r(B)}  \fint_{\lambda B} |u-u_{\lambda B}| \, d\xi  \\
    &\le \frac{C}{r(B)} \biggl(\fint_{\lambda B} |u-u_{\lambda B}|^2 \, d\xi \biggr)^{\!\frac{1}{2}}\\
    &\le C \biggl(\fint_{\lambda' B} |\nabla_L u|^2 \, d\xi \biggr)^{\!\frac{1}{2}}.
\end{align*}

Next, consider an anchored ball $B=B((0,y_0), r)$, and let $u$ be $L$-harmonic in $\lambda B$. Define the separated constant
\begin{equation*}
\widetilde{c} := u_{(\lambda B)^+}\mathbf 1_{\{x>0\}}(x) + u_{(\lambda B)^-} \mathbf 1_{\{x<0\}}(x).
\end{equation*}
Since $\alpha\in [\frac{1}{2},1)$, the function $\widetilde{c}(x)$ belongs to $\mathcal{F}_{\mathrm{loc}}$ and is locally $L$-harmonic, while
\begin{equation*}
    \nabla_L u=\nabla_L(u-\widetilde{c})
\end{equation*}
(see Remark~\ref{rem_separated_constant_Friedrichs} below). Therefore, the same argument as in the remote case gives
\begin{align*}
    \biggl( \fint_B |\nabla_L u|^p \, d\xi \biggr)^{\!\frac{1}{p}}
    &=\biggl( \fint_B |\nabla_L (u-\widetilde{c})|^p \, d\xi \biggr)^{\!\frac{1}{p}} \\
    &\le \frac{C}{r(B)}  \fint_{\lambda B} |u-\widetilde{c}| \, d\xi  \\
    &\le \frac{C}{r(B)} \biggl(\fint_{\lambda B} |u-\widetilde{c}|^2 \, d\xi \biggr)^{\!\frac{1}{2}}\\
    &= \frac{C}{r(B)} |\lambda B|^{-\frac{1}{2}} \biggl( \sum_{\pm} \int_{(\lambda B)^{\pm}} |u-u_{(\lambda B)^{\pm}}|^2 \, d\xi \biggr)^{\!\frac{1}{2}} \\
    &\le C \biggl(\fint_{\lambda' B} |\nabla_L u|^2 \, d\xi \biggr)^{\!\frac{1}{2}}.
\end{align*}

Finally, if $B$ is neither anchored nor remote, then the same reduction argument as in Lemma~\ref{reduce_RH} applies. We omit the details.
\end{proof}

\begin{remark}\label{rem_separated_constant_Friedrichs}
We justify more explicitly the use of the separated constant
\begin{equation*}
\widetilde c
:=
c_+\mathbf 1_{\{x>0\}}(x)
+
c_-\mathbf 1_{\{x<0\}}(x),
\qquad
c_\pm:=u_{(CB)^\pm}.
\end{equation*}
It may be written as
\begin{equation*}
\widetilde c
=
\frac{c_++c_-}{2}
+
\frac{c_+-c_-}{2}\operatorname{sgn}(x).
\end{equation*}
When $1/2\le \alpha<1$, the function $\operatorname{sgn}(x)$ belongs to the local
Friedrichs form domain. Moreover, $\nabla_L\widetilde c=0$ a.e.\ in the form sense and $\widetilde c$ is locally $L$-harmonic, and
\begin{equation*}
\nabla_L(u-\widetilde c)=\nabla_Lu.
\end{equation*}
Strictly speaking, $\widetilde c$ need not belong to $\mathcal D(L)$ on $\mathbb R^{1+m}$, because it need not be in $L^2$. What is used
in Proposition~\ref{prop_u_RH_to_Shen_RH_large_alpha} is precisely its membership in the local Friedrichs form domain and its local harmonicity. This is sufficient to apply \eqref{eq:RH:Jiang} to
$u-\widetilde c$ on the enlarged ball $\lambda B$.
\end{remark}

Immediately, Proposition~\ref{prop_u_RH_to_Shen_RH_large_alpha} implies the following.

\begin{theorem}\label{thm:Hodge:n1}
Let $n=1$, $m\ge1$, $\alpha \in [\frac{1}{2},1)$, and $\beta \ge0$. Then \eqref{Pi_p} holds for all $1<p<\infty$, i.e.,
\begin{equation*}
    \| \mathcal{P} \omega \|_p = \| \nabla_L L^{-1} \operatorname{div}_L \omega \|_p \le C \| \omega \|_p
\end{equation*}
for all $1<p<\infty$.
\end{theorem}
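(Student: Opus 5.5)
The plan is to apply Shen's criterion, Theorem~\ref{thm:Shen}, to $T=\mathcal P$ on $E=\mathbb R^{1+m}$. This yields $(\Pi_p)$ for $2<p<\infty$; the range $1<p<2$ then follows by duality, since $\mathcal P^*=\mathcal P$. The space satisfies \eqref{D} by Theorem~\ref{RS_Grushin}, and $\mathcal P$ is bounded on $L^2$ because it is an orthogonal projection. It therefore remains to verify \eqref{eq:Shen} for every $q<\infty$, with some fixed $\lambda_1<\lambda_2$.

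Fix a ball $B$ and $\omega\in L^2(\mathbb R^{1+m};\mathbb C^{1+m})$ supported in $\mathbb R^{1+m}\setminus\lambda_2 B$. Set $u:=L^{-1}\operatorname{div}_L\omega$, understood as $u=L^{-1/2}(\nabla_L L^{-1/2})^*\omega$, so that $\nabla_L u=\mathcal P\omega\in L^2$ and $u$ lies locally in the Friedrichs form domain. The first step is to check that $u$ is $L$-harmonic in $\lambda_2 B$. For every $\varphi\in\mathcal F$ compactly supported in $\lambda_2 B$, the weak formulation gives
\[
\mathfrak q(u,\varphi)=\langle \omega,\nabla_L\varphi\rangle=0,
\]
since the supports of $\omega$ and $\nabla_L\varphi$ are disjoint. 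One technical point is that $u$ itself need not belong to $L^2$. I would handle this either by working with truncations $L^{-1/2}e^{-\epsilon L}$ and passing to the limit, or by using directly that $u$ belongs to $\mathcal F_{\mathrm{loc}}$. Either way only $\nabla_L u$ enters the estimates, and in the separated regime the form-domain statements of Subsection~\ref{ssec:Friedrichs} apply on each half-line.

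Next, apply Proposition~\ref{prop_u_RH_to_Shen_RH_large_alpha} with exponent $q$ to the harmonic function $u$ on $\lambda B$, where $\lambda$ is the enlargement constant from \eqref{eq:RH:Shen}. Choosing $\lambda_1:=\lambda'$ (the final enlargement appearing in that proof) and $\lambda_2>\lambda_1$ large enough that $u$ is harmonic on every intermediate ball used there, we obtain
\[
\Bigl(\fint_B|\mathcal P\omega|^q\Bigr)^{1/q}\le C\Bigl(\fint_{\lambda_1B}|\mathcal P\omega|^2\Bigr)^{1/2}.
\]
This is exactly \eqref{eq:Shen}. Since $q$ is arbitrary, Theorem~\ref{thm:Shen} gives boundedness for all $2<p<\infty$, and duality covers $p<2$ (with $p=2$ trivial).

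The main obstacle I expect is making the constant bookkeeping uniform across ball types. For anchored balls, the separated constant $\widetilde c$ of Remark~\ref{rem_separated_constant_Friedrichs} must be subtracted on a ball where $u$ is still harmonic, so the radii $\lambda$, $\lambda'$ and the reduction constants from Lemma~\ref{reduce_RH} for balls that are neither anchored nor remote all have to fit inside $\lambda_2 B$. I would handle this by taking $\lambda_2$ to be the maximum of all the enlargement factors arising in Lemma~\ref{reduce_RH}, Lemma~\ref{RH_remote} and Proposition~\ref{prop_u_RH_to_Shen_RH_large_alpha}, multiplied by a fixed factor.
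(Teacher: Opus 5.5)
Your proposal is correct and follows the paper's proof. The paper likewise applies Shen's criterion (Theorem~\ref{thm:Shen}) to $\mathcal P$, using its $L^2$-boundedness and the reverse H\"older inequality of Proposition~\ref{prop_u_RH_to_Shen_RH_large_alpha} for $u=L^{-1}\operatorname{div}_L\omega$, which is $L$-harmonic away from the support of $\omega$. Two points the paper leaves implicit are made precise in your version: you state the duality step for $1<p<2$ via $\mathcal P^*=\mathcal P$ explicitly, and you choose $\lambda_1<\lambda_2$ to accommodate the enlargement $\lambda'$, where the paper simply works with $4B$ and $5B$.
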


\begin{proof}
The argument is based on \cite[Section~2.2]{AC}. Note that $|\mathcal{P}|$ is a sublinear operator bounded on $L^2$. Let $2<p<\infty$. Let $B$ be a Grushin ball, and let $\omega \in C_c^\infty(\mathbb{R}^{1+m}; \mathbb{C}^{1+m})$ be a smooth vector field with
\begin{equation*}
    \operatorname{supp} \omega \subset (5B)^c.
\end{equation*}
Let $u$ be a weak solution to
\begin{equation*}
    Lu = \operatorname{div}_L \omega
\end{equation*}
such that $|\nabla_L u|\in L^2$. Clearly,
\begin{equation*}
    |\nabla_L u| = |\mathcal{P} \omega|.
\end{equation*}
By the support condition on $\omega$, the function $u$ is $L$-harmonic in $4B$. Therefore, Proposition~\ref{prop_u_RH_to_Shen_RH_large_alpha} implies that \eqref{eq:Shen} holds for all $q>p$, with $|Tf|$ replaced by $|\mathcal{P}\omega|$. Together with \eqref{D} and the $L^2$-boundedness of $\mathcal{P}$, Theorem~\ref{thm:Shen} applies immediately, and the proof follows.
\end{proof}

\subsection{From Hodge projector to Riesz transform}

We now complete the proof of Theorem~\ref{thm:n=1:alpha>1/2}.

\begin{proof}[Proof of Theorem~\ref{thm:n=1:alpha>1/2}]
Let $p>2$, so that
\begin{equation*}
    1<p'<2<\frac{1}{1-\alpha}.
\end{equation*}
For $\omega\in C_c^\infty(\mathbb R^{1+m};\mathbb C^{1+m})$, define
\begin{equation*}
    u=L^{-1}\operatorname{div}_L \omega.
\end{equation*}
We first verify that Theorem~\ref{prop_RR} applies to $u$. By Remark~\ref{re:applicable:HA} (see also Remarks~\ref{re:Hardy} and \ref{re:harmonic:annihilation}), $u$ must satisfy the following two conditions:
\begin{enumerate}[label=(\roman*)]
    \item $|\nabla_L u|\in L^{p'}$,
    \item $u(\xi) \to 0$ as $d(\xi,0)\to \infty$.
\end{enumerate}
Condition (i) follows from Theorem~\ref{thm:Hodge:n1}. Indeed, \eqref{Pi_p} implies
\begin{equation*}
    \|\nabla_L u\|_{p'} = \| \nabla_L L^{-1} \mathrm{div}_L \omega\|_{p'}\le C \|\omega\|_{p'} <\infty.
\end{equation*}
Condition (ii) follows from the decay of the Green kernel. Since $\omega$ is smooth and compactly supported, the decay of $u$ at infinity is a consequence of the decay of the kernel of $L^{-1}$. Recall that the homogeneous dimension is
\begin{equation*}
    \mathcal{Q}=\frac{1+m(\beta+1-\alpha)}{1-\alpha}.
\end{equation*}
Since $m\ge1$, $\beta\ge 0$, and $1/2\le\alpha<1$, we have
\begin{equation*}
    \mathcal{Q}\ge 1+\frac{1}{1-\alpha}\ge3.
\end{equation*}
Hence, for $\eta \in \operatorname{supp} \omega$, Theorem~\ref{RS_Grushin} yields
\begin{align*}
L^{-1}(\xi,\eta)
&:=\int_0^\infty e^{-tL}(\xi,\eta) \, dt\\
&\le C\int_0^{|x|^{2-2\alpha}} |x|^{-\alpha-m\beta} t^{-\frac{m+1}{2}} \exp\!\Bigl(-c\frac{d(\xi,\eta)^2}{t}\Bigr)\,dt \\
&\quad + C\int_{|x|^{2-2\alpha}}^\infty t^{-\mathcal{Q}/2} \exp\!\Bigl(-c\frac{d(\xi,\eta)^2}{t}\Bigr)\,dt \\
&\le C d(\xi,\eta)^{2-\mathcal{Q}}
    \to d(\xi,0)^{2-\mathcal{Q}}
    \to 0,
\end{align*}
as $d(\xi,0)\to \infty$. This proves condition (ii).

Therefore, $(\mathrm{RR}_{p'})$ applies to $u$, namely
\begin{equation*}
   \|L^{-1/2}\operatorname{div}_L\omega\|_{p'} = \| L^{1/2} u\|_{p'} \le C \| \nabla_L u\|_{p'}.
\end{equation*}
Since $\nabla_L u=\mathcal P\omega$, Theorem~\ref{thm:Hodge:n1} gives
\begin{equation*}
\|L^{-1/2}\operatorname{div}_L\omega\|_{p'} \le C\|\mathcal P\omega\|_{p'} \le
C\|\omega\|_{p'}.
\end{equation*}
By duality, this yields the boundedness of $\nabla_L L^{-1/2}$ on $L^p$.
\end{proof}

\begin{remark}
In fact, the implication: \eqref{RR_p} $+$ \eqref{Pi_p} $\implies$ $(\mathrm{R}_{p'})$ is more subtle than it looks. Indeed, a counterexample is found in \cite[Theorem~1.5]{HeSi} within the framework of manifolds with ends. Particularly, on the connected sum $\mathbb{R}^2 \# \mathbb{ R}^2$, \eqref{R_p} holds if and only if $1<p\le 2$, however, \eqref{RR_p} and \eqref{Pi_p} both remain valid for all $1<p<\infty$.
\end{remark}

We conclude with the following corollary.

\begin{corollary}
Let $n=1$, $m\geq1$, $\alpha\in(0,\frac12)$, and $\beta\geq0$. Then
\eqref{Pi_p} holds for $(1-\alpha)^{-1}<p<\alpha^{-1}$, and fails whenever $1<p<(1-\alpha)^{-1}$ or $\alpha^{-1}<p<\infty$.
\end{corollary}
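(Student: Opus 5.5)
The plan is to treat the positive range by factorising $\mathcal P$ through the Riesz transform. The negative range then follows by combining self-adjointness of $\mathcal P$ with the reverse Riesz inequality of Theorem~\ref{thm_main2}, mirroring the argument in the proof of Theorem~\ref{thm:n=1:alpha>1/2}.

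\emph{Positive range.} Write $\mathcal R:=\nabla_L L^{-1/2}$, so that $\mathcal P=\mathcal R\,\mathcal R^*$.
\begin{enumerate}
\item By Theorem~\ref{thm:n=1:alpha<1/2}, $\mathcal R$ is bounded on $L^p$ for $1<p<\alpha^{-1}$.
\item By duality, $\mathcal R^*=L^{-1/2}\operatorname{div}_L$ is bounded on $L^p$ whenever $p'<\alpha^{-1}$, that is, whenever $p>(1-\alpha)^{-1}$.
\item Since $\alpha<1/2$, we have $(1-\alpha)^{-1}<2<\alpha^{-1}$, so both conditions hold on the nonempty interval $(1-\alpha)^{-1}<p<\alpha^{-1}$. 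On this interval $\mathcal P=\mathcal R\mathcal R^*$ is bounded on $L^p$.
\end{enumerate}

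\emph{Negative range: reduction.} Since $\mathcal P^*=\mathcal P$, the operator $\mathcal P$ is bounded on $L^p$ if and only if it is bounded on $L^{p'}$. The map $p\mapsto p'$ sends $(1,(1-\alpha)^{-1})$ onto $(\alpha^{-1},\infty)$. So it suffices to show that $(\Pi_p)$ fails for every $p>\alpha^{-1}$.

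\emph{Negative range: contradiction argument.} Fix $p>\alpha^{-1}$ and suppose $(\Pi_p)$ holds, hence also $(\Pi_{p'})$.
\begin{enumerate}
\item Note that $p'<(1-\alpha)^{-1}$. This is exactly the range in which Theorem~\ref{thm_main2} gives $(\mathrm{RR}_{p'})$ through harmonic annihilation.
\item For $\omega\in C_c^\infty(\mathbb R^{1+m};\mathbb C^{1+m})$, set $u:=L^{-1}\operatorname{div}_L\omega$. Then $\mathcal R^*\omega=L^{1/2}u$ and $\nabla_L u=\mathcal P\omega$.
\item Applying $(\mathrm{RR}_{p'})$ to $u$ gives
\[
\|\mathcal R^*\omega\|_{p'}\le C\|\nabla_Lu\|_{p'}=C\|\mathcal P\omega\|_{p'}\le C\|\omega\|_{p'}.
\]
\item By duality, $\mathcal R$ is then bounded on $L^p$ with $p>\alpha^{-1}$. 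This contradicts Theorem~\ref{thm:n=1:alpha<1/2}, or directly Proposition~\ref{prop_RH_failure_n1} combined with Theorem~\ref{thm:CJKS}.
\end{enumerate}

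\emph{Main obstacle.} The hard step is justifying that $(\mathrm{RR}_{p'})$, proved in Theorem~\ref{prop_RR} for $f\in C_c^\infty$, may be applied to the non-compactly-supported $u=L^{-1}\operatorname{div}_L\omega$. Following Remark~\ref{re:applicable:HA}, two things must be checked.
\begin{enumerate}
\item \emph{Hardy inequality for $u$.} I would prove \eqref{eq_hardy_x} for $u$ by approximating $u$ with $C_c^\infty$ cut-offs $u\,\eta_R$. This uses $|\nabla_L u|\in L^{p'}$, which comes from $(\Pi_{p'})$, together with the Green decay $|u(\xi)|\lesssim d(\xi,0)^{2-\mathcal Q}$, obtained exactly as in the proof of Theorem~\ref{thm:n=1:alpha>1/2}. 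The decay makes the cut-off error $\|u\,\nabla_L\eta_R\|_{p'}$ tend to zero for the relevant range.
\item \emph{Integration by parts \eqref{eq:HA} for $u$.} In the weakly degenerate regime $u$ has a common trace across $\{x=0\}$ and matching conormal flux. Therefore the boundary terms at $x=\pm\varepsilon$ cancel in the limit, as in the flux-matching discussion of Subsection~\ref{ssec:Friedrichs}.
\end{enumerate}
The endpoints $p=(1-\alpha)^{-1}$ and $p=\alpha^{-1}$ are not asserted, consistent with the statement.
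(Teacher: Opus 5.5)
Your argument is the paper's. The positive range is $\mathcal P=\mathcal R\mathcal R^*$ combined with Theorem~\ref{thm:n=1:alpha<1/2} and duality. The negative range is ``$(\Pi_{q})$ plus $(\mathrm{RR}_{q})$ for $u=L^{-1}\operatorname{div}_L\omega$ gives $(\mathrm R_{q'})$'', which contradicts Theorem~\ref{thm:n=1:alpha<1/2}. The only difference is that you invoke self-adjointness of $\mathcal P$ at the start rather than at the end. Your trace-plus-flux-matching justification of \eqref{eq:HA} for $u$ is also the right replacement for Remark~\ref{re:harmonic:annihilation} when $\alpha<1/2$.

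The step that does not close as written is the Hardy inequality for $u$ via cut-offs $u\eta_R$. You only have $|u(\xi)|\lesssim d(\xi,0)^{2-\mathcal Q}$, and $|\nabla_L\eta_R|\lesssim R^{-1}$ on an annulus of volume $\sim R^{\mathcal Q}$, so
\[
\|u\,\nabla_L\eta_R\|_{p'}^{p'}\lesssim R^{\mathcal Q-(\mathcal Q-1)p'}.
\]
This tends to zero only for $p'>\mathcal Q/(\mathcal Q-1)$, and with only this pointwise bound no other cut-off profile does better. You need every $p'\in(1,(1-\alpha)^{-1})$. In general the argument misses all $p'$ near $1$, since $\mathcal Q/(\mathcal Q-1)>1$. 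Worse, $\mathcal Q/(\mathcal Q-1)\ge(1-\alpha)^{-1}$ exactly when $\mathcal Q\alpha\le1$, for example $m=1$, $\beta=0$, $\alpha\le(3-\sqrt5)/2$; in that case it covers none of the required exponents.

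The fix is to drop the cut-offs and argue slice by slice as in Remark~\ref{re:Hardy}, whose proof never uses $\alpha\ge1/2$. On each half-line substitute $r=|x|^{1-\alpha}$ and apply the one-dimensional weighted Hardy inequality for functions vanishing at infinity, which is valid precisely for $1<p'<(1-\alpha)^{-1}$. This needs only two inputs:
\begin{itemize}
\item $u(x,y)\to0$ as $|x|\to\infty$, which your $d^{2-\mathcal Q}$ bound gives since $\mathcal Q>2$;
\item $|\nabla_Lu|\in L^{p'}$, which comes from $(\Pi_{p'})$.
\end{itemize}
No decay rate, no approximation by $C_c^\infty$ functions, and no trace condition at $\{x=0\}$ are required.
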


\begin{proof}
The positive assertion follows directly from
Theorem~\ref{thm:n=1:alpha<1/2}. To prove the failure below the stated
range, suppose that \eqref{Pi_p} holds for some
$p<(1-\alpha)^{-1}$. Since $0<\alpha<1$, we have $\mathcal Q\geq 1+(1-\alpha)^{-1}>2$. The proof of Theorem~\ref{thm:n=1:alpha>1/2}, together with Theorem~\ref{thm_main2}, therefore implies that \eqref{RR_p} is valid for functions of the form $u:=L^{-1}\operatorname{div}_L\omega$, with $\omega\in C_c^\infty(\mathbb{R}^{1+m};\mathbb{C}^{1+m})$. The standard argument combining \eqref{Pi_p} and \eqref{RR_p} then yields $(\mathrm{R}_{p'})$. Since $p<(1-\alpha)^{-1}$ implies $p'>\alpha^{-1}$, this contradicts Theorem~\ref{thm:n=1:alpha<1/2}. The failure for $p>\alpha^{-1}$ follows by duality.
\end{proof}

\begin{remark}
Because \eqref{RR_p} is not known at $p=(1-\alpha)^{-1}$, the preceding
argument does not address either of the endpoint cases $p=(1-\alpha)^{-1}$ and $p=\alpha^{-1}$.
\end{remark}


\appendix

\section{Telescoping argument and covering lemma}\label{app:tele}

\begin{proof}[Proof of Lemma~\ref{le_tele}]
Set 
\[
g:=|\nabla_L f|\,\mathbf{1}_{2\gamma B}.
\]
We prove \eqref{eq:remote-lemma-312} for Lebesgue points $\xi,\eta$ of $f$.
Let
\[
\rho:=d(\xi,\eta).
\]
We divide the proof into two cases.

\textit{Case 1: $\rho\ge r/8$.}
We first show that for a.e.\ $\xi \in B$,
\begin{equation}\label{eq:point-vs-ball}
|f(\xi)-f_B|\le C r\, M_2(g)(\xi).
\end{equation}
Fix such a $\xi$, and define for $j\ge0$, $B_j:=B(\xi,2^{-j}r)$. Note that
\begin{equation*}
    B_j \subset B_0 = B(\xi,r) \subset B(\xi_0, 2r) = 2B.
\end{equation*}

\textit{Claim:} $B_j$ is remote for every $j\ge 0$.

It is enough to verify that $B_0$ is remote. Since $\xi = (x,y) \in B=B(\xi_0,r)$ and $B$ is remote, we have $|x-x_0|\le |x_0|/2$. Next, because $3B$ is remote, we get $3r \le \varepsilon_0 |x_0|^{1-\alpha}$, and hence $|x_0|\ge \bigl(3r/\varepsilon_0\bigr)^{\frac{1}{1-\alpha}}$. Also, $|x|\ge |x_0|- |x-x_0|\ge \frac{|x_0|}{2}$. Therefore, $|x|\ge 2^{-1} \bigl(3r/\varepsilon_0\bigr)^{\frac{1}{1-\alpha}}$, which implies $r \le \frac{2^{1-\alpha}}{3} \varepsilon_0 |x|^{1-\alpha} \le \varepsilon_0 |x|^{1-\alpha}$. Thus $B_0$ is remote, and hence so is every $B_j$.

Since each $B_j$ is remote, Lemma~\ref{PE} applies on every $B_j$. By Lebesgue differentiation,
\[
f(\xi)=\lim_{j\to\infty} f_{B_j}.
\]
Hence
\[
|f(\xi)-f_{B_0}|
\le \sum_{j=0}^\infty |f_{B_j}-f_{B_{j+1}}|.
\]
Moreover,
\[
|f_{B_j}-f_{B_{j+1}}|
\le \frac{1}{|B_{j+1}|}\int_{B_{j+1}} |f-f_{B_j}|
\le \frac{C}{|B_{j}|}\int_{B_j}|f-f_{B_j}|,
\]
where the last inequality follows from doubling. By Cauchy--Schwarz and Lemma~\ref{PE},
\[
\frac{1}{|B_{j}|}\int_{B_j}|f-f_{B_j}|
\le
\biggl(\frac{1}{|B_{j}|}\int_{B_j}|f-f_{B_j}|^2\biggr)^{\!1/2}
\le
C\, 2^{-j}r\biggl(\frac{1}{|B_{j}|}\int_{\gamma B_j} g^2\biggr)^{\!1/2}.
\]
Therefore
\[
|f(\xi)-f_{B_0}|
\le
C\sum_{j=0}^\infty 2^{-j}r\biggl(\frac{1}{|B_{j}|}\int_{\gamma B_j} g^2\biggr)^{\!1/2}
\le
C r\, M_2(g)(\xi).
\]
Since $B_0=B(\xi,r)\subset 2B$, we further decompose
\[
|f_{B_0}-f_B| \le |f_{B_0} - f_{2B}| + |f_B - f_{2B}|.
\]
For the first term, doubling gives
\begin{align*}
    |f_{B_0} - f_{2B}|
    &\le \frac{1}{|B_0|}\int_{B_0}|f-f_{2B}|\\
    &\le \frac{|B(\xi_0,2r)|}{|B(\xi,r)|} \frac{1}{|2B|}\int_{2B} |f-f_{2B}|\\
    &\le \frac{|B(\xi,3r)|}{|B(\xi,r)|} \biggl(\frac{1}{|2B|}\int_{2B} |f-f_{2B}|^2\biggr)^{\!1/2}\\
    &\le C r \biggl(\frac{1}{|2B|}\int_{2\gamma B} g^2\biggr)^{\!1/2}
    \le C r M_2(g)(\xi).
\end{align*}
The term $|f_B - f_{2B}|$ is estimated in the same way. Thus \eqref{eq:point-vs-ball} follows.

Applying \eqref{eq:point-vs-ball} to both $\xi$ and $\eta$, we obtain
\[
|f(\xi)-f(\eta)|
\le
|f(\xi)-f_B|+|f(\eta)-f_B|
\le
C r\bigl(M_2(g)(\xi)+M_2(g)(\eta)\bigr).
\]
Since in the present case $\rho\ge r/8$, this gives
\[
|f(\xi)-f(\eta)|
\le
C \rho \bigl(M_2(g)(\xi)+M_2(g)(\eta)\bigr).
\]

\textit{Case 2: $\rho< r/8$.}
For $j\ge0$, define
\[
B_j^\xi:=B(\xi,2^{-j+1}\rho),
\qquad
B_j^\eta:=B(\eta,2^{-j+1}\rho).
\]
Because $\rho<r/8$, it is clear that
\[
B(\xi,2\rho)\subset B(\xi_0,r+2\rho)\subset 2B,
\]
and similarly
\[
B(\eta,2\rho)\subset 2B.
\]
By the same argument as in the Claim above, every ball $B_j^\xi$ and $B_j^\eta$ is remote. Again by Lebesgue differentiation,
\[
f(\xi)=\lim_{j\to\infty} f_{B_j^\xi},
\qquad
f(\eta)=\lim_{j\to\infty} f_{B_j^\eta}.
\]
Hence
\[
|f(\xi)-f(\eta)|
\le
\sum_{j=0}^\infty |f_{B_j^\xi}-f_{B_{j+1}^\xi}|
+
|f_{B(\xi,2\rho)}-f_{B(\eta,\rho)}|
+
\sum_{j=1}^\infty |f_{B_j^\eta}-f_{B_{j+1}^\eta}|.
\]
For the telescoping terms, exactly as above,
\[
|f_{B_j^\xi}-f_{B_{j+1}^\xi}|
\le
C\, 2^{-j}\rho \biggl(\frac{1}{|B_j^\xi|}\int_{\gamma B_j^\xi} g^2\biggr)^{\!1/2}
\le
C\, 2^{-j}\rho\, M_2(g)(\xi),
\]
and similarly
\[
|f_{B_j^\eta}-f_{B_{j+1}^\eta}|
\le
C\, 2^{-j}\rho\, M_2(g)(\eta).
\]
For the middle term, since $d(\xi,\eta)=\rho$, we have
\[
B(\eta,\rho)\subset B(\xi,2\rho) \subset B(\eta,3\rho).
\]
Therefore
\[
|f_{B(\xi,2\rho)}-f_{B(\eta,\rho)}|
\le
\frac{1}{|B(\eta,\rho)|}\int_{B(\eta,\rho)} |f-f_{B(\xi,2\rho)}|
\le
\frac{C}{|B(\xi,2\rho)|} \int_{B(\xi,2\rho)} |f-f_{B(\xi,2\rho)}|.
\]
Using Cauchy--Schwarz and Lemma~\ref{PE} on the remote ball $B(\xi,2\rho)$, we get
\[
|f_{B(\xi,2\rho)}-f_{B(\eta,\rho)}|
\le
C\, \rho\biggl( \frac{1}{|B(\xi,2\rho)|}\int_{\gamma B(\xi,2\rho)} g^2\biggr)^{\!1/2}
\le
C\, \rho\, M_2(g)(\xi).
\]
Combining the above estimates and summing the geometric series, we conclude that
\[
|f(\xi)-f(\eta)|
\le
C\, \rho\bigl(M_2(g)(\xi)+M_2(g)(\eta)\bigr),
\]
that is,
\[
|f(\xi)-f(\eta)|
\le
C\, d(\xi,\eta)\bigl(M_2(|\nabla_L f|\mathbf{1}_{2\gamma B})(\xi)+M_2(|\nabla_L f|\mathbf{1}_{2\gamma B})(\eta)\bigr).
\]
This proves \eqref{eq:remote-lemma-312}.
\end{proof}

\begin{proof}[Proof of Lemma~\ref{lemma_remotecover}]
Set $B_n^0 = B_n(0,\varepsilon)$ and define
\[
A_N:= B_n(0, \varepsilon 2^{N}) \setminus B_n(0, \varepsilon 2^{N-1}), \qquad N\ge 1.
\]
Clearly,
\begin{align*}
    \mathbb{R}^n = B_n^0 \cup \Bigl(\bigcup_{N\ge 1}A_N\Bigr),\qquad
    A_N \subset \bigcup_{x\in A_N} B_n \bigl(x, \varepsilon 2^{N-\delta-13}\bigr).
\end{align*}
By Vitali's covering lemma, one can find an index set $I_N$ and a collection of pairwise disjoint balls
\[
\{B_n(x_{N,i},\varepsilon 2^{N-\delta-13})\}_{i\in I_N},
\qquad x_{N,i}\in A_N,
\]
such that
\[
A_N \subset \bigcup_{i\in I_N}B_n\bigl(x_{N,i}, \varepsilon 2^{N-\delta-10}\bigr).
\]
By \eqref{D}, the cardinality $\# I_N$ is finite. More precisely,
\begin{align*}
    (\# I_N) |B_n(0,\varepsilon 2^{N+2})|
    &\le \sum_{i \in I_N} |B_n(x_{N,i}, \varepsilon 2^{N+3})| \\
    &\le C(n,\delta) \sum_{i \in I_N} |B_n(x_{N,i}, \varepsilon 2^{N-\delta-13})|\\
    &\le C(n,\delta) |B_n(0,\varepsilon 2^{N+2})|,
\end{align*}
where the first inequality follows from the inclusion
\[
B_n(0,4\varepsilon 2^{N}) \subset B_n( x_{N,i}, 8 \varepsilon 2^N ),
\qquad i\in I_N,\ N\ge 1.
\]
The second inequality follows directly from \eqref{D}, and the last inequality uses the disjointness of the balls $B_n(x_{N,i},\varepsilon 2^{N-\delta-13})$ together with
\begin{equation*}
    \bigcup_{i\in I_N} B_n\bigl(x_{N,i},\varepsilon 2^{N-\delta-13}\bigr)
    \subset B_n\bigl(0, \varepsilon 2^N + \varepsilon 2^{N-\delta-13}\bigr)
    \subset B_n( 0, \varepsilon 2^{N+2} ).
\end{equation*}
In particular, the bound for $\# I_N$ is independent of both $N$ and $\varepsilon$.

Now set $B_n^j = B_n(x_{j}, r_j)$, where $x_j = x_{N,i}$ and $r_j = \varepsilon 2^{N-\delta-10}$, after relabeling. This yields a sequence of balls satisfying
\[
\bigcup_{j \ge 0}B_n^j = \mathbb{R}^n.
\]
Moreover, for $j \ne 0$, it is easy to check that
\begin{align}\label{eq_|xj|,r_j}
2^{-10-\delta} |x_j| \le r_j \le 2^{-9-\delta} |x_j|,
\end{align}
and for all $x\in B_n^j$,
\begin{align}\label{eq_|x|,r_j}
    2^{8+\delta} r_j \le |x| \le 2^{11+\delta}r_j.
\end{align}
This proves \eqref{(1)} and \eqref{(2)}.

It remains to prove \eqref{(3)}. By construction, $B_n^0 = B_n(0,\varepsilon)$ is the only ball in the collection containing $0$. Now let $x\in \mathbb{R}^n \setminus \{0\}$, and consider the index set
\[
J_x = \{j: x\in B_n^j\}.
\]
If $x\in A_N$ for some $N\ge 1$ and $x\in B_n^j$, then $j$ must belong to $I_N$, $I_{N-1}$, or $I_{N+1}$ if $N\ge 2$, and to $I_N$ or $I_{N+1}$ if $N=1$. Therefore, by \eqref{eq_|x|,r_j}, the doubling condition, and \eqref{eq_|xj|,r_j},
\begin{align*}
    (\# J_x) \bigl|B_n (x,\tfrac{|x|}{2})\bigr|
    &\le \sum_{j \in J_x} \bigl|B_n( x_j, \tfrac{|x|}{2} + \tfrac{|x|}{2^{8+\delta}} )\bigr|\\
    &\le C(n,\delta) \sum_{j\in J_x} \bigl| B_n(x_j, \tfrac{|x|}{2^{14+\delta}} )\bigr|\\
    &\le C(n,\delta) \sum_{j \in J_x} \bigl| B_n(x_j, \tfrac{r_j}{2^{3}} )\bigr|\\
    &= C(n,\delta) \sum_{l\in \{-1,0,1\}} \sum_{j \in J_x \cap I_{N+l}} \bigl| B_n(x_j, \tfrac{r_j}{2^{3}} )\bigr| \\
    &= C(n,\delta) \sum_{l\in \{-1,0,1\}} \Bigl|\bigcup_{j \in J_x\cap I_{N+l}} \tfrac{B_n^j}{8} \Bigr|\\
    &\le C(n,\delta) \bigl|B_n (x,\tfrac{|x|}{2})\bigr|,
\end{align*}
where the second-to-last equality follows from the disjointness of the balls
\[
\frac{B_n( x_{N,i}, \varepsilon 2^{N-\delta-10} )}{8},
\]
and the last inequality uses the fact that, for any $j\in J_x \cap I_{N+l}$,
\begin{align*}
    \frac{B_n^j}{8} = B_n\bigl(x_j, \tfrac{r_j}{8} \bigr)
    \subset B_n \bigl(x,\tfrac{9r_j}{8} \bigr)
    \subset B_n\bigl(x, \tfrac{9 |x|}{2^{11+\delta}}\bigr)
    \subset B_n \bigl(x,\tfrac{|x|}{2}\bigr).
\end{align*}
The case $x\in B_n^0$ is similar. Indeed, in that case, $x\in B_n^j$ implies that $j=0$ or $j\in I_1$, and the same argument applies. This proves \eqref{(3)}, and hence Lemma~\ref{lemma_remotecover}.
\end{proof}

\section{Bessel analysis package}\label{app:B}




\begin{proof}[Proof of Lemma~\ref{lem:Bessel_regular_branch}]
The power-series expansion gives
\begin{equation*}
I_\nu(t)=\frac{1}{\Gamma(\nu+1)}\left(\frac{t}{2}\right)^\nu\sum_{q=0}^\infty \frac{1}{q!(\nu+1)_q} \left(\frac{t^2}{4}\right)^q.
\end{equation*}
Hence
\begin{equation*}
\mathcal I_\nu(t) = \frac{2^{-\nu}}{\Gamma(\nu+1)} \sum_{q=0}^\infty \frac{1}{q!(\nu+1)_q} \left(\frac{t^2}{4}\right)^q.
\end{equation*}
Since $\nu\geq\nu_*>-1$, the coefficients and their finitely many $t$-derivatives are uniformly bounded on $0\leq t\leq1$, with constants depending only on $N$ and $\nu_*$. This proves the first assertion. The derivative identity follows from the standard recurrence
\begin{equation*}
\frac{d}{dt}\bigl(t^{-\nu}I_\nu(t)\bigr)=t^{-\nu}I_{\nu+1}(t) = t\,\mathcal I_{\nu+1}(t).
\end{equation*}

\end{proof}



\begin{proof}[Proof of Lemma~\ref{lem_S_derivative_bound}]
We first treat $0<t\le1$. By the power-series expansion,
\begin{equation*}
    S_\nu(t)
    =
    \frac{t}{2(\nu+1)}
    \frac{E_{\nu+1}(t^2)}{E_\nu(t^2)},\qquad E_\nu(s):=
    \sum_{q=0}^\infty
    \frac{1}{q!(\nu+1)_q}
    \left(\frac{s}{4}\right)^q.
\end{equation*}
For $\nu\ge \nu_*>-1$, we have $E_\nu(s)\ge 1$, and, for every $N\ge 0$,
\begin{equation}\label{eq_A_derivatives_small}
    \sup_{\nu\geq\nu_*}\sup_{0\le s\leq1}
    \left|E_\nu^{(k)}(s)\right|
    \le
    C_{N,\nu_*},
    \qquad
    0\leq k\le N.
\end{equation}
Since $\nu+1\ge \nu_*+1>0$, differentiating the formula for $S_\nu$ gives
\begin{equation*}
    |S_\nu^{(k)}(t)|
    \le
    C_{N,\nu_*},
    \qquad
    0<t\leq1,\quad 0\leq k\le N.
\end{equation*}
We now consider $t\ge 1$. The recurrence identities
\begin{equation*}
    I_\nu'(t)=I_{\nu+1}(t)+\frac{\nu}{t}I_\nu(t),
    \qquad
    I_{\nu+1}'(t)=I_\nu(t)-\frac{\nu+1}{t}I_{\nu+1}(t)
\end{equation*}
give 
\begin{equation*}
    S_\nu'(t)
    =
    1-S_\nu(t)^2-\frac{2\nu+1}{t}S_\nu(t).
\end{equation*}
Since $S_\nu(t)\le C_{\nu_*}$ for all $t>0$ and $\nu\ge \nu_* >-1$, we have, for $t\geq1$,
\begin{equation*}
    |S_\nu'(t)|
    \leq
    C_{\nu_*}(1+\nu).
\end{equation*}
Repeatedly differentiating the equation expresses $S_\nu^{(j+1)}$ as a finite sum of products involving $S_\nu^{(k)}$, $k\leq j$, powers of $t^{-1}$, and the coefficient $2\nu+1$. An induction on $j$ gives
\begin{equation*}
    |S_\nu^{(k)}(t)|
    \leq
    C_{N,\nu_*}(1+\nu)^{A_N}
    \leq
    C_{N,\nu_*}(1+\nu+t)^{A_N},
    \qquad
    t\geq1,\quad 0\leq k\leq N.
\end{equation*}
This proves the first assertion.

For the estimate in the $Y$-variable, set $F_\nu(r):=rS_\nu(r)$. If $|Y|\geq1$, the standard formula for derivatives of radial functions gives
\begin{equation*}
    \left|\partial_Y^\gamma F_\nu(|Y|)\right|
    \leq
    C_\gamma
    \sum_{j=0}^{|\gamma|}
    |Y|^{j-|\gamma|}
    \left|F_\nu^{(j)}(|Y|)\right|
    \leq
    C_{N,\nu_*}(1+\nu+|Y|)^{A_N}.
\end{equation*}
If $|Y|\leq1$, then
\begin{equation*}
    |Y|S_\nu(|Y|)
    =
    \frac{|Y|^2}{2(\nu+1)}
    \frac{E_{\nu+1}(|Y|^2)}{E_\nu(|Y|^2)}
    =
    g_\nu(|Y|^2), \qquad g_\nu(s)
    :=
    \frac{s}{2(\nu+1)}
    \frac{E_{\nu+1}(s)}{E_\nu(s)}.
\end{equation*}
The preceding bounds for $E_\nu$ imply
\begin{equation*}
    |g_\nu^{(k)}(s)|
    \leq
    C_{N,\nu_*},
    \qquad
    0\leq s\leq1,\quad 0\leq k\leq N.
\end{equation*}
Applying the chain rule to $g_\nu(|Y|^2)$ yields
\begin{equation*}
    \left|
        \partial_Y^\gamma\bigl(|Y|S_\nu(|Y|)\bigr)
    \right|
    \leq
    C_{N,\nu_*},
    \qquad
    |Y|\leq1,\quad |\gamma|\leq N.
\end{equation*}
Combining the two cases proves the lemma.
\end{proof}



\begin{proof}[Proof of Lemma~\ref{lem_bessel_inputs_high_frequency}]
The first estimate follows from the standard Amos-type lower bound
\[
    \frac{I_{\nu+1}(t)}{I_\nu(t)}
    \geq
    \frac{t}
    {\nu+\frac12+\sqrt{(\nu+\frac32)^2+t^2}},
    \qquad
    \nu>-1,\quad t>0.
\]
Since $\nu\geq\nu_*>-1$, choosing $C$ sufficiently large gives
\[
    \frac{I_{\nu+1}(t)}{I_\nu(t)}
    \geq c
\]
whenever $t\geq C(1+\nu)$.

For the $K$-branch, the identity
\[
    \frac{d}{dt}\bigl(t^\nu K_\nu(t)\bigr)
    =
    -t^\nu K_{\nu-1}(t)
\]
shows that $\mathcal K_\nu$ is decreasing. Moreover,
\[
    \frac{d}{dt}\log\mathcal K_\nu(t)
    =
    -\frac{K_{\nu-1}(t)}{K_\nu(t)}.
\]
Using
\[
    -\frac{K_\nu'(t)}{K_\nu(t)}
    =
    \frac{\nu}{t}+\frac{K_{\nu-1}(t)}{K_\nu(t)}
\]
and the integral representation
\[
    K_\nu(t)=\int_0^\infty e^{-t\cosh s}\cosh(\nu s)\,ds,
\]
we get
\[
    -\frac{K_\nu'(t)}{K_\nu(t)}
    =
    \frac{\int_0^\infty \cosh s\,e^{-t\cosh s}\cosh(\nu s)\,ds}
    {\int_0^\infty e^{-t\cosh s}\cosh(\nu s)\,ds}
    \geq1.
\]
Hence
\[
    \frac{K_{\nu-1}(t)}{K_\nu(t)}
    \geq
    1-\frac{\nu}{t}.
\]
If $\nu\leq0$, the right-hand side is at least $1$. If $\nu>0$ and
$t\geq2\nu$, it is at least $1/2$. Therefore, after enlarging $C$ if necessary,
\[
    \frac{K_{\nu-1}(t)}{K_\nu(t)}\geq c,
    \qquad
    t\geq C(1+\nu).
\]
Integrating the logarithmic derivative of $\mathcal K_\nu$ over $[t_1,t_2]$
gives
\[
    \log\frac{\mathcal K_\nu(t_2)}{\mathcal K_\nu(t_1)}
    =
    -\int_{t_1}^{t_2}
    \frac{K_{\nu-1}(s)}{K_\nu(s)}\,ds
    \leq
    -c(t_2-t_1).
\]
This proves the exponential estimate.

Finally, Put
\[
    P_\nu(t):=I_\nu(t)K_\nu(t),
    \qquad
    r_\nu(t):=\frac{I_\nu'(t)}{I_\nu(t)},
    \qquad
    q_\nu(t):=-\frac{K_\nu'(t)}{K_\nu(t)}.
\]
The Wronskian identity
\[
    I_\nu'(t)K_\nu(t)-I_\nu(t)K_\nu'(t)=\frac1t
\]
gives
\[
    P_\nu(t)\bigl(r_\nu(t)+q_\nu(t)\bigr)=\frac1t.
\]
By the recurrence formulae
\[
    I_\nu'(t)=\frac{\nu}{t}I_\nu(t)+I_{\nu+1}(t),
    \qquad
    -K_\nu'(t)=\frac{\nu}{t}K_\nu(t)+K_{\nu-1}(t),
\]
and positivity of $I_{\nu+1}$ and $K_{\nu-1}$, we have, for $\nu>0$, $r_\nu(t)+q_\nu(t)\geq\frac{2\nu}{t}$. Thus
\[
    I_\nu(t)K_\nu(t)\leq\frac1{2\nu},
    \qquad
    \nu>0.
\]
On the other hand, the integral representation of $K_\nu$ gives
\[
    q_\nu(t)=-\frac{K_\nu'(t)}{K_\nu(t)}\geq1.
\]
Therefore $r_\nu(t)+q_\nu(t)\geq1$ and hence
\[
    I_\nu(t)K_\nu(t)\leq\frac1t.
\]
Combining the two estimates gives
\[
    I_\nu(t)K_\nu(t)
    \leq
    \min\left\{\frac1t,\frac1{2\nu}\right\}
    \leq
    \frac{C}{t+\nu}
\]
for $\nu>0$. The case $\nu=0$ follows directly from the bound
$I_0(t)K_0(t)\leq t^{-1}$.
\end{proof}

\begin{remark}
The restriction $\nu\geq0$ in the third assertion of  Lemma~\ref{lem_bessel_inputs_high_frequency}
is essential. If $-1<\nu<0$, then
\[
    I_\nu(t)K_\nu(t)\sim c_\nu t^{2\nu},
    \qquad t\to 0,
\]
and the product is unbounded at the origin. In the paper this product estimate is used only for the higher-dimensional angular orders, where $\nu=\nu_\ell\geq0$. 
\end{remark}

\section{Fa\`a di Bruno's Formula}

We record only the form of Faa di Bruno's formula used in the proof. Let
$I\subset\mathbb R$ be an interval, let $g\in C^N(I)$, and let
$F\in C^N(g(I))$. Then, for every integer $N\geq1$,
\[
    \frac{d^N}{dt^N}F(g(t))
    =
    \sum_{k=1}^N
    F^{(k)}(g(t))
    B_{N,k}\bigl(g'(t),g''(t),\ldots,g^{(N-k+1)}(t)\bigr),
\]
where
\[
    B_{N,k}(X_1,\ldots,X_{N-k+1})
    =
    \sum
    \frac{N!}{j_1!\cdots j_{N-k+1}!}
    \prod_{r=1}^{N-k+1}
    \left(\frac{X_r}{r!}\right)^{j_r}.
\]
The sum is taken over all non-negative integers
$j_1,\ldots,j_{N-k+1}$ satisfying
\[
    j_1+\cdots+j_{N-k+1}=k,
    \qquad
    j_1+2j_2+\cdots+(N-k+1)j_{N-k+1}=N.
\]
In particular, if $f>0$ and $f\in C^N(I)$, then applying the formula to
$f=\exp(\log f)$ gives
\[
    f^{(N)}(t)
    =
    f(t)
    \sum
    C_{j_1,\ldots,j_N}
    \prod_{r=1}^N
    \left((\log f)^{(r)}(t)\right)^{j_r},
\]
where the sum is over all non-negative integers $j_1,\ldots,j_N$ such that
\[
    j_1+2j_2+\cdots+Nj_N=N,
\]
and the constants $C_{j_1,\ldots,j_N}$ depend only on $N$.

Consequently, if for some $A\geq1$,
\[
    \left|(\log f)^{(r)}(t)\right|
    \leq
    C_r A^r,
    \qquad
    1\leq r\leq N,
\]
then
\[
    |f^{(N)}(t)|
    \leq
    C_N A^N f(t).
\]
This is the form used when differentiating positive multipliers written as
$f=\exp(\log f)$.

\bibliographystyle{abbrv}
\bibliography{references}

\end{document}